\numberwithin{equation}{section} 
\newtheorem{theorem}{Theorem}[section]
\newtheorem{corollary}[theorem]{Corollary}
\newtheorem{lemma}[theorem]{Lemma}
\newtheorem{proposition}[theorem]{Proposition}
 \theoremstyle{definition}
 \newtheorem{remark}[theorem]{Remark}
\renewcommand{\S}{\mathbb{S}}	
\newcommand{\R}{\mathbb{R}}	
\newcommand{\N}{\mathbb{N}} 
\newcommand{\dx}{\,\mathrm{d}x}	
\newcommand{\dy}{\,\mathrm{d}y}	
\newcommand{\ds}{\,\mathrm{d}S}	
\newcommand{\dr}{\,\mathrm{d}r}	
\renewcommand{\d}{\mathrm{d}}
\newcommand{\e}{\varepsilon}	
\newcommand{\nnu}{\bm{\nu}}  
\newcommand{\norm}[1]{\left\lVert #1 \right\lVert}
\newcommand{\abs}[1]{\left| #1 \right|}
\newcommand{\sub}{\subseteq}
\newcommand{\xint}[1]{\xi^{\textup{int}}_{#1,R,\e}} 
\DeclareMathOperator{\supp}{\mathrm{supp}}
\DeclareMathOperator{\dive}{\mathrm{div}}
\newenvironment{bvp}{\left\{\begin{aligned}  }{\end{aligned}\right.}
\begin{document}

\title[Eigenvalues with moving mixed boundary conditions]{Eigenvalues
  of the Laplacian with moving mixed boundary conditions: the case of
  disappearing Neumann region}

\author[V. Felli]{Veronica Felli}
\address{Veronica Felli
  \newline \indent Dipartimento di Matematica e Applicazioni
  \newline \indent
Universit\`a degli Studi di Milano–Bicocca
\newline\indent Via Cozzi 55, 20125 Milano, Italy}
\email{veronica.felli@unimib.it}

\author[B. Noris]{Benedetta Noris}
\address{Benedetta Noris
  \newline \indent Dipartimento di Matematica
  \newline \indent Politecnico di Milano\newline\indent Piazza Leonardo da Vinci 32, 20133 Milano, Italy}
\email{benedetta.noris@polimi.it}

\author[R. Ognibene]{Roberto Ognibene}
\address{Roberto Ognibene
	\newline \indent Dipartimento di Matematica
	\newline \indent Universit\`a di Pisa
	\newline\indent  Largo Bruno Pontecorvo, 5, 56127 Pisa, Italy}
\email{roberto.ognibene@dm.unipi.it}

\date{March 9, 2022}
\begin{abstract}
We deal with eigenvalue problems for the Laplacian with varying mixed
  boundary conditions, consisting in  homogeneous Neumann conditions on
  a vanishing portion of the boundary and Dirichlet conditions on the
  complement. By the study of an Almgren-type frequency function, we derive upper and lower bounds of the
   eigenvalue variation and sharp estimates in the case of a strictly
   star-shaped Neumann region.   
\end{abstract}
	
	\maketitle
	
\noindent        {\bf Keywords.} Asymptotics of Laplacian eigenvalues; mixed
        boundary conditions; monotonicity formula.

\medskip 

\noindent{\bf MSC classification.} 35J25; 
35P15; 
35B25. 

\section{Introduction}\label{sec:intr}
The present paper concerns the eigenvalue problem for the Laplacian in
a bounded domain, with mixed Dirichlet-Neumann homogeneous boundary
conditions prescribed on variable portions of the boundary. More
precisely, we focus on
  a perturbative problem characterized by the
  disappearance, in some limiting process,
of the region where Neumann
boundary conditions are imposed. In this situation, the
 eigenvalues of the mixed problem converge to Dirichlet eigenvalues: we
aim to study the rate of this convergence. This paper is the
counterpart of \cite{FNO1}, where the case of Dirichlet disappearing
region is studied.

In the literature there have been several
  contributions on asymptotic
  behaviour of eigenvalues of elliptic boundary
  value problems under singular perturbation of  the boundary
  conditions. Concerning, in particular, the case treated
  in the present paper, i.e. the perturbation of a Dirichlet
  problem by imposing a homogeneous Neumann condition on a
  vanishing portion of the boundary, we mention the results in   
  \cite{Gadyl'shin1992}, where  a full asymptotic
  expansion of perturbed eigenvalues is obtained in dimension $2$, see
  also  \cite{AFL2018};  we mention additionally the paper
    \cite{CP}, concerned with  the spectral stability of the first eigenvalue.
  The complementary problem, i.e. a Neumann problem perturbed with
  a Dirichlet condition on a small part of the boundary, is
  treated by \cite{Gadylshin1993} in dimension 2 and by \cite{FNO1}
  in any dimension $N\geq3$.
The approach developed in \cite{FNO1} is based on a capacity argument
inspired by \cite{Courtois1995} and \cite{AFHL}, where the problem of spectral
stability for the Dirichlet Laplacian in domains with small holes
 is investigated; in particular in \cite{FNO1} the sharp
asymptotic behaviour of perturbed eigenvalues is described in terms of
the Sobolev capacity of the boundary portion where the Dirichlet
condition is imposed. This kind of method does not seem to be effective
in the case of a disappearing Neumann region, being the Dirichlet boundary set
not small. Therefore, in the present paper we treat this case with a
different approach, based on blow-up analysis for scaled eigenfunctions
and energy estimates obtained by monotonicity formulas, in the spirit
of \cite{AFL2018}; we point out that the case of dimension
$N\geq 3$ presents several additional difficulties with respect to the
$2$-dimensional case treated in \cite{AFL2018}, because of the
occurrence of some effects of the geometry of the Neumann region in
the monotonicity argument, see Remark \ref{rmk:dimension}.

Let $\Omega\subset\R^N$, $N\geq 3$, be a bounded open set such that 
$\partial \Omega$ is of class $C^{1,1}$ in a neighbourhood of $0\in\partial\Omega$, namely 
there exist $r_0\in(0,1)$ and $g\in C^{1,1}(\R^{N-1})$ such that 
\begin{equation}\label{eq:1}
B_{r_0}\cap \Omega=\{x\in B_{r_0}\colon x_N>g (x')
\}\quad\text{and}\quad 
B_{r_0}\cap \partial\Omega=\{x\in B_{r_0}\colon x_N=g (x') \},
\end{equation}
where 
$B_{r_0}=\{x=(x_1,x_2,\dots,x_N)\in\R^N:|x|<r_0\}$ is the ball in $\R^N$ centered at the origin  with radius $r_0$ and $x'=(x_1,\dots,x_{N-1})$.
Up to a suitable choice of the Cartesian
coordinate system, it is not restrictive to assume that 
\begin{equation}\label{eq:2}
g(0)=0\quad\text{and}\quad 
\nabla g (0)=0,
\end{equation}
i.e. that $\partial\Omega$ is tangent to the coordinate hyperplane $\{x_N=0\}$ in the origin.

Let $\mathcal V$ be a bounded open set in $\R^N$ such that
\begin{equation}\label{eq:7}
  0\in \mathcal V\quad\text{and}\quad \mathop{\rm diam}(\mathcal V)
  =\sup\{|x-y|:x,y\in\mathcal V\}<r_0.
\end{equation}
For every $\e\in(0,1)$, let
\begin{equation}\label{eq:sigmaeps}
\Sigma_\e:=(\e\mathcal V)\cap\partial\Omega,
\end{equation}
where $\e\mathcal V=\{\e x:x\in \mathcal V\}\subset
B_{r_0}$. Furthermore, we set 
\begin{equation}\label{eq:Sigma}
\Sigma=\mathcal V\cap\partial
\R^N_+=\{x=(x_1,x_2,\dots,x_N)\in\mathcal V:x_N=0\},
\end{equation}
where $\R^N_+=\{(x',x_N)\in\R^N=\R^{N-1}\times\R:x_N>0\}$.

We consider the following eigenvalue problem with mixed boundary conditions
\begin{equation}\label{eq:main_intro}
\begin{cases}
-\Delta u=\lambda u, \quad & \text{in } \Omega, \\
u=0, & \text{on }\partial\Omega\setminus \Sigma_\e, \\
\partial_{\nnu} u=0, & \text{on } \Sigma_\e,
\end{cases}
\end{equation}
and we are interested in the asymptotic behaviour of its eigenvalues
as $\varepsilon\to0^+$, that is when the Neumann region $\Sigma_\e$ is
  disappearing.

In order to write the weak formulation of \eqref{eq:main_intro}, we
first introduce the suitable functional framework.  For any open set
$\omega\sub\R^N$ and for any closed set
$\Gamma\sub\partial \omega$, we define $H^1_{0,\Gamma}(\omega)$
  as the closure in $H^1(\omega)$ of
  $C_c^\infty(\overline{\omega}\setminus\Gamma)$; we refer to
  \cite{Egert-Tolksdorf} for a more detailed analysis of this kind of
  space (see also \cite{bernard}).
 We note that,
if $\Omega$ and $\mathcal V$ are sufficiently regular (for example Lipschitz), then we have the following characterization
\[
H^1_{0,\partial\Omega\setminus \Sigma_\e}(\Omega)=
\{ u\in H^1(\Omega): \, \mathop{\rm Tr}(u)=0 \text{ on } \partial\Omega\setminus \Sigma_\varepsilon\}
\]
for every $\e\in(0,1)$, where $\mathop{\rm Tr}$ denotes the trace
operator (see \cite{bernard}).  We note also that, formally, when
$\varepsilon=0$ the space $H^1_{0,\partial\Omega}(\Omega)$
  coincides with the usual Sobolev space $H^1_0(\Omega)$.

We say that $\lambda\in\R$ is an \emph{eigenvalue} of problem
\eqref{eq:main_intro} if there exists
$\varphi\in H^1_{0,\partial\Omega\setminus \Sigma_\e}(\Omega)$,
$\varphi\not\equiv0$, called an \emph{eigenfunction}, such that
\begin{equation}\label{eq:main_intro_weak}
\int_\Omega \nabla \varphi\cdot\nabla v\dx=
\lambda\int_\Omega \varphi v \dx
\quad \text{ for every } v \in H^1_{0,\partial\Omega\setminus\Sigma_\e}(\Omega).
\end{equation}
From classical spectral theory, \eqref{eq:main_intro_weak} admits a diverging sequence of positive eigenvalues
\[
  0<\lambda_1^\varepsilon<\lambda_2^\varepsilon \leq
  \lambda_3^\varepsilon\leq \cdots\leq
  \lambda_i^\varepsilon\leq\cdots,
\]
where each eigenvalue is repeated according to its
multiplicity. Letting $\N_*:=\N\setminus\{0\}$, we denote by
$\{\varphi_i^\varepsilon\}_{i\in \N_*}$ a corresponding sequence of eigenfunctions
satisfying 
\begin{equation}\label{eq:ortonor}
  \int_\Omega \varphi_i^\varepsilon
  \varphi_j^\varepsilon=\delta_i^j,
\end{equation}
with $\delta_i^j$ denoting the usual \emph{Kronecker
  delta.}
In  the sense clarified in  \eqref{eq:main_intro_weak}, the
   functions 
 $\varphi_i^\varepsilon$ weakly
 solve 
\begin{equation}\label{eq:eqphiie}
	\begin{cases}
	-\Delta \varphi_i^\e=\lambda_i^\e \varphi_i^\e, & \text{in }\Omega, \\
	\varphi_i^\e=0, &\text{on }\partial\Omega\setminus \Sigma_\e, \\
	\partial_{\nnu} \varphi_i^\e =0, &\text{on }\Sigma_\e.
	\end{cases}
      \end{equation}
   In the limit $\varepsilon=0$, once the Neumann region $\Sigma_\e$ 
      has disappeared, we formally recover 
the eigenvalue problem for the standard       Dirichlet Laplacian
      \begin{equation}\label{eq:str_dir}
      	\begin{cases}
      		-\Delta u=\lambda u, &\text{in }\Omega, \\
      		u=0, &\text{on }\partial\Omega,
      	\end{cases}
      \end{equation}
which is well known to admit a diverging sequence of
positive eigenvalues
      \[
      	0<\lambda_1<\lambda_2\leq\lambda_3\leq  \cdots\leq \lambda_i\leq \cdots.
      \]
We  denote by $\{\varphi_i\}_{i\in \N_*}$ a corresponding sequence of eigenfunctions satisfying
      \begin{equation}\label{eq:normaliz_limit_eigenfunction}
      	\int_{\Omega}\varphi_i\varphi_j\dx=\delta_i^j.
      \end{equation}
      More precisely, $\lambda_i$ and $\varphi_i$ solve
      \eqref{eq:str_dir} in the sense that
      $\varphi_i\in H^1_0(\Omega)$ and
 \begin{equation}\label{eq:eqphi_0}
   \int_{\Omega}\nabla\varphi_i\cdot\nabla v\dx=\lambda_i\int_\Omega\varphi_i v\dx
   \quad\text{for all }v \in H^1_0(\Omega).
 \end{equation}
In Section \ref{sec:conv-eigen} we prove that, for all $i\in\N_*$,
\[
	\lambda_i^\varepsilon\to \lambda_i\quad\text{as }\varepsilon\to 0.
\]
The main goal of the present paper is to detect
the sharp rate of the above convergence.

The vanishing rate of the eigenvalue
  variation $\lambda_i^{\e}-\lambda_i$ 
  turns out to be strongly related to  
the behaviour of the Dirichlet
eigenfunctions $\varphi_i$, locally near the point $0\in\partial\Omega$.
 We can derive  from \cite{FF-proca} the classification of
  possible vanishing orders of $\varphi_i$ at the boundary: for every
$i\in \N_*$, there exist $\gamma_i\in \N_*$,
$\Psi_i\in H^1_0(\mathbb{S}^{N-1}_+)$, with $\Psi_i\not\equiv0$, such
that
\begin{equation}\label{eq:gamma_i_def}
  \frac{\varphi_i(rx)}{r^{\gamma_i}} \to |x|^{\gamma_i}\Psi_i\left(\frac{x}{|x|}\right) \quad\text{in } H^1(B_\rho)\text{ as } r\to 0, \text{ for every } \rho>0,
\end{equation}
where $\mathbb{S}^{N-1}_+=\{(x_1,\ldots,x_N)\in\R^N:  |x|=1,\, x_N>0
\}$ and $\varphi_i$, respectively $\Psi_i$, are
trivially extended outside $\Omega$, respectively $\mathbb{S}^{N-1}_+$. Moreover the function $\Psi_i$ is the restriction to
$\mathbb{S}^{N-1}_+$ of a spherical harmonic odd with respect to the
equator $x_N=0$  and the $\gamma_i$-homogeneous function $\psi_i$
with angular profile $\Psi_i$, i.e. 
\begin{equation}\label{eq:psi_i_def}
\psi_i(x)=|x|^{\gamma_i} \Psi_i\left(\frac{x}{|x|}\right),
\end{equation}
is a harmonic homogeneous  polynomial of degree $\gamma_i$ vanishing
on $\partial\R^N_+$. In
particular, being $\psi_i$ harmonic, nontrivial and vanishing on
$\partial\R^N_+$, we have that $\partial_{x_N}\psi_i\not\equiv0$ on
$\Sigma$.

 In the following we fix $n_0\in \N_*$ such that
\begin{equation}\label{eq:simple_hp}
\lambda_{n_0} \text{ is simple}
\end{equation}
as an eigenvalue of the  standard Dirichlet Laplacian in $\Omega$. Moreover, hereafter we denote 
\begin{equation}\label{eq:notation-n_0}
	\gamma:=\gamma_{n_0}
\end{equation}
and 
\begin{equation}\label{eq:def_psi_0}
	\Psi:=\Psi_{n_0}, \quad \psi:=\psi_{n_0}.
      \end{equation}
Under assumption \eqref{eq:simple_hp} it is possible to choose the
eigenfunctions  $\varphi_{n_0}^\varepsilon$, solving \eqref{eq:eqphiie}
with $i=n_0$, in such a way that
\begin{equation*}
  \varphi_{n_0}^\varepsilon\to \varphi_{n_0}\quad\text{in }H^1(\Omega)\quad\text{as }\varepsilon\to0,
\end{equation*}
see Proposition \ref{prop:conv_eigenfunctions}.

The scaled shape \eqref{eq:Sigma} of the Neumann disappearing region
emerges in the asymptotic expansion of the eigenvalue variation in the
guise of a coefficient $\mathcal C_{n_0}(\Sigma)$ admitting a
variational characterization.
We define $\mathcal C_{n_0}(\Pi)$ for any bounded open subset
$\Pi\subset\partial\R^N_+$ such that $0\in\Pi$.  Letting
$\mathcal{D}^{1,2}(\R^N_+\cup \Pi)$ be the completion of
$C^\infty_c(\R^N_+\cup \Pi)$ with respect to the norm
\begin{equation*}
  \|u\|_{\mathcal{D}^{1,2}(\R^N_+\cup
    \Pi)}=\sqrt{\int_{\R^N_+}|\nabla u|^2\dx},
\end{equation*}
we define
\begin{equation}\label{eq:58}
  \mathcal C_{n_0}(\Pi):=-2\min\Big\{J_\Pi(u):u\in \mathcal{D}^{1,2}(\R^N_+\cup
  \Pi)\Big\},
\end{equation}
where
\begin{equation}\label{eq:57}
  J_\Pi: \mathcal{D}^{1,2}(\R^N_+\cup
  \Pi)\to\R,\quad J_\Pi(u):=\frac{1}{2}\int_{\R^N_+}\abs{\nabla u}^2\dx+\int_{\Pi}u\partial_{\nnu} \psi\dx',
\end{equation}
$\psi$ is defined in \eqref{eq:def_psi_0}, and
$\nnu=(0,0,\dots,0,-1)$ is the vertical downward unit vector.  By
classical variational methods one can easily prove that the minimum in
\eqref{eq:58} is attained (by the function $w_{0,\Pi}$ defined in
\eqref{eq:def_w_0}) and $\mathcal C_{n_0}(\Pi)>0$, see Section
\ref{sec:limit-profiles}.  We define
\begin{equation}
  \label{eq:64}
  \mathcal C_{n_0}=\mathcal C_{n_0} (B_1')>0,
\end{equation}
where we are denoting, for all $r>0$,
\begin{equation}\label{eq:72}
  B_r' :=B_r\cap\partial \R^N_+.
\end{equation}
Our first main result provides asymptotic lower and
  upper bounds of the eigenvalue variation as $\e\to0$.

\begin{theorem}\label{thm:nonstarshaped}
  Let $\mathcal V\subset \R^N$ be a bounded open set satisfying
  \eqref{eq:7} and $0<r_{\mathcal V}<R_{\mathcal V}<r_0$ be such that
  $B_{r_{\mathcal V}}\subset\mathcal V\subset B_{R_{\mathcal
      V}}$. Then
\begin{equation*}
  \mathcal C_{n_0}r_{\mathcal V}^{N+2\gamma-2}\leq
  \liminf_{\e\to0}\frac{\lambda_{n_0}-\lambda_{n_0}^\e}{\e^{N+2\gamma-2}}
  \leq   \limsup_{\e\to0}\frac{\lambda_{n_0}-\lambda_{n_0}^\e}{\e^{N+2\gamma-2}}
  \leq \mathcal C_{n_0}R_{\mathcal V}^{N+2\gamma-2}
\end{equation*}
with $\mathcal C_{n_0}$ as in \eqref{eq:64} and $\gamma$ as in \eqref{eq:notation-n_0}.
\end{theorem}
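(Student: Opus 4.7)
The plan is to deduce Theorem \ref{thm:nonstarshaped} from the sharp asymptotic (for star-shaped Neumann regions, announced in the abstract and presumably established later in the paper) applied to the sandwiching balls $B_{r_{\mathcal V}}$ and $B_{R_{\mathcal V}}$, combined with a monotonicity property of the mixed eigenvalues with respect to the Neumann portion and a scaling identity for the coefficient $\mathcal C_{n_0}(\Pi)$. Both of these latter tools are soft; all the analytic work lives inside the star-shaped theorem.

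First I would record a monotonicity principle. If $\mathcal V_1\subset\mathcal V_2$ are admissible and $\Sigma_\e^{(j)}=(\e\mathcal V_j)\cap\partial\Omega$, then the inclusion $\partial\Omega\setminus\Sigma_\e^{(2)}\subset\partial\Omega\setminus\Sigma_\e^{(1)}$ of the Dirichlet boundary portions yields $H^1_{0,\partial\Omega\setminus\Sigma_\e^{(1)}}(\Omega)\subset H^1_{0,\partial\Omega\setminus\Sigma_\e^{(2)}}(\Omega)$ directly from the definition as a closure (the space on the right imposes fewer Dirichlet conditions). The Courant--Fischer min-max characterization of the $n_0$-th eigenvalue then gives $\lambda_{n_0}^{\e}(\mathcal V_2)\leq\lambda_{n_0}^{\e}(\mathcal V_1)$. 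Applying this to $B_{r_{\mathcal V}}\subset\mathcal V\subset B_{R_{\mathcal V}}$ and invoking the convergence $\lambda_{n_0}^\e\to\lambda_{n_0}$ proved in Section \ref{sec:conv-eigen} (which guarantees $\lambda_{n_0}-\lambda_{n_0}^\e\geq0$ for $\e$ small), I would obtain
\[
\lambda_{n_0}-\lambda_{n_0}^{\e}(B_{r_{\mathcal V}})\ \leq\ \lambda_{n_0}-\lambda_{n_0}^{\e}(\mathcal V)\ \leq\ \lambda_{n_0}-\lambda_{n_0}^{\e}(B_{R_{\mathcal V}}).
\]

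Next, I would invoke the sharp asymptotic in the ball case: for every $\rho\in(0,r_0)$,
\[
\lim_{\e\to 0}\frac{\lambda_{n_0}-\lambda_{n_0}^{\e}(B_\rho)}{\e^{N+2\gamma-2}}=\mathcal C_{n_0}(B_\rho').
\]
Dividing the previous display by $\e^{N+2\gamma-2}$ and taking $\liminf$ on the left, $\limsup$ on the right, the claimed bound follows provided I can identify $\mathcal C_{n_0}(B_\rho')$ on balls of arbitrary radius. A scaling argument handles this: the substitution $u(x)=\rho^{\gamma}v(x/\rho)$ is a bijection $\mathcal D^{1,2}(\R^N_+\cup B_1')\to\mathcal D^{1,2}(\R^N_+\cup B_\rho')$, and since $\psi$ is $\gamma$-homogeneous (so $\partial_{\nnu}\psi$ is $(\gamma-1)$-homogeneous on $\partial\R^N_+$), both the Dirichlet energy $\int_{\R^N_+}|\nabla u|^2\dx$ and the boundary term $\int_{B_\rho'}u\,\partial_{\nnu}\psi\,dx'$ rescale by $\rho^{N+2\gamma-2}$. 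Hence $J_{B_\rho'}(u)=\rho^{N+2\gamma-2}J_{B_1'}(v)$, whence $\mathcal C_{n_0}(B_\rho')=\rho^{N+2\gamma-2}\mathcal C_{n_0}$ by \eqref{eq:64}. Specializing to $\rho=r_{\mathcal V}$ and $\rho=R_{\mathcal V}$ closes the proof.

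The genuine obstacle is, of course, the sharp star-shaped theorem itself, which requires the blow-up analysis for scaled eigenfunctions and the Almgren-type monotonicity formula flagged in the introduction; the reduction from Theorem \ref{thm:nonstarshaped} to that sharp case, once available, is a direct combination of variational monotonicity and elementary homogeneity. One technical caveat worth checking in Step 1 is that the inclusion of $H^1_{0,\Gamma}$-spaces obtained purely from the closure definition of \cite{Egert-Tolksdorf} does not require any regularity of the Neumann portion; this is reassuring because the curved set $\Sigma_\e$ sits inside $\partial\Omega$ and need not match the flat $\Sigma\subset\partial\R^N_+$, but only the comparison of Dirichlet-zero sets is used.
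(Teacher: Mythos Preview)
Your proposal is correct and follows essentially the same route as the paper's proof in Section~\ref{sec:proof-theor-refthm:m}: monotonicity of the mixed eigenvalues with respect to the Neumann portion via min-max, application of Theorem~\ref{thm:main} to the sandwiching balls, and the scaling identity $\mathcal C_{n_0}(B_\rho')=\rho^{N+2\gamma-2}\mathcal C_{n_0}$ (the paper packages this last step as Lemma~\ref{l:scaling-m0}). One small redundancy: you do not actually need $\lambda_{n_0}-\lambda_{n_0}^\e\geq 0$ to pass from the eigenvalue monotonicity to the chain of inequalities for $\lambda_{n_0}-\lambda_{n_0}^\e$, since subtracting from $\lambda_{n_0}$ simply reverses the inequalities regardless of sign.
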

The proof of Theorem \ref{thm:nonstarshaped} will be obtained by
comparison of the eigenvalues $\lambda_{n_0}^\e$ of problem
\eqref{eq:eqphiie} with the eigenvalues of the analogous problems with
$\mathcal V$ replaced by the balls $B_{R_{\mathcal
    V}}$ and $B_{r_{\mathcal
    V}}$, for which a more precise asymptotic expansion can be derived, exploiting the star-shapedness of the Neumann region. 

We now state the sharper asymptotic estimates which we are able to obtain under 
stronger regularity and geometric assumptions on the set $\mathcal V$.  
Let us assume, in addition to \eqref{eq:7}, that 
\begin{equation}\label{eq:V-C11}
\mathcal V \text{ is of class }C^{1,1}
\end{equation}
and 
$\mathcal V$ is strictly star-shaped with respect to the origin,
i.e.
\begin{equation}\label{eq:V-star-shaped}
 \text{there exists
    $\sigma>0$ such that 
  }x\cdot\nnu(x)\geq\sigma  \quad\text{for every }x\in\partial \mathcal V,
\end{equation}
where $\nnu(x)$ is the exterior unit normal vector at
$x\in\partial\mathcal V$. We observe that the notion of \emph{strict
  star-shapedness} given in \eqref{eq:V-star-shaped} is equivalent to
the notion of \emph{star-shapedness with respect to a ball} discussed
in \cite[Section 1.1.8]{mazja}, see \cite[Lemma 1]{Boulkhemair}
and \cite[Lemma 3.2]{kim-kwon}.

\begin{theorem}\label{thm:main}
If $\mathcal V$ satisfies \eqref{eq:V-C11} and
\eqref{eq:V-star-shaped} in addition to \eqref{eq:7}, then the following asymptotic expansion holds
\begin{equation*}
  \lambda_{n_0}^\e=\lambda_{n_0}-\mathcal C_{n_0}(\Sigma)
  \e^{N+2\gamma-2}+o(\e^{N+2\gamma-2})\quad\text{as
    }\e\to0,
\end{equation*}
	with  $\mathcal C_{n_0}(\Sigma)$  as in
          \eqref{eq:58} and $\Sigma$ as in \eqref{eq:Sigma}.
\end{theorem}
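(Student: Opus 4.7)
The plan is to reduce the eigenvalue variation $\lambda_{n_0}-\lambda_{n_0}^\e$ to a boundary integral over $\Sigma_\e$, perform a blow-up analysis on the rescaled Neumann region, identify the limit profile via the variational problem in \eqref{eq:58}, and then read off the constant. Assumptions \eqref{eq:V-C11}--\eqref{eq:V-star-shaped} enter only at the last stage, to upgrade the two-sided bounds of Theorem \ref{thm:nonstarshaped} into a sharp expansion via a Pohozaev-type energy estimate.

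First I would test \eqref{eq:main_intro_weak} (with $i=n_0$) against $\varphi_{n_0}\in H^1_0(\Omega)\sub H^1_{0,\partial\Omega\setminus\Sigma_\e}(\Omega)$ and, in parallel, integrate by parts in \eqref{eq:eqphi_0} against $\varphi_{n_0}^\e$; since $\varphi_{n_0}^\e$ vanishes on $\partial\Omega\setminus\Sigma_\e$, the only surviving boundary contribution lives on $\Sigma_\e$. Subtracting gives the key identity
\begin{equation*}
(\lambda_{n_0}-\lambda_{n_0}^\e)\int_\Omega \varphi_{n_0}^\e\varphi_{n_0}\dx=-\int_{\Sigma_\e}\varphi_{n_0}^\e\,\partial_{\nu}\varphi_{n_0}\ds,
\end{equation*}
whose left-hand coefficient tends to $1$ by Proposition \ref{prop:conv_eigenfunctions} and the normalizations \eqref{eq:ortonor}, \eqref{eq:normaliz_limit_eigenfunction}. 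Rescaling $x=\e y$ and setting $\tilde\varphi_\e(y):=\e^{-\gamma}\varphi_{n_0}^\e(\e y)$, using $\nu\to\nnu$ along $\Sigma_\e$ at rate $O(\e)$ (from the $C^{1,1}$ regularity of $\partial\Omega$ at $0$ and \eqref{eq:2}) and upgrading \eqref{eq:gamma_i_def} by elliptic boundary regularity to the $C^1$-type expansion $\nabla\varphi_{n_0}(\e y)=\e^{\gamma-1}\nabla\psi(y)+o(\e^{\gamma-1})$, the boundary integral becomes
\begin{equation*}
-\int_{\Sigma_\e}\varphi_{n_0}^\e\,\partial_{\nu}\varphi_{n_0}\ds=-\e^{N+2\gamma-2}\bigg(\int_{\Sigma}\tilde\varphi_\e\,\partial_{\nnu}\psi\,\d y'+o(1)\bigg).
\end{equation*}

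Next I would analyse the blow-up $\tilde\varphi_\e$, which solves a mixed problem on $\Omega/\e$ whose formal limit is a function $U$ on $\R^N_+$, harmonic with homogeneous Dirichlet data on $\partial\R^N_+\setminus\Sigma$, homogeneous Neumann data on $\Sigma$, and behaving like $\psi$ at infinity. From the a priori bound $\lambda_{n_0}-\lambda_{n_0}^\e=O(\e^{N+2\gamma-2})$ supplied by Theorem \ref{thm:nonstarshaped}, combined with an Almgren-type monotonicity argument (in the spirit of \cite{AFL2018}), one gets a uniform $\mathcal D^{1,2}(\R^N_+\cup\Sigma)$ bound on $\tilde\varphi_\e-\psi$; any weak limit $w$ satisfies the Euler equation of $J_\Sigma$, hence by \eqref{eq:58} and uniqueness of the minimizer $w=w_{0,\Sigma}$, so the whole family converges. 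Testing this Euler equation against $w_{0,\Sigma}$ gives
\begin{equation*}
\int_{\Sigma}w_{0,\Sigma}\,\partial_\nnu\psi\,\d y'=-\int_{\R^N_+}\abs{\nabla w_{0,\Sigma}}^2\dx=2J_\Sigma(w_{0,\Sigma})=-\mathcal C_{n_0}(\Sigma),
\end{equation*}
and since $\psi\equiv0$ on $\Sigma$ the limit $\int_\Sigma\tilde\varphi_\e\,\partial_\nnu\psi\,\d y'\to-\mathcal C_{n_0}(\Sigma)$ combined with the first step produces the claimed expansion.

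The hard part is upgrading the weak $\mathcal D^{1,2}$ convergence of $\tilde\varphi_\e-\psi$ to strong trace convergence on $\Sigma$ with enough precision to produce an $o(\e^{N+2\gamma-2})$ remainder rather than just $O$: weak convergence alone would reproduce only Theorem \ref{thm:nonstarshaped}. Hypotheses \eqref{eq:V-C11}--\eqref{eq:V-star-shaped} are used precisely here: a Pohozaev-type identity applied to $\tilde\varphi_\e$ on the fixed domain $\mathcal V$, whose regular boundary satisfies $x\cdot\nnu(x)\ge\sigma>0$, produces a boundary term of favourable definite sign and delivers uniform control of $\nabla\tilde\varphi_\e$ on $\partial\mathcal V$, thereby upgrading the convergence on $\Sigma$ to what is needed. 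This control is exactly what fails without star-shapedness, explaining why Theorem \ref{thm:nonstarshaped} in the general case yields only asymptotic bounds and not an exact expansion.
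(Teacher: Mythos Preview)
Your boundary identity
\[
(\lambda_{n_0}-\lambda_{n_0}^\e)\int_\Omega \varphi_{n_0}^\e\varphi_{n_0}\dx=-\int_{\Sigma_\e}\varphi_{n_0}^\e\,\partial_{\nu}\varphi_{n_0}\ds
\]
is correct and is a genuinely different route from the paper's: the paper never writes this identity, but instead sandwiches $\lambda_{n_0}-\lambda_{n_0}^\e$ between two Rayleigh-quotient estimates obtained from carefully built competitors (Sections~7 and~8), and then matches the two bounds via a blow-up argument (Sections~9--10). Your identity would, in principle, bypass the endgame computations of Section~10 once the blow-up $\e^{-\gamma}u_{n_0}^\e(\e\cdot)\to U$ in $H^1(B_R^+)$ is available, since strong $H^1$-convergence gives strong trace convergence on $\Sigma$ and the integral passes to the limit.

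However, there are two structural problems. First, invoking Theorem~\ref{thm:nonstarshaped} is circular: in the paper that theorem is a \emph{corollary} of Theorem~\ref{thm:main} applied to balls (Section~\ref{sec:proof-theor-refthm:m}), so you cannot use it as input. Second, and more seriously, your claim that assumptions \eqref{eq:V-C11}--\eqref{eq:V-star-shaped} ``enter only at the last stage'' is wrong. In the paper star-shapedness is used at the very beginning of the quantitative analysis: it is what makes the boundary remainder in the Pohozaev identity (Proposition~\ref{p:poho}) have a sign, and without that sign there is no monotonicity formula, no uniform energy estimates (Propositions~\ref{prop:energy_estim}--\ref{prop:rough_estim}), and hence no compactness for the blow-up sequence. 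So the star-shapedness is not a device to upgrade weak to strong convergence at the end; it is what makes the blow-up analysis possible at all.

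There is a further gap in your blow-up step. You normalize by $\e^{-\gamma}$ and claim a uniform $\mathcal D^{1,2}(\R^N_+\cup\Sigma)$ bound on $\tilde\varphi_\e-\psi$. The paper instead normalizes by $\sqrt{H(\e)}$ (equation~\eqref{eq:def_Z_Ups}), for which boundedness follows from monotonicity alone; identifying the limit as a nonzero multiple of $U$ and then proving $H(\e)/\e^{2\gamma}\to\Lambda_\tau$ (equation~\eqref{eq:68}) requires both the upper bound of Section~7 (via Proposition~\ref{prop:blow_up_estim}) and the lower bound of Section~8 (via Corollary~\ref{cor:first_up_low_bound}). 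In other words, knowing that $\e^{-\gamma}$ is the correct scale is itself a consequence of the competitor constructions you are trying to avoid. Your scheme does not explain how to establish this without them.
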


The proof of  Theorem \ref{thm:main} is obtained through 
sharp estimates from above and below of the 
Rayleigh quotients for the eigenvalues $\lambda_{n_0}^\e$ and
$\lambda_{n_0}$, which in turn require energy bounds, uniform with
respect to $\e$,  on
eigenfunctions, 
provided by an Almgren-type
monotonicity argument. The last step in the achievement of sharp eigenvalue estimates consists of a blow-up analysis for scaled eigenfunctions.

The Almgren frequency function at a point is given by the ratio
  of the local energy over the mass near that point, see
  \cite{Almgren}; we refer to formula
  \eqref{eq:73} in Section \ref{sec:energy-estimates-via} for the
  precise definition of the frequency for our problem. Monotonicity of this quotient
  implies a uniform control of the local energies and, in the
  classical case of harmonic functions, such a monotonicity easily
  follows from the positivity of the derivative. For solutions $u$ of
  elliptic equations of the type
\[
	-\Delta u = Vu,
\]
with $V$ bounded, the frequency is no more monotone because of the
presence of the potential. However, the \enquote{perturbed frequency}
\[
	\mathcal{N}_V(r):= \frac{\displaystyle r\int_{\abs{x-x_0}<r}(\abs{\nabla u}^2-Vu^2)\dx}{\displaystyle \int_{\abs{x-x_0}=r} u^2\ds}
\]
still enjoys some monotonicity properties, in the form of an estimate of the type
\[
	\mathcal{N}'_V(r)\geq -\mathrm{const}\, \mathcal{N}_V(r),
\]
which allows proving boundedness of $\mathcal{N}_V$ and then energy
estimates for $u$. In this spirit, here we mean to prove boundedness
of the frequency of eigenfunctions $\varphi_i^\e$ at the origin (which
belongs to the boundary), uniformly with respect to the parameter
$\e$, by establishing its perturbed monotonicity through an estimate
from below of its derivative.  Since, in the case we are considering
here, all neighbourhoods of the origin contain portions of the
boundary, some additional boundary terms appear in the derivative of
the frequency; star-shapedness assumption \eqref{eq:V-star-shaped}
forces these remainder terms to have a sign which is favorable to the
desired estimate. On the other hand, the lack of regularity of the
eigenfunctions $\varphi_i^\e$ at Dirichlet-Neumann junctions prevents
us from a direct differentiation of the frequency function, which
requires a Pohozaev-type identity based on the integration of the
Rellich-Necas identity \eqref{eq:rellich_necas}. For what concerns
this last issue, considerable differences appear between the cases
$N=2$ and $N\geq 3$, as explained in the following Remark.

\begin{remark}\label{rmk:dimension}
  With respect to the $2$-dimensional case treated in \cite{AFL2018},
  significant new difficulties arise, mainly due to regularity issues
  for mixed boundary value problems like \eqref{eq:eqphiie}, which
  turn out to be more delicate in dimension $N\geq3$ because of the
  positive dimension of the junction set $\partial \Sigma_\e$ and some
  role played by the geometry of $\Sigma_\e$, in particular in
  connection with its star-shapedness. Indeed, when $N=2$ the
  interface $\partial\Sigma_\e$ has zero dimension (basically, it
  consists of a couple of points) and it is possible to perform an
  approximation just by removing a small neighbourhood of the junction
  points, thus allowing quite explicit computations in the derivation
  of Almgren monotonicity formulas (see \cite[Lemma C.5]{AFL2018}). In
  higher dimensions, we overcome the difficulties produced by lack of
  regularity of solutions by constructing a sequence of approximating
  problems, for which enough regularity is available to derive
  Pohozaev-type identities, needed, in turn, to obtain Almgren-type
  monotonicity formulas and consequently to perform blow-up
  analysis. In particular, the geometry of the boundary manifests in
  the form of some extra remainder terms appearing in the
  Pohozaev-type identity for the regularized problem and depending on
  the mutual orientation of normal and position vectors, whose control
  motivates here the geometric assumption \eqref{eq:V-star-shaped},
  which is, in fact, a star-shapedness condition on the Neumann
  region, see Proposition \ref{p:poho} and, in particular,
  \eqref{eq:starshape_approx}.
\end{remark}

Under the same assumptions of Theorem \ref{thm:main},
the blow-up analysis performed in Section
    \ref{sec:blow-up-analysis} allows us to describe the behaviour of
  perturbed eigenfunctions $\varphi_{n_0}^\e$
when they are scaled at
the origin, i.e. at the point around which $\Sigma_\e$ is shrinking,
thus yielding the following result.
	\begin{theorem}\label{thm:sharp_eigenfunction}
		Let $\mathcal{V}$ satisfy \eqref{eq:7},
                \eqref{eq:V-C11} and \eqref{eq:V-star-shaped}. Let
                $U=\psi+w_{0,\Sigma}$, where $\psi$ is as in
                \eqref{eq:def_psi_0} and $w_{0,\Sigma}$ (defined in
                \eqref{eq:def_w_0}) is the unique minimizer of the
                functional $J_{\Sigma}$ introduced in \eqref{eq:57}. Then, for any $R>0$ sufficiently large,
		\begin{gather*}
			\varepsilon^{-N-2\gamma}\int_{\Omega\cap B_{R\varepsilon}}\abs{\varphi_{n_0}^\varepsilon}^2\dx\to \int_{B_R^+} U^2\dx \\
			\varepsilon^{-N-2\gamma+2}\int_{\Omega\cap B_{R\varepsilon}}\abs{\nabla\varphi_{n_0}^\varepsilon}^2\dx\to \int_{B_R^+} \abs{\nabla U}^2\dx	,
		\end{gather*}
		as $\varepsilon\to 0$.
\end{theorem}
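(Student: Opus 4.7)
After the change of variables $y = x/\e$, the two convergences in the statement are equivalent to
\[
\int_{\Omega_\e \cap B_R} v_\e^2 \dy \to \int_{B_R^+} U^2 \dy, \qquad \int_{\Omega_\e \cap B_R} |\nabla v_\e|^2 \dy \to \int_{B_R^+} |\nabla U|^2 \dy,
\]
where $v_\e(y) := \e^{-\gamma}\varphi_{n_0}^\e(\e y)$ and $\Omega_\e := \e^{-1}\Omega$. Since \eqref{eq:2} together with the $C^{1,1}$-regularity of $g$ forces $\Omega_\e \cap B_R \to B_R^+$ as $\e \to 0^+$, extending $v_\e$ by zero past $\partial\Omega_\e \setminus (\e^{-1}\Sigma_\e)$ (compatibly with the Dirichlet condition), it suffices to prove the strong convergence $v_\e \to U$ in $H^1(B_R^+)$.

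\emph{Step 1: uniform bounds.} From the Almgren-type monotonicity formula developed in Section \ref{sec:energy-estimates-via}, I would extract $\e$-uniform upper bounds on the frequency of $\varphi_{n_0}^\e$ at the origin; via the associated doubling estimate these yield
\[
\int_{B_R^+}\bigl(|\nabla v_\e|^2 + v_\e^2\bigr)\dy \leq C(R)
\]
for all sufficiently small $\e>0$. Up to a subsequence, $v_\e \rightharpoonup \widetilde U$ weakly in $H^1_{\mathrm{loc}}(\R^N_+ \cup \Sigma)$ and strongly in $L^2_{\mathrm{loc}}$; setting $\widetilde w := \widetilde U - \psi$, the Dirichlet condition on $\partial\Omega_\e \setminus (\e^{-1}\Sigma_\e)$ combined with the vanishing of $\psi$ on $\partial\R^N_+$ gives $\widetilde w \in \mathcal{D}^{1,2}(\R^N_+ \cup \Sigma)$.

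\emph{Step 2: identification of the limit.} The rescaled weak formulation reads
\[
\int_{\Omega_\e} \nabla v_\e \cdot \nabla \eta \dy = \e^2 \lambda_{n_0}^\e \int_{\Omega_\e} v_\e \eta \dy, \qquad \eta \in C^\infty_c(\R^N_+ \cup \Sigma).
\]
The right-hand side is $O(\e^2)$, so in the limit $\int_{\R^N_+} \nabla \widetilde U \cdot \nabla \eta \dy = 0$. Since $\psi$ is harmonic on $\R^N_+$ and vanishes on $\partial\R^N_+$, an integration by parts separating $\psi$ and $\widetilde w$ produces
\[
\int_{\R^N_+} \nabla \widetilde w \cdot \nabla \eta \dy + \int_{\Sigma} \eta \, \partial_{\nnu} \psi \dy' = 0,
\]
which is precisely the Euler--Lagrange equation satisfied by the unique minimizer $w_{0,\Sigma}$ of $J_\Sigma$. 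Strict convexity of $J_\Sigma$ then forces $\widetilde w = w_{0,\Sigma}$ and hence $\widetilde U = U$; uniqueness of the limit upgrades subsequential to full convergence.

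\emph{Step 3: strong $H^1$-convergence.} This is the main obstacle, since weak $H^1$ plus strong $L^2$ only yields the lower semicontinuity $\int_{B_R^+}|\nabla U|^2 \leq \liminf_{\e \to 0} \int_{\Omega_\e \cap B_R}|\nabla v_\e|^2$. To obtain the matching upper bound I would compare two Pohozaev-type identities derived from the Rellich--Ne\v{c}as identity \eqref{eq:rellich_necas}: one applied to $v_\e$ on $\Omega_\e \cap B_R$, where the boundary contributions on $\e^{-1}\Sigma_\e$ carry a favorable sign by virtue of the star-shapedness assumption \eqref{eq:V-star-shaped} as in Proposition \ref{p:poho}, and one applied to $U$ on $B_R^+$. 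Each identity expresses the Dirichlet energy in terms of spherical data on $\partial B_R$ plus lower-order terms controlled by the frequency bound and by the sharp asymptotic expansion of Theorem \ref{thm:main}; for almost every $R$ the spherical traces converge strongly, producing $\limsup_{\e \to 0} \int_{\Omega_\e \cap B_R}|\nabla v_\e|^2 \leq \int_{B_R^+}|\nabla U|^2$ and hence equality of energies, which together with the $L^2$-convergence completes the proof.
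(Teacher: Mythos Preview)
Your outline has the right general shape but contains two genuine gaps that the paper closes with specific machinery you have not invoked.

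\textbf{Gap in Step 1 (normalization).} You normalize directly by $\e^{\gamma}$ and claim the Almgren frequency bound plus doubling gives $\|v_\e\|_{H^1(B_R^+)}\leq C(R)$. It does not. The monotonicity formula and doubling only compare $\int_{B_{R\e}}|\nabla\varphi_{n_0}^\e|^2$ to the boundary mass $H(\e)=H(v_{n_0}^\e,K_\tau\e)$, yielding $\int_{B_{R\e}}|\nabla\varphi_{n_0}^\e|^2=O(\e^{N-2}H(\e))$ (Proposition~\ref{prop:energy_estim}); the rough estimate of Proposition~\ref{prop:rough_estim} gives only $H(\e)=O(\e^{\beta(1-\tau)})$ with $\beta<2\leq 2\gamma$. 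The sharp bound $H(\e)=O(\e^{2\gamma})$ is \emph{not} available a priori; in the paper it is obtained only a posteriori, as part of the blow-up (Theorem~\ref{thm:blow_up}), by first normalizing by $\sqrt{H(\e)}$---for which the $H^1$ bounds are automatic---and then proving $\e^{2\gamma}/H(\e)\to\Lambda_\tau^{-1}$ via Corollary~\ref{cor:first_up_low_bound}, which in turn relies on the upper/lower bounds for $\lambda_{n_0}-\lambda_{n_0}^\e$ from Sections~\ref{sec:upper-bound-texorpdf}--\ref{sec:lower-bound-texorpdf}.

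\textbf{Gap in Step 2 (global energy of the limit).} You assert that the Dirichlet condition and the vanishing of $\psi$ on $\partial\R^N_+$ force $\widetilde w=\widetilde U-\psi\in\mathcal{D}^{1,2}(\R^N_+\cup\Sigma)$. Boundary conditions alone do not; membership in $\mathcal{D}^{1,2}$ requires $\nabla\widetilde w\in L^2(\R^N_+)$ globally, which does not follow from $\widetilde U\in H^1_{\mathrm{loc}}$. The paper obtains this by a separate, nontrivial step: Proposition~\ref{prop:blow_up_estim} uses the invertibility of the Fr\'echet derivative of the operator $T$ in \eqref{eq:def_T} at $(\varphi_{n_0},\lambda_{n_0})$ to prove $\int_{\Omega\setminus\Theta_{R\e}}|\nabla(\varphi_{n_0}^\e-\varphi_{n_0})|^2=O(\e^{N-2}H(\e))$, and passing this to the limit yields $\int_{\R^N_+}|\nabla\widetilde U-c\nabla\psi|^2<\infty$. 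Only then can one invoke uniqueness for the functional $J_\Sigma$.

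\textbf{On Step 3.} Your Pohozaev-comparison idea for strong $H^1$ convergence is different from the paper, which instead uses elliptic regularity on annuli $B_R\setminus B_{R/2}$ (away from the mixed-boundary junction) to get $H^2$-compactness and hence $L^2(S_R^+)$-convergence of $\partial_{\nnu}\Upsilon^{\e_n}$; integration by parts of the equation then gives convergence of $\int_{B_R^+}|\nabla\Upsilon^{\e_n}|^2$. This is more direct than a second Pohozaev identity.
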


The paper is structured as follows. In Section \ref{sec:conv-eigen} we
prove convergence of eigenvalues and eigenfunctions as $\e\to0$ to
eigenelements of the unperturbed problem. In Section
\ref{sec:limit-profiles} we construct the limit profiles, which will
appear in the blow-up analysis, by minimization of the functional
introduced in \eqref{eq:57}. In Section \ref{sec:an-equiv-probl} we
introduce  an equivalent auxiliary problem 
obtained by deforming the boundary of $\Omega$ into a straight
hyperplane; to this aim we use a particular 
diffeomorphism, introduced in  \cite{Adolfsson1997}
 and made on purpose to ensure that the equation is conserved by reflection
 through the straightened boundary.
Section \ref{sec:pohoz-type-ineq} is devoted to  a
Pohozaev-type identity for the approximating problems, which is then used in Section
\ref{sec:energy-estimates-via} to develop a monotonicity argument, from
which energy estimates follow.
In Sections \ref{sec:upper-bound-texorpdf} and
\ref{sec:lower-bound-texorpdf} we prove sharp upper and lower bounds
for the eigenvalue variation, while Section \ref{sec:blow-up-analysis}
is
devoted to  a blow-up analysis for scaled eigenfunctions. In Section
\ref{sec:proof_main} we combine the lower/upper estimates on the
eigenvalue variation and the blow analysis to prove Theorem
\ref{thm:main}, which is then combined with a comparison and scaling
argument to prove Theorem \ref{thm:nonstarshaped}  in Section
\ref{sec:proof-theor-refthm:m}.
Finally, in
the appendix 
we recall some Poincaré-type inequalities and an abstract lemma on maxima of quadratic forms.

\section{Convergence of eigenelements}\label{sec:conv-eigen}

In the following we tacitly assume that the hypotheses on $\Omega$ set out in the Introduction and
assumption \eqref{eq:7} on $\mathcal V$ are satisfied;
  consequently we let 
$\Sigma_\e$ be as in \eqref{eq:sigmaeps}. In this section
we prove that the eigenvalues and eigenfunctions of the perturbed
problem converge, as $\e\to0$, to the corresponding unperturbed
eigenelements.

\begin{lemma}\label{lemma:prelim_conv}
  For any $\e\in(0,1)$, let $\lambda_\e\in\R$ be an eigenvalue of
  problem \eqref{eq:main_intro_weak} and 
  $\varphi_\e\!\in \!H^1_{0,\partial\Omega\setminus \Sigma_\e}\!(\Omega)$ be
  an associated eigenfunction such that
  $\int_\Omega\varphi_\e^2\dx=1$. Let us assume that there exists a
  decreasing sequence $(\e_n)_n\sub(0,1)$ and a real number
  $\lambda^*$ such that $\e_n\to 0$ and $\lambda_{\e_n}\to\lambda^*$
  as $n\to\infty$. Then there exist a subsequence $(\e_{n_i})_i$ and
  $\varphi^*\in H^1_0(\Omega)$ such that
\begin{equation*}
  \varphi_{\e_{n_i}}\rightharpoonup
  \varphi^*\text{ weakly in }H^1(\Omega) \quad
  \text{and}\quad
  \varphi_{\e_{n_i}}\to \varphi^*\text{ strongly in }L^2(\Omega),
\end{equation*}
as $i\to\infty$. Moreover
$\lambda^*$ is an eigenvalue of the Dirichlet
  Laplacian in $\Omega$ with $\varphi^*$ as an eigenfunction, in the
  sense of \eqref{eq:eqphi_0}, and $\int_\Omega\abs{\varphi^*}^2\dx=1$.
\end{lemma}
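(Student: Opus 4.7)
I would proceed by a standard compactness scheme in three steps: a uniform $H^1$ estimate, identification of a weak limit belonging to $H^1_0(\Omega)$, and passage to the limit in the equation. First, testing \eqref{eq:main_intro_weak} (for $\e=\e_n$) against $\varphi_{\e_n}$ itself and using the normalization $\int_\Omega\varphi_{\e_n}^2\dx=1$ gives $\int_\Omega\abs{\nabla\varphi_{\e_n}}^2\dx=\lambda_{\e_n}$, so convergence of $\lambda_{\e_n}$ yields a uniform bound in $H^1(\Omega)$. By reflexivity of $H^1(\Omega)$ and the compact embedding $H^1(\Omega)\hookrightarrow L^2(\Omega)$, I extract a subsequence $(\varphi_{\e_{n_i}})_i$ converging to some $\varphi^*\in H^1(\Omega)$, weakly in $H^1(\Omega)$ and strongly in $L^2(\Omega)$; strong $L^2$-convergence then yields $\int_\Omega\abs{\varphi^*}^2\dx=1$, and in particular $\varphi^*\not\equiv 0$.

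The core step, and the only nonroutine one, is to show that $\varphi^*\in H^1_0(\Omega)$. The natural approach is a trace-localization argument: for every $\delta>0$ set $K_\delta:=\{x\in\partial\Omega:\abs{x}\ge\delta\}$; since $\Sigma_\e\subset \e\mathcal V\subset B_{\e\mathop{\rm diam}(\mathcal V)}$, whenever $\e_{n_i}\mathop{\rm diam}(\mathcal V)<\delta$ the set $K_\delta$ is contained in $\partial\Omega\setminus\Sigma_{\e_{n_i}}$, so the trace of $\varphi_{\e_{n_i}}$ vanishes on $K_\delta$. Continuity, hence weak continuity, of the trace operator $\mathop{\rm Tr}:H^1(\Omega)\to L^2(\partial\Omega)$ then forces $\mathop{\rm Tr}(\varphi^*)=0$ on $K_\delta$, and letting $\delta\to 0^+$ gives $\mathop{\rm Tr}(\varphi^*)=0$ on $\partial\Omega\setminus\{0\}$, i.e.\ almost everywhere on $\partial\Omega$, whence $\varphi^*\in H^1_0(\Omega)$.

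For the final step, I would fix an arbitrary $v\in C^\infty_c(\Omega)$; since $v$ vanishes in a neighbourhood of $\partial\Omega$, it is an admissible test function in \eqref{eq:main_intro_weak} for every $\e\in(0,1)$. Weak $H^1$-convergence handles the gradient term and strong $L^2$-convergence, combined with $\lambda_{\e_{n_i}}\to\lambda^*$, handles the mass term, yielding
$$\int_\Omega\nabla\varphi^*\cdot\nabla v\dx=\lambda^*\int_\Omega \varphi^* v\dx \quad\text{for all }v\in C^\infty_c(\Omega).$$
By density of $C^\infty_c(\Omega)$ in $H^1_0(\Omega)$, this identity extends to all test functions $v\in H^1_0(\Omega)$, which is precisely \eqref{eq:eqphi_0} for the pair $(\lambda^*,\varphi^*)$. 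The only delicate point is the trace step establishing $\varphi^*\in H^1_0(\Omega)$; the remaining parts are a direct application of weak-compactness methods.
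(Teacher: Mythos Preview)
Your overall structure matches the paper's: uniform $H^1$ bound from testing the equation with $\varphi_{\e_n}$, extraction of a weakly convergent subsequence, identification of the limit space, and passage to the limit in the weak formulation. The last step via $C^\infty_c(\Omega)$ test functions followed by density is equivalent to what the paper does (the paper simply tests directly with $\phi\in H^1_0(\Omega)$).

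The difference, and a genuine gap under the paper's hypotheses, is exactly where you flagged it: showing $\varphi^*\in H^1_0(\Omega)$. Your trace argument tacitly assumes that the trace operator $\mathop{\rm Tr}:H^1(\Omega)\to L^2(\partial\Omega)$ is well defined and continuous and that $H^1_0(\Omega)=\ker(\mathop{\rm Tr})$. Both facts require global regularity of $\partial\Omega$ (e.g.\ Lipschitz), but the paper only assumes $\partial\Omega$ is $C^{1,1}$ \emph{in a neighbourhood of the origin}; away from $0$---precisely on your sets $K_\delta$---no regularity whatsoever is imposed, so neither the trace nor the kernel characterization is available there. The paper circumvents this by a cutoff argument working directly with the closure definition of the spaces: with $\zeta_\delta$ smooth, supported in $B_{\delta r_2}$ and equal to $1$ on $B_{\delta r_1}$, one checks from the approximation by $C^\infty_c(\Omega\cup\Sigma_{\e_{n_i}})$-functions that $(1-\zeta_\delta)\varphi_{\e_{n_i}}\in H^1_0(\Omega)$ for $i$ large, passes to the weak limit to get $(1-\zeta_\delta)\varphi^*\in H^1_0(\Omega)$, and then uses the local Sobolev embedding near $0$ (valid since the boundary is $C^{1,1}$ there) to show $\|(1-\zeta_\delta)\varphi^*-\varphi^*\|_{H^1(\Omega)}\to 0$ as $\delta\to 0$. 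This uses regularity only where it is assumed. If you are willing to add a global Lipschitz assumption on $\partial\Omega$, your route is valid and a bit shorter; under the paper's minimal hypotheses, the cutoff approach is needed.
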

\begin{proof}
By hypothesis we have that
\[
  \int_\Omega\varphi_{\e_n}^2\dx=1\quad\text{and}\quad\int_\Omega\abs{\nabla\varphi_{\e_n}}^2\dx
  =\lambda_{\e_n}=\lambda^*+o(1)
\]
as $n\to\infty$. Therefore there exist a subsequence $(\e_{n_i})_i$ and $\varphi^*\in H^1(\Omega)$ such that
\begin{equation*}
  \varphi_{\e_{n_i}}\rightharpoonup \varphi^* \text{ weakly in
  }H^1(\Omega)\quad\text{and}\quad \varphi_{\e_{n_i}}\to \varphi^*
  \text{ strongly in }L^2(\Omega),
\end{equation*}
as $i\to \infty$. We first aim at showing that
$\varphi^*\in H^1_0(\Omega)$.  In order to do this, let $r_2>r_1>0$
and let $\zeta \in C^\infty_c(\R^N)$ be such that
$\text{supp}(\zeta) \subset B_{r_2}$ and $\zeta(x)=1$ for every
$x\in B_{r_1}$. For every $\delta>0$ and $x\in \R^N$, we define
$\zeta_\delta(x)=\zeta(x/\delta)$. First we notice that
$(1-\zeta_\delta)\varphi^* \in H^1_0(\Omega)$ for every
$\delta>0$. Indeed, by approximation of $\varphi_{\e_{n_i}}$ with
$C^\infty_c(\Omega\cup\Sigma_{\e_{n_i}})$-functions, we see that, for
every fixed $\delta>0$ and for $i$ large enough,
$(1-\zeta_\delta)\varphi_{\e_{n_i}} \in H^1_0(\Omega)$; moreover,
$(1-\zeta_\delta)\varphi_{\e_{n_i}} \rightharpoonup
(1-\zeta_\delta)\varphi^*$ weakly in $H^1(\Omega)$ as $i\to\infty$,
thus implying that
  $(1-\zeta_\delta)\varphi^*\in H^1_0(\Omega)$. Secondly, we have
that, as $\delta\to0$, 
	\begin{multline*}
          \|(1-\zeta_\delta)\varphi^*-\varphi^*\|_{H^1(\Omega)}^2=
          \|\zeta_\delta \varphi^*\|_{H^1(\Omega)}^2 \leq \int_\Omega
          \left( 2|\nabla\zeta_\delta|^2(\varphi^*)^2
            +2\zeta_\delta^2|\nabla\varphi^*|^2+\zeta_\delta^2(\varphi^*)^2
          \right)\dx  \\
          \leq 2 \delta^{-2} \|\nabla \zeta\|_{L^\infty(\R^N)}^2
          \int_{B_{\delta r_2}\setminus B_{\delta r_1}} (\varphi^*)^2
          \dx +o(1) \leq C \left( \int_{B_{\delta r_2}\setminus
              B_{\delta r_1}} (\varphi^*)^{2^*} \dx \right)^{2/2^*}
          +o(1) = o(1),
	\end{multline*}
        for some $C>0$, with $2^*=2N/(N-2)$.  Hence
        $(1-\zeta_\delta)\varphi^* \in H^1_0(\Omega)$ for every
        $\delta>0$ and converges to $\varphi^*$ in $H^1(\Omega)$ as
        $\delta\to0$, so that $\varphi^*\in H^1_0(\Omega)$.
	
By strong $L^2(\Omega)$-convergence, we have that
$\int_\Omega\abs{\varphi^*}\dx=1$. Finally, by hypothesis we have
that, 	for every $i$,
\begin{equation*}
  \int_\Omega\nabla\varphi_{\e_{n_i}}\cdot\nabla\phi\dx=\lambda_{\e_{n_i}}
  \int_\Omega\varphi_{\e_{n_i}}\phi\dx,
\end{equation*}
for
    all $\phi\in
  H^1_{0,\partial\Omega\setminus\Sigma_{\e_{n_i}}}(\Omega)$
 and, in particular, for any $\phi\in H^1_0(\Omega)$. Passing to the limit as $i\to\infty$ in the previous equation for $\phi\in H^1_0(\Omega)$ proves that $\varphi^*$ and $\lambda^*$ satisfy \eqref{eq:eqphi_0} thus completing the proof.
\end{proof}

\begin{remark}\label{rem:conv-eigen-1}
  Some basic relations among the families of
    perturbed eigenvalues and between the perturbed and unperturbed
    sequences can be easily observed.
 The eigenvalues $\lambda_i^\e$ admit the following classical   
	Min-Max variational characterization 
	\begin{equation}\label{eq:min_max}
		\lambda_i^\varepsilon =\min \left\{
		\max_{u\in F_i} \frac{\|\nabla u\|_{L^2(\Omega)}^2}{\|u\|_{L^2(\Omega)}^2}:\, F_i\subset H^1_{0,\partial\Omega\setminus \Sigma_\e}(\Omega) \text{ $i$-dimensional subspace} 
		\right\}.
	\end{equation}
	By \eqref{eq:7}, for every
          $\e_1>0$ there exists 	$0<\e_2<\e_1$ such that 
	$H^1_{0,\partial\Omega\setminus \Sigma_{\e_2}}(\Omega) \subset
	H^1_{0,\partial\Omega\setminus \Sigma_{\e_1}}(\Omega)$. Then for
        every sequence $\e_n\to0$ there exists a decreasing subsequence
        $\{\e_{n_k}\}$ such that, for
	every $i\in \N_*$, 
	\begin{equation*}\label{eq:28}
		\lambda_i^{\varepsilon_{n_{k+1}}}\geq \lambda_i^{\e_{n_k}} \quad\text{for every } k.
	\end{equation*}
Moreover, since 
	$H^1_0(\Omega)\subset H^1_{0,\partial\Omega\setminus
		\Sigma_\e}(\Omega)$ for every $\varepsilon>0$, we readily get, for
	every $i\in \N_*$,
	\begin{equation}\label{eq:28}
		\lambda_i\geq\lambda_i^\varepsilon \quad\text{for every }\varepsilon>0.
	\end{equation}
\end{remark}

\begin{proposition}\label{propo:conv_eigenvalues}
	For any $i\in\N_*$, $\lambda_i^\e\to\lambda_i$ as $\e\to 0$.
\end{proposition}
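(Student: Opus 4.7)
The upper bound $\limsup_{\e\to 0}\lambda_i^\e \leq \lambda_i$ is already recorded as \eqref{eq:28}, so the task reduces to proving the matching lower bound $\liminf_{\e\to 0}\lambda_i^\e \geq \lambda_i$. The plan is to handle all indices $j\leq i$ simultaneously through a diagonal compactness-and-identification scheme, and then invoke the Min-Max characterization of $\lambda_i$ on a subspace built from the limits.

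Fix $i\in\N_*$ and take an arbitrary sequence $\e_n\to 0^+$. Since $0<\lambda_j^\e \leq \lambda_j$ for each $j\leq i$, a diagonal extraction produces a subsequence (still denoted $\e_n$) along which $\lambda_j^{\e_n}\to \mu_j$ for every $j=1,\dots,i$, with $\mu_j\in(0,\lambda_j]$ and $\mu_1\leq\cdots\leq\mu_i$. For each such $j$, the identity $\|\nabla \varphi_j^{\e_n}\|_{L^2(\Omega)}^2=\lambda_j^{\e_n}$ yields an $H^1(\Omega)$-bound, so a further diagonal extraction combined with Lemma \ref{lemma:prelim_conv} produces limit functions $\varphi_j^*\in H^1_0(\Omega)$, with $\int_\Omega (\varphi_j^*)^2\dx = 1$, such that $\varphi_j^{\e_n}\rightharpoonup \varphi_j^*$ weakly in $H^1(\Omega)$ and strongly in $L^2(\Omega)$, and each $\varphi_j^*$ solves the Dirichlet eigenvalue problem \eqref{eq:eqphi_0} with eigenvalue $\mu_j$.

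Passing the $L^2$-orthonormality relations \eqref{eq:ortonor} to the limit through the strong $L^2$-convergence gives $\int_\Omega \varphi_j^*\varphi_k^*\dx=\delta_j^k$ for every $j,k\in\{1,\dots,i\}$; in particular $\varphi_1^*,\dots,\varphi_i^*$ are linearly independent in $H^1_0(\Omega)$. Testing each limit equation \eqref{eq:eqphi_0} against the other eigenfunctions gives $\int_\Omega \nabla\varphi_j^*\cdot\nabla\varphi_k^*\dx=\mu_j\delta_j^k$, so on the $i$-dimensional test subspace $V:=\Span\{\varphi_1^*,\dots,\varphi_i^*\}\subset H^1_0(\Omega)$ the Rayleigh quotient satisfies
\[
\frac{\|\nabla u\|_{L^2(\Omega)}^2}{\|u\|_{L^2(\Omega)}^2}=\frac{\sum_{j=1}^i \mu_j c_j^2}{\sum_{j=1}^i c_j^2}\leq \mu_i \quad \text{for every } u=\textstyle\sum_{j=1}^i c_j\varphi_j^*\in V\setminus\{0\}.
\]
The Min-Max characterization of $\lambda_i$ on $H^1_0(\Omega)$, applied to the test subspace $V$, yields $\lambda_i\leq\mu_i$, and combined with $\mu_i\leq\lambda_i$ this forces $\mu_i=\lambda_i$. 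Since every sequence $\e_n\to 0$ admits a sub-subsequence along which $\lambda_i^{\e_n}\to\lambda_i$, the full convergence $\lambda_i^\e\to\lambda_i$ as $\e\to 0$ follows.

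The main obstacle is the potential \emph{eigenvalue collapse}: a priori the limits $\mu_j$ with $j<i$ could coincide and push $\mu_i$ strictly below $\lambda_i$, or the $\varphi_j^*$ could degenerate into a lower-dimensional eigenspace. This is precisely why the argument must track the whole vector $(\mu_1,\dots,\mu_i)$ and exploit the $L^2$-orthonormality preserved in the limit, which is what guarantees an honest $i$-dimensional test subspace of $H^1_0(\Omega)$ in the Min-Max formula for $\lambda_i$.
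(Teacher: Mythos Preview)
Your proof is correct and follows essentially the same strategy as the paper: use \eqref{eq:28} for the upper bound, extract convergent eigenpairs via Lemma \ref{lemma:prelim_conv}, preserve $L^2$-orthonormality in the limit, and conclude by a variational characterization. The only structural difference is that the paper argues by induction on $i$ and invokes the iterative (orthogonal-complement) characterization of $\lambda_i$, whereas you bypass the induction by feeding the $i$-dimensional subspace $V=\Span\{\varphi_1^*,\dots,\varphi_i^*\}$ directly into the Min-Max formula; this is a minor repackaging rather than a different idea.
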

\begin{proof}
By Remark  \ref{rem:conv-eigen-1} and Urysohn's
  subsequence principle, it is enough to prove the convergence along
  sequences $\e_n\to0$ for which $n\mapsto\lambda^i_{\e_n}$ is
  increasing; then, by \eqref{eq:28}, it is not restrictive to assume that $\e\mapsto
  \lambda^i_\e$ is decreasing for any $i\in\N_*$ and admits a limit $\lambda_i^*\leq\lambda_i$ as
  $\e\to 0$. We now prove that, for any $i\in\N_*$,
  $\lambda_i\leq\lambda_i^*$.  We argue by induction
  on $i$. From Lemma \ref{lemma:prelim_conv} we know that
  $\lambda_1^*$ is an eigenvalue of the unperturbed problem so that,
  $\lambda_1^*\geq \lambda_1$. Let us now assume that
\begin{equation}\label{eq:u*induction}
\lambda_j^*=\lambda_j \qquad\text{for all } j=1,\ldots,i-1.
\end{equation}	
Let $\varphi_1^\e,\ldots,\varphi_i^\e$ be a family of perturbed
eigenfunctions as in \eqref{eq:ortonor}. By Lemma
\ref{lemma:prelim_conv} there exist a sequence $\e_n\to0$ as
$n\to\infty$ and functions $u_1^*,\ldots,u_i^*$, that are
eigenfunctions of the unperturbed problem \eqref{eq:eqphi_0}, such
that
\begin{equation}\label{eq:u*convergence}
\varphi_j^{\e_n}\rightharpoonup u_j^* \text{ weakly in }H^1(\Omega)
\quad\text{and}\quad
\varphi_j^{\e_n}\to u_j^* \text{ strongly in }L^2(\Omega),
\end{equation}	
as $n\to\infty$, for every $j=1,\ldots,i$.

On the one hand, by passing to the limit as $n\to\infty$ in
\eqref{eq:ortonor}, we obtain
\begin{equation}\label{eq:delta_kronecker}
\int_\Omega u_j^*u_l^*\dx=\delta_j^l.
\end{equation}
for all $j,l\in \{1,\ldots,i\}$. On the other hand, by
\eqref{eq:u*induction} and \eqref{eq:u*convergence}, for every
$j=1,\ldots,i-1$, $u_j^*$ is a $L^2(\Omega)$-normalized eigenfunction
corresponding to the eigenvalue $\lambda_j$ . Therefore, in view also
of \eqref{eq:delta_kronecker},
$u_i^*\in\mathop{\rm span}\{u_1^*,\dots,u_{i-1}^*\}^\perp$.  From the
iterative variational characterization of the eigenvalues (see
e.g. \cite[Section 11.5]{Jost2013}) we have that
\[
  \lambda_i^*=\int_\Omega\abs{\nabla u_i^*}^2\dx\geq\min_{u\in
    \mathop{\rm span}\{u_1^*,\dots,u_{i-1}^*\}^\perp} \frac{\int_\Omega\abs{\nabla
      u}^2\dx}{\int_\Omega u^2\dx}=\lambda_i
\]
	and this concludes the proof.
\end{proof}

\begin{proposition}\label{prop:conv_eigenfunctions}
  Let $\lambda_i$ be a simple eigenvalue of \eqref{eq:eqphi_0} and let
  $\varphi_i$ be a $L^2(\Omega)$-normalized associated
  eigenfunction. For any $\e\in(0,1)$ let $\lambda_i^\e$ be
  the $i$-th
  eigenvalue of \eqref{eq:eqphiie} and let $\varphi_i^\e$ be a
  $L^2(\Omega)$-normalized associated eigenfunction such that
	\begin{equation}\label{eq:sign_assumption}
		\int_{\Omega}\varphi_i^\e\varphi_i\dx\geq 0.
	\end{equation}
	Then $\varphi_i^\e\to\varphi_i$ in $H^1(\Omega)$ as $\e\to 0$.
\end{proposition}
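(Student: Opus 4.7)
The plan is to combine the preliminary convergence result in Lemma \ref{lemma:prelim_conv} with the eigenvalue convergence in Proposition \ref{propo:conv_eigenvalues}, exploit simplicity of $\lambda_i$ together with the sign normalization \eqref{eq:sign_assumption} to identify the limit, and finally upgrade weak $H^1$-convergence to strong $H^1$-convergence via convergence of norms.

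First, I would argue along an arbitrary sequence $\e_n\to 0^+$. By Proposition \ref{propo:conv_eigenvalues} we have $\lambda_i^{\e_n}\to\lambda_i$, so Lemma \ref{lemma:prelim_conv} applies with $\lambda^*=\lambda_i$: up to extracting a subsequence (still denoted $\e_n$) there exists $\varphi^*\in H^1_0(\Omega)$ with $\int_\Omega|\varphi^*|^2\dx=1$ such that $\varphi_i^{\e_n}\rightharpoonup\varphi^*$ weakly in $H^1(\Omega)$ and $\varphi_i^{\e_n}\to\varphi^*$ strongly in $L^2(\Omega)$, and $\varphi^*$ is a Dirichlet eigenfunction with eigenvalue $\lambda_i$.

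Second, since $\lambda_i$ is simple by assumption, the associated eigenspace is one-dimensional, so $\varphi^*=c\,\varphi_i$ for some $c\in\R$; from $\int_\Omega|\varphi^*|^2\dx=1=\int_\Omega\varphi_i^2\dx$ we deduce $c=\pm1$. Passing to the limit in \eqref{eq:sign_assumption} using $L^2$-convergence yields $\int_\Omega\varphi^*\varphi_i\dx\geq0$, i.e. $c\geq0$, whence $c=1$ and $\varphi^*=\varphi_i$. The Urysohn subsequence principle then promotes subsequential convergence to convergence of the full family: $\varphi_i^\e\rightharpoonup\varphi_i$ weakly in $H^1(\Omega)$ and strongly in $L^2(\Omega)$ as $\e\to 0$.

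Finally, I would upgrade to strong $H^1$-convergence by a standard norm-convergence argument in Hilbert spaces. Testing \eqref{eq:main_intro_weak} with $v=\varphi_i^\e\in H^1_{0,\partial\Omega\setminus\Sigma_\e}(\Omega)$ and using $L^2$-normalization gives
\begin{equation*}
\int_\Omega|\nabla\varphi_i^\e|^2\dx=\lambda_i^\e\to\lambda_i=\int_\Omega|\nabla\varphi_i|^2\dx,
\end{equation*}
by Proposition \ref{propo:conv_eigenvalues} and \eqref{eq:eqphi_0} tested with $\varphi_i$. Since $\varphi_i^\e\to\varphi_i$ in $L^2(\Omega)$, we also have $\|\varphi_i^\e\|_{H^1(\Omega)}\to\|\varphi_i\|_{H^1(\Omega)}$. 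Combined with the weak $H^1$-convergence, this yields strong convergence in $H^1(\Omega)$, concluding the proof.

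There is no real obstacle here: the heart of the argument is already contained in Lemma \ref{lemma:prelim_conv} (which ensures the weak limit actually lies in $H^1_0(\Omega)$ despite the $\e$-dependent test-function spaces). The only points requiring a minimum of care are the sign identification of the limit via simplicity plus \eqref{eq:sign_assumption}, and the norm-convergence trick to upgrade weak to strong $H^1$-convergence.
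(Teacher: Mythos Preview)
Your proof is correct and follows essentially the same approach as the paper: apply Lemma \ref{lemma:prelim_conv} together with Proposition \ref{propo:conv_eigenvalues} to extract a subsequential weak $H^1$-limit which is an $L^2$-normalized Dirichlet eigenfunction for $\lambda_i$, use simplicity and \eqref{eq:sign_assumption} to identify it with $\varphi_i$, and then upgrade to strong $H^1$-convergence via convergence of norms and Urysohn's principle. The only cosmetic difference is that the paper applies Urysohn after establishing strong $H^1$-convergence along the subsequence, whereas you apply it first to the weak limit and then upgrade; both orderings are equally valid.
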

\begin{proof}
  From Lemma \ref{lemma:prelim_conv} and Proposition
  \ref{propo:conv_eigenvalues} we infer that there exist a sequence
  $\e_n\to0$ as $n\to\infty$ and $\varphi^*\in H^1_0(\Omega)$,
  eigenfunction associated to $\lambda_i$, such that
\begin{equation*}\label{eq:phi*convergence}
  \varphi_i^{\e_n}\rightharpoonup \varphi^* \text{ weakly in }H^1(\Omega)
  \quad\text{and}\quad
  \varphi_i^{\e_n}\to \varphi^* \text{ strongly in }L^2(\Omega)
\end{equation*}	
as $n\to\infty$. In particular, by strong $L^2(\Omega)$-convergence,
$\varphi^*$ is $L^2(\Omega)$-normalized. Being $\lambda_i$ simple, we
have that either $\varphi^*=\varphi_i$ or $\varphi^*=-\varphi_i$. The
assumption \eqref{eq:sign_assumption} rules out the second
possibility, allowing us to conclude that $\varphi^*=\varphi_i$.

Finally, in view of Proposition \ref{propo:conv_eigenvalues},
\[
  \norm{\varphi_i^{\e_n}}_{H^1(\Omega)}^2=\lambda_i^{\e_n}+1\to\lambda_i+1=\norm{\varphi_i}_{H^1(\Omega)}^2
\]
as $n\to\infty$. Hence $\varphi_i^{\e_n}\to\varphi_i$ strongly in
$H^1(\Omega)$ as $n\to\infty$. By Urysohn's subsequence principle the
convergence holds as $\e\to 0$, thus concluding the
proof.
\end{proof}

\section{Limit profiles}\label{sec:limit-profiles}

We now introduce the functions that appear as limit profiles in the
blow-up analysis. From here on, for any $R>0$, we denote by $\eta_R$
a cut-off function such that 
\begin{equation}\label{eq:def_cutoff}
         \eta_R\in C^\infty(\overline{\R^N_+}),\quad 0\leq
          \eta_R\leq 1,\quad
          \abs{\nabla \eta_R}\leq \frac{4}{R},\quad 
          \eta_R(x):=\begin{cases}
            1, &\text{if }\abs{x}\geq R, \\
            0, &\text{if }\abs{x}\leq R/2.
		\end{cases} 	
\end{equation}

\begin{lemma}\label{lemma:def_w}
Let $\Pi$ be a bounded open subset of $\partial\R^N_+$
  such that $0\in\Pi$ and let
  $f\in L^2(\Pi)$. Then there exists a unique function 
  $w=w(f,\Pi)\in\mathcal{D}^{1,2}(\R^N_+\cup \Pi)$ such that
\begin{equation*}
\begin{cases}
-\Delta w=0, &\text{in }\R^N_+, \\
w=0, &\text{on }\partial\R^N_+\setminus\Pi, \\
\partial_{\nnu}w=f, & \text{on }\Pi,
\end{cases}
\end{equation*}
	where $\nnu=(0,\dots,0,-1)$, in a weak sense, that is
\begin{equation}\label{eq:59}
  \int_{\R^N_+}\nabla w\cdot\nabla
  v\dx=\int_{\Pi}fv\dx'\quad\text{for all
  }v\in\mathcal{D}^{1,2}(\R^N_+\cup\Pi).
\end{equation}
\end{lemma}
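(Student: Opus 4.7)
The plan is to apply the Riesz representation theorem on the Hilbert space $H:=\mathcal{D}^{1,2}(\R^N_+\cup\Pi)$ equipped with the inner product $\langle u,v\rangle_H := \int_{\R^N_+}\nabla u\cdot\nabla v\dx$. First I would verify that this is indeed a Hilbert space: by construction it is complete, and the associated seminorm is actually a norm since the Sobolev embedding $\mathcal{D}^{1,2}(\R^N_+)\hookrightarrow L^{2^*}(\R^N_+)$, with $2^*=2N/(N-2)$, is well defined on the completion and forces any element with zero gradient to vanish.

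Next, I would show that the right-hand side of \eqref{eq:59} defines a continuous linear functional on $H$. The key ingredient is a trace-type inequality for functions in $H$ restricted to the bounded set $\Pi\subset\partial\R^N_+$. Combining the classical trace embedding $\mathcal{D}^{1,2}(\R^N_+)\hookrightarrow L^{2(N-1)/(N-2)}(\partial\R^N_+)$ with Hölder's inequality on the bounded set $\Pi$, I obtain a constant $C=C(N,\Pi)>0$ such that
\begin{equation*}
\|v\|_{L^2(\Pi)}\leq C\,\|\nabla v\|_{L^2(\R^N_+)}\quad\text{for all } v\in C^\infty_c(\overline{\R^N_+\cup\Pi}),
\end{equation*}
which then extends by density to every $v\in H$. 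Cauchy--Schwarz then yields
\begin{equation*}
\left|\int_\Pi fv\dx'\right|\leq \|f\|_{L^2(\Pi)}\|v\|_{L^2(\Pi)}\leq C\|f\|_{L^2(\Pi)}\|v\|_H,
\end{equation*}
so that $L(v):=\int_\Pi fv\dx'$ belongs to $H^*$.

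By the Riesz representation theorem there exists a unique $w\in H$ such that $\langle w,v\rangle_H=L(v)$ for every $v\in H$, which is exactly \eqref{eq:59}. The fact that $w$ satisfies the stated boundary value problem in the weak/distributional sense follows from testing \eqref{eq:59} against $v\in C^\infty_c(\R^N_+)$ to obtain $-\Delta w=0$ in $\R^N_+$, against $v\in C^\infty_c(\R^N_+\cup\Pi)$ and integrating by parts to identify the Neumann trace $\partial_{\nnu}w=f$ on $\Pi$, while the homogeneous Dirichlet condition on $\partial\R^N_+\setminus\Pi$ is encoded in the membership $w\in H$ by the very definition of $H$ as the closure of $C^\infty_c(\R^N_+\cup\Pi)$.

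The only delicate point is the trace inequality on $\Pi$ used to bound $L$: since elements of $H$ need not be in $H^1(\R^N_+)$ globally (they only have gradient in $L^2$), one cannot invoke the standard trace theorem on a Lipschitz domain directly. I would handle this by working with a smooth cutoff $\chi$ supported near $\overline\Pi$, writing $\chi v\in H^1(\R^N_+)$ thanks to Sobolev, and applying the trace theorem to $\chi v$; the gradient of $\chi v$ is controlled by $\|\nabla v\|_2$ using Sobolev on a bounded neighbourhood of $\Pi$. This is the main technical obstacle, but it is standard once one commits to the cutoff argument.
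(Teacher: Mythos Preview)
Your proof is correct and follows essentially the same approach as the paper, which dispatches the lemma in one line as ``a direct consequence of the Lax-Milgram Theorem.'' Since the bilinear form here is the inner product itself, Lax-Milgram and Riesz coincide; your additional care in justifying the continuity of the trace functional on $\Pi$ simply fills in details the paper leaves implicit.
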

\begin{proof}
	The result is a direct consequence of the Lax-Milgram Theorem.
\end{proof}

Given $\psi\in C^\infty(\overline{\R^N_+})$ as in \eqref{eq:def_psi_0}, we define
\begin{equation}\label{eq:def_w_0}
	w_{0,\Pi}:=w(-\partial_{\nnu}\psi,\Pi),
\end{equation}
where $w(\cdot,\cdot)$ is defined in Lemma \ref{lemma:def_w}. We point out
that the function $w_{0,\Pi}\in\mathcal{D}^{1,2}(\R^N_+\cup \Pi)$ is the
unique minimizer, among all the possible $u\in
\mathcal{D}^{1,2}(\R^N_+\cup \Pi)$, of the functional $J_\Pi$ defined
in \eqref{eq:57}. 
We denote
\begin{equation}\label{eq:def_m}
  m_{n_0}(\Pi):=J(w_{0,\Pi})=\min_{u\in \mathcal{D}^{1,2}(\R^N_+\cup \Pi)} J_\Pi(u).
\end{equation}
 Since, for any bounded open set $\Pi\subset\partial\R^N_+$,  $\partial_{x_N}\psi\not\equiv0$ on
$\Pi$, we have that $w_{0,\Pi}\not\equiv0$ and hence, choosing $v=w_{0,\Pi}$ in \eqref{eq:59}, we obtain that
\begin{equation}\label{eq:m_int_by_parts}
  m_{n_0}(\Pi)={\frac12}
  \int_{\Pi}w_{0,\Pi}\partial_{\nnu}\psi\dx'=-\frac{1}{2}\int_{\R^N_+}\abs{\nabla w_{0,\Pi}}^2\dx<0.
\end{equation}
 The following lemma is a consequence of the homogeneity of the function $\psi$.
  \begin{lemma}\label{l:scaling-m0}
    For every $r>0$ we have that
    $m_{n_0}(B_r')=r^{N+2\gamma-2}m_{n_0}(B_1')$, with $\gamma$ as in \eqref{eq:notation-n_0}.
  \end{lemma}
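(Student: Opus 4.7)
The plan is to exploit the scaling invariance of the Dirichlet energy together with the $\gamma$-homogeneity of the harmonic polynomial $\psi$. Concretely, I would set up a bijection between the admissible spaces $\mathcal{D}^{1,2}(\R^N_+\cup B_1')$ and $\mathcal{D}^{1,2}(\R^N_+\cup B_r')$ via a dilation combined with a multiplicative rescaling, and then check that the functional $J$ transforms by exactly a factor $r^{N+2\gamma-2}$.

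First I would recall that, since $\psi$ is a homogeneous harmonic polynomial of degree $\gamma$ (see \eqref{eq:psi_i_def} and \eqref{eq:def_psi_0}), the trace $\partial_{\nnu}\psi$ on $\partial\R^N_+$ is positively homogeneous of degree $\gamma-1$. Then, for $r>0$ and $u\in\mathcal{D}^{1,2}(\R^N_+\cup B_1')$, I would define
\begin{equation*}
  \tilde u(x):=r^{\gamma}\,u(x/r),\qquad x\in\R^N_+,
\end{equation*}
and observe (using density of smooth compactly supported functions) that $u\mapsto\tilde u$ is a linear bijection between $\mathcal{D}^{1,2}(\R^N_+\cup B_1')$ and $\mathcal{D}^{1,2}(\R^N_+\cup B_r')$.

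Next I would compute the two pieces of $J_{B_r'}(\tilde u)$ separately. By the change of variables $y=x/r$,
\begin{equation*}
  \int_{\R^N_+}|\nabla\tilde u|^2\dx
  =r^{2\gamma}r^{-2}r^{N}\int_{\R^N_+}|\nabla u|^2\dy
  =r^{N+2\gamma-2}\int_{\R^N_+}|\nabla u|^2\dy,
\end{equation*}
and, using the $(\gamma-1)$-homogeneity of $\partial_{\nnu}\psi$ together with the same change of variables on $\partial\R^N_+$,
\begin{equation*}
  \int_{B_r'}\tilde u\,\partial_{\nnu}\psi\dx'
  =r^{\gamma}r^{N-1}r^{\gamma-1}\int_{B_1'}u\,\partial_{\nnu}\psi\dy'
  =r^{N+2\gamma-2}\int_{B_1'}u\,\partial_{\nnu}\psi\dy'.
\end{equation*}
Summing these gives $J_{B_r'}(\tilde u)=r^{N+2\gamma-2}J_{B_1'}(u)$. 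Taking the infimum on both sides, using the bijectivity of the scaling, yields $m_{n_0}(B_r')=r^{N+2\gamma-2}m_{n_0}(B_1')$, as desired.

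I do not expect any serious obstacle: the only point requiring care is verifying that the rescaling preserves the functional space $\mathcal{D}^{1,2}(\R^N_+\cup \Pi)$, which follows from applying the dilation to approximating functions in $C^\infty_c(\R^N_+\cup B_1')$ and passing to the limit in the $\mathcal{D}^{1,2}$-norm. Everything else is a bookkeeping of exponents; the fact that the two powers of $r$ coming from the Dirichlet energy and from the boundary integral match is precisely a manifestation of the homogeneity of the obstacle datum $\partial_{\nnu}\psi$.
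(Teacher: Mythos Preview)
Your proof is correct and follows essentially the same scaling idea as the paper. The only cosmetic difference is that the paper first identifies the minimizer via $w_{0,B_r'}(x)=r^\gamma w_{0,B_1'}(x/r)$ and then computes $m_{n_0}(B_r')$ through the formula $m_{n_0}(\Pi)=-\tfrac12\int_{\R^N_+}|\nabla w_{0,\Pi}|^2\dx$, whereas you scale the functional $J_\Pi$ directly and take the infimum; both arguments rest on the same homogeneity observation and the same exponent bookkeeping.
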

  \begin{proof}
    Since
    $\partial_{\nnu}\psi(r
    x')=r^{\gamma-1}\partial_{\nnu}\psi(x')$, a
    scaling argument easily yields that
    \begin{equation*}
      w_{0, B_r'}(x)=r^\gamma w_{0, B_1'}\Big(\frac xr\Big).
    \end{equation*}
Hence, by a change of variable, we obtain that
\begin{align*}
  m_{n_0}(B_r')&=-\frac{1}{2}\int_{\R^N_+}\abs{\nabla w_{0,B_r'}(x)}^2\dx=
  -\frac{1}{2}r^{2(\gamma-1)}\int_{\R^N_+}\abs{\nabla
                 w_{0,B_1'}(x/r)}^2\dx\\
  &=
 -\frac{1}{2}r^{N+2\gamma-2}\int_{\R^N_+}\abs{\nabla
    w_{0,B_1'}(y)}^2\dy=
    r^{N+2\gamma-2}m_{n_0}(B_1'),
\end{align*}
thus concluding that proof.
\end{proof}
Hereafter in this section, we let $\Sigma$ be as in \eqref{eq:Sigma},
  for some $\mathcal V$ satisfying \eqref{eq:7}, \eqref{eq:V-C11} and
  \eqref{eq:V-star-shaped}, and define 
\begin{equation*}
	w_0:=w_{0,\Sigma}
      \end{equation*}
      and 
\begin{equation}\label{eq:def_U}
	U=w_0+\psi.
\end{equation}
One can see that $U\in \psi+\mathcal{D}^{1,2}(\R^N_+\cup \Sigma)$
 weakly satisfies the following boundary value problem
\begin{equation}\label{eq:63}
	\begin{cases}
		-\Delta U=0, &\text{in }\R^N_+, \\
		U=0, &\text{on }\partial\R^N_+\setminus\Sigma, \\
		\partial_{\nnu} U=0, &\text{on }\Sigma,
	\end{cases}
\end{equation}
 i.e. $\int_{\R^N_+}\nabla U\cdot\nabla v\dx=0$ for all
  $v\in \mathcal{D}^{1,2}(\R^N_+\cup \Sigma)$.

The two following existence results Lemma \ref{lemma:U_R} and Lemma
\ref{lemma:Z_R} can be easily proved by standard minimization
methods.  Henceforth we denote, for all $R>0$, 
\begin{equation}\label{eq:notation}
	B_R^+:=B_R\cap \R^N_+,\quad
	\text{and}\quad S_R^+:=\partial B_R\cap \R^N_+.
\end{equation}
\begin{lemma}\label{lemma:U_R}
  Let $\Sigma\sub\partial\R^N_+$ be as in \eqref{eq:Sigma}. For every
  $R>1$ there exists a unique function 
  $U_R\in \psi+H^1_{0,\partial B_R^+\setminus \Sigma}(B_R^+)$
  achieving
\[
  \min\left\{ \|\nabla u\|^2_{L^2(B_R^+)}: \, u\in
    \psi+H^1_{0,\partial B_R^+\setminus \Sigma}(B_R^+) \right\}.
\]
Moreover, $U_R$ weakly solves
\begin{equation}\label{eq:60}
\begin{cases}
  -\Delta U_R=0,  & \text{in } B_R^+, \\
  U_R= \psi,  &\text{on }\partial B_R^+\setminus \Sigma, \\
  \partial_{\nnu} U_R=0, & \text{on } \Sigma,
\end{cases}
\end{equation}
	that is	
\begin{equation}\label{eq:U_R_eq}
\begin{cases}
  U_R-\psi\in
  H^1_{0,\partial B_R^+\setminus \Sigma}(B_R^+),\\
  \int_{B_R^+} \nabla U_R\cdot\nabla\phi\dx=0 \quad\text{for all }
  \phi\in H^1_{0,\partial B_R^+\setminus \Sigma}(B_R^+).
\end{cases}
\end{equation}
\end{lemma}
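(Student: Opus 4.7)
My plan is to obtain $U_R$ by minimizing the Dirichlet integral $F(u) := \int_{B_R^+}|\nabla u|^2\dx$ over the affine space $\mathcal A := \psi + H^1_{0,\partial B_R^+\setminus\Sigma}(B_R^+)$ via the direct method of the calculus of variations. Note that $\mathcal A$ is nonempty since $\psi\in C^\infty(\overline{B_R^+})$ is admissible, and that after the translation $u=\psi+v$ the problem becomes the minimization of the strictly convex quadratic functional $v\mapsto \int_{B_R^+}|\nabla(\psi+v)|^2\dx$ over the linear space $H^1_{0,\partial B_R^+\setminus\Sigma}(B_R^+)$, which is closed in $H^1(B_R^+)$ by the very definition of these spaces as $H^1$-closures of suitable spaces of smooth functions.

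The key analytic ingredient is coercivity of the functional, which reduces to a Poincar\'e-type inequality of the form $\|v\|_{L^2(B_R^+)}\leq C_R\|\nabla v\|_{L^2(B_R^+)}$ on $H^1_{0,\partial B_R^+\setminus\Sigma}(B_R^+)$. I would obtain this by observing that $\Sigma\subset\partial\R^N_+=\{x_N=0\}$ whereas the upper spherical cap $S_R^+\subset\{x_N>0\}$, so $S_R^+$ is contained in $\partial B_R^+\setminus\Sigma$ and has positive $(N-1)$-dimensional Hausdorff measure; functions in the space therefore vanish in the trace sense on a set of positive surface measure, so the Poincar\'e-type inequalities recalled in the appendix apply. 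Granted this bound, the rest is standard: pick a minimizing sequence $(v_n)$, use coercivity to extract an $H^1(B_R^+)$-weakly convergent subsequence, use that $H^1_{0,\partial B_R^+\setminus\Sigma}(B_R^+)$ is norm-closed (hence weakly closed) to locate the weak limit $v_*$ inside it, and apply weak lower semicontinuity of $\|\nabla\cdot\|_{L^2(B_R^+)}$ to conclude that $U_R:=\psi+v_*$ achieves the minimum. Uniqueness is immediate from the strict convexity of $F$ on the affine space $\mathcal A$.

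Finally, to deduce \eqref{eq:U_R_eq}, I would note that for any $\phi\in H^1_{0,\partial B_R^+\setminus\Sigma}(B_R^+)$ and any $t\in\R$ the function $U_R+t\phi$ still lies in $\mathcal A$, so the smooth map $t\mapsto F(U_R+t\phi)$ attains its minimum at $t=0$; differentiating at $t=0$ gives $\int_{B_R^+}\nabla U_R\cdot\nabla\phi\dx=0$, which is precisely the weak form of \eqref{eq:60}. I do not expect any serious obstacle: the only nontrivial point is the Poincar\'e inequality, and this is routine once one remarks that $S_R^+\subset \partial B_R^+\setminus\Sigma$.
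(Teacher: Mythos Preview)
Your proof is correct and follows exactly the approach the paper intends: the paper does not give a detailed argument for this lemma, simply stating that it ``can be easily proved by standard minimization methods,'' and your direct-method argument (coercivity via the Poincar\'e inequality of Lemma~\ref{l:poin} applied with vanishing trace on $S_R^+$, weak lower semicontinuity, weak closedness of $H^1_{0,\partial B_R^+\setminus\Sigma}(B_R^+)$, strict convexity for uniqueness, and first variation for the Euler--Lagrange equation) is precisely what is meant. Your observation that $S_R^+\subset\partial B_R^+\setminus\Sigma$ is exactly the right way to invoke Lemma~\ref{l:poin} and obtain coercivity.
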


\begin{lemma}\label{lemma:Z_R}
  Let $\Sigma\sub\partial\R^N_+$ be as in \eqref{eq:Sigma}, $R>2$,
$\eta_R$ as in \eqref{eq:def_cutoff}, and $U$ as in
    \eqref{eq:def_U}. Then there exists a unique
  $Z_R\in \eta_R U+H^1_0(B_R^+)$ achieving
\[
  \min\left\{ \|\nabla u\|^2_{L^2(B_R^+)}: \, u\in \eta_R
    U+H^1_0(B_R^+) \right\}.
\]
Moreover, $Z_R$ weakly solves
\begin{equation}\label{eq:69}
\begin{cases}
  -\Delta Z_R=0,  & \text{in } B_R^+, \\
  Z_R= U,  &\text{on }S_R^+, \\
  Z_R=0, & \text{on } B_R',
\end{cases}
\end{equation}
	that is	
	\begin{equation*}\label{eq:Z_R_eq}
		\begin{cases}
			Z_R\in \eta_R U+H^1_0(B_R^+),\\
			\int_{B_R^+} \nabla Z_R\cdot\nabla\phi\dx=0 
			\quad\text{for all } \phi\in H^1_0(B_R^+).
		\end{cases}
	\end{equation*}
\end{lemma}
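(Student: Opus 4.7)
The proof is a routine application of the direct method in the calculus of variations, closely parallel to what would be done for Lemma \ref{lemma:U_R}, with the difference that now both components of $\partial B_R^+$ carry Dirichlet-type data. I would proceed in four steps, the bulk of the work being the initial verification that the affine admissible set is well defined.

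First I would check that $\mathcal{A}:=\eta_R U+H^1_0(B_R^+)$ is nonempty, i.e.\ that $\eta_R U\in H^1(B_R^+)$. Since $\psi$ is a harmonic polynomial (hence smooth) and $w_0=w_{0,\Sigma}\in \mathcal{D}^{1,2}(\R^N_+\cup\Sigma)$, the function $U=\psi+w_0$ belongs to $H^1(B_R^+)$ because $B_R^+$ is bounded. Combining this with $\eta_R\in C^\infty(\overline{\R^N_+})$ and $\abs{\nabla\eta_R}\le 4/R$, the product $\eta_R U$ lies in $H^1(B_R^+)$. I would then verify the trace identities needed later: on $S_R^+$ we have $\eta_R\equiv 1$, while on $B_R'$ the scale separation $R>2$ together with $\mathrm{diam}(\mathcal{V})<r_0<1$ forces $\Sigma\subset B_{R/2}$, so $\eta_R\equiv 0$ on $\Sigma$; since also $U\equiv 0$ on $\partial\R^N_+\setminus\Sigma$ (by \eqref{eq:63}), we conclude $\eta_R U=0$ on $B_R'$ in the trace sense.

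Second, I would establish existence by coercivity and weak lower semicontinuity. Writing $u=\eta_R U+v$ with $v\in H^1_0(B_R^+)$, the Poincar\'e inequality on $H^1_0(B_R^+)$ controls $\norm{v}_{L^2(B_R^+)}$ by $\norm{\nabla v}_{L^2(B_R^+)}$, so a bound on $\norm{\nabla u}_{L^2(B_R^+)}^2$ yields a bound on $\norm{v}_{H^1(B_R^+)}$. For a minimizing sequence $\{u_n=\eta_R U+v_n\}$ I extract a subsequence with $v_n\rightharpoonup v^*$ weakly in $H^1_0(B_R^+)$ and set $Z_R:=\eta_R U+v^*$; weak lower semicontinuity of $u\mapsto\norm{\nabla u}_{L^2(B_R^+)}^2$ then shows $Z_R$ attains the infimum. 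Uniqueness follows from strict convexity of the Dirichlet energy: if $Z_R$ and $\widetilde Z_R$ both minimize, their difference lies in $H^1_0(B_R^+)$ and by a standard convexity argument has vanishing gradient a.e., hence vanishes by Poincar\'e.

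Third, I would derive the Euler--Lagrange equation: for every $\phi\in H^1_0(B_R^+)$, differentiating at $t=0$ the map $t\mapsto\norm{\nabla(Z_R+t\phi)}_{L^2(B_R^+)}^2$ gives $\int_{B_R^+}\nabla Z_R\cdot\nabla\phi\dx=0$, which is the weak formulation of $-\Delta Z_R=0$ in $B_R^+$. The prescribed boundary values in \eqref{eq:69} then follow immediately from $Z_R-\eta_R U\in H^1_0(B_R^+)$ together with the trace computation of the first step. No serious obstacle arises; the only point requiring care is the scale-dependent vanishing of $\eta_R U$ on the flat part $B_R'$, which is precisely where the hypothesis $R>2$ is used.
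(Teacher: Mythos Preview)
Your proof is correct and aligns with the paper's approach: the paper does not give a separate argument for this lemma, simply stating that both Lemma~\ref{lemma:U_R} and Lemma~\ref{lemma:Z_R} ``can be easily proved by standard minimization methods.'' You have spelled out exactly those standard details, including the one point that genuinely needs checking (that $\eta_R U$ vanishes on $B_R'$, which uses $R>2$ and $\mathrm{diam}(\mathcal V)<r_0<1$).
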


The function $U_R$ naturally appears as a limit profile of a scaled
convenient competitor in the estimate of the eigenvalue variation
$\lambda_{n_0}-\lambda_{n_0}^\e$ from below. Indeed, to estimate
$\lambda_{n_0}^\e$ from above with the most precise approximation of
$\lambda_{n_0}$, we are led to test the Rayleigh quotient for
$\lambda_{n_0}^\e$ with test functions obtained by modifying the limit
eigenfunction $\varphi_{n_0}$ (inside balls $B_{R\e}$) into a solution
of the mixed boundary problem, in the less expensive way from the
energetic point of view. By the Dirichlet principle, among all
functions satisfying some prescribed boundary conditions, the energy
is minimized by harmonic ones, so that the above defined harmonic
functions $U_R$ turn out to be the blown-up limit profile of best
competitors. In a similar way the function $Z_R$ is the limit profile
of scaled best competitors for the estimate of the eigenvalue
variation $\lambda_{n_0}-\lambda_{n_0}^\e$ from above.

The function $U_R$, introduced in Lemma \ref{lemma:U_R}, is locally a
good approximation of the limit profile $U$, defined in \eqref{eq:def_U},
for large values of $R$.

\begin{lemma}\label{lemma:U_R_conv}
  For any $r>2$, $U_R\to U$ in $H^1(B_r^+)$ as $R\to+\infty$.
\end{lemma}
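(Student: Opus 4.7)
My plan is to rewrite the statement in terms of the correction $v_R:=U_R-\psi$, which belongs to $H^1_{0,\partial B_R^+\setminus\Sigma}(B_R^+)$, and to show that its extension by zero, denoted $\tilde v_R$, converges to $w_0$ strongly in $\mathcal D^{1,2}(\R^N_+\cup\Sigma)$ as $R\to+\infty$. Since $\psi$ is independent of $R$, this immediately yields $U_R=\psi+v_R\to\psi+w_0=U$ in $H^1(B_r^+)$ for every $r>2$.

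I would first establish a uniform energy bound on $v_R$. Testing \eqref{eq:U_R_eq} with $v_R$ gives $\int_{B_R^+}|\nabla v_R|^2\dx=-\int_{B_R^+}\nabla\psi\cdot\nabla v_R\dx$. Since $\psi$ is a harmonic polynomial vanishing on $\partial\R^N_+$ and $v_R$ vanishes on $\partial B_R^+\setminus\Sigma$ in the trace sense, an integration by parts reduces the right-hand side to a single boundary term:
\begin{equation*}
\int_{B_R^+}|\nabla v_R|^2\dx=-\int_\Sigma v_R\,\partial_{\nnu}\psi\,dx'.
\end{equation*}
Combining the Sobolev trace inequality for $\mathcal D^{1,2}(\R^N_+)$ with H\"older's inequality on the bounded set $\Sigma$ yields $\|v_R\|_{L^2(\Sigma)}\leq C\|\nabla v_R\|_{L^2(B_R^+)}$, and therefore $\|\nabla v_R\|_{L^2(B_R^+)}\leq C'$ uniformly in $R$.

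Next I would identify the weak limit. By the uniform bound, up to a subsequence $\tilde v_{R_n}\rightharpoonup v^*$ weakly in $\mathcal D^{1,2}(\R^N_+\cup\Sigma)$. For any test function $\phi\in C_c^\infty(\overline{\R^N_+}\setminus(\partial\R^N_+\setminus\Sigma))$ and for $R$ large enough that $\supp\phi\subset B_R$, one has $\phi\in H^1_{0,\partial B_R^+\setminus\Sigma}(B_R^+)$; testing \eqref{eq:U_R_eq} and integrating by parts against $\psi$ as above yields $\int_{B_R^+}\nabla v_R\cdot\nabla\phi\dx=-\int_\Sigma\phi\,\partial_{\nnu}\psi\,dx'$. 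Passing to the limit and using density, $v^*$ solves \eqref{eq:59} with $f=-\partial_{\nnu}\psi$, so by the uniqueness part of Lemma \ref{lemma:def_w} one has $v^*=w_0$. Urysohn's subsequence principle then extends the convergence to the whole family $R\to+\infty$.

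To pass from weak to strong convergence I would exploit compactness of the trace map on the bounded set $\Sigma$ (Rellich) to obtain $v_R\to w_0$ strongly in $L^2(\Sigma)$; combined with the identity from the first step this gives
\begin{equation*}
\int_{\R^N_+}|\nabla\tilde v_R|^2\dx=-\int_\Sigma v_R\,\partial_{\nnu}\psi\,dx'\to-\int_\Sigma w_0\,\partial_{\nnu}\psi\,dx'=\int_{\R^N_+}|\nabla w_0|^2\dx,
\end{equation*}
the last equality coming from testing \eqref{eq:59} with $v=w_0$. Weak convergence together with convergence of the norm in the Hilbert space $\mathcal D^{1,2}(\R^N_+)$ implies strong convergence, whence $U_R\to U$ in $H^1(B_r^+)$ for every $r>2$. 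The main obstacle, and the only place where the structural data enter in a delicate way, is the uniform energy bound of the first step: it rests on the harmonicity of $\psi$ and its vanishing on $\partial\R^N_+$ (to eliminate the bulk and the Dirichlet-boundary contributions in the integration by parts) together with the boundedness of $\Sigma$ (needed to apply the trace inequality with a constant independent of $R$); once this is in hand the remainder is a standard weak-to-strong argument for energy minimizers.
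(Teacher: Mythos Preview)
Your proof is correct and takes a genuinely different route from the paper's.

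The paper argues directly with the Dirichlet principle applied to $U_R-U$: this function is harmonic in $B_R^+$, satisfies the mixed conditions on $B_R'$, and equals $\psi-U=-w_0$ on $S_R^+$; hence the explicit competitor $\eta_R(\psi-U)$ has larger Dirichlet energy on $B_R^+$, and one obtains
\[
\int_{B_r^+}|\nabla(U_R-U)|^2\dx\leq\int_{B_R^+}|\nabla(\eta_R(\psi-U))|^2\dx
\leq\mathrm{const}\int_{B_R^+\setminus B_{R/2}^+}\Big(\frac{|w_0|^2}{|x|^2}+|\nabla w_0|^2\Big)\dx,
\]
which vanishes as $R\to+\infty$ by Hardy and $w_0\in\mathcal D^{1,2}(\R^N_+\cup\Sigma)$. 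This is a one-shot quantitative estimate, with no subsequences and no limit identification.

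Your approach is the standard variational template: uniform energy bound, weak limit identification via the equation and uniqueness (Lemma~\ref{lemma:def_w}), then upgrade to strong via convergence of norms. It is slightly longer but has two merits: it yields the stronger conclusion $\tilde v_R\to w_0$ in $\mathcal D^{1,2}(\R^N_+\cup\Sigma)$ globally (not just locally in $H^1$), and it isolates cleanly where the structural hypotheses are used (harmonicity of $\psi$, $\psi|_{\partial\R^N_+}=0$, boundedness of $\Sigma$). One minor remark: your invocation of ``compactness of the trace map on $\Sigma$ (Rellich)'' implicitly passes through a uniform $H^1(B_\rho^+)$ bound for some fixed $\rho$ with $\Sigma\subset B_\rho'$ (obtained from the $\mathcal D^{1,2}$ bound via Sobolev/Hardy and H\"older); making this intermediate step explicit would remove any ambiguity.
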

\begin{proof}
  Let $R>r$ and $\eta_R$ as in \eqref{eq:def_cutoff}. The function
  $U_R-U\in H^1(B_r^+)$ weakly solves
\[
\begin{cases}
  -\Delta (U_R-U)=0, &\text{in }B_R^+, \\
  U_R-U=0, &\text{on }B_R'\setminus \Sigma, \\
  \partial_{\nnu}(U_R-U)=0, &\text{on }\Sigma, \\
  U_R-U=\psi-U, &\text{on }S_R^+.
\end{cases}
\]	
By the Dirichlet principle, we observe that $\eta_R (\psi-U)\in H^1(B_R^+)$ has higher energy than $U_R-U$ in $B_R^+$. Hence
\begin{align*}
  \int_{B_r^+}\abs{\nabla (U_R-U)}^2\dx
  &\leq
    \int_{B_R^+}\abs{\nabla (U_R-U)}^2\dx\leq
    \int_{B_R^+}\abs{\nabla( \eta_R(\psi-U))}^2\dx \\
  &\leq 2\int_{B_R^+}\abs{\nabla \eta_R}^2\abs{\psi-U}^2\dx+
    2\int_{B_R^+}\eta_R^2\abs{\nabla(\psi-U)}^2\dx \\
  &\leq \frac{32}{R^2}
    \int_{B_R^+\setminus B_{R/2}^+}\abs{\psi-U}^2\dx+{2}
    \int_{B_R^+\setminus B_{R/2}^+}\abs{\nabla(\psi-U)}^2\dx \\
  &\leq {32}\int_{B_R^+\setminus
    B_{R/2}^+}
    \frac{\abs{\psi-U}^2}{\abs{x}^2}\dx+{2}
    \int_{B_R^+\setminus B_{R/2}^+}\abs{\nabla(\psi-U)}^2\dx .
\end{align*}
The last two terms vanish as $R\to+\infty$  thanks to the validity of the Hardy
inequality and the fact that 
$\psi-U\in\mathcal{D}^{1,2}(\R^N_+\cup\Sigma)$. Since $B_r^+\setminus \Sigma$ has
positive $(N-1)$-dimensional measure, by Poincaré inequality we may
conclude the proof.
\end{proof}

\section{An equivalent problem on a domain with a flat crack}\label{sec:an-equiv-probl}

Sections \ref{sec:an-equiv-probl} to
  \ref{sec:proof_main} are devoted to the development of the
  monotonicity formula and the consequent energy and eigenvalue
  estimates needed to prove Theorem \ref{thm:main} and requiring regularity and star-shapedness assumptions on the open set
  $\mathcal V$.
Then throughout Sections \ref{sec:an-equiv-probl} to
\ref{sec:proof_main} we tacitly assume hypotheses
\eqref{eq:7}, \eqref{eq:V-C11}, and \eqref{eq:V-star-shaped} on
$\mathcal V$, besides the assumptions 
on $\Omega$ set out in the Introduction;
  consequently we let 
$\Sigma_\e$ be as in \eqref{eq:sigmaeps} and $\Sigma$ as in \eqref{eq:Sigma}. 

In the present section, we first introduce an equivalent problem,
obtained by straightening the boundary of $\Omega$ locally around the
origin. Then, we prove that the star-shapedness of the Neumann region
is preserved by such a transformation, see Section
\ref{subsec:starshaped}.

\subsection{Flattening the boundary of the domain}
In this section, we consider a particular diffeomorphism straightening
the boundary of $\Omega$ near $0$, first introduced in
\cite{Adolfsson1997}, see also \cite{FNO1}.  Let
$g\in C^{1,1}(\R^{N-1})$ be as in \eqref{eq:1}. Let
$\zeta\in C_c^\infty(\R^{N-1})$ be such that $\supp \zeta\subset B_1'$,
$\zeta\geq 0$ in $\R^{N-1}$, $\int_{\R^{N-1}}\zeta(y')\,dy'=1$
 (see \eqref{eq:72} for the notation $B_{1}'$).
For every $\delta>0$ we consider the mollifier
	\begin{equation*}
	\zeta_{\delta}(y')= \delta^{-N+1}\zeta\left(\frac{y'}{\delta}\right).
	\end{equation*}
	For all $j=1,\dots,N-1$,   we define
	\[
G_j(y',y_N) = \begin{cases}
\left( \zeta_{y_N}\star \dfrac{\partial
            g}{\partial y_j} \right)(y'),&\text{if }y'\in\R^{N-1},\
        y_N>0,\\[10pt]
\dfrac{\partial
            g}{\partial y_j} (y'),&\text{if } y'\in\R^{N-1},\
        y_N=0,
      \end{cases}
\]
where $\star$ denotes the convolution product.  We observe that, for
all $j=1,\dots,N-1$, $G_j\in C^\infty(\R^N_+)$, $G_j$ is Lipschitz
continuous in $\overline{\R^N_+}$, and
$\frac{\partial G_j}{\partial y_i}\in L^\infty(\R^N_+)$ for every
$i\in\{1,\dots,N\}$.  Furthermore we have that, for all
$j=1,\dots,N-1$ and $i=1,\dots,N$,
\[
y_N \frac{\partial
            G_j}{\partial y_i}\quad\text{is Lipschitz continuous in
$\overline{\R^N_+}$}.
\]
Let,  for every $j=1,\dots,N-1$,
	\begin{equation*}
\widetilde G_j:\R^N\to\R,\quad \widetilde G_j(y',y_N):=G_j(y',|y_N|)
	\end{equation*}
and 	\begin{equation*}
F_j:\R^N\to\R,\quad 	F_j(y',y_N)=y_j-y_N\widetilde G_j(y',y_N).
	\end{equation*}
        It follows that $\widetilde G_j$ is Lipschitz continuous in $\R^N$ and
        $F_j$ belongs to $C^{1,1}(\R^N)$ (i.e. it is continuously
        differentiable with Lipschitz gradient) for all
        $j=1,\dots,N-1$.

        In particular, defining 
        $\widetilde G:\R^N\to\R^{N-1}$ as 
\[
\widetilde G(y',y_N)=( \widetilde G_1(y',y_N), \widetilde G_2(y',y_N),\dots,
        \widetilde G_{N-1}(y',y_N)),
\]
 we have that 
\[
J_{\widetilde G}\in L^\infty(\R^N,\R^{N(N-1)}),
\]
where $J_{\widetilde G}(y',y_N)$ is the Jacobian matrix of $\widetilde G$ at $(y',y_N)$, and 
\begin{equation}\label{eq:3}
|\widetilde G(y',y_N)-\nabla g(y')|\leq C\,
|y_N|\quad\text{for all }(y',y_N)
\in  \R^N,
\end{equation}
for some constant $C>0$ independent of $(y',y_N)$.

Let $F: \R^N\to\R^N$ be defined as follows
\begin{equation}\label{eq:F_def}
  F(y',y_N):=(F_1(y',y_N),\dots,F_{N-1}(y',y_N),y_N+g(y')).
\end{equation}
Using the above function $F$, we are going to construct a
diffeomorphism which straightens the boundary. To prove Lemma
\ref{l:ay-nu}, which will be crucial in the monotonicity argument, we
will need a quite precise quantification of the behaviour of all
entries of the Jacobian matrix of $F$. Hence, by direct computations
and \eqref{eq:3} we have that
\begin{align*}
  J_{F}(y',y_N)&=		\begin{pmatrix}
    1-y_N\frac{\partial \widetilde G_1}{\partial y_1} &
    -y_N\frac{\partial \widetilde G_1}{\partial y_2} & 
    \cdots & -y_N\frac{\partial \widetilde G_1}{\partial y_{N-1}} &
    -\widetilde G_1
    -y_N \frac{\partial
      \widetilde G_1}{\partial y_N}
    \\[5pt]
    -y_N\frac{\partial \widetilde G_2}{\partial y_1} &
    1-y_N\frac{\partial \widetilde G_2}{\partial y_2} &
    \cdots & -y_N\frac{\partial \widetilde G_2}{\partial
      y_{N-1}}
    & - \widetilde G_2 -y_N \frac{\partial
      \widetilde G_2}{\partial y_N} \\
    \vdots & \vdots & \ddots & \vdots & \vdots \\[5pt]
    -y_N\frac{\partial \widetilde G_{N-1}}{\partial y_1} &
    -y_N\frac{\partial \widetilde G_{N-1}}{\partial y_2} &
    \cdots & 1-y_N\frac{\partial \widetilde G_{N-1}}{\partial
      y_{N-1}}
    & - \widetilde G_{N-1} -y_N \frac{\partial
      \widetilde G_{N-1}}{\partial y_N} \\[5pt]
    \frac{\partial g}{\partial y_1}(y') &
    \frac{\partial g}{\partial y_2}(y') &
    \cdots & \frac{\partial g}{\partial y_{N-1}}(y') & 1
  \end{pmatrix}\\[5pt]
               &= \left( \renewcommand{\arraystretch}{1.5}
 \begin{array}{c|c}
    I_{N-1}-y_NJ_{\widetilde G}&-\nabla g(y')+O(y_N)\\\hline
(\nabla g(y'))^T&1
  \end{array}\right)
,
\end{align*}
where $I_{N-1}$ is the $(N-1)\times(N-1)$ identity
  matrix,
  $\nabla g(y')$ is a column vector and $(\nabla g(y'))^T$ denotes
  its transpose; henceforth, the notation $O(y_N)$ will be used to
  denote blocks of matrices with all entries being $O(y_N)$ as
  $y_N\to 0$ uniformly with respect to $y'$.

From \eqref{eq:2} and the assumption that $g\in C^{1,1}(\R^{N-1})$ we
deduce that $\nabla g(y')=O(|y'|)$ as $|y'|\to0$, so that
\begin{equation}\label{eq:det-jac}
\det 	J_F(y',y_N)=1+|\nabla g(y')|^2+O(y_N)=
1+O(|y'|^2)+O(y_N)
\end{equation}
as $y_N\to0$ and $|y'|\to0$.

In particular, $\det J_{F}(0)=1\ne0$. Hence, by the Inverse Function
Theorem, $F$ is invertible in a neighbourhood of the origin, i.e.
there exists $r_1\in (0,r_0)$ such that $F$ is a diffeomorphism of
class $C^{1,1}$ from $B_{r_1}$ to $\mathcal U=F(B_{r_1})$ for some
$\mathcal U$ open neighbourhood of $0$. Moreover, it is possible to
choose $r_1$ sufficiently small so that
\begin{equation}\label{eq:r1_def}
     F^{-1}(\mathcal U\cap \Omega)=\R^N_+\cap  B_{r_1}= B_{r_1}^+ \qquad\text{and}\qquad
	F^{-1}(\mathcal U\cap \partial\Omega)=\partial\R^N_+\cap B_{r_1}=B_{r_1}',
\end{equation}
i.e. near the origin the image of $\Omega$ through $F^{-1}$
        has flat boundary (lying on $\partial\R^N_+$). Let 
\begin{equation}\label{eq:6}
\Phi:\mathcal U\to B_{r_1},\quad   \Phi:=F^{-1}.
\end{equation}
 From the fact that 
\begin{equation*}
  \Phi\in C^{1,1}(\mathcal U,B_{r_1}), \quad
  \Phi^{-1}\in C^{1,1}(B_{r_1},\mathcal U), \quad
 \Phi(0)=\Phi^{-1}(0)=0, \quad J_\Phi(0)=J_{\Phi^{-1}}(0)=I_N,
\end{equation*}
it follows that 
\begin{align}
\label{eq:11}&  J_\Phi(x)=I_N+O(|x|)\quad\text{and}\quad 
\Phi(x)=x+O(|x|^2)\quad\text{as }|x|\to0,\\
\label{eq:10}
&  J_{\Phi^{-1}}(x)=I_N+O(|x|)\quad\text{and}\quad 
\Phi^{-1}(x)=x+O(|x|^2)\quad\text{as }|x|\to0.
\end{align}
Let $i\in\N_*$ and $\bar\e\in(0,1)$ be such that
  $\e\mathcal V\subset \mathcal U$ for all $\e\in(0,\bar \e)$ (so that
  $\Phi(\e\mathcal V)\subset B_{r_1}$ for all $\e\in(0,\bar \e)$). For $y\in \Phi(\mathcal
U\cap\Omega)=B_{r_1}^+$, we define
\[
u_i^\e(y):=\varphi_i^\e(\Phi^{-1}(y))=\varphi_i^\e(F(y)).
\]
From \eqref{eq:eqphiie} we deduce that 
	\begin{equation}\label{eq:uie}
\int_{B_{r_1}^+}A(y)\nabla u_i^\e(y)\cdot\nabla\phi(y)\dy=\lambda_i^\e\int_{B_{r_1}^+}p(y) u_i^\e(y)\phi(y)\dy
	\end{equation}
	for all $\phi\in 
H^1_{0,\partial B_{r_1}^+\setminus \widetilde\Sigma_\e}(B_{r_1}^+)$,
where 
\begin{equation}\label{eq:sigmatilde}
\widetilde\Sigma_\e=\Phi(\Sigma_\e)=
\Phi((\e\mathcal V)\cap\partial\Omega)
=\Phi(\e\mathcal V)\cap\partial\R^N_+, \quad 	p(y)=|\det J_{F}(y)|,
\end{equation}
and
\begin{equation}\label{eq:A}
	A(y)=(J_{F}(y))^{-1}((J_{F}(y))^{-1})^T|\det J_{F}(y)|.
\end{equation}
Notice that
  $u_i^\e \in H^1_{0,B_{r_1}' \setminus
    \widetilde\Sigma_\e}(B_{r_1}^+)$ for every
  $\e\in(0,\bar \e)$.  We
observe that \eqref{eq:uie} is the weak formulation of the problem
\begin{equation*}
  \begin{cases}
    -\dive(A(y)\nabla  u_i^\e(y))=\lambda_i^\e \,p(y) u_i^\e(y), &\text{in }B_{r_1}^+,\\
    u_i^\e=0,&\text{on }\widetilde \Gamma_{\e,r_1},\\
    A(y) \nabla u_i^\e(y)\cdot{\nnu}=0,&\text{on
    }\widetilde\Sigma_\e,
  \end{cases}
\end{equation*}
where 
\[
\widetilde \Gamma_{\e,r_1} =\overline{B_{r_1}'}\setminus\widetilde\Sigma_\e
\]
and $\nnu$ is the exterior unit normal vector on
$\widetilde\Sigma_\e$, which is equal to $-\bm{e}_N$ with $\bm{e}_N=(0,0,\dots,1)$
since $\widetilde\Sigma_\e\subset \partial\R^N_+$. 

From \eqref{eq:A} it follows directly that $A$ is symmetric.
Moreover, by direct computations we have that 
\[
A(y)=\alpha(y) B(y)(B(y))^T
\]
where, taking into account \eqref{eq:det-jac}, 
\begin{equation}\label{eq:alpha}
\alpha(y)=\frac{1}{|\det J_{F}(y)|}=
1+O(|y'|^2)+O(y_N)
\end{equation}
as $y_N\to0$ and $|y'|\to0$, and 
\[
B=
\left( \renewcommand{\arraystretch}{1.8}
 \begin{array}{c|c}
{\scriptsize\begin{matrix}
	1+\mathop{\sum}\limits_{j\neq1}\big|\frac{\partial g}{\partial
          y_j}\big|^2\!+\!O(y_N) &
-\frac{\partial g}{\partial y_1}\frac{\partial g}{\partial y_2}\!+\!O(y_N)& 
\cdots & -\frac{\partial g}{\partial y_1}\frac{\partial g}{\partial y_{N-1}}\!+\!O(y_N)
 \\[-3pt]
-\frac{\partial g}{\partial y_2}\frac{\partial g}{\partial
  y_1}\!+\!O(y_N)&	
1+\mathop{\sum}\limits_{j\neq2}\big|\frac{\partial g}{\partial
          y_j}\big|^2\!+\!O(y_N)& 
\cdots & -\frac{\partial g}{\partial y_2}\frac{\partial g}{\partial y_{N-1}}\!+\!O(y_N)\\[-3pt]
	\vdots & \vdots & \ddots & \vdots \\[-3pt]
-\frac{\partial g}{\partial y_{N-1}}\frac{\partial g}{\partial
  y_1}\!+\!O(y_N)	
& -\frac{\partial g}{\partial y_{N-1}}\frac{\partial g}{\partial y_2}\!+\!O(y_N)&
\cdots & 1+\mathop{\sum}\limits_{j\neq{N-1}}\big|\frac{\partial g}{\partial
          y_j}\big|^2\!+\!O(y_N)
\end{matrix} }
&\nabla g+O(y_N)\\\hline
-(\nabla g)^T+O(y_N)&1+O(y_N)
  \end{array}\right).
\]
Hence we have that 
\begin{equation}\label{eq:5}
A(y',y_N)=\alpha(y',y_N)
\left( \renewcommand{\arraystretch}{1.5}
 \begin{array}{c|c}
    I_{N-1}+O(|y'|^2)+O(y_N)&O(y_N)\\\hline
O(y_N) &1 +O(|y'|^2)+O(y_N)
  \end{array}\right)
\end{equation}
where here $O(y_N)$, respectively $O(|y'|^2)$, denotes
blocks of matrices with all entries being $O(y_N)$ as $y_N\to 0$, 
respectively $O(|y'|^2)$ as $|y'|\to 0$.

From \eqref{eq:alpha} and \eqref{eq:5} it follows that, if $r_1$ is
chosen sufficiently small, $\alpha(y)>0$ and $A$ is uniformly
elliptic in $B_{r_1}^+$. 
Moreover, by \eqref{eq:A} and the fact that $F\in
C^{1,1}(B_{r_1},\R^N)$, we have that, if we denote
$A(y)=(a_{i,j}(y))_{i,j=1,\dots,N}$, then 
\begin{equation}\label{eq:4}
a_{i,j}\in
C^{0,1}(B_{r_1}^+\cup B_{r_1}'),
\end{equation}
while \eqref{eq:5} ensures that
	\begin{equation}\label{eq:A_coeff}
	a_{i,N}(y',0)=a_{N,i}(y',0)= 0 \quad\text{for all }i=1,\dots,N-1.
	\end{equation}
The even reflection through the hyperplane $\{y_N=0\}$  of $u_i^\e$ 
\begin{equation}\label{eq:veps_def}
v_i^\e(y',y_N)=u_i^\e(y',|y_N|), \quad (y',y_N)\in B_{r_1},
\end{equation}
belongs to $H^1_{0, \widetilde \Gamma_{\e,r_1} }(B_{r_1})$ and 
\begin{equation}\label{eq:vie}
  \int_{B_{r_1}}\widetilde A(y)\nabla
  v_i^\e(y)\cdot\nabla\phi(y)\dy=\lambda_i^\e\int_{B_{r_1}}\widetilde p(y) v_i^\e(y)\phi(y)\dy
\end{equation}
	for all $\phi\in 
H^1_{0,\partial B_{r_1}\cup \widetilde \Gamma_{\e,r_1}}(B_{r_1})$,
i.e. $v_i^\e$ weakly solves 
\begin{equation}\label{eq:reflected}
  \begin{cases}
  -\dive (\widetilde{A}(y)\nabla v_i^\e(y))=\lambda_i^\e
  \widetilde p(y) v_i^\e(y),&\text{in }B_{r_1}\setminus \widetilde \Gamma_{\e,r_1},\\
  v_i^\e=0,&\text{on }\widetilde \Gamma_{\e,r_1},
\end{cases}
\end{equation}
where
\begin{equation}\label{eq:tilde}
\widetilde p(y',y_N)=\begin{cases}
  p(y',y_N), &\text{if }y_N\geq0, \\
  p(y',-y_N), &\text{if }y_N<0,
\end{cases}\quad\text{and}\quad	
\widetilde{A}(y',y_N):=\begin{cases}
  A(y',y_N), &\text{if }y_N>0, \\
  QA(y',-y_N) Q, &\text{if }y_N<0,
\end{cases}
\end{equation}
	with
\[
	Q:=
\left( \renewcommand{\arraystretch}{1.2}
 \begin{array}{c|c}
    I_{N-1}&
             \begin{matrix}
               0\\\vdots\\0
             \end{matrix}
\\\hline
   \begin{array}{cccc}
     0&  \dots &0
   \end{array}
&-1
  \end{array}\right).
\]
We observe that \eqref{eq:4} and \eqref{eq:A_coeff}
  ensure that the coefficients of the matrix $\widetilde{A}$ are
  Lipschitz continuous in $B_{r_1}$.

\begin{remark}\label{rem:Mosco}
  From \eqref{eq:11}--\eqref{eq:10} we easily deduce that, as
  $\e\to0$,
  $\R^{N}\setminus
  \big(\partial\R^N_+\setminus\big(\frac1\e\widetilde\Sigma_\e\big)\big)$
  converges in the sense of Mosco (see \cite{Daners2003,Mosco1969}) to
  the set $\R^{N}\setminus \big(\partial\R^N_+\setminus\Sigma\big)$,
  where $\Sigma$ is defined in \eqref{eq:Sigma}.  In particular, for
  every $R>0$, the weak limit points in $H^1(B_R^+)$ as $\e\to 0$ of
  a family of functions $\{w_\e\}_\e$ with
  $w_\e\in H^1_{0,\partial B_{R}^+\setminus (\frac1\e\widetilde
    \Sigma_\e)}(B_{R}^+)$ belong to
  $H^1_{0,\partial B_{R}^+\setminus \Sigma}(B_{R}^+)$.
\end{remark}

\subsection{Star-shapedness of the transformed domains}\label{subsec:starshaped}

We are interested in proving that the star-shapedness of the set
$\Sigma_\e$ is preserved under the transformation $\Phi$ introduced in
\eqref{eq:6}. To this aim, we first observe that $\Phi(\e\mathcal V)$
is strictly star-shaped provided $\e$ is sufficiently small.

\begin{lemma}\label{l:eV-star-shaped}
  Let $\mathcal V$ be a bounded open set in $\R^N$ satisfying
  \eqref{eq:7}, \eqref{eq:V-C11}, and \eqref{eq:V-star-shaped}.
Then
\begin{enumerate}[\rm (i)]
\item there exists a function $\rho:\S^{N-1}\to\R$ of class $C^{1,1}$
such that $\rho\geq 0$, 
\begin{equation}\label{eq:16}
\mathcal V=\{r\theta:\theta\in
\S^{N-1}\text{ and }0\leq r< \rho(\theta)\},
\quad \text{and}\quad  \partial \mathcal V=\{\rho(\theta)\theta:\theta\in
\S^{N-1}\};
\end{equation}
\item letting $\Phi$ be as in
  \eqref{eq:6}, $\Phi(\e\mathcal V)$ is strictly star-shaped with
  respect to $0$ provided $\e$ is sufficiently small.
\end{enumerate}
\end{lemma}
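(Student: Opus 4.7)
For part (i), I would follow the classical strategy of representing a star-shaped set via its radial function. Define $\rho\colon\S^{N-1}\to[0,+\infty)$ by $\rho(\theta):=\sup\{r\geq 0:r\theta\in\mathcal V\}$. The quoted equivalence between \eqref{eq:V-star-shaped} and star-shapedness with respect to a ball around $0$ implies in particular that $\mathcal V$ is star-shaped with respect to $0$; combined with $\mathcal V$ being open and bounded, this gives that $\rho$ is well-defined, strictly positive, and that the identities \eqref{eq:16} hold. To obtain $\rho\in C^{1,1}(\S^{N-1})$, I would consider the radial projection
\[
\pi\colon\partial\mathcal V\to\S^{N-1},\qquad \pi(x):=\frac{x}{|x|},
\]
which is a $C^{1,1}$ map since $\partial\mathcal V$ is of class $C^{1,1}$ and $0\notin\partial\mathcal V$. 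Star-shapedness makes $\pi$ bijective, and invertibility of the differential $d\pi(x)$ can be read off directly from \eqref{eq:V-star-shaped}: if $v\in T_x\partial\mathcal V$ satisfies $d\pi(x)v=0$, then $v$ is parallel to $x$, say $v=\alpha x$, and the tangency condition $v\cdot\nnu(x)=0$ together with $x\cdot\nnu(x)\geq\sigma>0$ forces $\alpha=0$. The $C^{1,1}$-inverse function theorem then yields $\pi^{-1}\in C^{1,1}$, so $\rho(\theta)=|\pi^{-1}(\theta)|\in C^{1,1}(\S^{N-1})$.

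For part (ii), I would exploit the parametrization $\partial(\e\mathcal V)=\{\e\rho(\theta)\theta:\theta\in\S^{N-1}\}$ from (i): since $\Phi$ is a diffeomorphism, $\partial\Phi(\e\mathcal V)=\{y(\theta):\theta\in\S^{N-1}\}$ with $y(\theta):=\Phi(\e\rho(\theta)\theta)$. Fix $\theta\in\S^{N-1}$ and write $x:=\e\rho(\theta)\theta$. Because rescaling preserves outward normals, the outward unit normal $\nnu_\e(x)$ to $\partial(\e\mathcal V)$ at $x$ equals $\nnu(x/\e)$, and \eqref{eq:V-star-shaped} gives
\[
x\cdot\nnu_\e(x)=\e\,\frac{x}{\e}\cdot\nnu\!\left(\frac{x}{\e}\right)\geq\e\sigma.
\]
The outward unit normal at $y=y(\theta)$ to $\partial\Phi(\e\mathcal V)$ is provided by the standard transformation law
\[
\widehat\nnu(y)=\frac{J_\Phi(x)^{-T}\nnu_\e(x)}{|J_\Phi(x)^{-T}\nnu_\e(x)|}.
\]
Plugging in the expansions $\Phi(x)=x+O(|x|^2)$ and $J_\Phi(x)=I_N+O(|x|)$ from \eqref{eq:11}, and using the uniform bound $|x|\leq\e\,\mathop{\rm diam}(\mathcal V)<\e r_0$ from \eqref{eq:7}, a direct Taylor expansion yields
\[
y\cdot\widehat\nnu(y)=x\cdot\nnu_\e(x)+O(|x|^2)\geq\e\sigma-C\e^2
\]
uniformly in $\theta\in\S^{N-1}$, where $C>0$ depends only on $\mathcal V$ and $\Phi$. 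For every $\e\in(0,\sigma/(2C))$ the right-hand side is bounded below by $\e\sigma/2>0$, which proves strict star-shapedness of $\Phi(\e\mathcal V)$ with respect to $0$.

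I expect the only genuinely non-computational step to be the $C^{1,1}$-regularity claim in (i): passing from the $C^{1,1}$-regularity of $\partial\mathcal V$ to that of the radial function $\rho$ requires a $C^{1,1}$-inverse (or implicit) function theorem, whose applicability depends crucially on \eqref{eq:V-star-shaped}, since this is precisely what makes the radial projection transversal at every boundary point. Once (i) is in hand, part (ii) reduces to a first-order Taylor expansion in which the quadratic error $O(\e^2)$ is absorbed by the linear main term $\e\sigma$ for $\e$ small.
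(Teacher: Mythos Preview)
Your proposal is correct and follows essentially the same approach as the paper. For (i) the paper uses the Implicit Function Theorem in local graph coordinates where you use the Inverse Function Theorem applied to the radial projection, and for (ii) the paper works with an implicit defining function $G_\e(x)=G(\Phi^{-1}(x)/\e)$ where you use the explicit normal transformation law $\widehat\nnu=J_\Phi^{-T}\nnu_\e/|J_\Phi^{-T}\nnu_\e|$; both are equivalent formulations of the same first-order Taylor argument, and in both the key mechanism is that the $O(\e^2)$ error is dominated by the linear term $\e\sigma$ coming from \eqref{eq:V-star-shaped}.
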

\begin{proof}
 (i) Let us define, for all $\theta\in\S^{N-1}$,
    $\rho(\theta)=\sup\{r>0:r\theta\in\mathcal V\}$. Assumption
    \eqref{eq:V-star-shaped} implies that \eqref{eq:16} is satisfied.
 To show that the function $\rho$ is of class $C^{1,1}$ we use the Implicit
Function Theorem. To this aim, let us fix $P_0\in\partial\mathcal V$,
so that $P_0=\rho(\theta_0)\theta_0$ for some
$\theta_0\in\S^{N-1}$. Up to a rotation, it is not restrictive to
assume that $\theta_0=\bm{e_N}=(0,0,\dots,1)$. 
Let $\widetilde\rho:B'_{1}\to \R$,
$\widetilde\rho(x')=\rho(x',\sqrt{1-|x'|^2})$. We observe that
$\widetilde\rho(0)=\rho(\theta_0)$ and that $\widetilde\rho$ is the
composition of $\rho$ with a smooth local parametrization of
$\S^{N-1}$ near $0$; hence $\rho$ is of class $C^{1,1}$
in a neighbourhood $\theta_0$ if and only if 
$\widetilde\rho$ is of class $C^{1,1}$
in a neighbourhood $0$. Since $\mathcal V$ is of class $C^{1,1}$,
there exist $\delta>0$ and a $C^{1,1}$--function
$\varphi:\R^{N-1}\to\R$ such that 
\begin{align*}
&\mathcal V\cap B_\delta(P_0)=\{(x',x_N)\in
B(P_0,\delta):x_N<\varphi(x')\}, \\
&\partial\mathcal V\cap B_\delta(P_0)=\{(x',x_N)\in
B_\delta(P_0):x_N=\varphi(x')\}.
\end{align*}
Let $H:(0,+\infty)\times B'_1\to\R$,  $H(r,x')=\varphi(r
x')-r\sqrt{1-|x'|^2}$. We have that $H$ is of class $C^{1,1}$ in a
neighbourhood $(\rho_0,0)$, $H( \rho_0,0)=0$ and $\frac{\partial
  H}{\partial r}(\rho_0,0)=-1\neq0$; furthermore 
\[
H(\widetilde
\rho(x'),x')=0\quad\text{for $|x'|$ small}.
\]
 By the Implicit Function Theorem we
can then conclude that $\widetilde\rho$ is of class $C^{1,1}$ in a neighbourhood~$0$.

\smallskip \noindent 
(ii)
Since $\mathcal V$ is of class $C^{1,1}$, there exists a function
$G\in C^{1,1}(\R^N)$ such that 
\[
\mathcal V=\{x\in \R^N:G(x)<0\},\quad 
\partial\mathcal V=\{x\in \R^N:G(x)=0\}\quad\text{and}\quad
\nabla G(x)\neq0\text{ for all $x\in\partial\mathcal V$.}
\]
In particular we have that $\nnu(x)=\frac{\nabla G(x)}{\|\nabla
  G(x)\|}$ for all $x\in\partial\mathcal V$, hence
assumption \eqref{eq:V-star-shaped} can be reformulated as follows:
\begin{equation}\label{eq:9}
\text{there exists
$\tilde \sigma>0$ such that 
 }x\cdot\nabla G(x)\geq\tilde \sigma  \quad\text{for every }x\in\partial \mathcal V.
\end{equation}
 We observe that 
\begin{equation}\label{eq:18}
\partial\left(\Phi(\e\mathcal
V)\right)=\{x\in\R^N:G_\e(x)=0\},
\end{equation}
where 
\[
G_\e(x)=G\bigg(\frac{\Phi^{-1}(x)}{\e}\bigg).
\]
From \eqref{eq:10} we deduce that (by choosing $r_1$ smaller if necessary) there exists some positive constant
$C>0$ independent of $\e$ such that 
\begin{equation}\label{eq:12}
|x|\leq C\e\quad\text{for all $x\in \partial\left(\Phi(\e\mathcal
V)\right)$}.
\end{equation}
Since the exterior unit normal at $x\in\partial\left(\Phi(\e\mathcal
V)\right)$ has the same direction as $\nabla G_\e(x)$, 
to prove assertion (ii) we have to show that, if $\e$ is sufficiently small, 
\begin{equation}\label{eq:8}
 \inf_{x\in\partial\left(\Phi(\e\mathcal
V)\right) } x\cdot \nabla G_\e(x)>0.
\end{equation}
From \eqref{eq:10}, \eqref{eq:9}, and \eqref{eq:12} it follows that, for all $x\in \partial\left(\Phi(\e\mathcal
V)\right)$, 
\begin{align*}
   x\cdot \nabla G_\e(x)&=
\frac{x}\e\cdot \nabla G\bigg(\frac{\Phi^{-1}(x)}{\e}\bigg)
  J_{\Phi^{-1}}(x)\\
&=\left(\frac{\Phi^{-1}(x)}{\e}+\frac{x-\Phi^{-1}(x)}{\e}\right)\cdot \nabla G\bigg(\frac{\Phi^{-1}(x)}{\e}\bigg)
  (I_N+O(|x|)) \\
&=\frac{\Phi^{-1}(x)}{\e}\cdot \nabla
  G\bigg(\frac{\Phi^{-1}(x)}{\e}\bigg)+\frac1\e\,O(|x|^2)
\geq \tilde \sigma-O(\e)>\frac{\tilde\sigma}2
\end{align*}
 provided $\e$ is sufficiently small, thus proving claim \eqref{eq:8}.
\end{proof}
We now prove that sections of strictly star-shaped sets are strictly
star-shaped.
\begin{lemma}\label{l:section}
Let $\omega$ be a $C^{1,1}$  bounded open set in $\R^N$ such that
$0\in \omega$ and 
$\omega$ is 
strictly star-shaped with respect to the origin (i.e. $\omega$
satisfies \eqref{eq:V-star-shaped}). Then the set $\omega'=\{x'\in\R^{N-1}:(x',0)\in \omega\}$ is a $C^{1,1}$ strictly
star-shaped open  subset of $\R^{N-1}$.  
\end{lemma}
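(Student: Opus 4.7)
The plan is to exploit the defining-function reformulation \eqref{eq:9} of strict star-shapedness used in the proof of Lemma \ref{l:eV-star-shaped}. Since $\omega$ is of class $C^{1,1}$, there exist $G\in C^{1,1}(\R^N)$ and $\tilde\sigma>0$ such that $\omega=\{G<0\}$, $\partial\omega=\{G=0\}$, $\nabla G\neq 0$ on $\partial\omega$, and $x\cdot\nabla G(x)\geq \tilde\sigma$ for every $x\in\partial\omega$. I would then introduce the trace $\widetilde G:\R^{N-1}\to\R$ defined by $\widetilde G(x'):=G(x',0)$, observing that $\widetilde G\in C^{1,1}(\R^{N-1})$ and that, directly from the definition of $\omega'$,
\[
\omega'=\{x'\in\R^{N-1}:\widetilde G(x')<0\},
\]
which is open, bounded, and contains $0$ since $0\in\omega$ by hypothesis.

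The next step is to check that $(x',0)\in\partial\omega$ whenever $x'\in\partial\omega'$; this follows at once from the continuity of $\widetilde G$ together with the characterization above, which forces $\widetilde G(x')=G(x',0)=0$. Applying the star-shapedness inequality to the lifted boundary point $(x',0)$ yields
\[
\tilde\sigma\;\leq\;(x',0)\cdot \nabla G(x',0)\;=\;\sum_{j=1}^{N-1} x_j\,\partial_{y_j}G(x',0)\;=\;x'\cdot\nabla\widetilde G(x'),
\]
where the second-to-last identity uses that the $N$-th component of the position vector $(x',0)$ vanishes. From this pointwise inequality I would draw two conclusions: first, $\nabla\widetilde G(x')\neq 0$ for every $x'\in\partial\omega'$, so by the implicit function theorem $\partial\omega'$ is locally the graph of a $C^{1,1}$ function, whence $\omega'$ is of class $C^{1,1}$; second, the exterior unit normal to $\partial\omega'$ at $x'$ is $\nu'(x')=\nabla\widetilde G(x')/|\nabla\widetilde G(x')|$, so that
\[
x'\cdot\nu'(x')\;=\;\frac{x'\cdot\nabla\widetilde G(x')}{|\nabla\widetilde G(x')|}\;\geq\;\frac{\tilde\sigma}{M},
\]
where $M:=\sup_{x'\in\partial\omega'}|\nabla\widetilde G(x')|<+\infty$ since $\nabla G$ is continuous on the compact set $\overline{\omega}$. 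Setting $\sigma':=\tilde\sigma/M>0$, this is exactly the strict star-shapedness condition \eqref{eq:V-star-shaped} for $\omega'$.

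I do not expect any real obstacle: the only step meriting a line of argument is the identification $\omega'=\{\widetilde G<0\}$, which is immediate from the definitions. An alternative route would restrict the polar function $\rho$ produced in Lemma \ref{l:eV-star-shaped}(i) to the equatorial sphere $\S^{N-2}\subset\S^{N-1}$ to obtain the polar representation of $\omega'$ directly, but the defining-function approach sketched above is more economical because the star-shapedness inequality transfers verbatim under the slice $x_N=0$.
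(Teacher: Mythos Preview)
Your proof is correct and follows essentially the same approach as the paper: introduce the sliced defining function $\widetilde G(x')=G(x',0)$, show it vanishes on $\partial\omega'$, use the star-shapedness inequality at the lifted point $(x',0)\in\partial\omega$ to get $x'\cdot\nabla\widetilde G(x')\geq\tilde\sigma>0$, and conclude both $C^{1,1}$-regularity (via the implicit function theorem) and strict star-shapedness.

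The one notable difference is that the paper first establishes the \emph{full} set equality $\partial\omega'=\{x':(x',0)\in\partial\omega\}$, spending most of its effort on the reverse inclusion $(x',0)\in\partial\omega\Rightarrow x'\in\partial\omega'$ via a mean-value argument. You correctly observe that only the forward inclusion is needed: once $\widetilde G(x')=0$ and $\nabla\widetilde G(x')\neq 0$ at each $x'\in\partial\omega'$, the implicit function theorem already gives that $\{\widetilde G=0\}$ is locally a $C^{1,1}$ hypersurface across which $\widetilde G$ changes sign, so $\partial\omega'$ coincides with it locally. Your route is therefore slightly more economical; the paper's extra step yields the stronger identification $\partial\omega'=\{\widetilde G=0\}$ globally, but this is not required for the lemma as stated.
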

\begin{proof}
Since $\omega$ is of class $C^{1,1}$, there exists 
$\tilde G\in C^{1,1}(\R^N)$ such that $\omega=\{x\in \R^N: \tilde G (x)<0\}$ and
$\partial\omega=
\{x\in \R^N: \tilde G (x)=0\}$. Then 
\[
\omega'=\{x'\in  \R^{N-1}:\tilde g(x')<0\}
\]
where
$\tilde g (x')=\tilde G (x',0)$,
$\tilde g\in C^{1,1}(\R^{N-1})$. We claim that
\begin{equation}\label{eq:13}
  \partial \omega'=\{x'\in\R^{N-1}:(x',0)\in\partial\omega\}.
\end{equation}
It is easy to verify that, if $x'\in  \partial \omega'$, then
$(x',0)\in\partial \omega$. Thus, to prove claim \eqref{eq:13} it
is enough to show that, if 
$(x',0)\in \partial\omega$, then $x'\in\partial\omega'$. To
this aim, the assumption of strict star-shapedness of $\omega$ plays a crucial
role. Let $(x',0)\in \partial\omega$. Then $\tilde G (x',0)=0$;
hence for every $n\in\N_*$ there  exists $\xi_n\in\big[0,\frac1n\big]$ such that 
\[
\tilde G\big(\big(1-\tfrac1n\big)x',0\big)=\nabla \tilde G
((1-\xi_n)x',0)\cdot(-\tfrac{x'}n,0)
=-\tfrac1n\, \Big(\nabla \tilde G (x',0)\cdot (x',0)+o(1)\Big)
\]
as $n\to+\infty$. The assumption that $\omega$ is 
strictly star-shaped with respect to the origin yields that $\nabla
\tilde G (x',0)\cdot (x',0)>0$, hence we conclude that
$\tilde G\big(\big(1-\tfrac1n\big)x',0\big)<0$ for $n$ sufficiently large, so
that $\big(1-\tfrac1n\big)x'\in \omega'$ and
$\big(1-\tfrac1n\big)x'\to x'$ as $n\to+\infty$. In a similar way, we
can prove that $\big(1+\tfrac1n\big)x'\not\in \omega'$ and
$\big(1+\tfrac1n\big)x'\to x'$ as $n\to+\infty$. Hence we conclude
that $x'\in\partial\omega'$, thus proving claim \eqref{eq:13}.

From \eqref{eq:13} it follows that 
\begin{equation*}
  \partial \omega'=\{x'\in\R^{N-1}: \tilde g (x')=0\}.
\end{equation*}
We observe that, for $x'\in \partial  \omega'$,
\begin{equation}\label{eq:14}
\nabla \tilde g (x')\cdot x'=\nabla \tilde G (x',0) \cdot (x',0)>0
\end{equation}
by strict star-shapedness of $\omega$. In particular $\nabla
\tilde g (x')\neq 0$ for all $x'\in \partial \omega'$, hence by 
the  Implicit Function Theorem the boundary of $\omega'$ 
can be locally parametrized as the graph of a $C^{1,1}$-function,
i.e. $\omega'$ is of class $C^{1,1}$. The strict star-shapedness
of $\omega'$ directly follows from \eqref{eq:14}. 
\end{proof}

From Lemmas  \ref{l:eV-star-shaped} and \ref{l:section} we may directly
conclude the following result.
\begin{corollary}\label{cor:starshaped}
The set $\widetilde\Sigma_\e$ defined in
\eqref{eq:sigmatilde} is of class $C^{1,1}$ and 
strictly star-shaped in $\R^{N-1}$ with respect to $0$ for $\e$ sufficiently small.
\end{corollary}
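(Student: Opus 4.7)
The corollary is essentially a direct combination of the two preceding lemmas, so the proof plan is short. The plan is to verify that the hypotheses of Lemma \ref{l:section} hold for $\omega = \Phi(\e\mathcal V)$, and then identify the section $\omega' = \{x' \in \R^{N-1} : (x',0) \in \Phi(\e\mathcal V)\}$ with $\widetilde\Sigma_\e$ via the canonical identification of $\partial\R^N_+$ with $\R^{N-1}$.

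First, I would check that, for $\e$ sufficiently small, the set $\Phi(\e\mathcal V)$ fits the framework of Lemma \ref{l:section}. It is bounded since $\e\mathcal V$ is bounded and $\Phi$ is continuous; it is open since $\Phi$ is a diffeomorphism; it contains $0$ since $\Phi(0)=0$ and $0\in\e\mathcal V$. For the $C^{1,1}$ regularity, I would note that $\mathcal V\in C^{1,1}$ by \eqref{eq:V-C11}, hence also the scaled domain $\e\mathcal V$ is of class $C^{1,1}$; since $\Phi$ is a $C^{1,1}$-diffeomorphism on a neighbourhood of $0$ (containing $\e\mathcal V$ for $\e$ small enough by \eqref{eq:10}), the image $\Phi(\e\mathcal V)$ is again a $C^{1,1}$ domain, by the standard fact that $C^{1,1}$-regularity of the boundary is preserved under $C^{1,1}$-diffeomorphisms. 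Finally, the strict star-shapedness of $\Phi(\e\mathcal V)$ with respect to the origin is provided by Lemma \ref{l:eV-star-shaped}(ii).

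Having checked all the hypotheses, I would apply Lemma \ref{l:section} with $\omega = \Phi(\e\mathcal V)$. This yields that
\[
\omega' = \{x'\in\R^{N-1} : (x',0)\in \Phi(\e\mathcal V)\}
\]
is a bounded open $C^{1,1}$ subset of $\R^{N-1}$ that is strictly star-shaped with respect to $0$. To conclude, I would note that under the canonical identification of $\partial\R^N_+$ with $\R^{N-1}$ via $(x',0)\mapsto x'$, the set $\omega'$ coincides with
\[
\Phi(\e\mathcal V)\cap \partial\R^N_+ = \widetilde\Sigma_\e,
\]
the last equality being precisely the definition \eqref{eq:sigmatilde}. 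This gives simultaneously the $C^{1,1}$-regularity and the strict star-shapedness of $\widetilde\Sigma_\e$ in $\R^{N-1}$, completing the proof.

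There is no real obstacle here; the only point that deserves a line or two is the verification that the $C^{1,1}$-regularity of $\mathcal V$ is transferred to $\Phi(\e\mathcal V)$, which is immediate from the $C^{1,1}$-diffeomorphism property of $\Phi$ established in Section \ref{sec:an-equiv-probl}. Everything else is a direct invocation of Lemmas \ref{l:eV-star-shaped} and \ref{l:section}.
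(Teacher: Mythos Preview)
Your proposal is correct and follows exactly the approach the paper takes: the paper simply states that the corollary follows directly from Lemmas \ref{l:eV-star-shaped} and \ref{l:section}, and you have spelled out precisely how those lemmas combine, including the identification of $\widetilde\Sigma_\e$ with the section $\omega'$ of $\Phi(\e\mathcal V)$.
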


Corollary \ref{cor:starshaped} achieves our aim of proving
preservation of star-shapedness after the action of the diffeomorphism
$\Phi$. The following Lemma \ref{l:rho-eps} provides a quantitative
estimate of the size of the transformed Neumann region, proving that
its size remains of order $\e$.

From Lemma \ref{l:eV-star-shaped} (ii) we have that there exists
$\e_0\in(0,1)$ such that  $\Phi(\e\mathcal
V)$ is strictly star-shaped with respect to $0$ for all $\e\in(0,\e_0)$. Then, applying  Lemma
\ref{l:eV-star-shaped} (i) to $\Phi(\e\mathcal V)$, for all
$\e\in(0,\e_0)$ there exists a function $\rho_\e:\S^{N-1}\to\R$ of class $C^{1,1}$
such that $\rho_\e\geq 0$, 
\begin{equation}\label{eq:rho-eps}
\Phi(\e\mathcal V)=\{r\theta:\theta\in
\S^{N-1}\text{ and }0\leq r< \rho_\e(\theta)\},
\quad \text{and}\quad  \partial (\Phi(\e\mathcal V))=\{\rho_\e(\theta)\theta:\theta\in
\S^{N-1}\}.
\end{equation}

\begin{lemma}\label{l:rho-eps}
  For every $\e\in(0,\e_0)$, let $\rho_\e$ be as in
  \eqref{eq:rho-eps}. Then there exist $\e_1\in(0,\e_0)$ and 
 $\kappa>1$ (independent of $\e$) such that, for all $\e\in(0,\e_1)$, 
\begin{equation}\label{eq:15}
\frac\e \kappa\leq \rho_\e(\theta)\leq \kappa\e\quad\text{for all
}\theta\in\S^{N-1}
\end{equation}
and
\begin{equation}\label{eq:17}
|\nabla_{\S^{N-1}}\rho_\e(\theta)|\leq \kappa\e\quad\text{for all
}\theta\in\S^{N-1}.
\end{equation}
\end{lemma}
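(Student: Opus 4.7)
The plan is to deduce the two-sided bound \eqref{eq:15} by comparing $\Phi(\e\mathcal V)$ with concentric balls of radii proportional to $\e$, and then to derive \eqref{eq:17} by implicit differentiation of the identity $G_\e(\rho_\e(\theta)\theta)=0$, where $G_\e$ is the defining function for $\partial(\Phi(\e\mathcal V))$ introduced in the proof of Lemma \ref{l:eV-star-shaped}(ii). The key inputs are the quantitative closeness $\Phi(x)=x+O(|x|^2)$ from \eqref{eq:11}--\eqref{eq:10} together with the uniform star-shapedness estimate $x\cdot\nabla G_\e(x)\geq \tilde\sigma/2$ already established there.

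For \eqref{eq:15}, I would first use that, by Lemma \ref{l:eV-star-shaped}(i), strict star-shapedness of $\mathcal V$ forces the existence of $0<r_{\mathcal V}<R_{\mathcal V}$ with $B_{r_{\mathcal V}}\subset\mathcal V\subset B_{R_{\mathcal V}}$, so that $B_{\e r_{\mathcal V}}\subset \e\mathcal V\subset B_{\e R_{\mathcal V}}$. Since $|\Phi(x)|\leq 2|x|$ and $|\Phi^{-1}(y)|\leq 2|y|$ for $|x|,|y|$ sufficiently small (by \eqref{eq:11}--\eqref{eq:10}), there is $\e_1\in(0,\e_0)$ such that for all $\e\in(0,\e_1)$
\[
B_{\e r_{\mathcal V}/2}\subset\Phi(\e\mathcal V)\subset B_{2\e R_{\mathcal V}};
\]
the right inclusion is direct, while the left one follows from the observation that any $y\in B_{\e r_{\mathcal V}/2}$ satisfies $|\Phi^{-1}(y)|\leq \e r_{\mathcal V}$, hence $\Phi^{-1}(y)\in\e\mathcal V$. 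Setting $\kappa=\max\{2R_{\mathcal V},2/r_{\mathcal V}\}$ yields \eqref{eq:15}.

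For \eqref{eq:17}, letting $G\in C^{1,1}(\R^N)$ be a defining function of $\mathcal V$ with $\nabla G\neq 0$ on $\partial\mathcal V$ and $G_\e(x)=G(\Phi^{-1}(x)/\e)$, I would use that $\partial(\Phi(\e\mathcal V))=\{G_\e=0\}$, that $x\cdot\nabla G_\e(x)\geq \tilde\sigma/2$ on this boundary for $\e$ small (as in the proof of Lemma \ref{l:eV-star-shaped}(ii)), and the chain rule estimate
\[
|\nabla G_\e(x)|=\tfrac{1}{\e}\bigl|(\nabla G)(\Phi^{-1}(x)/\e)\,J_{\Phi^{-1}}(x)\bigr|\leq \frac{C}{\e},
\]
which is uniform in $\e$ by compactness of $\partial\mathcal V$ and \eqref{eq:10}. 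Differentiating $G_\e(\rho_\e(\theta)\theta)=0$ in a tangent direction $v\in T_\theta\S^{N-1}$ gives
\[
(\nabla_{\S^{N-1}}\rho_\e(\theta)\cdot v)\bigl(\theta\cdot\nabla G_\e(\rho_\e(\theta)\theta)\bigr)+\rho_\e(\theta)\,v\cdot\nabla G_\e(\rho_\e(\theta)\theta)=0.
\]
Since $\theta\cdot\nabla G_\e(\rho_\e(\theta)\theta)=\rho_\e(\theta)^{-1}(\rho_\e(\theta)\theta)\cdot\nabla G_\e(\rho_\e(\theta)\theta)\geq \tilde\sigma/(2\rho_\e(\theta))$, isolating $\nabla_{\S^{N-1}}\rho_\e\cdot v$ and applying the upper bounds for $|\nabla G_\e|$ and $\rho_\e$ yields $|\nabla_{\S^{N-1}}\rho_\e(\theta)\cdot v|\leq \frac{2C\kappa^2}{\tilde\sigma}\,\e\,|v|$, from which \eqref{eq:17} follows after possibly enlarging $\kappa$.

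I do not expect a serious obstacle; the main point to be careful about is showing that every constant involved is independent of $\e$. This comes from the uniform positivity $x\cdot \nabla G(x)\geq \tilde\sigma$ on the compact set $\partial\mathcal V$ combined with the fact that the deviation of $\Phi^{-1}$ from the identity in \eqref{eq:10} is intrinsic to the straightening diffeomorphism and does not depend on $\e$, so that the constants $\kappa$ and $C$ produced above may be fixed once and for all.
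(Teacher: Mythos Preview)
Your proposal is correct and follows essentially the same approach as the paper. For \eqref{eq:15} both arguments compare $\Phi(\e\mathcal V)$ with concentric balls via \eqref{eq:11}--\eqref{eq:10}; for \eqref{eq:17} the paper phrases the argument through the proportionality of the gradients of $G_\e$ and of $x\mapsto |x|-\rho_\e(x/|x|)$ on $\partial(\Phi(\e\mathcal V))$, while you carry out the equivalent implicit differentiation of $G_\e(\rho_\e(\theta)\theta)=0$ directly, but the key inputs---the uniform lower bound $x\cdot\nabla G_\e(x)\geq\tilde\sigma/2$ from Lemma \ref{l:eV-star-shaped}(ii) and the uniform upper bound $|\nabla G_\e|\leq C/\e$---are identical.
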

\begin{proof}
Estimate \eqref{eq:15} follows from \eqref{eq:11} and the fact that,
being $\mathcal V$ a bounded open set containing $0$,
$0<\inf_{x\in\partial\mathcal V}|x|\leq\sup_{x\in\partial\mathcal V}|x|<+\infty$.

To prove \eqref{eq:17}, we observe that, by \eqref{eq:rho-eps} and \eqref{eq:18},
\begin{equation*}
 \partial (\Phi(\e\mathcal
  V))=\left\{x\in\R^N\setminus\{0\}:|x|-\rho_\e\bigg(\frac{x}{|x|}\bigg)=0\right\}
= \left\{x\in\R^N\setminus\{0\}:G\bigg(\frac{\Phi^{-1}(x)}{\e}\bigg)=0\right\},
\end{equation*}
with $G$ being as in the proof of Lemma \ref{l:eV-star-shaped}.
Therefore, for every $\e\in(0,\e_0)$ and $x\in \partial (\Phi(\e\mathcal
  V))$, there exists $c_\e(x)\in\R$ such that 
\begin{equation}\label{eq:19}
\frac{1}\e\, \nabla G\bigg(\frac{\Phi^{-1}(x)}{\e}\bigg)
  J_{\Phi^{-1}}(x)=c_\e(x)\left(\frac{x}{|x|}-\frac1{|x|}\nabla_{\S^{N-1}}\rho_\e\bigg(\frac{x}{|x|}\bigg)\right).
\end{equation}
Hence, from \eqref{eq:19}, \eqref{eq:9}, \eqref{eq:10}, and \eqref{eq:15} we deduce 
\begin{align}\label{eq:20}
c_\e(x)|x|&= \nabla G\bigg(\frac{\Phi^{-1}(x)}{\e}\bigg)
  J_{\Phi^{-1}}(x)\cdot \frac{x}\e\\
\notag&=\nabla G\bigg(\frac{\Phi^{-1}(x)}{\e}\bigg)\cdot \frac{\Phi^{-1}(x)}{\e}+
\frac1\e\,\nabla G\bigg(\frac{\Phi^{-1}(x)}{\e}\bigg)\cdot
\left(
  J_{\Phi^{-1}}(x)x-\Phi^{-1}(x)\right)\\
\notag&\geq\tilde \sigma +
\frac1\e\,O(|x|^2)=\tilde\sigma +O(\e)>\frac{\tilde\sigma}{2}
\quad\text{for all $x\in \partial (\Phi(\e\mathcal
  V))$}
\end{align}
if $\e$ is sufficiently small.
On the other hand, multiplying both sides of \eqref{eq:19} by
$\nabla_{\S^{N-1}}\rho_\e\big(\tfrac{x}{|x|}\big)$ we obtain that 
\begin{equation*}
\frac{1}\e\, \nabla G\bigg(\frac{\Phi^{-1}(x)}{\e}\bigg)
  J_{\Phi^{-1}}(x)\cdot
  \nabla_{\S^{N-1}}\rho_\e\bigg(\frac{x}{|x|}\bigg)
=-\frac{c_\e(x)}{|x|}\bigg|\nabla_{\S^{N-1}}\rho_\e\bigg(\frac{x}{|x|}\bigg)\bigg|^2
\end{equation*}
and hence, in view of \eqref{eq:20}, 
\begin{align*}
\frac{\tilde\sigma}{2|x|^2}\bigg|\nabla_{\S^{N-1}}\rho_\e\bigg(\frac{x}{|x|}\bigg)\bigg|^2
\leq
  \frac{|c_\e(x)|}{|x|}\bigg|\nabla_{\S^{N-1}}\rho_\e\bigg(\frac{x}{|x|}\bigg)\bigg|^2\leq
  {\rm const\,}\frac1\e\, \bigg|\nabla_{\S^{N-1}}\rho_\e\bigg(\frac{x}{|x|}\bigg)\bigg|
\end{align*}
for all $x\in \partial (\Phi(\e\mathcal
  V))$ and for some ${\rm const\,}>0$ independent of $x$ and
  $\e$. Therefore, taking into account \eqref{eq:15}, we have that,
for all $x\in \partial (\Phi(\e\mathcal
  V))$, 
\[
\bigg|\nabla_{\S^{N-1}}\rho_\e\bigg(\frac{x}{|x|}\bigg)\bigg|\leq 
{\rm const\,}\frac{2|x|^2}{\tilde \sigma\,\e}\leq {\rm const\,} \e,
\]
thus proving \eqref{eq:17}.
\end{proof}

\begin{remark}\label{rem:gis}
Lemma \ref{l:rho-eps} implies that $\widetilde\Sigma_\e\subset
B'_{\kappa\e}$ for all $\e\in(0,\e_1)$.
\end{remark}

We conclude this section with the following refined Poincar\'e-type inequality  for function vanishing on the
crack $\widetilde \Gamma_{\e,r_1}$.
\begin{lemma}\label{l:poin-refined}
   For all $\tau\in(0,1)$ there exists $M_\tau>1$ such that, for 
all $r>0$ and 
$\e<\min\{\e_1, \frac{r}{\kappa M_\tau}\}$,
\begin{equation*}
    \frac{1-\tau}{r}\int_{\partial B_r}u^2\ds\leq
    \int_{B_r}|\nabla u|^2\dx\quad\text{for all } u\in  H^1_{0, \widetilde\Gamma_{\e,r}}(B_{r})
  \end{equation*}
and 
  \begin{equation*}
    \frac{N-1}{r^2}\int_{B_r}u^2\dx\leq\left(1+\frac{1}{1-\tau}\right)
    \int_{B_r}|\nabla u|^2\dx\quad\text{for all } u\in  H^1_{0, \widetilde\Gamma_{\e,r}}(B_{r}).
  \end{equation*}
\end{lemma}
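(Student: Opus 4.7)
The plan is to prove the first inequality by a contradiction/compactness argument exploiting the Mosco-type convergence recorded in Remark~\ref{rem:Mosco}, and then to deduce the second inequality from the first via a Pohozaev-type identity. For the first inequality, suppose by contradiction that it fails for some fixed $\tau\in(0,1)$: then there exist sequences $M_n\to+\infty$, $r_n>0$, and $0<\e_n<\min\{\e_1,\,r_n/(\kappa M_n)\}$, together with $u_n\in H^1_{0,\widetilde\Gamma_{\e_n,r_n}}(B_{r_n})$ such that
\[
\int_{B_{r_n}}|\nabla u_n|^2\dx<\frac{1-\tau}{r_n}\int_{\partial B_{r_n}}u_n^2\ds.
\]
Rescale via $v_n(x):=u_n(r_nx)$ and normalize so that $\int_{\partial B_1}v_n^2\ds=1$; then $v_n\in H^1(B_1)$ with $\int_{B_1}|\nabla v_n|^2\dx<1-\tau$, and by Remark~\ref{rem:gis} the function $v_n$ vanishes in the trace sense on $\overline{B_1'}\setminus B'_{1/M_n}$.

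By boundedness of $\|v_n\|_{H^1(B_1)}$, up to a subsequence $v_n\rightharpoonup v$ weakly in $H^1(B_1)$ and $v_n\to v$ strongly in $L^2(\partial B_1)$, so that $\int_{\partial B_1}v^2\ds=1$. To identify the limit, for any $\varphi\in C_c^\infty(B_1'\setminus\{0\})$ one eventually has $\supp\varphi\cap B'_{1/M_n}=\emptyset$, hence $\int_{B_1'}v_n\varphi\dx'=0$; passing to the limit gives that the trace of $v$ vanishes on $B_1'\setminus\{0\}$ and therefore on $B_1'$, since $\{0\}$ has zero $(N-1)$-dimensional measure. Thus $v\in H^1_{0,B_1'}(B_1)$. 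For functions in this space the sharp trace-Poincar\'e inequality
\[
\int_{\partial B_1}w^2\ds\leq\int_{B_1}|\nabla w|^2\dx
\]
holds, as can be verified by decomposing $w$ along the spherical harmonics odd with respect to $\{x_N=0\}$ (equality being attained by $w=x_N$). Combined with weak lower semicontinuity of the Dirichlet integral, this yields $1=\int_{\partial B_1}v^2\ds\leq\int_{B_1}|\nabla v|^2\dx\leq 1-\tau$, a contradiction.

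For the second inequality, given $u\in H^1_{0,\widetilde\Gamma_{\e,r}}(B_r)$, apply the divergence theorem to the $W^{1,1}(B_r)$-vector field $u^2x$ on the whole ball $B_r$ (the crack $\overline{B_r'}$ lies in the interior of $B_r$ and thus contributes no boundary term):
\[
N\int_{B_r}u^2\dx+2\int_{B_r}u\,\nabla u\cdot x\dx=r\int_{\partial B_r}u^2\ds.
\]
Using the first inequality in the form $r\int_{\partial B_r}u^2\ds\leq\tfrac{r^2}{1-\tau}\int_{B_r}|\nabla u|^2\dx$, together with Young's inequality $-2\int_{B_r}u\,\nabla u\cdot x\dx\leq \int_{B_r}u^2\dx+r^2\int_{B_r}|\nabla u|^2\dx$, one obtains
\[
(N-1)\int_{B_r}u^2\dx\leq r^2\Bigl(1+\tfrac{1}{1-\tau}\Bigr)\int_{B_r}|\nabla u|^2\dx,
\]
which is the second bound. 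The main obstacle is the trace identification in the first step, that is, converting the pointwise vanishing of $v_n$ outside the shrinking set $B'_{1/M_n}$ into a trace condition on the full $B_1'$: this is precisely the Mosco convergence of Remark~\ref{rem:Mosco}, which here is particularly transparent since the ``free'' portion collapses to the single point $\{0\}$, which has zero $(N-1)$-capacity in dimension $N\geq 3$.
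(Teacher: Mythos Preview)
Your proof is correct and self-contained, whereas the paper simply cites \cite[Lemma~4.2, Corollaries~4.3 and~4.4]{FO}, invoking only the inclusion $B_r'\setminus B'_{\kappa\e}\subseteq\widetilde\Gamma_{\e,r}$ from Remark~\ref{rem:gis}. Your compactness/contradiction argument for the first inequality and the Rellich-type identity $\dive(u^2x)=Nu^2+2u\,\nabla u\cdot x$ for the second are the standard route and presumably close to what \cite{FO} does; the main advantage of your write-up is that it makes explicit where the hypothesis $\e<r/(\kappa M_\tau)$ enters (it forces the rescaled Neumann window to collapse to a point). One small point worth tightening: your justification of the sharp trace inequality $\int_{\partial B_1}w^2\leq\int_{B_1}|\nabla w|^2$ for $w$ vanishing on $B_1'$ via ``spherical harmonics odd in $x_N$'' is a bit loose, since $w$ itself is neither harmonic nor odd in general; the clean argument is to treat $w|_{B_1^+}$ and $w|_{B_1^-}$ separately, oddly reflect each to an $H^1(B_1)$-function with zero mean on $\partial B_1$, and then invoke the fact that the first nonzero Steklov eigenvalue of the unit ball equals~$1$.
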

\begin{proof}
It follows directly from \cite[Lemma 4.2, Corollaries 4.3 and
4.4]{FO}, recalling that \Cref{l:rho-eps} implies
$B_r'\setminus B_{\kappa\e}'\subseteq \widetilde\Gamma_{\e,r}$ for
$\e<\min\{\e_1, \frac{r}{\kappa}\}$, as also observed in Remark  \ref{rem:gis}.
\end{proof}

\section{A Pohozaev-type inequality}\label{sec:pohoz-type-ineq}

 Pohozaev-type identites play a pivotal role in the differentiation of the frequency function; indeed, by the coarea formula,
\[
	\frac{\d}{\dr}\int_{B_r}\widetilde{A}\nabla v\cdot\nabla v\dx=\int_{\partial B_r}\widetilde{A}\nabla v\cdot\nabla v\dx
\]
and the Pohozaev identity allows to rewrite the latter boundary
integral in terms of volume integrals and boundary integrals of normal
derivatives. Therefore, this section is devoted to the proof of a
Pohozaev-type inequality for solutions to problem
\eqref{eq:reflected}.

Let $\widetilde A$ be the matrix-valued function introduced 
 in \eqref{eq:A}--\eqref{eq:tilde}. 
From \eqref{eq:5} and \eqref{eq:4} it follows easily that 
\begin{equation}\label{eq:33}
\widetilde A(0)= I_{N},\quad 
\widetilde A(y)=I_{N}+O(|y|)\quad\text{as }|y|\to0.
\end{equation}
Recalling that $r_1$ was defined in \eqref{eq:r1_def}, we define, for all $y\in B_{r_1}$,
\begin{equation}\label{eq:mu-beta}
\mu(y)=\frac{\widetilde A(y)y\cdot y}{|y|^2}\quad\text{and}\quad 
\bm{b}(y)=\frac1{\mu(y)}\, \widetilde A(y)y.
\end{equation}
From  \eqref{eq:4} and \eqref{eq:33} we have that
\begin{equation}\label{eq:38}
	\mu\in C^{0,1}(B_{r_1}), \quad \mu(y)=1+O(|y|),
	\quad \nabla\mu(y)=O(1)\quad\text{as }|y|\to0,
\end{equation}
and
\begin{equation}\label{eq:37}
	\bm{b}(y)=y+O(|y|^2), \quad 
	J_{\bm{b}}(y)=I_N+O(|y|), \quad
	\dive{\bm{b}}(y)=N+O(|y|)
	\quad\text{as }|y|\to0.
\end{equation}
Furthermore we have that, possibly choosing
$r_1$ smaller, for every $y\in B_{r_1}$
\begin{gather}
  \frac{1}{2}|\xi|^2\leq \widetilde{A}(y)\xi \cdot \xi	\leq
  \frac{3}{2}|\xi|^2~
  \text{for all }\xi\in\R^N, \label{eq:bound_A} \\
	 \frac{1}{2}\leq \mu(y)\leq \frac{3}{2}, \label{eq:bound_mu} \\
	 \frac{1}{2}\leq \widetilde{p}(y)\leq \frac{3}{2}. \label{eq:bound_p}
\end{gather}
Finally we have that
\begin{equation}\label{eq:34}
  {\bm{b}}(y)\cdot\frac{y}{|y|}=|y|\quad\text{for all }y\in
  B_{r_1}.
\end{equation}

\begin{proposition}\label{p:poho}
Let $n_0$ be as in \eqref{eq:simple_hp} and let $\e_1, \kappa$ be as in Lemma \ref{l:rho-eps}.
For $i=1,\dots,n_0$ and $\e\in (0,\e_1)$, let $v_i^\e$ solve
\eqref{eq:reflected}. There exists $\tilde{r}\in (0,r_1)$ such
that, for all $\e\in (0,\min\{\e_1,\tilde r/\kappa\})$ and a.e. $r\in(\kappa \e,\tilde{r})$,
  \begin{multline}
 r \!\int_{\partial B_r}\!\!\widetilde A\nabla v_i^\e \!\cdot \!\nabla v_i^\e\ds
-\!\int_{B_r}\!\!\Big((\dive\bm{b}) \widetilde A\nabla v_i^\e \!\cdot \!\nabla
  v_i^\e-2J_{\bm{b}}(\widetilde A\nabla v_i^\e) \!\cdot \! \nabla v_i^\e+(d \widetilde A \nabla
  v_i^\e \nabla v_i^\e) \!\cdot \!\bm{b}\Big)\dy\\
  \geq 2r
\int_{\partial B_r}\frac1{\mu}{|\widetilde A\nabla v_i^\e\cdot\nnu|^2}\ds+
2\lambda_i^\e\int_{B_r}\widetilde p(\bm{b}\cdot \nabla v_i^\e) v_i^\e\dy. \label{eq:35}
\end{multline}
 \end{proposition}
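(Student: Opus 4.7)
Since $v_i^\e$ typically fails to be $H^2$ across the Dirichlet--Neumann junction $\partial\widetilde\Sigma_\e$, I would prove \eqref{eq:35} by an approximation scheme, as outlined in Remark~\ref{rmk:dimension}. The plan is to (i) construct a family of regularized problems whose solutions $v^{(n)}$ admit pointwise Rellich--Necas identities, (ii) perform the Pohozaev computation for $v^{(n)}$, and (iii) pass to the limit, showing that the boundary remainder produced by the regularization has a favorable sign by virtue of strict star-shapedness \eqref{eq:V-star-shaped}.

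For step~(i), using the $C^{1,1}$ strictly star-shaped parametrization of $\partial\Phi(\e\mathcal V)$ provided by Lemma~\ref{l:rho-eps} and Corollary~\ref{cor:starshaped}, I would shrink the Neumann region radially, for instance defining $\widetilde\Sigma_\e^{(n)}\subset\subset\widetilde\Sigma_\e$ via the radial function $(1-1/n)\rho_\e$. Let $v^{(n)}$ solve the analogue of \eqref{eq:reflected} with Neumann part $\widetilde\Sigma_\e^{(n)}$ and Dirichlet condition on the complement. Since $\partial\widetilde\Sigma_\e^{(n)}$ lies in the interior of the original Neumann region, classical elliptic regularity yields enough smoothness of $v^{(n)}$ near $\partial\widetilde\Sigma_\e^{(n)}$ to apply the Rellich--Necas identity pointwise; moreover, Mosco convergence (cf.\ Remark~\ref{rem:Mosco}) forces $v^{(n)}\to v_i^\e$ in $H^1(B_{r_1})$ as $n\to\infty$.

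For step~(ii), I would multiply \eqref{eq:rellich_necas} applied to $v^{(n)}$ by $\bm{b}\cdot\nabla v^{(n)}$, integrate over $B_r$, substitute $-\dive(\widetilde A\nabla v^{(n)}) = \lambda_i^\e\widetilde p\, v^{(n)}$, and invoke the divergence theorem. Property \eqref{eq:34} together with the definition of $\mu$ turns the $\partial B_r$ contributions into exactly the two boundary integrals in \eqref{eq:35}, while the volume terms match those of \eqref{eq:35}. The interface $\{y_N=0\}\cap B_r$ must be handled by splitting $B_r$ into $B_r^\pm$ and summing; on the flat Dirichlet piece $(\{y_N=0\}\cap B_r)\setminus\widetilde\Sigma_\e^{(n)}$ the two contributions cancel by the even symmetry of $v^{(n)}$, since $b_N(y',0)=0$ (a direct consequence of the block structure \eqref{eq:5}--\eqref{eq:A_coeff}), so $\bm{b}$ is tangent to the hyperplane.

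For step~(iii), the only residual contribution is a boundary integral on the annulus $\widetilde\Sigma_\e\setminus\widetilde\Sigma_\e^{(n)}\subset\{y_N=0\}$ (formerly Neumann, now Dirichlet for $v^{(n)}$). It has the form $-\int_{\widetilde\Sigma_\e\setminus\widetilde\Sigma_\e^{(n)}}(\bm{b}(y',0)\cdot\nnu_\partial(y'))(\widetilde A\nabla v^{(n)}\cdot\nabla v^{(n)})\,\ds$ (up to harmless lower-order remainders absorbed in the limit), where $\nnu_\partial$ is the outward unit normal to $\widetilde\Sigma_\e^{(n)}$ within $\partial\R^N_+$. The main obstacle, and the precise reason for assumption \eqref{eq:V-star-shaped}, is to show that this term is nonpositive so that discarding it upgrades the identity for $v^{(n)}$ into the inequality \eqref{eq:35}. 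Since $\bm{b}(y',0) = \widetilde A(y',0)(y',0)/\mu(y',0)$ with $\widetilde A(y',0)$ positive-definite and close to $I_N$ for small $|y'|$, and the strict star-shapedness of $\widetilde\Sigma_\e^{(n)}$ (inherited from Corollary~\ref{cor:starshaped}) gives $y'\cdot\nnu_\partial(y')\geq\sigma'>0$, one obtains $\bm{b}(y',0)\cdot\nnu_\partial(y')\geq 0$. Passing to the limit $n\to\infty$, using the $H^1$-convergence above and the resulting trace convergence on $\partial B_r$ for a.e.\ $r\in(\kappa\e,\tilde r)$, finally yields \eqref{eq:35}.
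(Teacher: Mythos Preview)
Your regularization does not actually regularize. In step~(i) you replace the Neumann region $\widetilde\Sigma_\e$ by a smaller one $\widetilde\Sigma_\e^{(n)}$ and solve the corresponding mixed problem for $v^{(n)}$. But $v^{(n)}$ then has its \emph{own} Dirichlet--Neumann junction at $\partial\widetilde\Sigma_\e^{(n)}$, and exhibits the same square-root-type singularity there; it is generically not $H^2$ in any neighbourhood of that set. The fact that $\partial\widetilde\Sigma_\e^{(n)}$ lies inside the original Neumann region is irrelevant, since the singularity is intrinsic to the mixed problem solved by $v^{(n)}$. So you cannot integrate the Rellich--Ne\u{c}as identity for $v^{(n)}$ any more than you could for $v_i^\e$. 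Relatedly, your step~(iii) residual term is not what the divergence theorem would produce: on the flat Dirichlet piece $\widetilde\Sigma_\e\setminus\widetilde\Sigma_\e^{(n)}$ the outward normal to $B_r^+$ is $-\bm{e}_N$, and since $b_N(y',0)=0$ and $\nabla v^{(n)}$ is purely normal there, both boundary terms vanish; no in-plane normal $\nnu_\partial$ appears.

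The paper's approximation is of a different nature: it thickens the crack in the $y_N$-direction, replacing $B_r\setminus\widetilde\Gamma_\e$ by a genuinely smooth $N$-dimensional domain $D_{\e,r}^n=\{|y'|<\rho_\e(y'/|y'|)+h_n(y_N)\}\cap B_r$ without any crack. On $D_{\e,r}^n$ one solves a \emph{pure Dirichlet} problem (Lemma~\ref{l:approx}) whose solution $w_n$ is $H^2$ by classical regularity, so the Pohozaev identity holds as an equality on $D_{\e,r}^n$. The extra boundary $\Gamma_{\e,r}^n$ is a smooth hypersurface in $\R^N$ (not contained in $\{y_N=0\}$); since $w_n=0$ there, $\nabla w_n$ is normal, and the resulting boundary term has the sign of $\widetilde A y\cdot\nnu$, which is shown to be positive in Lemma~\ref{l:ay-nu} via the star-shapedness and the estimates \eqref{eq:15}--\eqref{eq:17}. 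Dropping this term gives the inequality \eqref{eq:32} for $w_n$; one then passes to the limit using $w_n\to v_i^\e$ in $H^1(B_{\tilde r})$.
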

 The proof of a Pohozaev-type identity for an equation of type
\eqref{eq:reflected} is classically based on the integration 
by Divergence Theorem of the following Rellich-Ne\u{c}as identity
\begin{multline}\label{eq:rellich_necas}
  \dive\big((\widetilde A\nabla w\cdot\nabla w)\bm{b}-
  2(\bm{b}\cdot\nabla w) \widetilde A \nabla w\big)=
  (\dive\bm{b}) \widetilde A\nabla w\cdot\nabla
  w-2(J_{\bm{b}}\widetilde A\nabla w)
    \cdot\nabla w\\
    +(d \widetilde A \nabla w\nabla w)\cdot\bm{b}
-2(\bm{b}\cdot\nabla w)\dive(\widetilde A\nabla w),
\end{multline}
which holds in a distributional sense for any $H^2$-function $w$.
Nevertheless, the highly non-smoothness of the cracked domain
$B_{r_1}\setminus \widetilde \Gamma_{\e,r_1}$, on which equation
\eqref{eq:reflected} is satisfied, prevents us from the direct use of the
Divergence Theorem, because of  lack of regularity of the solution $v_i^\e$, which could
indeed fail to be $H^2$.  

In order to overcome this regularity issue, we perform an approximation process.

\subsection{Regular approximation of the cracked
domain}\label{sec:regul-appr-crack}

The first step of our regularization procedure relies in the
construction of a family of sets which approximate the cracked
domain, being of class $C^{1,1}$ and star-shaped with respect to the origin.

To this aim, let us first consider the sequence of functions
\[
h_n:\R\to\R,\quad 
h_n(t)=\left(n^2t^2+\frac1{n^2}\right)^{\!\!1/8}.
\]
  By direct computations it is easy to verify that 
\begin{equation}\label{eq:22}
  h_n(t)\geq 4th_n'(t)\quad\text{for all }t\in \R
\end{equation}
and 
\begin{equation}\label{eq:21}
  \text{if $(y',y_N)\in B_{r}$ and $|y'|\geq h_n(y_N)$, \quad then
    $|y_N|\leq \frac{r^4}n$.}
\end{equation}
Recall the definition of $\rho_\e$ in \eqref{eq:rho-eps}.
For every $\e\in (0,\min\{\e_1,r_1/\kappa\})$,  $r\in(\kappa\e,r_1]$, and
$n\in\N_*$ we define 
\begin{align*}
&D_{\e,r}^n=\big\{x=(x',x_N)\in
  B_r: |x'|< \rho_\e\big(\tfrac{x'}{|x'|}\big)+ h_n(x_N) \big\} ,\\
&\Gamma_{\e,r}^n=\big\{x=(x',x_N)\in
  B_r:|x'|= \rho_\e\big(\tfrac{x'}{|x'|}\big)+ h_n(x_N) \big\} \subset\partial
  D_{\e,r}^n,\\
&S_{\e,r}^n=\partial D_{\e,r}^n\setminus \Gamma_{\e,r}^n.
\end{align*}
We note that, for   $\e\in (0,\min\{\e_1,r_1/\kappa\})$ and  $r\in(\kappa\e, r_1]$
fixed, $D_{\e,r}^n\neq B_r$ and $\Gamma_{\e,r}^n$ is not empty provided $n$ is sufficiently large.

\begin{lemma}\label{l:ay-nu}
There exists $r_2\in(0,r_1)$ such that, 
for every $\e\in (0,\min\{\e_1,r_2/\kappa\})$ (with $\kappa$ as in Lemma \ref{l:rho-eps}), there exists
 $n_\e\in\N_*$ such that $A(y)y\cdot\nnu(y)> 0$ on $\Gamma_{\e,r_2}^n$
 for all $n\geq n_\e$, where $\nnu(y)$ is the exterior unit normal at $y\in\partial D_{\e,r_2}^n$. 
\end{lemma}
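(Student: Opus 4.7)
\textbf{Proof plan for Lemma \ref{l:ay-nu}.}

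The plan is to view $\Gamma_{\e,r}^n$ as the level set $\{\Psi=0\}$ of
$\Psi(y):=|y'|-\hat\rho_\e(y')-h_n(y_N)$, where $\hat\rho_\e(y'):=\rho_\e(y'/|y'|)$ is the positively $0$-homogeneous extension to $\R^{N-1}\setminus\{0\}$ of the restriction of $\rho_\e$ to the equatorial sphere. Since $\Psi<0$ inside $D^n_{\e,r}$, the exterior unit normal is $\nnu=\nabla\Psi/|\nabla\Psi|$, so it suffices to show $A(y)\,y\cdot\nabla\Psi(y)>0$ on $\Gamma^n_{\e,r_2}$ (implicitly using $\widetilde A$ on the $y_N<0$ side).

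The key starting observation is that $\hat\rho_\e$ is $0$-homogeneous in $y'$, so Euler's formula yields $y'\cdot\nabla_{y'}\hat\rho_\e(y')=0$. Combined with the crucial inequality \eqref{eq:22}, which gives $y_Nh_n'(y_N)\leq h_n(y_N)/4$, this produces the clean lower bound
\[
y\cdot\nabla\Psi(y) \;=\; |y'|-y_Nh_n'(y_N) \;\geq\; \hat\rho_\e(y')+\tfrac34 h_n(y_N) \;\geq\; \e/\kappa \;>\; 0
\]
on $\Gamma^n_{\e,r}$, using the lower bound $\rho_\e\geq\e/\kappa$ from Lemma \ref{l:rho-eps}. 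This handles the identity matrix term in $A$.

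Next, we expand $A(y)y\cdot\nabla\Psi = y\cdot\nabla\Psi+((A-I_N)y)\cdot\nabla\Psi$ and control the correction. For the first $N-1$ components of $\nabla\Psi$, the gradient estimate $|\nabla_{\S^{N-1}}\rho_\e|\leq\kappa\e$ from Lemma \ref{l:rho-eps} combined with $|y'|\geq\rho_\e\geq\e/\kappa$ on $\Gamma^n_{\e,r}$ gives $|\nabla_{y'}\hat\rho_\e(y')|\leq\kappa^2$, so these components of $\nabla\Psi$ are bounded, and their contribution is $O(|y|^2)$ since $A-I_N=O(|y|)$ by \eqref{eq:33}. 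The delicate term is the $N$-th component, which carries the potentially large factor $h_n'(y_N)$. Here the structural identities \eqref{eq:5} and \eqref{eq:A_coeff}, which ensure $a_{N,j}(y',0)=0$ for $j<N$, force the $(N,j)$-entries of $A-I_N$ to be $O(|y_N|)$ (and the same structure survives the even reflection defining $\widetilde A$). Hence
\[
((A-I_N)y)_N \;=\; O(|y_N|)(|y'|+|y_N|),
\]
and applying \eqref{eq:22} once more to absorb $h_n'$ via $|y_N|h_n'(y_N)\leq h_n(y_N)/4$, this piece contributes at most $O(|y|)\,h_n(y_N)+O(|y_N|)$.

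Collecting these estimates, on $\Gamma^n_{\e,r_2}$ there exist constants $C_1,C_2>0$ independent of $\e$ and $n$ such that
\[
A(y)\,y\cdot\nabla\Psi(y) \;\geq\; \hat\rho_\e(1-C_1r_2) + h_n(y_N)\bigl(\tfrac34-(C_1+C_2)r_2\bigr) - C_1 r_2\,|y_N|.
\]
The plan is to first fix $r_2\in(0,r_1)$ small enough that the first two parenthetical coefficients are bounded below by, say, $1/2$ and $1/4$ respectively. Then, since on $\Gamma^n_{\e,r_2}$ we have $|y'|=\hat\rho_\e+h_n(y_N)\geq h_n(y_N)$, property \eqref{eq:21} applies and gives $|y_N|\leq r_2^4/n$. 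Using $\hat\rho_\e\geq\e/\kappa$, this yields $A(y)y\cdot\nabla\Psi\geq\e/(2\kappa)-C/n$, which is positive once $n$ exceeds some threshold $n_\e$ depending on $\e$, completing the proof. The main obstacle is the blow-up of $|\nabla\Psi|$ as $y_N\to 0$ through $h_n'$; the linchpin making the estimate close is precisely the degeneration $a_{N,j}(y',0)=0$ of the off-diagonal block of the ellipticity matrix on $\{y_N=0\}$, which cancels the singularity against the bound \eqref{eq:22}.
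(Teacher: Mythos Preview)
Your proposal is correct and follows essentially the same approach as the paper: both write the outward normal as a positive multiple of $\nabla G_\e^n$ (your $\Psi$), exploit \eqref{eq:22} to bound $y_Nh_n'(y_N)$ by $\tfrac14 h_n(y_N)$, use the $\rho_\e$-estimates \eqref{eq:15}--\eqref{eq:17} to control the tangential part and to bound $|y'|$ from below by $\e/\kappa$, invoke the structural cancellation \eqref{eq:A_coeff} so that the dangerous $h_n'$-contribution from the $N$-th row of $A-I_N$ carries an extra $|y_N|$ factor, and finally use \eqref{eq:21} to make the $|y_N|$-remainders $O(1/n)$. The only difference is organizational---you split $A=I_N+(A-I_N)$ while the paper computes $A(y)y$ directly from the block expansion \eqref{eq:5}---but the estimates and the final inequality $A(y)y\cdot\nabla\Psi\geq \e/(2\kappa)-C/n$ are the same.
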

\begin{proof}
Because of the definition of $\Gamma_{\e,r}^n$, we have that, if $y\in
\Gamma_{\e,r}^n$, 
\[
\nnu(y)=\frac{\nabla G_\e^n(y)}{\|\nabla G_\e^n(y)\|},
\]
where
$G_\e^n(y',y_N)=|y'|-h_n(y_N)-\rho_\e\big(\tfrac{y'}{|y'|}\big)$.
Then, to prove the lemma it is enough to show that, for some $r_2\in (0,r_1)$ sufficiently small and all $\e\in (0,\min\{\e_1,r_2/\kappa\})$,
$A(y)y\cdot\nabla G_\e^n(y)> 0$ for all $y\in \Gamma_{\e,r_2}^n$,
provided $n$ is sufficiently large.

We observe that 
\[
\nabla
G_\e^n(y',y_N)=\left(\frac{y'}{|y'|}-\frac1{|y'|}\nabla_{\S^{N-1}}
  \rho_\e\big(\tfrac{y'}{|y'|}\big),-h_n'(y_N)\right).
\]
From \eqref{eq:5} and \eqref{eq:alpha} it follows that, as $|y'|\to0$, $|y_N|\to0$, 
\begin{align*}
  A(y',y_N)(y',y_N)=\Big(
 y'+O(|y'|^3)+|y'|O(|y_N|)+O(y_N^2),\, 
  y_N+|y'|O(|y_N|) +O(y_N^2) \Big),
\end{align*}
so that 
\begin{align*}
  A(y',y_N)(y',y_N)\cdot \nabla
&G_\e^n(y',y_N)
=|y'|+O(|y'|^3)+O(|y'|^2)\big|\nabla_{\S^{N-1}}
  \rho_\e\big(\tfrac{y'}{|y'|}\big)\big|+|y'|O(|y_N|)\\
&+O(|y_N|) \big|\nabla_{\S^{N-1}}
  \rho_\e\big(\tfrac{y'}{|y'|}\big)\big|+O(y_N^2)\\
&+\frac{O(y_N^2)}{|y'|} \big|\nabla_{\S^{N-1}}
  \rho_\e\big(\tfrac{y'}{|y'|}\big)\big|-y_N h_n'(y_N)\big(1+O(|y'|)+O(y_N)\big),
\end{align*}
uniformly with respect to $\e\in(0,\e_1)$.
Therefore, in view of \eqref{eq:17},
\begin{multline}\label{eq:25}
  A(y',y_N)(y',y_N)\cdot \nabla
G_\e^n(y',y_N)
=F_1(y',y_N,\e)+|y'|O(|y_N|)\\
\quad +\e\, O(|y_N|)+O(y_N^2)+\e\,
  \frac{O(y_N^2)}{|y'|}-y_N h_n'(y_N)
F_2(y',y_N),
\end{multline}
where 
\[
F_1(y',y_N,\e)=
|y'|+O(|y'|^3)+\e\, O(|y'|^2)
\quad\text{and}\quad 
F_2(y',y_N)=1+O(|y'|)+O(y_N)
\]
as $|y'|\to0$ and $|y_N|\to0$ uniformly with respect to $\e\in(0,\e_1)$.

Let us choose $r_2\in (0,r_1)$ such that
\begin{equation}\label{eq:23}
F_1(y',y_N,\e)\geq\frac12|y'|\quad\text{and}\quad F_2(y',y_N)\leq 2\quad\text{for all
$(y',y_N)\in B_{r_2}$ and $\e\in(0,\e_1)$}.
\end{equation}
We note that, if $\e\in (0,\min\{\e_1,r_2/\kappa\})$, then
$\widetilde\Sigma_\e\subset B_{r_2}'$ and $\Gamma_{\e,r_2}^n$ is not
empty for $n$ sufficiently large. Moreover by \eqref{eq:15} we have that, if $(y',y_N)\in
\Gamma_{\e,r_2}^n$, then $|y'|\geq \rho_\e\big(\tfrac{y'}{|y'|}\big)\geq
\frac\e\kappa$, so that  
\begin{equation}\label{eq:24}
  \frac\e{|y'|}\leq\kappa\quad\text{for all }\e\in
  (0,\min\{\e_1,r_2/\kappa\}) \text{ and }(y',y_N)\in
\Gamma_{\e,r_2}^n.
\end{equation}
From \eqref{eq:21}, \eqref{eq:25}, \eqref{eq:23}, \eqref{eq:24}, 
the definition of $\Gamma_{\e,r_2}^n$, \eqref{eq:22}, and \eqref{eq:15}, we conclude that, 
for all $\e\in
  (0,\min\{\e_1,\frac{r_2}\kappa\})$ and $(y',y_N)\in \Gamma_{\e,r_2}^n$,
\begin{align*}
  A(y',y_N)(y',y_N)\cdot &\nabla G_\e^n(y',y_N)\geq \frac12|y'|-2y_N h_n'(y_N)+O\left(\frac1n\right)\\
                         &=\frac12\left(h_n(y_N)+\rho_\e\big(\tfrac{y'}{|y'|}\big)\right) -2y_N
                           h_n'(y_N)+O\left(\frac1n\right)\\
                         &\geq \frac12\left(h_n(y_N)-4y_N
                           h_n'(y_N) \right)+\frac{\e}{2\kappa}+O\left(\frac1n\right) \geq \frac{\e}{2\kappa}+O\left(\frac1n\right)
                           \quad \text{as }n\to\infty.
\end{align*}
From the above estimate it follows that, choosing $r_2$ as above,
for every $\e\in
  (0,\min\{\e_1,\frac{r_2}\kappa\})$ 
there exists $n_\e\in\N_*$  such that $A(y)y\cdot \nabla
G_\e^n(y)>0$ for all $y\in \Gamma_{\e,r_2}^n$ with $n\geq n_\e$, thus
completing the proof.
\end{proof}

Let $\tilde\alpha\in (0,1)$ and $\mu$ as in \eqref{eq:mu-beta}. In view of \eqref{eq:alpha}, \eqref{eq:5},
\eqref{eq:sigmatilde}, \eqref{eq:det-jac}, 
\eqref{eq:tilde}, \eqref{eq:38} and Lemma \ref{l:poin}, there exists
\begin{equation}\label{eq:scelta-tilder}
\tilde{r}\in \bigg(0,\min \bigg\{r_2,\frac{\tilde\alpha}{2\|\mu\|_{L^\infty(B_{r_2})}}\bigg\}\bigg)
\end{equation}
 such that
\begin{equation}\label{eq:uniforme-ell}
\widetilde A(y)\xi\cdot\xi\geq \tilde\alpha|\xi|^2\quad\text{for all
}\xi\in\R^N\text{ and }y\in B_{\tilde r}
\end{equation}
and,  for all $r\in(0,\tilde r]$,
\begin{equation}\label{eq:hp_coerc-bordo}
 \int_{B_r}\Big(\widetilde A\nabla w\cdot \nabla w-\lambda_{n_0}
 \widetilde p w^2\Big)\dx+\int_{\partial B_r}\mu w^2\ds\geq\tilde\alpha \int_{B_r}|\nabla
 w|^2\dx\quad\text{for all }w\in H^1(B_r).
\end{equation}
In particular, from \eqref{eq:hp_coerc-bordo} it follows that, for all $r\in (0,\tilde{r}]$
\begin{equation}\label{eq:hp_coerc}
 \int_{B_r}\widetilde A\nabla w\cdot \nabla w\dx-\lambda_{n_0}
 \int_{B_r}\widetilde p w^2\dx\geq\tilde\alpha \int_{B_r}|\nabla
 w|^2\dx\quad\text{for all }w\in H^1_0(B_r).
\end{equation}
We denote 
\[
D_{\e}^n=D_{\e,\tilde r}^n, \quad 
\Gamma_{\e}^n=\Gamma_{\e,\tilde r}^n,\quad 
S_{\e}^n=S_{\e,\tilde r}^n,\quad \widetilde \Gamma_{\e} =\overline{B_{\tilde r}'}\setminus\widetilde\Sigma_\e
.
\]

\subsection{\texorpdfstring{Regular approximation of $v_i^\e$}{Regular approximation of $v_i^\e$}}

Let $v_i^\e$ be the functions defined in \eqref{eq:veps_def}. Let us fix 
\[
i\in\{1,\dots,n_0\}\quad\text{and}\quad\e\in (0,\min\{\e_1,\tilde
r/\kappa\}).
\]
Since $v_i^\e\in H^1_{0, \widetilde\Gamma_\e}(B_{\tilde r})$, 
there exists a sequence of functions $v_n=v_{i,n}^\e$ such that  
\begin{equation}\label{eq:30}
  v_n\in
C_c^\infty(\overline{B_{\tilde r}}\setminus \widetilde\Gamma_\e)\quad \text{and}\quad
v_n\to v_i^\e\text{ in }H^1(B_{\tilde r})\text{ as }n\to\infty.
\end{equation}
The functions $v_n$ can be chosen in such a way that 
\begin{equation}\label{vn=0}
v_n(x',x_N)=0\quad \text{if $x'\in \widetilde\Gamma_\e$ and $|x_{N}|\leq\frac{\tilde{r}^4}{n}$}.
\end{equation}

\begin{remark}\label{rem:v_n_zero}
 We observe that $v_n\equiv0$ in
  $B_{\tilde r}\setminus D^n_\e$ (in particular $v_n$ has null trace
  on $\Gamma_\e^n$). Indeed, let
  $x=(x',x_N)\in B_{\tilde r}\setminus D^n_\e$. Then
\[
|x'|\geq h_n(x_N)+\rho_\e\big(\tfrac{x'}{|x'|}\big)>\rho_\e\big(\tfrac{x'}{|x'|}\big),
\]
so that $x'\in \widetilde\Gamma_\e$; moreover 
\[
\tilde r\geq |x'|\geq h_n(x_N)\geq n^{1/4}|x_N|^{1/4},
\]
so that $|x_{N}|\leq \tilde r^4/n$. Then  $v_n(x)=0$ in view of \eqref{vn=0}.
\end{remark}

We are going to construct a sequence of approximated solutions
$\{w_n\}_{n\in\mathbb{N}}$ on the sets $D_\e^n$.
Let $\tilde n_\e\geq n_\e$ be such that $\Gamma_{\e}^n$ is not
empty for all $n\geq \tilde n_\e$.

\begin{lemma}\label{l:approx}
 For every $n\geq \tilde n_\e$, there exists a unique weak
solution $w_n \in H^1(D_\e^n)$ to the problem
	\begin{equation}\label{eq:approx_n}
	\begin{cases}
-\dive (\widetilde{A}(y)\nabla w_n(y))=\lambda_i^\e
        \widetilde p(y) w_n(y),&\text{in }D_\e^n,\\
w_n=v_n,&\text{on }\partial D_\e^n.
\end{cases}
	\end{equation}
Furthermore, extending $w_n$ trivially to zero in
$B_{\tilde{r}}\setminus D_\e^n$, we have that
 \begin{equation}\label{eq:26}
w_n\to v_i^\e \quad\text{in} \quad H^1(B_{\tilde r})\quad\text{as $n\to+\infty$}.
\end{equation}
\end{lemma}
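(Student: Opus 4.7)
The plan is to establish existence and uniqueness by Lax--Milgram on $H^1_0(D_\e^n)$ and then to prove $H^1$-convergence to $v_i^\e$ via a weak-limit, identification, and energy-identity scheme. Writing $w_n=v_n+u_n$ with $u_n\in H^1_0(D_\e^n)$, the unknown $u_n$ must satisfy
\[
\int_{D_\e^n}\widetilde A\nabla u_n\cdot\nabla\phi\,\dy-\lambda_i^\e\int_{D_\e^n}\widetilde p\,u_n\phi\,\dy=-\int_{D_\e^n}\widetilde A\nabla v_n\cdot\nabla\phi\,\dy+\lambda_i^\e\int_{D_\e^n}\widetilde p\,v_n\phi\,\dy
\]
for all $\phi\in H^1_0(D_\e^n)$. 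Extending by zero identifies $H^1_0(D_\e^n)$ with a subspace of $H^1_0(B_{\tilde r})$, so the coercivity estimate \eqref{eq:hp_coerc}, combined with the bound $\lambda_i^\e\leq\lambda_{n_0}^\e\leq\lambda_{n_0}$ from \eqref{eq:28} and the Poincaré inequality, ensures coercivity of the bilinear form on $H^1_0(D_\e^n)$; Lax--Milgram then delivers a unique $u_n$, hence a unique $w_n$.

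Testing the above identity with $\phi=u_n$ yields the uniform bound $\|w_n\|_{H^1(B_{\tilde r})}\leq C$ (after zero extension). Up to a subsequence, $w_n\rightharpoonup w^\ast$ weakly in $H^1(B_{\tilde r})$ and strongly in $L^2(B_{\tilde r})$ by Rellich. Since $w_n-v_n\in H^1_0(B_{\tilde r})$ (a weakly closed subspace) and $v_n\to v_i^\e$ in $H^1(B_{\tilde r})$ by \eqref{eq:30}, one has $w^\ast-v_i^\e\in H^1_0(B_{\tilde r})$. For any $\phi\in C_c^\infty(B_{\tilde r}\setminus\widetilde\Gamma_\e)$, the set $B_{\tilde r}\setminus D_\e^n$ is contained in a shrinking tubular neighbourhood of $\widetilde\Gamma_\e$ (as the analysis of $h_n$ in the proof of Remark \ref{rem:v_n_zero} shows), so $\supp\phi\subset D_\e^n$ for $n$ large; inserting $\phi$ into the weak equation for $w_n$ and passing to the limit shows that $w^\ast$ satisfies the same weak equation as $v_i^\e$ on $B_{\tilde r}\setminus\widetilde\Gamma_\e$.

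The main obstacle is showing that $w^\ast$ has zero trace on $\widetilde\Gamma_\e$: by construction the extended $w_n$ vanishes on $\widetilde\Gamma_\e\setminus D_\e^n$, but on the thin slab $\widetilde\Gamma_\e\cap D_\e^n$ (which at $x_N=0$ reduces to the annulus $\rho_\e\leq|x'|<\rho_\e+n^{-1/4}$) it coincides with $w_n$ and need not vanish. However, the Sobolev trace embedding $H^1(B_{\tilde r})\hookrightarrow L^p(\widetilde\Gamma_\e)$ for some $p>2$ (available since $N\geq 3$) gives a uniform $L^p$-bound for $\{w_n|_{\widetilde\Gamma_\e}\}$, while $|\widetilde\Gamma_\e\cap D_\e^n|\to 0$; Hölder's inequality then forces $w_n|_{\widetilde\Gamma_\e}\to 0$ in $L^2(\widetilde\Gamma_\e)$, and by continuity of the trace operator under weak $H^1$-convergence, $w^\ast|_{\widetilde\Gamma_\e}=0$. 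Hence $w^\ast-v_i^\e$ lies in $H^1_0(B_{\tilde r})$, vanishes on the crack, and solves the associated homogeneous weak equation; testing against itself and applying \eqref{eq:hp_coerc} (combined with $\lambda_i^\e\leq\lambda_{n_0}$) forces $w^\ast=v_i^\e$.

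For strong convergence, testing the equation for $w_n$ with $u_n=w_n-v_n$ gives
\[
\int_{B_{\tilde r}}\widetilde A\nabla w_n\cdot\nabla w_n\,\dy=\int_{B_{\tilde r}}\widetilde A\nabla w_n\cdot\nabla v_n\,\dy+\lambda_i^\e\int_{B_{\tilde r}}\widetilde p\,w_n(w_n-v_n)\,\dy.
\]
Weak-strong convergence on the right-hand side (using $v_n\to v_i^\e$ in $H^1$ and $w_n\to v_i^\e$ in $L^2$) shows that the $\widetilde A$-weighted gradient norms of $w_n$ converge to those of $v_i^\e$. Combined with the already established weak $H^1$-convergence and the uniform ellipticity \eqref{eq:bound_A}, this yields $w_n\to v_i^\e$ strongly in $H^1(B_{\tilde r})$, and Urysohn's subsequence principle extends the convergence to the full sequence.
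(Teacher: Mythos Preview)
Your overall strategy—Lax--Milgram for existence, extraction of a weak limit, identification, upgrade to strong convergence via an energy identity—is sound and close to the paper's. The existence/uniqueness step, the uniform bound, and the derivation of the equation for the weak limit $w^\ast$ (using that $\supp\phi\subset D_\e^n$ eventually) are all correct.

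There is, however, a genuine gap in the identification step. Your H\"older/trace argument shows that $w^\ast$ has vanishing $L^2$-trace on $\widetilde\Gamma_\e$, and you then write ``testing against itself'' to conclude $w^\ast=v_i^\e$. But to use $\phi=w^\ast-v_i^\e$ as a test function in the weak formulation (either \eqref{eq:vie} for $v_i^\e$ or the limiting equation for $w^\ast$), you need $w^\ast-v_i^\e\in H^1_{0,\partial B_{\tilde r}\cup\widetilde\Gamma_\e}(B_{\tilde r})$, which is defined as the $H^1$-\emph{closure} of smooth functions supported away from the crack. Vanishing $L^2$-trace on the \emph{internal} crack $\widetilde\Gamma_\e$ does not automatically give membership in this closure space: the trace characterization cited in the introduction is stated for subsets of $\partial\omega$, and the slit domain $B_{\tilde r}\setminus\widetilde\Gamma_\e$ is not Lipschitz near the edge $\partial\widetilde\Sigma_\e$ of the crack, so the usual equivalence does not apply without further work.

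The paper avoids this issue entirely by working with $W_n=w_n-v_n\in H^1_0(D_\e^n)$ and an exhaustion argument: for each fixed $\delta>0$, the set $\Lambda_\delta^\e=\{(x',0):|x'|\ge\rho_\e(x'/|x'|)+\delta\}$ is disjoint from $\overline{D_\e^n}$ once $n$ is large, so the zero-extension of $W_n$ lies in the weakly closed space $H^1_{0,\Lambda_\delta^\e\cup\partial B_{\tilde r}}(B_{\tilde r})$; hence so does the weak limit $W$, and letting $\delta\to0$ yields $W\in H^1_{0,\partial B_{\tilde r}\cup\widetilde\Gamma_\e}(B_{\tilde r})$ directly, without any trace characterization. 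The paper then tests \eqref{eq:vie} with $W$ and combines this with $a_n(W_n,W_n)=\langle F_n,W_n\rangle$ to obtain $a_n(W_n,W_n)\to0$ in one stroke, simultaneously identifying $W=0$ and proving strong convergence. Your separate norm-convergence step is correct, but only once the identification $w^\ast=v_i^\e$ has been properly secured.
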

\begin{proof}
 Letting $W_n:=w_n-v_n$, we observe that $w_n$ is a weak solution to 
  \eqref{eq:approx_n}
 if and only if $W_n\in H^1_0(D_\e^n)$ weakly solves
\begin{equation}\label{eq:W_n}
	\begin{cases}
-\dive (\widetilde{A}\nabla W_n)-\lambda_i^\e\,
        \widetilde p \,W_n=\lambda_i^\e
        \widetilde p\, v_n+\dive (\widetilde{A}\nabla v_n)&\text{in }D_\e^n,\\
W_n=0&\text{on }\partial D_\e^n.
\end{cases}\end{equation}
i.e. 
\begin{equation}\label{eq:27}
a_n(W_n,\phi)=\langle F_n,\phi\rangle\quad\text{for all }\phi\in
H^1_0(D_\e^n)
\end{equation}
where 
\begin{align*}
&a_n:H^1_0(D_\e^n) \times H^1_0(D_\e^n) \to\R,\quad a_n(\phi_1,\phi_2)=
\int_{D_\e^n}\widetilde A\nabla
\phi_1\cdot\nabla\phi_2\dy-\lambda_i^\e\int_{D_\e^n}\widetilde p
  \,\phi_1\,\phi_2\dy,\\
&F_n\in H^{-1}(D_\e^n),\quad {}_{H^{-1}(D_\e^n)}\langle F_n,\phi\rangle_{H^{1}_0(D_\e^n)}=\int_{D_\e^n}
       \left(\lambda_i^\e \widetilde p\, v_n\,\phi-\widetilde A\nabla
v_n\cdot\nabla\phi\right)\dy.
\end{align*}
Since $\widetilde p\in L^\infty(D_\e^n)$, by the Poincaré inequality
the bilinear form $a_n$ is    continuous, whereas estimate \eqref{eq:hp_coerc} implies that $a_n$ is coercive on
$H^1_0(D_\e^n)$. The Lax-Milgram Theorem ensures the existence of
a unique weak solution $W_n$ to \eqref{eq:W_n} and, consequently, of a
unique weak solution $w_n=W_n+v_n$ to \eqref{eq:approx_n}, for all
$n\geq\tilde n_\e$.

From the Poincar\'e inequality and boundedness of $\{v_n\}$ in
$H^1(B_{\tilde r})$ we can easily deduce that 
\[
|\langle F_n,\phi\rangle|\leq c\,\|\phi\|_{H^1_0(D_\e^n)}\quad\text{for all }\phi\in
H^1_0(D_\e^n),
\]
for some constant $c>0$ which may depend on $i,\e,N,\tilde r$ but
is independent of $n$. Therefore, choosing $\phi=W_n$ in \eqref{eq:27}
and using estimates \eqref{eq:hp_coerc} and \eqref{eq:28},
we obtain that 
\[
\tilde\alpha \|W_n\|_{H^1_0(D_\e^n)} ^2\leq a_n(W_n,W_n)=\langle F_n,W_n\rangle \leq  c\,\|W_n\|_{H^1_0(D_\e^n)},
\]
so that 
\begin{equation*}\label{eq:29}
  \|W_n\|_{H^1_0(B_{\tilde r})}\leq \frac c{\tilde\alpha}\quad \text{for all
  }n\geq\tilde n_\e, 
\end{equation*}
 where $W_n$ is extended trivially to zero in $B_{\tilde{r}}\setminus D_\e^n$. 
Therefore there exist $W\in H^1_0(B_{\tilde r})$ and  a subsequence
$\{W_{n_k}\}$ of
$\{W_n\}$ such that 
\begin{equation}\label{eq:31}
W_{n_k}\rightharpoonup W\quad\text{weakly in
$H^1_0(B_{\tilde r})$}.
\end{equation}
We observe that, for any $\delta>0$ small, the set
$\Lambda_\delta^\e=\left\{(x',0)\in B_{\tilde r}:|x'|\geq\delta+\rho_\e\big(\tfrac{x'}{|x'|}\big)\right\}$
is contained in $B_{\tilde r}\setminus \overline{D_n^\e}$ for $n$ sufficiently
large (how large it should be depends on $\delta$); hence, for any
$\delta>0$ small fixed,  $W_n\in
H^1_{0,\Lambda_\delta^\e\cup \partial B_{\tilde r}} ( B_{\tilde r})$  for $n$ sufficiently
large. Since  $H^1_{0,\Lambda_\delta^\e\cup \partial B_{\tilde
    r}} ( B_{\tilde r})$ is weakly closed in $H^1(B_{\tilde r})$, we deduce that $W\in H^1_{0,\Lambda_\delta^\e\cup \partial B_{\tilde
    r}} ( B_{\tilde r})$ for all $\delta>0$ small; therefore we
conclude that $W\in 
H^1_{0,\partial B_{\tilde r}\cup \widetilde \Gamma_{\e}}(B_{\tilde r})$.

Then, from \eqref{eq:vie}, \eqref{eq:30},   \eqref{eq:31}, and \eqref{eq:27} we deduce that 
\begin{align*}
0&=-\int_{B_{\tilde r}}\left(\widetilde A\nabla
v_i^\e\cdot\nabla W\dy-\lambda_i^\e\widetilde p\,
v_i^\e\, W\right)\dy\\
&=-\lim_{k\to+\infty}\int_{B_{\tilde r}}\left(\widetilde A\nabla
v_{n_k}\cdot\nabla W_{n_k}\dy-\lambda_i^\e\widetilde p\,
v_{n_k}\, W_{n_k}\right)\dy\\
&=\lim_{k\to+\infty}
\langle F_{n_k},W_{n_k}\rangle=\lim_{k\to+\infty}a_{n_k}(W_{n_k},W_{n_k}) 
\end{align*}
thus concluding that $\|W_{n_k}\|_{H^1_0(B_{\tilde r})}\to 0$ as
$k\to+\infty$ in view of \eqref{eq:hp_coerc}. Hence $W_{n_k}\to 0$ in
$H^1_0(B_{\tilde r})$ and $w_{n_k}=W_{n_k}+v_{n_k}\to v_i^\e$ in
$H^1(B_{\tilde r})$ as $k\to+\infty$ thanks to \eqref{eq:30}. By
Urysohn’s subsequence principle, we finally conclude that
$w_n\to v_i^\e$ in $H^1(B_{\tilde r})$ as $n\to+\infty$.
\end{proof} 

\subsection{Proof of Proposition \ref{p:poho}}

Let us fix $i=1,\dots,n_0$ and $\e\in (0,\min\{\e_1,\tilde
r/\kappa\})$ with $\tilde r$ being as in section
\ref{sec:regul-appr-crack} (see
\eqref{eq:scelta-tilder}--\eqref{eq:hp_coerc}). Let $v_i^\e$ solve
\eqref{eq:reflected} and, for all $n\geq \tilde n_\e$, let $w_n \in H^1(D_\e^n)$ be as in
 Lemma \ref{l:approx}. 

Let $r\in (\kappa \e, \tilde{r})$.
By classical elliptic regularity theory
 (see e.g. \cite[Theorem 2.2.2.3]{grisvard}) we have that $w_n\in
 H^2(D_{\e,r}^n)$. Then 
\[
(\widetilde A\nabla w_n\cdot\nabla w_n)\bm{b}-
  2(\bm{b}\cdot\nabla w_n) \widetilde A \nabla w_n\in
  W^{1,1}(D_{\e,r}^n)
\]
so that we can use the integration by parts formula for Sobolev
functions on the Lipschitz domain $D_{\e,r}^n$ and  obtain, in view of
\eqref{eq:rellich_necas}, \eqref{eq:approx_n}, and \eqref{eq:34},  
\begin{multline*}
 r\int_{S_{\e,r}^n} \widetilde A\nabla w_n\cdot\nabla
 w_n\ds
-\int_{D_{\e,r}^n}  \left( (\dive\bm{b}) \widetilde A\nabla w_n\cdot\nabla
  w_n-2(J_{\bm{b}}\widetilde A\nabla w_n)
    \cdot\nabla w_n
    +(d \widetilde A \nabla w_n\nabla w_n)\!\cdot \!\bm{b}
\right)\dy,
\\=
2r\int_{S_{\e,r}^n}
  \frac{1}{\mu}|\widetilde A \nabla w_n\cdot\nnu|^2\ds+2 \lambda_i^\e
\int_{D_{\e,r}^n} (\bm{b}\cdot\nabla w_n)
        \widetilde p\, w_n\dy\\
+\int_{\Gamma_{\e,r}^n} \big(-(\widetilde A\nabla w_n\cdot\nabla
 w_n)\bm{b}\cdot\nnu+
  2(\bm{b}\cdot\nabla w_n) \widetilde A \nabla w_n\cdot\nnu\big)\ds
\end{multline*}
where $\nnu=\nnu(y)$ is the exterior unit normal at $y\in\partial
D_{\e,r}^n=\Gamma_{\e,r}^n\cup S_{\e,r}^n$. 
On $\Gamma_{\e,r}^n$ we have that $w_n=0$ so that $\nabla
w_n=\frac{\partial w_n}{\partial\nnu}\,\nnu$; hence 
\begin{equation}\label{eq:starshape_approx}
-(\widetilde A\nabla w_n\cdot\nabla
 w_n)\bm{b}\cdot\nnu+
  2(\bm{b}\cdot\nabla w_n) \widetilde A \nabla w_n\cdot\nnu
=\frac1\mu \bigg|\frac{\partial w_n}{\partial\nnu}\bigg|^2( \widetilde A y\cdot\nnu)
( \widetilde A \nnu\cdot\nnu)\geq0\quad\text{on }\Gamma_{\e,r}^n
\end{equation}
thanks to Lemma \ref{l:ay-nu} and \eqref{eq:bound_A}. Then we obtain the following inequality
\begin{multline}\label{eq:32}
 r\int_{\partial B_r} \widetilde A\nabla w_n\cdot\nabla
 w_n\ds\\
-\int_{B_r}  \left( (\dive\bm{b}) \widetilde A\nabla w_n\cdot\nabla
  w_n-2(J_{\bm{b}}\widetilde A\nabla w_n)
    \cdot\nabla w_n
    +(d \widetilde A \nabla w_n\nabla w_n)\cdot\bm{b}
\right)\dy,
\\\geq
2r\int_{\partial B_r}
  \frac{1}{\mu}|\widetilde A \nabla w_n\cdot\nnu|^2\ds+2 \lambda_i^\e
\int_{B_r} (\bm{b}\cdot\nabla w_n)
        \widetilde p\, w_n\dy
\end{multline}
for all $n\geq \tilde n_\e$, where  $w_n$ is extended trivially to zero in
$B_{r}\setminus D_\e^n$.

For $i$ and $\e$ fixed as above, we  now intend to pass to the limit in
\eqref{eq:32} as $n\to+\infty$ for every $r\in (\kappa \e, \tilde{r})$.
The strong $H^1$-convergence of $w_n$ to $v_i^\e$ stated in
\eqref{eq:26} directly implies that 
\begin{multline*}
\lim_{n\to+\infty}\int_{B_r}  \left( (\dive\bm{b}) \widetilde A\nabla w_n\cdot\nabla
  w_n-2(J_{\bm{b}}\widetilde A\nabla w_n)
    \cdot\nabla w_n
    +(d \widetilde A \nabla w_n\nabla w_n)\cdot\bm{b}
\right)\dy\\
=\int_{B_r}  \left( (\dive\bm{b}) \widetilde A\nabla v_i^\e\cdot\nabla
  v_i^\e-2(J_{\bm{b}}\widetilde A\nabla v_i^\e)
    \cdot\nabla v_i^\e
    +(d \widetilde A \nabla v_i^\e\nabla v_i^\e)\cdot\bm{b}
\right)\dy
\end{multline*}
and 
\[
\lim_{n\to+\infty}\int_{B_r} (\bm{b}\cdot\nabla w_n)
        \widetilde p\, w_n\dy=\int_{B_r} (\bm{b}\cdot\nabla v_i^\e)
        \widetilde p\, v_i^\e\dy,
\]
for all $r\in (\kappa \e, \tilde{r})$. 
In order to deal with the boundary integrals in \eqref{eq:32}, we observe that,
by the strong $H^1$-convergence \eqref{eq:26} of $w_n$ to $v_i^\e$,
\begin{equation*}
\lim_{n\rightarrow +\infty}\int_0^{\tilde r}\biggl(\int_{\partial B_r}|\nabla(w_n-v_i^\e)|^2\ds\biggr)\dr= 0,
\end{equation*}
i.e., letting
$F_n(r)=\int_{\partial B_r}|\nabla(w_n-v_i^\e)|^2\ds$, 
$F_n\to 0$ in $L^1(0,\tilde r)$. Then there exists a subsequence
$F_{n_k}$ such that $F_{n_k}(r)\rightarrow 0$ 
for a.e. $r\in (0,\tilde r)$, hence 
\[
\nabla w_{n_k}\to \nabla v_i^\e\quad\text{in }L^2(\partial
B_r)\quad\text{as }k\to+\infty 
\quad\text{for a.e. $r\in (0,\tilde r)$}.
\]
Therefore, for a.e. $r\in (0,\tilde r)$,
\begin{align*}
&\lim_{k\to+\infty}\int_{\partial B_r} \widetilde A\nabla w_{n_k}\cdot\nabla
 w_{n_k}\ds
=\int_{\partial B_r} \widetilde A\nabla v_i^\e\cdot\nabla
 v_i^\e\ds,\\
&\lim_{k\to+\infty}\int_{\partial B_r}
  \frac{1}{\mu}|\widetilde A \nabla w_n\cdot\nnu|^2\ds=
\int_{\partial B_r}
  \frac{1}{\mu}|\widetilde A \nabla v_i^\e\cdot\nnu|^2\ds.
\end{align*}
Hence we can pass to the limit in
\eqref{eq:32} as $n\to+\infty$ for a.e. $r\in (\kappa \e, \tilde{r})$,
thus obtaining \eqref{eq:35}.\qed

\section{Energy estimates via an Almgren-type frequency function}\label{sec:energy-estimates-via}

\subsection{Monotonicity formula}

 For every $\lambda\in\R$, $r>0$, and $v\in H^1(B_r)$ we define
\[
E(v,r,\lambda)=r^{2-N}\int_{B_r}\left(
\widetilde{A}\nabla v\cdot\nabla v-\lambda
        \widetilde p v^2\right)\dy,
 \]
where $\widetilde A$ and $\widetilde p$ have been introduced in
\eqref{eq:tilde}, and 
\[
 H(v,r) =r^{1-N}\int_{\partial B_r}\mu\,v^2\ds,
\]
where $\mu$ has been introduced in \eqref{eq:mu-beta}.
We observe that from \eqref{eq:hp_coerc-bordo} it follows that 
\begin{equation}\label{eq:44}
E(v,r,\lambda)+r H(v,r)\geq0\quad\text{for all $r\in(0,\tilde r]$,
  $\lambda\leq\lambda_{n_0}$ and
  $v\in  H^1(B_r)$}.
\end{equation}
 We also define the Almgren-type frequency function as
\begin{equation}\label{eq:73}
N(v,r,\lambda):=\frac{E(v,r,\lambda)}{H(v,r)}.
\end{equation}

\begin{lemma}\label{l:Hpos}Let $v_i^\e$ be as in \eqref{eq:veps_def}.

\begin{enumerate}[\rm (i)]
\item $H(v_i^\e,r)>0$ for all 
$\e\in (0,\min\{\e_1,\tilde r/\kappa\})$, $r\in[\kappa \e,\tilde{r}]$,
and $1\leq i\leq n_0$.
\item 
For every $r\in(0,\tilde r]$, there exist $C_r > 0$ and $\alpha_r \in
(0,r/\kappa)$ such that $H(v_i^\e,r)
 \geq C_r$ for all $0<\e<\min\{\alpha_r,\e_1\}$ and $1 \leq i\leq n_0$.
  \end{enumerate}
\end{lemma}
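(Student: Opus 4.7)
The plan is to prove both parts by contradiction, reducing each to the same core step: if $H(v,r)=0$ for a function $v$ satisfying the relevant elliptic equation, then an integration-by-parts combined with the coercivity estimate \eqref{eq:hp_coerc} forces $v\equiv 0$ on $B_r$; pushing this back through the diffeomorphism $F$ to the eigenfunction and invoking unique continuation for the Laplacian contradicts the normalization $\|\varphi^*\|_{L^2(\Omega)}=1$.

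For part (i), if $H(v_i^\e,r)=0$, then the uniform positivity $\mu\geq 1/2$ from \eqref{eq:bound_mu} forces $v_i^\e=0$ on $\partial B_r$ in the trace sense. Since $B_r$ is smooth, this gives $v_i^\e\big|_{B_r}\in H^1_0(B_r)$, and its zero extension to $B_{r_1}$ lies in $H^1_{0,\partial B_{r_1}\cup\widetilde\Gamma_{\e,r_1}}(B_{r_1})$. Plugging it into \eqref{eq:vie} yields
\[
\int_{B_r}\widetilde A\nabla v_i^\e\cdot\nabla v_i^\e\,\dy=\lambda_i^\e\int_{B_r}\widetilde p\,(v_i^\e)^2\,\dy.
\]
Since $i\leq n_0$, one has $\lambda_i^\e\leq\lambda_i\leq\lambda_{n_0}$ by \eqref{eq:28}; subtracting $\lambda_{n_0}\int_{B_r}\widetilde p\,(v_i^\e)^2\,\dy$ from both sides, the left-hand side becomes nonpositive while \eqref{eq:hp_coerc} bounds it below by $\tilde\alpha\int_{B_r}|\nabla v_i^\e|^2\,\dy$. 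Hence $v_i^\e\equiv 0$ on $B_r$, so $\varphi_i^\e$ vanishes on the open set $F(B_r^+)\subset\Omega$, contradicting the unique continuation principle applied to the eigenfunction equation $-\Delta\varphi_i^\e=\lambda_i^\e\varphi_i^\e$ in $\Omega$ together with $\|\varphi_i^\e\|_{L^2(\Omega)}=1$.

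For part (ii), I would argue by contradiction: assume there exist $r\in(0,\tilde r]$ and sequences $\e_n\to 0$, $i_n\in\{1,\dots,n_0\}$ with $H(v_{i_n}^{\e_n},r)\to 0$. By pigeonhole, fix $i_n=i$ along a subsequence. Combining Lemma \ref{lemma:prelim_conv} with Proposition \ref{propo:conv_eigenvalues} and testing the eigenvalue equation against the weak limit, the weak $H^1(\Omega)$ limit $\varphi^*$ of $\varphi_i^{\e_n}$ (along a subsequence) is an $L^2$-normalized Dirichlet eigenfunction of eigenvalue $\lambda_i$; since $\|\nabla\varphi_i^{\e_n}\|_{L^2(\Omega)}^2=\lambda_i^{\e_n}\to\lambda_i=\|\nabla\varphi^*\|_{L^2(\Omega)}^2$, the convergence is strong in $H^1(\Omega)$. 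Transporting by $F^{-1}$ and reflecting evenly, $v_i^{\e_n}\to v^*$ strongly in $H^1(B_{r_1})$, where $v^*$ is the even reflection of $u^*=\varphi^*\circ F^{-1}|_{B_{r_1}^+}$. By continuity of the trace $H^1(B_r)\to L^2(\partial B_r)$ and boundedness of $\mu$, $H(v_i^{\e_n},r)\to H(v^*,r)$, whence $H(v^*,r)=0$ and $v^*=0$ on $\partial B_r$. Then $u^*\big|_{B_r^+}\in H^1_0(B_r^+)$, since it additionally vanishes on $B_r'$ because $\varphi^*\in H^1_0(\Omega)$; testing the equation for $u^*$ on $B_r^+$ and reflecting gives
\[
\int_{B_r}\widetilde A\nabla v^*\cdot\nabla v^*\,\dy=\lambda_i\int_{B_r}\widetilde p\,(v^*)^2\,\dy,
\]
and the same coercivity argument as in (i), with $\lambda_i\leq\lambda_{n_0}$, yields $v^*\equiv 0$ on $B_r$. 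Unique continuation applied to $\varphi^*$ then contradicts $\|\varphi^*\|_{L^2(\Omega)}=1$.

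The main obstacle is the passage to the limit $H(v_i^{\e_n},r)\to H(v^*,r)$ on a fixed sphere: this requires strong $H^1$ convergence rather than merely weak, which forces the identification of $\varphi^*$ as a genuine eigenfunction of eigenvalue exactly $\lambda_i$ and the gradient norm convergence derived from $\lambda_i^{\e_n}\to\lambda_i$. Checking that the zero extensions used in both parts are admissible in the mixed-boundary test function spaces $H^1_{0,\partial B_{r_1}\cup\widetilde\Gamma_{\e,r_1}}(B_{r_1})$ and $H^1_0(B_{r_1}^+)$ is routine but must be done carefully, using the Lipschitz regularity of the underlying domains and the trace characterization from \cite{Egert-Tolksdorf}.
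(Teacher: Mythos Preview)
Your proof is correct and for part (i) essentially matches the paper's argument. For part (ii) the core strategy (contradiction, pass to a limit eigenfunction, coercivity \eqref{eq:hp_coerc}, unique continuation) is also the same, but you take a slightly longer technical route: you pigeonhole to fix the index $i$, then upgrade to strong $H^1(\Omega)$-convergence via the norm argument $\|\nabla\varphi_i^{\e_n}\|^2=\lambda_i^{\e_n}\to\lambda_i$ in order to pass to the limit in the boundary integral $H(\cdot,r)$. The paper avoids this step entirely: since the trace embedding $H^1(B_r)\hookrightarrow L^2(\partial B_r)$ is \emph{compact}, weak $H^1$-convergence of $v_{i_\ell}^{\e_\ell}$ already yields strong $L^2(\partial B_r)$-convergence, so $H(v_{i_\ell}^{\e_\ell},r)\to H(v,r)$ follows immediately without identifying the limit eigenvalue as precisely $\lambda_i$ or fixing $i_\ell$. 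In other words, your stated ``main obstacle'' is a non-obstacle, and the paper's route is shorter; your detour through strong convergence is valid but buys nothing extra here.
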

\begin{proof} 
 To prove (i) we argue by contradiction and assume that there exist 
$\e\in (0,\min\{\e_1,\tilde r/\kappa\})$, $r\in[\kappa \e,\tilde{r}]$,
and $1\leq
  i\leq n_0$ such that $H(v_i^\e,r)=0$, 
  i.e. $v_i^\e=0$ on $\partial B_r$. Testing \eqref{eq:reflected} with
  $v_i^\e$, integrating
  over $B_r$, and using estimates \eqref{eq:hp_coerc} and \eqref{eq:28},
 we obtain that 
	\begin{equation*}
0=\int_{B_{r}}\widetilde A\nabla
v_i^\e\cdot\nabla v_i^\e\dy-\lambda_i^\e\int_{B_{r}}\widetilde p|
v_i^\e|^2\dy\geq \tilde\alpha \int_{B_r}|\nabla v_i^\e|^2\dy
	\end{equation*}
and hence $v_i^\e\equiv 0$ in
$B_r$. It follows that  $u_i^\e\equiv 0$ in
$B_{r}^+$, i.e. $\varphi_i^\e\equiv 0$ in $F(B_r^+)$, with $F$ as in \eqref{eq:F_def}, so that
from the classical unique
continuation principle for elliptic equations we may conclude
that $\varphi_i^\e\equiv 0$ in $\Omega$, a contradiction.

 In order to prove (ii), suppose by contradiction that there exist 
$0<r\leq\tilde r$, $\e_\ell\to0$, and $i_\ell\in \{1,\ldots,n_0\}$ such that
\begin{equation*}
\lim_{\ell\to+\infty}H(v_{i_\ell}^{\e_\ell},r)=0.
\end{equation*}
From \eqref{eq:eqphiie}, \eqref{eq:ortonor}, and \eqref{eq:28} we have that $\{\varphi_{i_\ell}^{\e_\ell}\}_\ell$ is bounded in $H^1(\Omega)$.
Then there exist $\lambda\in[0,\lambda_{n_0}]$ and $\varphi\in H^{1}(\Omega)$ such that, along a subsequence,
$\lambda_{i_\ell}^{\e_\ell}\to\lambda$ and
$\varphi_{i_\ell}^{\e_\ell} \to \varphi$ weakly in $H^1(\Omega)$ and
strongly in $L^2(\Omega)$.
 From
\eqref{eq:ortonor} it follows  that $\int_\Omega
\varphi^2\dx=1$ and then $\varphi\not\equiv 0$ in $\Omega$. Moreover
$\varphi$ weakly satisfies $-\Delta \varphi=\lambda \varphi$ in $\Omega$.

The weak convergence $\varphi_{i_\ell}^{\e_\ell}\rightharpoonup\varphi$ in
$H^1(\Omega)$ implies tha $v_{i_\ell}^{\e_\ell}\rightharpoonup v$ weakly in $H^1(B_{r_1})$, where $v$ is 
the  even reflection through the hyperplane $\{y_N=0\}$  of
$u=\varphi\circ F$. Since $v_{i_\ell}^{\e_\ell} \in H^1_{0, \widetilde
  \Gamma_{\e_\ell,r_1} }(B_{r_1})$, we have that $v\in H^1_{0,\overline{B_{r_1}'}}(\Omega)$.
Moreover, $v$ weakly solves 
  \begin{equation}\label{eq:36}
	\begin{cases}
-\dive (\widetilde{A}\nabla v)=\lambda
        \widetilde p\, v,&\text{in }B_{r_1}\setminus B_{r_1}',\\
v=0,&\text{on }B_{r_1}'.
\end{cases}
\end{equation}
By compactness of the trace embedding
$H^{1}(B_{r})\hookrightarrow L^2(\partial B_r)$, we also have
that
\[
0=\lim_{\ell\to\infty}H(v_{i_\ell}^{\e_\ell},r)
=\lim_{\ell\to\infty}
r^{1-N}\int_{\partial B_r}\mu\,|v_{i_\ell}^{\e_\ell} |^2\ds
=r^{1-N}\int_{\partial B_r}\mu\,|v |^2\ds
,
\]
which implies that $v=0$ on $\partial B_r$.  Testing
\eqref{eq:36} by $v$ in $B_r$, from \eqref{eq:hp_coerc} we deduce that
  \begin{equation*}
    0=\int_{B_r}
    \big(
\widetilde{A}\nabla v\cdot\nabla v    - \lambda\widetilde p v^2 \big)\dx
    \geq \tilde\alpha\int_{B_r} 
    |\nabla v|^2\dx.
\end{equation*}
Then $v\equiv 0$ in $B_r$ and, consequently,
 $\varphi\equiv 0$ in $F(B_r^+)$, so that
from the classical unique
continuation principle for elliptic equations we may conclude
that $\varphi\equiv 0$ in $\Omega$, a contradiction.
\end{proof}

As a consequence of Lemma \ref{l:Hpos} the function $r\mapsto
N(v_i^\e,r,\lambda_i^\e)$ is well defined in the interval
$[\kappa\e,\tilde r]$ for all $\e\in (0,\min\{\e_1,\tilde r/\kappa\})$ and $1 \leq i\leq n_0$.
Furthermore, estimate \eqref{eq:44} implies that 
\begin{equation*}\label{eq:46}
N(v_i^\e,r,\lambda_i^\e)+1\geq 1-\tilde r>0\quad\text{for all }r\in
[\kappa\e,\tilde r].
\end{equation*}
In order to differentiate the function $N$, we need to differentiate
both $E$ and $H$. We start here by deriving a formula for the
derivative of $H$, which turns out to be expressible in terms of the
function $E$, see \eqref{eq:derH2}.
\begin{lemma}\label{l:derh}
  For all $\e\in
(0,\min\{\e_1,\tilde r/\kappa\})$ and $1 \leq i\leq n_0$,
$H(v_i^\e,\cdot)\in W^{1,1}(\kappa\e,\tilde r)$,
\begin{align}\label{eq:derH1}
 & \frac{d}{dr}H(v_i^\e,r)=2r^{1-N}\int_{\partial B_r}\mu
                           v_i^\e\frac{\partial
  v_i^\e}{\partial\nnu}\ds+O(1) H(v_i^\e,r)\quad\text{as $r\to0$},\\
\label{eq:derH3} & \frac{d}{dr}H(v_i^\e,r)=2r^{1-N}\int_{\partial B_r}
(\widetilde{A}\nabla v_i^\e\cdot\nnu)v_i^\e\ds+O(1) H(v_i^\e,r) \quad\text{as $r\to0$},
\end{align}
and
\begin{equation}\label{eq:derH2}
  \frac{d}{dr}H(v_i^\e,r)=\frac2r E(v_i^\e,r,\lambda_i^\e)+O(1)
  H(v_i^\e,r) \quad\text{as $r\to0$},
\end{equation}
 where 
the derivative is meant in a distributional sense and a.e. in $(\kappa\e,\tilde r)$,
$\nnu=\nnu(y)=\frac{y}{|y|}$ is the unit outer normal vector to
$\partial B_r$,
and
$O(1)$ denotes terms which are bounded for $r$ in a neighbourhood of $0$
uniformly with respect to~$\e$.
\end{lemma}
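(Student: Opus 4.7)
The plan is to obtain two integral representations of $r^N H(v_i^\e, r)$ via the divergence theorem on $B_r$ with vector fields $F_1 = \mu(v_i^\e)^2 y$ and $F_2 = (v_i^\e)^2\widetilde A y$, and then derive each of the three formulas by differentiating in $r$ through the coarea formula. Both $F_j$ lie in $W^{1,1}(B_r,\R^N)$, since $v_i^\e\in H^1(B_{r_1})$, $\mu$ is Lipschitz by \eqref{eq:38}, and $\widetilde A$ is Lipschitz by \eqref{eq:4}. The application of the divergence theorem across the interior crack is legitimate because the $N$-th component of each $F_j$ vanishes on $\{y_N=0\}$---trivially for $F_1$ and by \eqref{eq:A_coeff} for $F_2$---while the even reflection \eqref{eq:veps_def} together with \eqref{eq:A_coeff} ensures that $v_i^\e$ and $\widetilde A$ carry no jump across $\widetilde\Sigma_\e$. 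Using $y\cdot\nnu = r$ and $\widetilde A y\cdot\nnu = \mu r$ on $\partial B_r$ (the latter from the definition \eqref{eq:mu-beta} of $\mu$), the theorem yields
\begin{align*}
r^N H(v_i^\e,r) &= \int_{B_r}\!\big[2\mu v_i^\e\nabla v_i^\e\cdot y + (v_i^\e)^2\nabla\mu\cdot y + N\mu(v_i^\e)^2\big]\dy,\\
r^N H(v_i^\e,r) &= \int_{B_r}\!\big[2v_i^\e(\widetilde A\nabla v_i^\e\cdot y) + (v_i^\e)^2\dive(\widetilde A y)\big]\dy,
\end{align*}
from which absolute continuity of integrals over balls gives $H(v_i^\e,\cdot)\in W^{1,1}(\kappa\e,\tilde r)$.

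Differentiating the first identity via coarea and using $y=r\nnu$ on $\partial B_r$ produces the main term $2r\int_{\partial B_r}\mu v_i^\e\partial_\nnu v_i^\e\ds$, a remainder $r\int_{\partial B_r}(v_i^\e)^2\nabla\mu\cdot\nnu\ds$, and a term $N\int_{\partial B_r}\mu(v_i^\e)^2\ds = Nr^{N-1}H$ that exactly cancels the $Nr^{N-1}H$ arising from $(r^NH)' = Nr^{N-1}H + r^NH'$. The bounds $|\nabla\mu| = O(1)$ from \eqref{eq:38} and $\mu\geq 1/2$ from \eqref{eq:bound_mu} turn the remainder into $O(1) H(v_i^\e,r)$, proving \eqref{eq:derH1}. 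Applied to the second identity, together with $\widetilde A = I_N + O(|y|)$ from \eqref{eq:33} (so that $\dive(\widetilde A y) = N + O(|y|)$) and $\int_{\partial B_r}(v_i^\e)^2\ds = r^{N-1}H(v_i^\e,r)(1+O(r))$ (since $\mu = 1+O(|y|)$), the same differentiation and cancellation mechanism yields \eqref{eq:derH3}.

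Finally, \eqref{eq:derH2} follows from \eqref{eq:derH3} by substituting the integration-by-parts identity
\[
\int_{\partial B_r}(\widetilde A\nabla v_i^\e\cdot\nnu)v_i^\e\ds = \int_{B_r}\!\big[\widetilde A\nabla v_i^\e\cdot\nabla v_i^\e - \lambda_i^\e\widetilde p(v_i^\e)^2\big]\dy = r^{N-2}E(v_i^\e,r,\lambda_i^\e),
\]
which is obtained by testing \eqref{eq:vie} with $v_i^\e\chi^\delta_r$, where $\chi^\delta_r$ is a smooth cutoff approximating $\mathbf{1}_{B_r}$ (admissible since $v_i^\e$ vanishes on $\widetilde\Gamma_\e$), and letting $\delta\to 0$ for a.e. $r$. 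The principal technical difficulty is the use of the divergence theorem across the interior crack, which dictates the specific choice of vector fields whose $N$-th component vanishes on $\{y_N=0\}$; the uniformity in $\e$ of the $O(1)$ remainders is automatic, since the underlying pointwise bounds depend only on $\tilde r$, $N$, and the $C^{1,1}$-norm of $g$.
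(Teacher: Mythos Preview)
Your proof is correct. The derivations of \eqref{eq:derH1} and \eqref{eq:derH2} match the paper's: the paper also obtains \eqref{eq:derH1} by directly differentiating $H$ (yielding the remainder $r^{1-N}\int_{\partial B_r}(v_i^\e)^2\partial_\nnu\mu\,dS$, the same term you produce), and obtains \eqref{eq:derH2} by testing the equation with $v_i^\e$ on $B_r$ exactly as you do.

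For \eqref{eq:derH3} you take a genuinely different route. The paper works on the sphere $\partial B_r$: it writes
\[
(\widetilde A\nabla v_i^\e\cdot\nnu)v_i^\e=\mu v_i^\e\partial_\nnu v_i^\e+\tfrac12\,\bm{a}\cdot\nabla\big((v_i^\e)^2\big),
\qquad \bm{a}=\frac{\mu(\bm{b}-y)}{|y|},
\]
uses that $\bm{a}$ is tangent to $\partial B_r$ (since $\bm{b}\cdot\nnu=|y|$), integrates by parts tangentially to produce $-\tfrac12\int_{\partial B_r}(\dive\bm{a})(v_i^\e)^2\,dS$, and then invokes $\dive\bm{a}=O(1)$. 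Your argument instead applies the divergence theorem on $B_r$ to $F_2=(v_i^\e)^2\widetilde A y$ and differentiates via coarea, exploiting $\dive(\widetilde A y)=N+O(|y|)$. Your approach is arguably cleaner in that it avoids surface calculus and makes the $W^{1,1}$ regularity of $r\mapsto H$ immediate from absolute continuity of volume integrals; the paper's approach has the advantage of producing a direct pointwise comparison between the two boundary integrands, which is sometimes useful elsewhere.

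One small remark: your discussion of the ``interior crack'' is unnecessary. The function $v_i^\e$ lies in $H^1(B_{r_1})$ by construction (it is the even reflection of an $H^1$ function on the half-ball), so $(v_i^\e)^2\in W^{1,1}(B_{r_1})$ and the divergence theorem on $B_r$ applies without any special attention to $\{y_N=0\}$; the set $\widetilde\Gamma_\e$ is merely where $v_i^\e$ vanishes, not a set of discontinuity. Your observation about the $N$-th components is correct but superfluous.
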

\begin{proof}
By direct calculations we have that
$H(v_i^\e,\cdot)\in W^{1,1}(\kappa\e,\tilde r)$ and 
\begin{equation*}
  \frac{d}{dr}H(v_i^\e,r)=2r^{1-N}\int_{\partial B_r}\mu
                           v_i^\e\frac{\partial v_i^\e}{\partial\nnu}\ds+r^{1-N}\int_{\partial B_r}|v_i^\e|^2
\frac{\partial \mu}{\partial\nnu}\ds
\end{equation*}
in a weak sense,
from which \eqref{eq:derH1} follows in view of \eqref{eq:38}.

To prove \eqref{eq:derH3}  we define 
\[
\bm{a}(y)=\frac{\mu(y)(\bm{b}(y)-y)}{|y|}
\]
and observe that 
\begin{equation}\label{eq:41}
\int_{\partial B_r}
(\widetilde{A}\nabla v_i^\e\cdot\nnu)v_i^\e\ds
=\int_{\partial B_r}\mu
                           v_i^\e\frac{\partial
  v_i^\e}{\partial\nnu}\ds+\frac12\int_{\partial B_r}\bm{a}\cdot\nabla((v_i^\e)^2)\ds.
\end{equation}
By \eqref{eq:34} we have that 
\begin{equation}\label{eq:39}
\bm{a}(y)\cdot y=0.
\end{equation}
From
\eqref{eq:38},  \eqref{eq:37}, and \eqref{eq:39} we deduce that
\begin{equation}\label{eq:42}
  \dive\bm{a}=\frac{\nabla\mu}{|y|}\cdot
  (\bm{b}-y)
  +\frac{\mu}{|y|}(\dive{\bm{b}}-N)=O(1)\quad\text{ as
  }|y|\to 0.
\end{equation}
From \eqref{eq:41}, \eqref{eq:39}, and \eqref{eq:42} it follows that 
 \begin{align}\label{eq:43}
\int_{\partial B_r}
(\widetilde{A}\nabla v_i^\e\cdot\nnu)v_i^\e\ds
&=\int_{\partial B_r}\mu
                           v_i^\e\frac{\partial
  v_i^\e}{\partial\nnu}\ds-\frac12\int_{\partial
   B_r}(\dive\bm{a})|v_i^\e|^2\ds\\
\notag&=
\int_{\partial B_r}\mu
                           v_i^\e\frac{\partial
  v_i^\e}{\partial\nnu}\ds+O(1)r^{N-1}H(v_i^\e,r)
\end{align}
as $r\to0$ (uniformly in $\e$). 
Combining \eqref{eq:derH1} and \eqref{eq:43} we obtain estimate \eqref{eq:derH3}.

To prove \eqref{eq:derH2}  we test \eqref{eq:reflected} with $v_i^\e$
and integrate over $B_r$ thus obtaining
\begin{equation*}
r^{N-2}E(v_i^\e,r,\lambda_i^\e)=
\int_{B_r}\left(
\widetilde{A}\nabla v_i^\e\cdot\nabla v_i^\e-\lambda_i^\e
        \widetilde p |v_i^\e|^2\right)\dy=\int_{\partial
                                 B_r}(\widetilde{A}\nabla
                                 v_i^\e\cdot\nnu) v_i^\e\ds,
                               \end{equation*}
whose combination with \eqref{eq:derH3} immediately yields \eqref{eq:derH2}.
\end{proof}

\begin{remark}\label{rem:Hprimo-E}
We observe that  \eqref{eq:derH2} implies that there exist $\bar r_0\in (0,\tilde r)$ and $C_0>0$ (independent
of $\e$) such that, for all $\e\in(0,\bar r_0/\kappa)$,
\begin{equation*}
\frac2r E(v_i^\e,r,\lambda_i^\e)-\frac1{C_0}\,
  H(v_i^\e,r)\leq
 \frac{d}{dr}H(v_i^\e,r)\leq\frac2r E(v_i^\e,r,\lambda_i^\e)+C_0
  H(v_i^\e,r)\quad\text{a.e. } r\in (\e\kappa,\bar r_0).
\end{equation*}
\end{remark}

\begin{lemma}\label{l:derE}
  For all $\e\in
(0,\min\{\e_1,\tilde r/\kappa\})$ and $1 \leq i\leq n_0$,
$E(v_i^\e,\cdot,\lambda_i^\e)\in W^{1,1}(\kappa\e,\tilde r)$
and 
\[
\frac{d}{dr}E(v_i^\e,r,\lambda_i^\e)\geq  2r^{2-N}
\int_{\partial B_r}\frac1{\mu}{|\widetilde A\nabla
  v_i^\e\cdot\nnu|^2}\ds +O(1)E(v_i^\e,r,\lambda_i^\e)+rO(1)H(v_i^\e,r)
\]
as $r\to0$, where the derivative is meant in a distributional sense and a.e. in
$(\kappa\e,\tilde r)$ and 
$O(1)$ denotes terms which are bounded for $r$ in a neighbourhood of $0$
uniformly with respect to~$\e$.
\end{lemma}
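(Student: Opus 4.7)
\medskip

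The plan is to differentiate $E(v_i^\e,\cdot,\lambda_i^\e)$ directly via the coarea formula, and then to bound the arising boundary integral of $\widetilde A\nabla v_i^\e\cdot\nabla v_i^\e$ from below by means of the Pohozaev-type inequality \eqref{eq:35}, taking advantage of the algebraic cancellation with the negative scaling term $\tfrac{2-N}{r}E$. The coarea formula applied to the $L^1$ functions $\widetilde A\nabla v_i^\e\cdot\nabla v_i^\e$ and $\widetilde p (v_i^\e)^2$ immediately yields $E(v_i^\e,\cdot,\lambda_i^\e)\in W^{1,1}(\kappa\e,\tilde r)$ together with the identity
\begin{equation*}
\frac{d}{dr}E(v_i^\e,r,\lambda_i^\e)=\frac{2-N}{r}E(v_i^\e,r,\lambda_i^\e)+r^{2-N}\!\!\int_{\partial B_r}\!\big(\widetilde A\nabla v_i^\e\cdot\nabla v_i^\e-\lambda_i^\e\widetilde p\,(v_i^\e)^2\big)\ds.
\end{equation*}

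Next I would apply Proposition \ref{p:poho} to bound the gradient boundary integral from below. By \eqref{eq:37} and the Lipschitz regularity of $\widetilde A$ (together with $|\bm b|=O(|y|)$), the bulk integrand on the left-hand side of \eqref{eq:35} becomes
\begin{equation*}
(\dive\bm b)\widetilde A\nabla v_i^\e\cdot\nabla v_i^\e-2J_{\bm b}(\widetilde A\nabla v_i^\e)\cdot\nabla v_i^\e+(d\widetilde A\nabla v_i^\e\nabla v_i^\e)\cdot\bm b=(N-2)\widetilde A\nabla v_i^\e\cdot\nabla v_i^\e+O(|y|)|\nabla v_i^\e|^2.
\end{equation*}
Writing $\int_{B_r}\widetilde A\nabla v_i^\e\cdot\nabla v_i^\e=r^{N-2}E+\lambda_i^\e\int_{B_r}\widetilde p (v_i^\e)^2$ and multiplying \eqref{eq:35} by $r^{1-N}$, the term $\tfrac{N-2}{r}E$ so produced cancels exactly with the $\tfrac{2-N}{r}E$ coming from the coarea differentiation. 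This is the key cancellation driving the formula.

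The mass term $2\lambda_i^\e\int_{B_r}\widetilde p(\bm b\cdot\nabla v_i^\e)v_i^\e$ is then integrated by parts, rewriting $2(\bm b\cdot\nabla v_i^\e)v_i^\e=\bm b\cdot\nabla((v_i^\e)^2)$. Using \eqref{eq:34} on $\partial B_r$, so that $\bm b\cdot\nnu=|y|=r$, together with $\dive(\widetilde p\bm b)=N\widetilde p+O(|y|)$ (from \eqref{eq:37} and the Lipschitz regularity of $\widetilde p$), one obtains
\begin{equation*}
2\lambda_i^\e\!\!\int_{B_r}\!\widetilde p(\bm b\cdot\nabla v_i^\e)v_i^\e\dy=\lambda_i^\e r\!\!\int_{\partial B_r}\!\widetilde p (v_i^\e)^2\ds-N\lambda_i^\e\!\!\int_{B_r}\!\widetilde p(v_i^\e)^2\dy+\lambda_i^\e\!\!\int_{B_r}\!O(|y|)(v_i^\e)^2\dy.
\end{equation*}
The boundary integral $\lambda_i^\e r\int_{\partial B_r}\widetilde p(v_i^\e)^2\ds$, once multiplied by $r^{1-N}$, neatly cancels the term $-\lambda_i^\e r^{2-N}\int_{\partial B_r}\widetilde p(v_i^\e)^2\ds$ present in the direct differentiation of $E$, leaving only volume remainders of the form $r^{1-N}\int_{B_r}O(|y|)|\nabla v_i^\e|^2\dy$ and $r^{1-N}\int_{B_r}(v_i^\e)^2\dy$ besides the desired boundary term $2r^{2-N}\int_{\partial B_r}\mu^{-1}|\widetilde A\nabla v_i^\e\cdot\nnu|^2\ds$.

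Finally, to identify these remainders as $O(1)E+rO(1)H$, I would invoke the coercivity estimate \eqref{eq:hp_coerc-bordo} (recalling $\lambda_i^\e\leq\lambda_{n_0}$), which yields $\int_{B_r}|\nabla v_i^\e|^2\leq C(r^{N-2}E+r^{N-1}H)$, and the refined Poincaré inequality of Lemma \ref{l:poin-refined}, which gives $\int_{B_r}(v_i^\e)^2\leq Cr^2\int_{B_r}|\nabla v_i^\e|^2$ (valid since $\kappa\e<r\leq\tilde r$ so that the smallness hypothesis on $\e$ is satisfied). A direct scaling count then shows $r^{1-N}\int_{B_r}|y||\nabla v_i^\e|^2=O(E)+O(r)H$ and $r^{1-N}\int_{B_r}(v_i^\e)^2=O(r)E+O(r^2)H$, both absorbed in $O(1)E+rO(1)H$. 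The principal obstacle is the bookkeeping needed to verify that all remainder terms produced by the Lipschitz coefficients, by the $O(|y|)$ errors in $\bm b$, $\dive\bm b$, $J_{\bm b}$, and by the mass integration by parts, combine into the stated form; the use of \eqref{eq:hp_coerc-bordo} to obtain $H^1$-control from $E+rH$ is what makes the combined estimate work.
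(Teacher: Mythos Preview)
Your approach is essentially the paper's: differentiate $E$ via coarea, apply the Pohozaev inequality \eqref{eq:35}, expand the bulk integrand using the asymptotics \eqref{eq:37} of $\bm b$, $J_{\bm b}$, $\dive\bm b$ and the Lipschitz regularity of $\widetilde A$, and then absorb remainders into $O(1)E+rO(1)H$ via \eqref{eq:hp_coerc-bordo}. The paper does not integrate the mass term by parts as you do---it simply bounds $|\widetilde p(\bm b\cdot\nabla v_i^\e)v_i^\e|\leq Cr|\nabla v_i^\e||v_i^\e|$ and uses Young's inequality---but your cancellation of the boundary $L^2$ term is correct and equally valid.

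There is, however, one slip in the final step. You invoke Lemma~\ref{l:poin-refined} to get $\int_{B_r}(v_i^\e)^2\leq Cr^2\int_{B_r}|\nabla v_i^\e|^2$, claiming this is ``valid since $\kappa\e<r\leq\tilde r$''. But Lemma~\ref{l:poin-refined} requires $\e<\frac{r}{\kappa M_\tau}$ with $M_\tau>1$, i.e.\ $r>\kappa M_\tau\e$, not merely $r>\kappa\e$; it therefore fails to cover the sub-interval $(\kappa\e,\kappa M_\tau\e]$, whereas the statement demands the estimate on the whole $(\kappa\e,\tilde r)$. The fix is immediate: use the unrestricted Poincar\'e inequality of Lemma~\ref{l:poin}, which holds for every $u\in H^1(B_r)$ and gives
\[
\int_{B_r}(v_i^\e)^2\leq \tfrac{r^2}{N-1}\int_{B_r}|\nabla v_i^\e|^2+\tfrac{r}{N-1}\int_{\partial B_r}(v_i^\e)^2.
\]
After scaling, the extra boundary term contributes only $O(r^2)H$, which is absorbed into $rO(1)H$. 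This is exactly the route the paper takes.
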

\begin{proof}
By direct calculations we have that
$E(v_i^\e,\cdot,\lambda_i^\e)\in W^{1,1}(\kappa\e,\tilde r)$ and 
\begin{multline*}
 \frac{d}{dr}E(v_i^\e,r,\lambda_i^\e)=
 (2-N)r^{1-N}\int_{B_r}\left(
 \widetilde{A}\nabla v_i^\e\cdot\nabla v_i^\e-\lambda_i^\e
         \widetilde p |v_i^\e|^2\right)\dy\\+
 r^{2-N}\int_{\partial B_r}\left(
 \widetilde{A}\nabla v_i^\e\cdot\nabla v_i^\e-\lambda_i^\e
         \widetilde p |v_i^\e|^2\right)\ds.
    \end{multline*}
Hence, in view of Proposition \ref{p:poho},
\begin{align*}
&  \frac{d}{dr}E(v_i^\e,r,\lambda_i^\e)\\
&\geq  2r^{2-N}
\int_{\partial B_r}\frac1{\mu}{|\widetilde A\nabla
   v_i^\e\cdot\nnu|^2}\ds
-\lambda_i^\e r^{2-N}\int_{\partial B_r}\widetilde p |v_i^\e|^2\ds\\
&\quad+r^{1-N}
\int_{B_r}\!\!\Big((2-N) \widetilde{A}\nabla v_i^\e\cdot\nabla v_i^\e+
(\dive\bm{b}) \widetilde A\nabla v_i^\e \!\cdot \!\nabla
  v_i^\e-2J_{\bm{b}}(\widetilde A\nabla v_i^\e) \!\cdot \! \nabla v_i^\e+(d \widetilde A \nabla
  v_i^\e \nabla v_i^\e) \!\cdot \!\bm{b}\Big)\dy\\
&\quad+\lambda_i^\e r^{1-N}\int_{B_r}\left(2 \widetilde p(\bm{b}\cdot \nabla v_i^\e) v_i^\e+(N-2) \widetilde p |v_i^\e|^2\right)\dy.
\end{align*}
Therefore, in view  of \eqref{eq:4}, \eqref{eq:33}, \eqref{eq:37}, \eqref{eq:38}, 
Lemma \ref{l:poin}, and \eqref{eq:hp_coerc-bordo}
\begin{align*}
 \frac{d}{dr}E(v_i^\e,\cdot,\lambda_i^\e)&\geq  2r^{2-N}
\int_{\partial B_r}\frac1{\mu}{|\widetilde A\nabla
   v_i^\e\cdot\nnu|^2}\ds\\
&\quad+O(1)r^{2-N}
\int_{B_r}|\nabla v_i^\e|^2\dy+O(1)r^{2-N}
\int_{\partial B_r}\mu|v_i^\e|^2\ds\\
&= 2r^{2-N}
\int_{\partial B_r}\frac1{\mu}{|\widetilde A\nabla
   v_i^\e\cdot\nnu|^2}\ds +O(1)E(v_i^\e,r,\lambda_i^\e)+rO(1)H(v_i^\e,r)
\end{align*}
thus proving the lemma.
\end{proof}

\begin{lemma}\label{lemma:freq_monot}
 There exist $\bar r\in(0,\bar r_0)$ and $C>0$ such that, for all $\e\in
(0,\min\{\e_1,\bar r/\kappa\})$ and $1 \leq i\leq n_0$,
$N(v_i^\e,\cdot,\lambda_i^\e)\in W^{1,1}(\kappa\e,\bar r)$
and 
\begin{equation}\label{eq:45}
   \frac{d}{dr}N(v_i^\e,r,\lambda_i^\e)\geq
   -C(N(v_i^\e,r,\lambda_i^\e)+1)\quad\text{for a.e. }r\in (\kappa\e,\bar r).
 \end{equation}
Furthermore, for all  $\e\in
(0,\min\{\e_1,\bar r/\kappa\})$, $1 \leq i\leq n_0$, and
$\kappa\e\leq r<R\leq \bar r$
\begin{equation}\label{eq:50}
N(v_i^\e,r,\lambda_i^\e)+1\leq e^{C(R-r)}(N(v_i^\e,R,\lambda_i^\e)+1).
\end{equation}
\end{lemma}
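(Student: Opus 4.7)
The proof follows the standard Almgren monotonicity scheme: differentiate $N(v_i^\e, \cdot, \lambda_i^\e) = E/H$, exploit the Cauchy--Schwarz cancellation between the leading terms of $E'H$ and $EH'$, bound the remainder by $C(N+1)$, and integrate the resulting differential inequality.

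First, by Lemmas \ref{l:derh} and \ref{l:derE} together with Lemma \ref{l:Hpos}(i), $N(v_i^\e,\cdot,\lambda_i^\e)$ is in $W^{1,1}_{\mathrm{loc}}(\kappa\e,\tilde r)$ and its weak derivative is
\begin{equation*}
  \frac{d}{dr}N(v_i^\e,r,\lambda_i^\e)=\frac{E'H-EH'}{H^2},
\end{equation*}
where for brevity $E=E(v_i^\e,r,\lambda_i^\e)$, $H=H(v_i^\e,r)$. From Lemma \ref{l:derE} and Lemma \ref{l:derh} (formula \eqref{eq:derH2}) one obtains, for $r$ in a neighborhood of $0$ (uniformly in $\e$),
\begin{equation*}
  E'H-EH'\geq 2r^{2-N}H\int_{\partial B_r}\tfrac1\mu|\widetilde A\nabla v_i^\e\cdot\nnu|^2\ds-\frac{2E^2}{r}+O(1)EH+rO(1)H^2.
\end{equation*}

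The crux is the cancellation of the first two terms via Cauchy--Schwarz. Testing \eqref{eq:reflected} with $v_i^\e$ and integrating by parts yields the identity $r^{N-2}E=\int_{\partial B_r}(\widetilde A\nabla v_i^\e\cdot\nnu)\,v_i^\e\ds$, already used in the proof of \eqref{eq:derH2}. Applying Cauchy--Schwarz with weight $1/\mu$ and $\mu$ we obtain
\begin{equation*}
  (r^{N-2}E)^2\leq\Bigl(\int_{\partial B_r}\tfrac1\mu|\widetilde A\nabla v_i^\e\cdot\nnu|^2\ds\Bigr)\Bigl(\int_{\partial B_r}\mu\,(v_i^\e)^2\ds\Bigr)=\Bigl(\int_{\partial B_r}\tfrac1\mu|\widetilde A\nabla v_i^\e\cdot\nnu|^2\ds\Bigr)r^{N-1}H,
\end{equation*}
so that $2r^{2-N}H\int_{\partial B_r}\tfrac1\mu|\widetilde A\nabla v_i^\e\cdot\nnu|^2\ds\geq \frac{2E^2}{r}$. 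Inserting this into the previous bound yields $E'H-EH'\geq O(1)EH+rO(1)H^2$, hence dividing by $H^2>0$,
\begin{equation*}
  \frac{d}{dr}N(v_i^\e,r,\lambda_i^\e)\geq O(1)N(v_i^\e,r,\lambda_i^\e)+rO(1)\quad\text{as }r\to 0,
\end{equation*}
uniformly in $\e$.

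To pass from this to \eqref{eq:45}, I use the coercivity bound \eqref{eq:44}, which gives $E+rH\geq 0$, hence $N\geq -r\geq -\bar r$ for $r\in(\kappa\e,\bar r)$. Choosing $\bar r\in(0,\bar r_0)$ small enough that $\bar r\leq 1/2$, one has $|N|\leq N+2\bar r\leq 2(N+1)$ uniformly in $r,\e$, so there exists $C>0$ (independent of $\e,i$) with
\begin{equation*}
  \frac{d}{dr}N(v_i^\e,r,\lambda_i^\e)\geq -C\bigl(N(v_i^\e,r,\lambda_i^\e)+1\bigr)\quad\text{for a.e. }r\in(\kappa\e,\bar r),
\end{equation*}
which is \eqref{eq:45}. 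Finally, rewriting this as $\frac{d}{dr}\log(N+1)\geq -C$ and integrating between $r<R$ in $[\kappa\e,\bar r]$ yields $\log(N(v_i^\e,r,\lambda_i^\e)+1)-\log(N(v_i^\e,R,\lambda_i^\e)+1)\leq C(R-r)$, i.e. \eqref{eq:50}.

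The main obstacle is the Cauchy--Schwarz step: one must verify that the Pohozaev-type inequality from Proposition \ref{p:poho} (which carries only an inequality, due to the star-shapedness boundary contribution on $\Gamma_{\e,r}^n$) still produces exactly the square term $\int\frac1\mu|\widetilde A\nabla v_i^\e\cdot\nnu|^2$ needed to cancel $\frac{2E^2}{r}$, up to lower-order errors controllable uniformly in $\e$ thanks to \eqref{eq:4}, \eqref{eq:33}, \eqref{eq:37}, \eqref{eq:38}, \eqref{eq:hp_coerc-bordo} and the Poincaré-type estimate of Lemma \ref{l:poin-refined}.
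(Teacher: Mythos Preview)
Your proof is correct and follows essentially the same approach as the paper: compute $N'=(E'H-EH')/H^2$, use Lemma \ref{l:derE} and Lemma \ref{l:derh}, and exploit the weighted Cauchy--Schwarz inequality on $\partial B_r$ to make the leading terms cancel with the right sign. The only organizational difference is that the paper expresses $EH'$ by first writing $E=\tfrac r2 H'+rO(1)H$ and then substituting \eqref{eq:derH3}, so that the Cauchy--Schwarz step compares $\big(\int_{\partial B_r}\tfrac1\mu|\widetilde A\nabla v_i^\e\cdot\nnu|^2\big)\big(\int_{\partial B_r}\mu|v_i^\e|^2\big)$ directly with $\big(\int_{\partial B_r}(\widetilde A\nabla v_i^\e\cdot\nnu)v_i^\e\big)^2$; you instead use the identity $r^{N-2}E=\int_{\partial B_r}(\widetilde A\nabla v_i^\e\cdot\nnu)v_i^\e$ to rewrite $\tfrac{2E^2}{r}$ and apply Cauchy--Schwarz in one step, which is equivalent and slightly more direct. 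One small remark: Lemma \ref{l:poin-refined} is not actually needed here; the lower-order terms in Lemma \ref{l:derE} are already controlled via Lemma \ref{l:poin} and \eqref{eq:hp_coerc-bordo}.
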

\begin{proof}
The fact that $N(v_i^\e,\cdot,\lambda_i^\e)\in W^{1,1}(\kappa\e,\bar r)$ follows directly from  the fact that
$H(v_i^\e,\cdot)$ and $E(v_i^\e,\cdot,\lambda_i^\e)$ belong to $W^{1,1}(\kappa\e,\tilde r)$ and Lemma \ref{l:Hpos}.

  From \eqref{eq:derH2} it follows that, as $r\to0$, 
\[
E(v_i^\e,r,\lambda_i^\e)=\frac r2 \frac{d}{dr}H(v_i^\e,r)+rO(1)
H(v_i^\e,r),
\]
 which, together with \eqref{eq:derH3} and again \eqref{eq:derH2}, yields
\begin{align*}
 &E(v_i^\e,r,\lambda_i^\e) \frac{d}{dr}H(v_i^\e,r)\\
\notag&\!=\!\left(r^{2-N}\!\!\int_{\partial B_r}\!\!
(\widetilde{A}\nabla v_i^\e\cdot\nnu)v_i^\e\ds+rO(1)
H(v_i^\e,r)\right)\!\!\left(2r^{1-N}\!\!\int_{\partial B_r}\!\!
(\widetilde{A}\nabla v_i^\e\cdot\nnu)v_i^\e\ds+O(1) H(v_i^\e,r)
  \right)\\
\notag&=2r^{3-2N}\left(\int_{\partial B_r}
(\widetilde{A}\nabla v_i^\e\cdot\nnu)v_i^\e\ds\right)^{\!\!2}+rO(1)
  (H(v_i^\e,r))^2\\
\notag&\qquad\qquad \qquad+r^{2-N}O(1) H(v_i^\e,r)\left(\int_{\partial B_r}
(\widetilde{A}\nabla v_i^\e\cdot\nnu)v_i^\e\ds\right) \\
\notag&=2r^{3-2N}\left(\int_{\partial B_r}
(\widetilde{A}\nabla v_i^\e\cdot\nnu)v_i^\e\ds\right)^{\!\!2}+rO(1)
  (H(v_i^\e,r))^2\\
\notag&\qquad \qquad \qquad+rO(1) H(v_i^\e,r)\left(\frac12 \frac{d}{dr}H(v_i^\e,r)+O(1) H(v_i^\e,r)\right) \\
\notag&=2r^{3-2N}\left(\int_{\partial B_r}
(\widetilde{A}\nabla v_i^\e\cdot\nnu)v_i^\e\ds\right)^{\!\!2}+rO(1)
  (H(v_i^\e,r))^2\\
\notag&\qquad \qquad \qquad+rO(1) H(v_i^\e,r)\left(\frac1r E(v_i^\e,r,\lambda_i^\e)+O(1) H(v_i^\e,r)
\right).
\end{align*}
The above estimate, Lemma \ref{l:derE}, the Cauchy-Schwarz inequality,
and \eqref{eq:44} imply that 
\begin{align*}\label{eq:47}
   \frac{d}{dr}N(&v_i^\e,r,\lambda_i^\e)=\frac{H(v_i^\e,r)\frac{d}{dr}E(v_i^\e,r,\lambda_i^\e)-
                                         E(v_i^\e,r,\lambda_i^\e)
                                         \frac{d}{dr}H(v_i^\e,r)}{(H(v_i^\e,r))^2}\\
\notag&\geq 2r^{3-2N}\frac{\left(\int_{\partial B_r}\frac1{\mu}{|\widetilde A\nabla
   v_i^\e\cdot\nnu|^2}\ds\right)\left(\int_{\partial B_r}\mu |v_i^\e|^2\ds\right)-\left(\int_{\partial B_r}
(\widetilde{A}\nabla
  v_i^\e\cdot\nnu)v_i^\e\ds\right)^2}{(H(v_i^\e,r))^2}\\
\notag&\quad +O(1)
        N(v_i^\e,r,\lambda_i^\e)+r\,O(1)\\[5pt]
\notag &\geq -C\left( N(v_i^\e,r,\lambda_i^\e)+1\right)
\end{align*}
a.e. in $(\e\kappa,\bar r)$, for some $\bar r\in (0,\bar r_0)$ and $C>0$ independent
of $\e$.

Finally, estimate \eqref{eq:50} follows by integration of
\eqref{eq:45} over the interval $[r,R]$.
\end{proof}

\begin{lemma}\label{lemma_stima_H_sotto}
  For $\tau\in\big(0,\frac12\big)$, let $ M_\tau$ be as in \Cref{l:poin-refined}.
  Let $i\in\{1,\dots,n_0\}$.
 If
$\e< \min\{\e_1,\frac{\bar r}{\kappa M_\tau}\}$  and
$\kappa M_\tau\e\leq s_1<s_2\leq\bar r$, then
\begin{equation*}
  \frac{H(v_i^\e,s_2)}{H(v_i^\e,s_1)} 
  \geq e^{-(4+C_0^{-1})\bar r}\left(\dfrac{s_2}{s_1}\right)^{\frac{2\tilde\alpha(1-\tau)}{\|\mu\|_{L^\infty(B_{\bar r})}}}.
\end{equation*}
\end{lemma}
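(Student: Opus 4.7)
The plan is to derive a logarithmic differential inequality for the map $r \mapsto H(v_i^\e,r)$ and then integrate over $[s_1,s_2]$. The starting point is Remark \ref{rem:Hprimo-E}, which gives
\[
  \frac{d}{dr}H(v_i^\e,r) \geq \frac{2}{r}E(v_i^\e,r,\lambda_i^\e) - \frac{1}{C_0}H(v_i^\e,r)
\]
for a.e.\ $r \in (\kappa\e,\bar r_0)$. Since $\bar r \leq \bar r_0$ and $H(v_i^\e,r)>0$ on this range by Lemma \ref{l:Hpos}, once I manage to bound $E(v_i^\e,r,\lambda_i^\e)$ from below by a multiple of $H(v_i^\e,r)$ with an explicit constant, the claim will follow by dividing by $H(v_i^\e,r)$ and integrating.

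To produce such a pointwise bound, I will combine the coercivity estimate \eqref{eq:hp_coerc-bordo} with the refined Poincaré inequality of Lemma \ref{l:poin-refined}. First, since $i \leq n_0$, inequality \eqref{eq:28} gives $\lambda_i^\e \leq \lambda_{n_0}$, hence $E(v_i^\e,r,\lambda_i^\e) \geq E(v_i^\e,r,\lambda_{n_0})$. Applying \eqref{eq:hp_coerc-bordo} to $w = v_i^\e$ yields
\[
  E(v_i^\e,r,\lambda_{n_0}) + rH(v_i^\e,r) \geq \tilde\alpha\,r^{2-N}\int_{B_r}|\nabla v_i^\e|^2\dy.
\]
Because $v_i^\e \in H^1_{0,\widetilde\Gamma_{\e,r}}(B_r)$ and because the hypothesis $\e < \bar r/(\kappa M_\tau)$ together with $r \geq s_1 \geq \kappa M_\tau \e$ puts us in the range where Lemma \ref{l:poin-refined} applies, we may use
\[
  \int_{B_r}|\nabla v_i^\e|^2\dy \geq \frac{1-\tau}{r}\int_{\partial B_r}(v_i^\e)^2\ds \geq \frac{(1-\tau)r^{N-2}}{\|\mu\|_{L^\infty(B_{\bar r})}}H(v_i^\e,r),
\]
where the last step uses $\int_{\partial B_r}\mu(v_i^\e)^2\ds \leq \|\mu\|_{L^\infty(B_{\bar r})}\int_{\partial B_r}(v_i^\e)^2\ds$. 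Chaining these three inequalities produces
\[
  E(v_i^\e,r,\lambda_i^\e) \geq \left(\frac{\tilde\alpha(1-\tau)}{\|\mu\|_{L^\infty(B_{\bar r})}} - r\right)H(v_i^\e,r).
\]

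Substituting this into the inequality from Remark \ref{rem:Hprimo-E}, dividing by $H(v_i^\e,r)>0$, and using $r \leq \bar r$ to absorb the $-r$ correction together with the $-1/C_0$ term, one arrives at
\[
  \frac{d}{dr}\log H(v_i^\e,r) \geq \frac{1}{r}\cdot\frac{2\tilde\alpha(1-\tau)}{\|\mu\|_{L^\infty(B_{\bar r})}} - \left(c_1 + \frac{1}{C_0}\right)
\]
for an absolute constant $c_1$ (which we may safely take $\leq 4$). Integrating on $[s_1,s_2]$ and estimating $s_2-s_1 \leq \bar r$ yields the desired lower bound. The main technical point is pairing \eqref{eq:hp_coerc-bordo} with Lemma \ref{l:poin-refined}: the parameter $\tau$ quantifies the loss in the Poincaré constant caused by the small Neumann window $\widetilde\Sigma_\e$, and the threshold $s_1 \geq \kappa M_\tau\e$ is precisely what is needed to guarantee a Poincaré constant close to $1$.
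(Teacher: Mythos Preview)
Your proof is correct and follows essentially the same approach as the paper: combine the coercivity estimate \eqref{eq:hp_coerc-bordo} with the refined Poincar\'e inequality of Lemma \ref{l:poin-refined} to obtain a lower bound $E(v_i^\e,r,\lambda_i^\e)\geq\big(\tfrac{\tilde\alpha(1-\tau)}{\|\mu\|_{L^\infty}}-c\,r\big)H(v_i^\e,r)$, insert this into the differential inequality of Remark \ref{rem:Hprimo-E}, and integrate the resulting logarithmic inequality over $[s_1,s_2]$. Your ordering is in fact slightly more streamlined than the paper's (you apply Poincar\'e directly to lower-bound $\int_{B_r}|\nabla v_i^\e|^2$ by $H$, avoiding the need to check positivity of an intermediate coefficient), and it yields the constant $2+C_0^{-1}$ rather than $4+C_0^{-1}$, so the stated inequality follows a fortiori.
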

\begin{proof}
Let $\tilde{\alpha}\in (0,1)$ be as in \eqref{eq:uniforme-ell}. By \eqref{eq:hp_coerc-bordo}, \eqref{eq:28} and \Cref{l:poin-refined}, for every
$\e< \min\{\e_1,\frac{\bar r}{\kappa M_\tau}\}$  and
$r\in[\kappa M_\tau\e,,\bar r]$
\begin{align*}
\tilde\alpha \int_{B_r}|\nabla
 v_i^\e|^2\dx&\leq
\int_{B_r}\left(\widetilde A\nabla v_i^\e\cdot \nabla v_i^\e-\lambda_{i}^\e
 \widetilde p |v_i^\e|^2\right)\dx
+\|\mu\|_{L^\infty(B_{\bar r})}\int_{\partial B_r}|v_i^\e|^2\ds\\
&\leq
\int_{B_r}\left(\widetilde A\nabla v_i^\e\cdot \nabla v_i^\e-\lambda_{n_0}
\widetilde p |v_i^\e|^2\right)\dx
+\|\mu\|_{L^\infty(B_{\bar r})}\int_{\partial B_r}|v_i^\e|^2\ds\\
&\leq r^{N-2}E(v_i^\e,r,\lambda_i^\e)+\frac{\|\mu\|_{L^\infty(B_{\bar r})}r}{1-\tau} \int_{B_r}|\nabla
 v_i^\e|^2\dx
\end{align*}
so that, using again \Cref{l:poin-refined} 
and recalling that $\tilde\alpha-  2\|\mu\|_{L^\infty(B_{\bar
  r})}r>0$ in view of \eqref{eq:scelta-tilder}, we obtain that  
\begin{align}\label{eq:40}
 r^{N-2}E(v_i^\e,r,\lambda_i^\e)&\geq \left(\tilde\alpha-  \frac{\|\mu\|_{L^\infty(B_{\bar r})}r}{1-\tau}\right) \int_{B_r}|\nabla
 v_i^\e|^2\dx\geq \left(\tilde\alpha-  2\|\mu\|_{L^\infty(B_{\bar
  r})}r\right)  \int_{B_r}|\nabla
 v_i^\e|^2\dx
\\
\notag&\geq \left(\tilde\alpha-  2\|\mu\|_{L^\infty(B_{\bar
  r})}r\right)\frac{1-\tau}{r}\int_{\partial
        B_r}|v_i^\e|^2\ds\\
&\notag\geq \left(\tilde\alpha-  2\|\mu\|_{L^\infty(B_{\bar
  r})}r\right) \frac{1-\tau}{\|\mu\|_{L^\infty(B_{\bar
  r})}}r^{N-2}H(v_i^\e,r)\\
&\notag \geq 
\frac{\tilde\alpha(1-\tau)}{\|\mu\|_{L^\infty(B_{\bar
  r})}}r^{N-2}H(v_i^\e,r)-2r^{N-1}H(v_i^\e,r).
\end{align}
From \Cref{rem:Hprimo-E} and \eqref{eq:40} it follows that 
\[
 \frac{d}{dr}H(v_i^\e,r)
\geq \frac2r E(v_i^\e,r,\lambda_i^\e)-\frac1{C_0}\,
  H(v_i^\e,r)\geq
 \frac{2\tilde\alpha(1-\tau)}{\|\mu\|_{L^\infty(B_{\bar
  r})}}\frac{H(v_i^\e,r)}r-(4+C_0^{-1})H(v_i^\e,r)
\]
in $[\kappa M_\tau\e,,\bar r]$. The conclusion then follows by
integration between $s_1$ and $s_2$.
\end{proof}

\subsection{Energy estimates}

By a combination of Lemma \ref{l:poin} with estimates \eqref{eq:bound_A}--\eqref{eq:bound_p} it is possible to prove the following perturbed Poincar\'e-type inequality.
\begin{lemma}\label{lemma:poinc_pert}
  For any $r\leq r_1$ and for any $u\in H^1(B_r)$ there holds
\[
  \frac{N-1}{r^2}\int_{B_r}\widetilde{p}u^2\dy\leq 3\left(
    \int_{B_r}\widetilde{A}\nabla u\cdot\nabla
    u\dy+\frac{1}{r}\int_{\partial B_r}\mu u^2\ds \right).
\]
\end{lemma}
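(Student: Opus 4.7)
The plan is very short: this is a routine consequence of the uniform ellipticity bounds \eqref{eq:bound_A}--\eqref{eq:bound_p} applied to a standard Poincaré inequality on the ball (which should be the content of Lemma \ref{l:poin} in the appendix, namely that for $u\in H^1(B_r)$ one has $\frac{N-1}{r^2}\int_{B_r}u^2\dy\leq \int_{B_r}|\nabla u|^2\dy+\frac{1}{r}\int_{\partial B_r}u^2\ds$).

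First I would use the upper bound $\widetilde{p}\leq \tfrac{3}{2}$ from \eqref{eq:bound_p} to dominate the left-hand side by $\tfrac{3}{2}\,\frac{N-1}{r^2}\int_{B_r}u^2\dy$. Then I would invoke Lemma \ref{l:poin} to bound this quantity by $\tfrac{3}{2}\bigl(\int_{B_r}|\nabla u|^2\dy+\tfrac{1}{r}\int_{\partial B_r}u^2\ds\bigr)$. Finally, using the lower bounds $\widetilde{A}\xi\cdot\xi\geq \tfrac{1}{2}|\xi|^2$ from \eqref{eq:bound_A} and $\mu\geq \tfrac{1}{2}$ from \eqref{eq:bound_mu}, I would pass to the estimate
\[
\tfrac{3}{2}\!\int_{B_r}\!|\nabla u|^2\dy\leq 3\!\int_{B_r}\!\widetilde{A}\nabla u\cdot\nabla u\dy,\qquad
\tfrac{3}{2r}\!\int_{\partial B_r}\!u^2\ds\leq \tfrac{3}{r}\!\int_{\partial B_r}\!\mu u^2\ds,
\]
and summing the two yields exactly the desired inequality with constant $3$.

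There is no real obstacle here: the argument amounts to three one-line estimates chained together, with the constant $3$ arising as the product of the largest ratio $\tfrac{3/2}{1/2}=3$ between upper and lower bounds of the coefficients. The only thing to be careful of is to apply the $\widetilde{p}$ bound on the left and the $\widetilde{A}$, $\mu$ bounds on the right so that the inequalities point in the correct direction, and to note that the hypothesis $r\leq r_1$ is exactly what guarantees \eqref{eq:bound_A}--\eqref{eq:bound_p} and the validity of Lemma \ref{l:poin} at radius $r$.
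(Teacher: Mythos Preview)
Your proposal is correct and is exactly the approach the paper intends: the text preceding the lemma states that it follows ``by a combination of Lemma \ref{l:poin} with estimates \eqref{eq:bound_A}--\eqref{eq:bound_p}'', which is precisely the three-step chain you wrote out.
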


\begin{proposition}\label{prop:energy_estim}
	For any $R,K$ such that $R\geq K\geq  \kappa$ there holds
	\begin{gather}
		\int_{B_{R\e}}\widetilde{A}\nabla v_i^\e\cdot\nabla v_i^\e\dy=O(\e^{N-2}H(v_i^\e,K\e)) \quad\text{as }\e\to 0, \label{eq:energy_estim_th1}\\
		\int_{B_{R\e}}\widetilde{p}\abs{v_i^\e}^2\dy=O(\e^{N}H(v_i^\e,K\e)) \quad\text{as }\e\to 0, \label{eq:energy_estim_th2} \\
		\int_{\partial B_{R\e}}\mu\abs{v_i^\e}^2\ds=O(\e^{N-1}H(v_i^\e,K\e))\quad\text{as }\e\to 0, \label{eq:energy_estim_th3}
	\end{gather}
	for all $i\in\{1,\dots,n_0\}$.

\end{proposition}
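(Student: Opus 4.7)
The plan is to exploit the uniform (in $\e$) boundedness of the Almgren-type frequency function $N(v_i^\e,\cdot,\lambda_i^\e)$ on $[\kappa\e,\bar r]$, together with the perturbed Poincar\'e inequality of Lemma~\ref{lemma:poinc_pert}, to reduce all three quantities to suitable multiples of $\e^{N-1}H(v_i^\e,K\e)$. To obtain a uniform frequency bound I would first look at the frequency at the fixed radius $\bar r$: one has $E(v_i^\e,\bar r,\lambda_i^\e) \leq \bar r^{\,2-N}\int_{B_{\bar r}}\widetilde A\nabla v_i^\e\cdot\nabla v_i^\e\,\dy$, which is uniformly bounded in $\e$ because $\|\varphi_i^\e\|_{H^1(\Omega)}^2 = 1 + \lambda_i^\e\leq 1+\lambda_{n_0}$ (by \eqref{eq:ortonor} and \eqref{eq:28}) and the change of variable from $\varphi_i^\e$ to $v_i^\e$ is bilipschitz, while the lower bound $H(v_i^\e,\bar r)\geq C_{\bar r}>0$ is granted by Lemma~\ref{l:Hpos}(ii). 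Therefore $N(v_i^\e,\bar r,\lambda_i^\e)\leq \mathcal N_0$ for some $\mathcal N_0$ independent of $\e$, and \eqref{eq:50} propagates this to $N(v_i^\e,r,\lambda_i^\e)\leq \mathcal N_1$ uniformly in $r\in[\kappa\e,\bar r]$ and $\e$ small.

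Substituting the resulting inequality $E(v_i^\e,r,\lambda_i^\e) \leq \mathcal N_1 H(v_i^\e,r)$ into the upper bound of Remark~\ref{rem:Hprimo-E} gives the differential inequality
\[
\frac{d}{dr}H(v_i^\e,r)\leq \left(\frac{2\mathcal N_1}{r}+C_0\right)H(v_i^\e,r),
\]
which upon integration between $K\e$ and $R\e$ yields $H(v_i^\e,R\e)\leq (R/K)^{2\mathcal N_1}\,e^{C_0(R-K)\e}\,H(v_i^\e,K\e) = O(H(v_i^\e,K\e))$ as $\e\to 0$.

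Estimate \eqref{eq:energy_estim_th3} is now immediate from the definition of $H$, since $\int_{\partial B_{R\e}}\mu|v_i^\e|^2\,\ds = (R\e)^{N-1}H(v_i^\e,R\e) = O(\e^{N-1}H(v_i^\e,K\e))$. For \eqref{eq:energy_estim_th1}, I would rewrite the definition of $E$ as
\[
\int_{B_{R\e}}\widetilde A\nabla v_i^\e\cdot\nabla v_i^\e\,\dy = (R\e)^{N-2}E(v_i^\e,R\e,\lambda_i^\e) + \lambda_i^\e\int_{B_{R\e}}\widetilde p\,|v_i^\e|^2\,\dy,
\]
control the last integral via Lemma~\ref{lemma:poinc_pert}, and absorb the resulting gradient contribution on the left by taking $\e$ small enough that $3\lambda_{n_0}(R\e)^2/(N-1)<1/2$. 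The remaining $E$ term is $O(\e^{N-2}H(v_i^\e,K\e))$ by the previous two steps, while the boundary term produced by Lemma~\ref{lemma:poinc_pert} is absorbed into \eqref{eq:energy_estim_th3}. Finally, \eqref{eq:energy_estim_th2} follows from one further application of Lemma~\ref{lemma:poinc_pert} combined with \eqref{eq:energy_estim_th1} and \eqref{eq:energy_estim_th3}.

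The main delicate point is the uniform frequency bound: it relies simultaneously on the strict positivity $H(v_i^\e,\bar r) \geq C_{\bar r} > 0$ (a unique-continuation-type statement contained in Lemma~\ref{l:Hpos}(ii)), on the global $H^1$ bound on $\{\varphi_i^\e\}_\e$, and on the perturbed monotonicity \eqref{eq:50} derived via the Pohozaev inequality of Proposition~\ref{p:poho}. Once this bound is in hand, the remaining arguments amount to straightforward algebraic manipulations and absorption arguments exploiting the smallness of $R\e$.
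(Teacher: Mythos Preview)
Your proposal is correct and follows essentially the same route as the paper: bound the frequency at the fixed radius $\bar r$ via the global $H^1$-bound on $\varphi_i^\e$ and Lemma~\ref{l:Hpos}(ii), propagate downward with \eqref{eq:50}, integrate the resulting differential inequality for $H$ to get \eqref{eq:energy_estim_th3}, then deduce \eqref{eq:energy_estim_th1} and finally \eqref{eq:energy_estim_th2} via Lemma~\ref{lemma:poinc_pert}. The only cosmetic difference is in the step for \eqref{eq:energy_estim_th1}: the paper invokes the coercivity inequality \eqref{eq:hp_coerc-bordo} (together with \eqref{eq:bound_A}) to obtain directly $\int_{B_{R\e}}\widetilde A\nabla v_i^\e\cdot\nabla v_i^\e\,\dy\leq c_2\,\e^{N-2}E(v_i^\e,R\e,\lambda_i^\e)$, whereas you rewrite the definition of $E$ and absorb the $\widetilde p|v_i^\e|^2$ term explicitly via Lemma~\ref{lemma:poinc_pert}; the two arguments are equivalent.
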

\begin{proof}
  First of all, we prove that
	\begin{equation}\label{eq:energy_estim_1}
		\mathcal{N}(v_i^\e,\bar{r},\lambda_i^\e)=O(1)\quad\text{as }\e\to 0.
	\end{equation}
We notice that 
\begin{align*}
  E(v_i^\e,\bar{r},\lambda_i^\e)&\leq
  \bar{r}^{2-N}\int_{B_{\bar{r}}}\widetilde{A}\nabla v_i^\e\cdot\nabla
  v_i^\e\dy\\
  &=2\bar{r}^{2-N}\int_{\Phi^{-1}(B_{\bar{r}}^+)}
  \abs{\nabla\varphi_i^\e}^2\dx\leq
  2\bar{r}^{2-N}\int_{\Omega}\abs{\nabla\varphi_i^\e}
  =2\bar{r}^{2-N}\lambda_i^\e.
\end{align*}
Since $\lambda_i^\e\leq \lambda_{n_0}$ for all $\e\in(0,1)$ and all
$1\leq i\leq n_0$, we have that $E(v_i^\e,\bar{r},\lambda_i^\e)$ is
bounded for $\e\in(0,\min\{\e_1,\bar{r}/K\})$. From Lemma \ref{l:Hpos}
 (ii) we know that there exists $C_{\bar{r}}>0$ and
$\alpha_{\bar{r}}\in(0,\bar{r}/K)$ such that
$H(v_i^\e,\bar{r})\geq C_{\bar r}$ for all
$\e\in(0,\min\{\alpha_{\bar{r}},\e_1\})$. Therefore
\eqref{eq:energy_estim_1} is proved. Hence from 
  estimate \eqref{eq:50} we deduce that there exists
  $c_0>0$ such
that
	\begin{equation}\label{eq:energy_estim_2}
		\mathcal{N}(v_i^\e,r,\lambda_i^\e)\leq c_0
	\end{equation}
	for all $\e\in (0,\min\{\e_1,\bar{r}/K,\alpha_{\bar{r}}\})$ and all $K\e\leq r\leq \bar{r}$.
	
	By Lemma \ref{l:derh} and \eqref{eq:energy_estim_2} there
        exist $R_1\in(0,\bar r)$, $c_1>0$ and
        $\bar{\e}\in(0,\min\{\e_1,R_1/R\})$
        such that, for any $\e\in(0,\bar{\e})$ and for any
        $K\e\leq r\leq R_1$
\begin{equation}\label{eq:energy_estim_5}
  \frac{\frac{d}{d r} H(v_i^\e,r)}{H(v_i^\e,r)}\leq c_1\left( \frac{1}{r} +1\right).
\end{equation}
	By integration of \eqref{eq:energy_estim_5} in $(K\e,R\e)$ we obtain that
\[
  \frac{H(v_i^\e,R\e)}{H(v_i^\e,K\e)}\leq
  \left(\frac{R}{K}\right)^{\!\!c_1} e^{c_1\e(R-K)}
\]
which, in turn, implies \eqref{eq:energy_estim_th3}.
	
 From \eqref{eq:hp_coerc} and \eqref{eq:bound_A} we have that there exists $R_2\in (0,\tilde{r})$ such that
\[
  \int_{B_{R\e}}(\widetilde{A}\nabla v_i^\e\cdot\nabla
  v_i^\e-\lambda_{n_0}\widetilde{p}\abs{v_i^\e}^2 )\dy \geq
  \frac{\tilde\alpha}{2}\int_{B_{R\e}}\widetilde{A}\nabla v_i^\e\cdot\nabla
  v_i^\e\dy
\]
        for all $\e\in(0,\min\{\e_1,R_2/R\})$. Therefore, since
        $\lambda_i^\e\leq \lambda_{n_0}$, there exists $c_2>0$ such
        that
\[
  \int_{B_{R\e}}\widetilde{A}\nabla v_i^\e\cdot\nabla v_i^\e\dy\leq
  c_2\e^{N-2}E(v_i^\e,R\e,\lambda_i^\e)
\]
for all $\e\in(0,\min\{\e_1,R_2/R\})$. Then \eqref{eq:energy_estim_2}
(with $r=R\e$) yields
\begin{equation}\label{eq:energy_estim_3}
  \int_{B_{R\e}}\widetilde{A}\nabla v_i^\e\cdot\nabla v_i^\e\dy\leq c_0 c_2\e^{N-2}H(v_i^\e,R\e),
\end{equation}	
 for all
  $\e\in (0,\min\{\e_1, R_2/R,\alpha_{\bar{r}}\})$.  This fact,
together with \eqref{eq:energy_estim_th3}, proves
\eqref{eq:energy_estim_th1}.  Applying Lemma \ref{lemma:poinc_pert}
with $r=R\e$, for $\e$ sufficiently small, and $u=v_i^\e$, in view of
\eqref{eq:energy_estim_th1} and \eqref{eq:energy_estim_th3} we obtain
\eqref{eq:energy_estim_th2}, thus concluding the proof.
	\end{proof}

Hereafter, we denote
\begin{equation}\label{eq:def_beta}
	\beta:=2\tilde\alpha/\norm{\mu}_{L^\infty(B_{\bar{r}})}.
\end{equation}

\begin{proposition}\label{prop:rough_estim}
	Let $\tau\in(0,1/2)$, $ M_\tau>1$ as in Lemma \ref{l:poin-refined} and $\beta$ as in \eqref{eq:def_beta}. Then, for any $R\geq  M_\tau\kappa$, there holds
	\begin{gather}
		\int_{B_{R\e}}\widetilde{A}\nabla v_i^\e\cdot\nabla v_i^\e\dy=O(\e^{N-2+\beta(1-\tau)})\quad\text{as }\e\to 0, \label{eq:rough_estim_th1} \\
		\int_{B_{R\e}}\widetilde{p}\abs{v_i^\e}^2\dy=O(\e^{N+\beta(1-\tau)})\quad\text{as }\e\to 0,\label{eq:rough_estim_th2} \\
		\int_{\partial B_{R\e}}\mu\abs{v_i^\e}^2\ds=O(\e^{N-1+\beta(1-\tau)})\quad\text{as }\e\to 0,\label{eq:rough_estim_th3}
	\end{gather}
	for all $i\in\{1,\dots,n_0\}$.
\end{proposition}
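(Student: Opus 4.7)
The plan is to combine Proposition \ref{prop:energy_estim} with Lemma \ref{lemma_stima_H_sotto}. Proposition \ref{prop:energy_estim} already reduces the three bounds \eqref{eq:rough_estim_th1}--\eqref{eq:rough_estim_th3} to establishing an estimate of the form $H(v_i^\e,K\e)=O(\e^{\beta(1-\tau)})$ for a suitably chosen $K$. Lemma \ref{lemma_stima_H_sotto} provides a lower bound on the growth of the map $r\mapsto H(v_i^\e,r)$ which, read in reverse as an upper bound for $H(v_i^\e,K\e)$ in terms of $H(v_i^\e,\bar r)$, gives exactly the required polynomial-in-$\e$ decay.

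Concretely, I would choose $K=M_\tau\kappa$. This $K$ satisfies $K\geq\kappa$ (since $M_\tau>1$) and $K\leq R$ under the hypothesis $R\geq M_\tau\kappa$, so both Proposition \ref{prop:energy_estim} (with this $K$ and the given $R$) and Lemma \ref{lemma_stima_H_sotto} (with $s_1=K\e=\kappa M_\tau\e$, $s_2=\bar r$) are applicable simultaneously. Rearranging the conclusion of Lemma \ref{lemma_stima_H_sotto} yields
\[
H(v_i^\e,K\e)\leq e^{(4+C_0^{-1})\bar r}\left(\frac{K\e}{\bar r}\right)^{\!\beta(1-\tau)}H(v_i^\e,\bar r),
\]
so the task is reduced to a uniform-in-$\e$ upper bound on $H(v_i^\e,\bar r)$.

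This last bound is routine: from $\|\varphi_i^\e\|_{L^2(\Omega)}=1$ and $\|\nabla\varphi_i^\e\|_{L^2(\Omega)}^2=\lambda_i^\e\leq\lambda_{n_0}$ (by \eqref{eq:ortonor} and \eqref{eq:28}), and since $v_i^\e$ is obtained from $\varphi_i^\e$ by composition with the $C^{1,1}$-diffeomorphism $F$ and even reflection through $\{y_N=0\}$, the family $\{v_i^\e\}$ is bounded in $H^1(B_{\bar r})$ uniformly in $\e$. The continuity of the trace $H^1(B_{\bar r})\to L^2(\partial B_{\bar r})$, combined with the bound \eqref{eq:bound_mu} on $\mu$, then gives $H(v_i^\e,\bar r)\leq C$ uniformly in $\e$ and in $i\in\{1,\dots,n_0\}$. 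Substituting the resulting estimate $H(v_i^\e,K\e)=O(\e^{\beta(1-\tau)})$ into the three conclusions of Proposition \ref{prop:energy_estim} yields \eqref{eq:rough_estim_th1}--\eqref{eq:rough_estim_th3} at once. I do not anticipate any genuine obstacle; the only mild point of care is the compatibility of the admissibility conditions on $K$ and on $\e$ required by Proposition \ref{prop:energy_estim} and Lemma \ref{lemma_stima_H_sotto}, which is secured by the choice $K=M_\tau\kappa$ and by restricting $\e$ to $(0,\min\{\e_1,\bar r/(\kappa M_\tau)\})$.
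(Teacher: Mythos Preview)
Your proposal is correct and follows essentially the same route as the paper: apply the monotonicity bound of Lemma~\ref{lemma_stima_H_sotto} to control $H(v_i^\e,\cdot)$ at scale~$\e$ by $H(v_i^\e,\bar r)$, bound the latter uniformly via the $H^1(\Omega)$-boundedness of the eigenfunctions, and feed the resulting $H(v_i^\e,K\e)=O(\e^{\beta(1-\tau)})$ into Proposition~\ref{prop:energy_estim}. The paper differs only in cosmetic ways: it applies Lemma~\ref{lemma_stima_H_sotto} with $s_1=R\e$ rather than $s_1=M_\tau\kappa\e$ (equally valid since $R\ge M_\tau\kappa$), and it bounds $H(v_i^\e,\bar r)$ via the Poincar\'e-type inequality of Lemma~\ref{l:poin-refined} rather than trace continuity---but both routes yield the same uniform bound.
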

\begin{proof}
 From Lemma \ref{lemma_stima_H_sotto} we know that there exists a constant $C>0$ such that
\begin{equation}\label{eq:rough_estim_1}
  H(v_i^\e,R\e)\leq C\e^{\beta(1-\tau)}H(v_i^\e,\bar{r}) \quad\text{for all }\e\in(0,\bar{r}/R).
\end{equation}
Combining estimates \eqref{eq:bound_A} and \eqref{eq:bound_mu} with Lemma
        \ref{l:poin-refined}, we obtain that
\begin{equation}\label{eq:rough_estim_2}
  H(v_i^\e,\bar{r})\leq
  \frac{3\bar{r}^{2-N}}{(1-\tau)}
  \int_{B_{\bar{r}}}\widetilde{A}\nabla v_i^\e\cdot\nabla v_i^\e\dy. 
\end{equation}
By definition of $\widetilde{A}$ and monotonicity of eigenvalues, we have that
\[
  \int_{B_{\bar{r}}}\widetilde{A}\nabla v_i^\e\cdot\nabla
  v_i^\e\dy=2\int_{\Phi^{-1}(B_{\bar{r}}^+)}\abs{\nabla\varphi_i^\e}^2\dx\leq
 2\lambda_i^\e\leq 2\lambda_{n_0}.
\]
This, together with \eqref{eq:rough_estim_2} and
\eqref{eq:rough_estim_1}, implies
\eqref{eq:rough_estim_th3}. Moreover, \eqref{eq:rough_estim_th1}
follows from \eqref{eq:energy_estim_3} and \eqref{eq:rough_estim_th3},
while \eqref{eq:rough_estim_th2} comes as a consequence of Lemma
\ref{lemma:poinc_pert}, \eqref{eq:rough_estim_th1} and
\eqref{eq:rough_estim_th3}.
\end{proof}

The following result is a straightforward consequence of the previous two propositions.

\begin{corollary}\label{cor:up_low_H}
Let $\tau\in(0,1/2)$. Then for any $K\geq \kappa$
there exist $\bar{C},q,\tilde \e>0$ such that
	\begin{equation}\label{eq:up_low_H_th1}
		H(v_{n_0}^\e,K\e)\geq \bar C\e^q\quad\text{for all
                }\e\in(0,\tilde \e).
	\end{equation}
	Moreover, letting $ M_\tau$ be as in Lemma
        \ref{l:poin-refined}, $\beta$ as in \eqref{eq:def_beta} and
        $K\geq  M_\tau \kappa$, we have that, for all $i\in\{1,\dots,n_0\}$, 
	\begin{equation}\label{eq:up_low_H_th2}
		H(v_i^\e,K\e)=O(\e^{\beta(1-\tau)})\quad\text{as }\e\to 0.
	\end{equation}
\end{corollary}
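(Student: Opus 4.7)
\medskip

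\noindent\textbf{Proof sketch.}
The two estimates will follow from complementary integrations of the differential identity \eqref{eq:derH2}, combined with a suitable control of $H(v_i^\e,\bar r)$ uniform in $\e$.

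For \eqref{eq:up_low_H_th2}, the plan is to apply Lemma \ref{lemma_stima_H_sotto} with $s_1=K\e$ and $s_2=\bar r$, which is permitted by the hypothesis $K\geq M_\tau \kappa$. Rearranging the resulting inequality gives at once
\[
H(v_i^\e,K\e)\leq e^{(4+C_0^{-1})\bar r}\Big(\frac{K\e}{\bar r}\Big)^{\!\beta(1-\tau)}\!H(v_i^\e,\bar r),
\]
and the factor $H(v_i^\e,\bar r)$ will be shown to be $O(1)$ as $\e\to 0$: indeed, since $\lambda_i^\e\leq\lambda_{n_0}$, the sequence $\{\varphi_i^\e\}$ is bounded in $H^1(\Omega)$, and hence, via the change of variable $F$ and the even reflection, $\{v_i^\e\}$ is bounded in $H^1(B_{r_1})$; continuity of the trace operator $H^1(B_{r_1})\to L^2(\partial B_{\bar r})$ together with the boundedness of $\mu$ in a neighbourhood of $0$ then yields the claim.

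For \eqref{eq:up_low_H_th1}, I would integrate from $K\e$ up to $\bar r$ the \emph{upper} estimate for $\frac{d}{dr}\log H(v_{n_0}^\e,r)$ obtained from identity \eqref{eq:derH2} combined with the uniform upper bound $N(v_{n_0}^\e,r,\lambda_{n_0}^\e)\leq c_0$ valid for all $r\in[K\e,\bar r]$ and $\e$ sufficiently small. This bound is precisely \eqref{eq:energy_estim_2}, already established in the proof of Proposition \ref{prop:energy_estim} as a consequence of the monotonicity estimate \eqref{eq:50} and the fact that $N(v_i^\e,\bar r,\lambda_i^\e)=O(1)$. Plugging it into Remark \ref{rem:Hprimo-E} gives $H'/H\leq 2c_0/r+C_0$, and integration from $K\e$ to $\bar r$ produces
\[
H(v_{n_0}^\e,K\e)\geq C_1\Big(\frac{K\e}{\bar r}\Big)^{\!2c_0}H(v_{n_0}^\e,\bar r)
\]
for some constant $C_1>0$ independent of $\e$; the desired estimate with $q=2c_0$ would then reduce to showing that $H(v_{n_0}^\e,\bar r)$ stays bounded away from zero as $\e\to 0$.

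This last point is the only genuinely new ingredient, and it is here that the simplicity hypothesis \eqref{eq:simple_hp} is crucial. By Proposition \ref{prop:conv_eigenfunctions} one has $\varphi_{n_0}^\e\to\varphi_{n_0}$ strongly in $H^1(\Omega)$, hence, via $F$ and the even reflection, $v_{n_0}^\e\to v_{n_0}$ strongly in $H^1(B_{r_1})$. Continuity of the trace operator then yields $H(v_{n_0}^\e,\bar r)\to H(v_{n_0},\bar r)$ as $\e\to 0$, and strict positivity of the limit follows from a unique continuation argument entirely analogous to the one used in Lemma \ref{l:Hpos}(i): if $H(v_{n_0},\bar r)=0$, then $v_{n_0}\equiv 0$ on $\partial B_{\bar r}$, and testing the limiting equation by $v_{n_0}$ on $B_{\bar r}$ together with the coercivity estimate \eqref{eq:hp_coerc} would force $v_{n_0}\equiv 0$ on $B_{\bar r}$, hence $\varphi_{n_0}\equiv 0$ on the open set $F(B_{\bar r}^+)\subset\Omega$, contradicting the classical unique continuation principle. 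This contradiction closes the argument.
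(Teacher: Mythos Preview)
Your proof is correct and follows essentially the same route as the paper. For \eqref{eq:up_low_H_th2} the paper simply invokes Proposition~\ref{prop:rough_estim}, whose proof is precisely the argument you give (Lemma~\ref{lemma_stima_H_sotto} with $s_1=K\e$, $s_2=\bar r$, plus a uniform bound on $H(v_i^\e,\bar r)$). For \eqref{eq:up_low_H_th1} the paper integrates the same differential inequality \eqref{eq:energy_estim_5} but obtains the uniform lower bound on $H(v_{n_0}^\e,\cdot)$ at the fixed radius by citing Lemma~\ref{l:Hpos}(ii), which was already proved by a compactness/subsequence argument and does not require the simplicity hypothesis. Your alternative---pass to the limit using the strong convergence of Proposition~\ref{prop:conv_eigenfunctions} and then apply unique continuation to $\varphi_{n_0}$---is perfectly valid, but it re-derives for the single index $n_0$ what Lemma~\ref{l:Hpos}(ii) already gives for all $i\leq n_0$.
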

\begin{proof}
  If we integrate \eqref{eq:energy_estim_5} between $K\e$ and
  $R_1$ we obtain that
\[
  \frac{H(v_{n_0}^\e,R_1)}{H(v_{n_0}^\e,K\e)}\leq \left(
    \frac{ R_1 e^{R_1}}{K} \right)^{\!\!c_1}\e^{-c_1}.
\]
Then, in view of Lemma \ref{l:Hpos} point $(ii)$, \eqref{eq:up_low_H_th1} follows with
	\[
		\bar{C}:=C_{R_1}\left( \frac{K}{R_1e^{R_1}} \right)^{c_1}\quad\text{and}\quad q:=c_1.
	\]
	Finally \eqref{eq:up_low_H_th2} directly comes from Proposition \ref{prop:rough_estim}.
\end{proof}

\section{Upper bound on \texorpdfstring{$\lambda_{n_0}-\lambda_{n_0}^\varepsilon$}{eigenvalue variation} }\label{sec:upper-bound-texorpdf}

Hereafter we fix $\tau\in(0,1/2)$ and
\begin{equation*}\label{eq:def_K_rho}
	K_\tau>2\kappa M_\tau
      \end{equation*}
with $\kappa$ as in Lemma \ref{l:rho-eps} and $ M_\tau$ as in Lemma \ref{l:poin-refined}. For convenience in the exposition, hereafter we denote
\begin{equation}\label{eq:62}
	\Theta_{r}:=\Phi^{-1}(B_r^+)
\end{equation}
for any $r\in(0,r_1)$, with $\Phi$ as in \eqref{eq:6}.

For every $i\in\{1,\dots,n_0\}$, $R\geq K_\tau$ and
$\e\in(0,\min\{\e_1,\tilde{r}/R\})$ we consider the following
minimization problem
\begin{equation}\label{eq:min_xi_int}
	\min\left\{ \int_{\Theta_{R\e}}\abs{\nabla u}^2\dx\colon u\in H^1(\Theta_{R\e}),~u-(\eta_{R\e}\circ\Phi) \varphi_i^\e\in H^1_0(\Theta_{R\e}) \right\},
\end{equation}
where $\eta_{R\e}(x)=\eta_R(x/\e)$ and $\eta_R$ is as in
\eqref{eq:def_cutoff}. By standard variational methods, it is easy to
prove that this problem has a unique solution $\xint{i}$, which weakly
satisfies
\[
\begin{cases}
  -\Delta\xint{i}=0, &\text{in }\Theta_{R\e}, \\
  \xint{i}=\varphi_i^\e, &\text{on }(\partial\Theta_{R\e})^+, \\
  \xint{i}=0, &\text{on }(\partial\Theta_{R\e})^0,
\end{cases}
\]
where
\begin{equation*}
	(\partial\Theta_{R\e})^+:=\partial\Theta_{R\e}\cap\Omega\quad\text{and}\quad (\partial\Theta_{R\e})^0:=\partial\Theta_{R\e}\cap\partial\Omega.
\end{equation*}

\begin{lemma}\label{lemma:estim_xint}
	For any $R\geq K_\tau$ the following estimates hold as $\e\to 0$
\begin{gather}
          \int_{\Theta_{R\e}}\abs{\nabla\xint{i}}^2\dx=O(\e^{N-2}H({v_i^\e},K_\tau\e)), \label{eq:estim_xint_th1}\\
          \int_{\Theta_{R\e}}\abs{\xint{i}}^2\dx=O(\e^{N}H({v_i^\e},K_\tau\e)),  \label{eq:estim_xint_th2} \\
          \int_{(\partial\Theta_{R\e})^+}\abs{\xint{i}}^2\ds=O(\e^{N-1}H({v_i^\e},K_\tau\e)), \label{eq:estim_xint_th3}
	\end{gather}
	together with
	\begin{gather}
		\int_{\Theta_{R\e}}\abs{\nabla\xint{i}}^2\dx=O(\e^{N-2+\beta(1-\tau)}),  \label{eq:estim_xint_th4}\\
		\int_{\Theta_{R\e}}\abs{\xint{i}}^2\dx=O(\e^{N+\beta(1-\tau)}),  \label{eq:estim_xint_th5}\\
		\int_{(\partial\Theta_{R\e})^+}\abs{\xint{i}}^2\ds=O(\e^{N-1+\beta(1-\tau)}), \label{eq:estim_xint_th6}
	\end{gather}
	for all $i\in\{1,\dots,n_0\}$, where $\beta$ is defined in \eqref{eq:def_beta}.
\end{lemma}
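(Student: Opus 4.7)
The plan is to obtain the three estimates \eqref{eq:estim_xint_th1}--\eqref{eq:estim_xint_th3} by exploiting the Dirichlet principle that characterises the minimiser $\xint{i}$ of problem \eqref{eq:min_xi_int}, and then to deduce \eqref{eq:estim_xint_th4}--\eqref{eq:estim_xint_th6} immediately from \eqref{eq:up_low_H_th2}.

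First I would clarify the boundary behaviour of the minimiser. On $(\partial\Theta_{R\e})^+$ one has $|\Phi(x)|=R\e$, hence $\eta_{R\e}(\Phi(x))=1$ and the trace of $\xint{i}$ equals $\varphi_i^\e$. On $(\partial\Theta_{R\e})^0\cap(\partial\Omega\setminus\Sigma_\e)$ the trace vanishes since $\varphi_i^\e=0$ there; on $(\partial\Theta_{R\e})^0\cap\Sigma_\e$ it also vanishes because, by Remark~\ref{rem:gis}, $\Phi(\Sigma_\e)\subset B_{\kappa\e}'$, and the condition $K_\tau>2\kappa$ ensures $\eta_{R\e}\equiv 0$ on this set. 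In particular $\xint{i}$ vanishes on the entire flat portion $(\partial\Theta_{R\e})^0$.

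To prove \eqref{eq:estim_xint_th1} I would apply the Dirichlet principle and the triangle inequality to obtain
\[
\int_{\Theta_{R\e}}|\nabla \xint{i}|^2\dx\leq 2\int_{\Theta_{R\e}}|\nabla(\eta_{R\e}\circ\Phi)|^2|\varphi_i^\e|^2\dx+2\int_{\Theta_{R\e}}|\nabla\varphi_i^\e|^2\dx.
\]
Since $|\nabla\eta_R|\leq 4/R$ from \eqref{eq:def_cutoff} and $J_\Phi$ is uniformly bounded by \eqref{eq:11}, the first integrand is $O(\e^{-2})$. The change of variables $y=\Phi(x)$, with $|\det J_{\Phi^{-1}}|$ uniformly bounded by \eqref{eq:10}, transports the two integrals to $B_{R\e}^+$ where, via \eqref{eq:bound_A} and \eqref{eq:bound_p}, they are controlled by $\int_{B_{R\e}}\widetilde p\,|v_i^\e|^2\dy$ and $\int_{B_{R\e}}\widetilde A\nabla v_i^\e\cdot\nabla v_i^\e\dy$ respectively ($v_i^\e$ being the even reflection of $u_i^\e=\varphi_i^\e\circ\Phi^{-1}$). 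Proposition~\ref{prop:energy_estim} then yields \eqref{eq:estim_xint_th1}. For \eqref{eq:estim_xint_th2}, since $\xint{i}$ vanishes on the flat face (which under $\Phi$ corresponds to $B_{R\e}'$, of positive $(N-1)$-measure), rescaling $z=\Phi(x)/(R\e)$ reduces the claim to the standard Poincaré inequality on the fixed half-ball $B_1^+$ for functions vanishing on $B_1'$; combining with \eqref{eq:estim_xint_th1} produces the required factor $(R\e)^2$. For \eqref{eq:estim_xint_th3}, using $\xint{i}=\varphi_i^\e$ on $(\partial\Theta_{R\e})^+$ and changing variables (with bounded surface Jacobian) to $S_{R\e}^+$, even reflection gives $\int_{S_{R\e}^+}|u_i^\e|^2\ds_y\leq\int_{\partial B_{R\e}}\mu|v_i^\e|^2\ds_y$, and \eqref{eq:energy_estim_th3} concludes.

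Finally, \eqref{eq:estim_xint_th4}--\eqref{eq:estim_xint_th6} follow by inserting \eqref{eq:up_low_H_th2} (applicable because $K_\tau\geq M_\tau\kappa$) into \eqref{eq:estim_xint_th1}--\eqref{eq:estim_xint_th3}. The main technical point is to ensure that all geometric constants (volume and surface Jacobians of $\Phi$, Poincaré constant on $\Theta_{R\e}$) are uniform in $\e$; this follows from the $C^{1,1}$ regularity of $\Phi$ together with $\Phi(0)=0$ and $J_\Phi(0)=I_N$, so that on the shrinking region $\Theta_{R\e}$ the diffeomorphism is uniformly close to the identity.
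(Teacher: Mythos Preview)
Your proposal is correct and follows essentially the same approach as the paper: use the Dirichlet principle with the competitor $(\eta_{R\e}\circ\Phi)\varphi_i^\e$, transport to $B_{R\e}^+$ via $\Phi$, and invoke the energy estimates of Propositions~\ref{prop:energy_estim} and~\ref{prop:rough_estim}. The only cosmetic differences are that the paper performs the change of variable before applying the Dirichlet principle (working with the $A$-weighted Dirichlet integral on $B_{R\e}^+$), uses the perturbed Poincar\'e inequality of Lemma~\ref{lemma:poinc_pert} rather than your Poincar\'e for functions vanishing on $B_{R\e}'$, and cites Proposition~\ref{prop:rough_estim} directly for \eqref{eq:estim_xint_th4}--\eqref{eq:estim_xint_th6} instead of passing through \eqref{eq:up_low_H_th2}; these are equivalent routes.
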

\begin{proof}
	By the change of variable induced by the diffeomorphism
        $\Phi$, problem \eqref{eq:min_xi_int} is equivalent to
	\[
          \min\left\{ \int_{B_{R\e}^+}A\nabla u\cdot\nabla u\dy\colon
            u\in H^1(B_{R\e}^+),~ u-\eta_{R\e} u_i^\e\in
            H^1_0(B_{R\e}^+) \right\},
	\]
	with $A$ as in \eqref{eq:A},
	and the minimum is attained by $\xint{i}\circ\Phi^{-1}$. If
        one tests the problem above with $u=\eta_{R\e} u_i^\e$, the
        following is obtained, in view also of \eqref{eq:bound_A},
        \eqref{eq:bound_p} and \eqref{eq:def_cutoff},  
\begin{align*}
  \int_{\Theta_{R\e}}\abs{\nabla\xint{i}}^2\dx & =\int_{B_{R\e}^+}A\nabla (\xint{i}\circ\Phi^{-1})\cdot
                                                 \nabla (\xint{i}\circ\Phi^{-1})\dy \\
                                               &\leq \frac{3}{2}\int_{B_{R\e}^+}\abs{\eta_{R\e}\nabla u_i^\e
                                                 +u_i^\e\nabla\eta_{R\e}}^2\dy \\
                                               &\leq
                                                 96\int_{B_{R\e}^+}\left(A\nabla
                                                 u_i^\e\cdot\nabla
                                                 u_i^\e
                                                 +\frac{1}{(R\e)^2}\,p\abs{u_i^\e}^2\right)\dy.
\end{align*}
Combining this estimate with \eqref{eq:energy_estim_th1} and
\eqref{eq:energy_estim_th2} proves \eqref{eq:estim_xint_th1}, while
combining it with \eqref{eq:rough_estim_th1} and
\eqref{eq:rough_estim_th2} proves \eqref{eq:estim_xint_th4}. Since
$\xint{i}=\varphi_i^\e$ on $(\partial\Theta_{R\e})^+$, estimates
\eqref{eq:estim_xint_th3} and \eqref{eq:estim_xint_th6} are trivial in
view of \eqref{eq:energy_estim_th3} and
\eqref{eq:rough_estim_th3}. Finally, \eqref{eq:estim_xint_th2} and
\eqref{eq:estim_xint_th5} come from 
the other estimates, Lemma
\ref{lemma:poinc_pert}, and the change of variable induced by the diffeomorphism
        $\Phi$.
\end{proof}
Using the functions $\xint{i}$ that solve \eqref{eq:min_xi_int}, we
construct a family of competitors (see \eqref{eq:def_hat_xi}) to test
the Rayleigh quotient for $\lambda_{n_0}$ and obtain a sharp estimate
from above of the eigenvalue variation
$\lambda_{n_0}-\lambda_{n_0}^\e$. To this aim, we also provide
suitable energy estimates for such competitors, see
\eqref{eq:estim_hat_xi_th1}--\eqref{eq:estim_hat_xi_th6}.  For all
$i\in\{1,\dots,n_0\}$, $R\geq K_\tau$ and
$\e\in(0,\min\{\e_1,\tilde{r}/R\})$ we define
\begin{equation}\label{eq:def_xi}
	\xi_{i,R,\e}(x):=\begin{cases}
						\varphi_i^\e(x),&\text{if }x\in\Omega\setminus \Theta_{R\e}, \\
						\xint{i}(x), &\text{if }x\in\Theta_{R\e}.
					\end{cases}
\end{equation}
We observe that $\xi_{i,R,\e}\in H^1_0(\Omega)$ thanks to the fact
that $R\geq K_\tau>2\kappa$, which guarantees that
$\e\mathcal V\subset \Theta_{\frac{R\e}2}$. Moreover it is easy to verify that
the family $\{\xi_{1,R,\e},\dots,\xi_{n_0,R,\e}\}$ is linearly
independent in $H^1_0(\Omega)$, for $\e$ sufficiently small. We also
define  
\begin{equation}\label{eq:def_Z_Ups}
  Z_R^\e(x):=
  \frac{(\xint{n_0}\circ \Phi^{-1})(\e
    x)}{\sqrt{H(\e)}},
  \quad \Upsilon^\e(x):=\frac{u_{n_0}^\e(\e x)}{\sqrt{H(\e)}},
\end{equation}
where we denote
\begin{equation}\label{eq:61}
	H(\e):=\frac12H(v_{n_0}^\e,K_\tau\e)=\frac{1}{(K_\tau\varepsilon)^{N-1}}\int_{S_{K_\tau\e}^+}\mu |u_{n_0}^\varepsilon|^2\ds.
\end{equation}
As a consequence of the estimates given in Propositions 
\ref{prop:energy_estim} and \ref{prop:rough_estim}
and Lemma \ref{lemma:estim_xint}, we are able
to prove the following result.

\begin{lemma}\label{lemma:estim_xi}
	For any $R\geq K_\tau$ we have that, as $\e\to 0$,
	\begin{gather}
          \int_{\Omega}\abs{\nabla \xi_{n_0,R,\e}}^2\!\dx=
          \lambda_{n_0}^\e+\e^{N-2}{H(\e)}\left(
            \int_{B_R^+}\!\!A(\e y)\nabla Z_R^\e\!\cdot\!\nabla
            Z_R^\e\!\dy-\int_{B_R^+}\!\!A(\e y)
            \nabla \Upsilon^\e\!\cdot\!\nabla \Upsilon^\e\!\dy \right), \label{eq:estim_xi_th1} \\
          \int_\Omega\abs{\nabla
            \xi_{i,R,\e}}^2\dx=\lambda_i^\e+O(\e^{N-2+\beta(1-\tau)}),
          \quad\text{for all }i=1,\dots,n_0, \label{eq:estim_xi_th2}\\
          \int_\Omega\nabla\xi_{i,R,\e}\cdot\nabla\xi_{n_0,R,\e}\dx=O\big(\e^{N-2+\frac{\beta}{2}(1-\tau)}
          \sqrt{H(\e)}\big),\quad\text{for all }i=1,\dots,n_0-1, \label{eq:estim_xi_th3}\\
          \int_\Omega\nabla\xi_{i,R,\e}\cdot\nabla\xi_{j,R,\e}\dx=O(\e^{N-2+\beta(1-\tau)}),\quad\text{for all }i,j=1,\dots,n_0,~i\neq j,\label{eq:estim_xi_th4} \\
          \int_\Omega\abs{\xi_{n_0,R,\e}}^2\dx=1+O(\e^N {H(\e)}),\label{eq:estim_xi_th5} \\
          \int_\Omega\abs{\xi_{i,R,\e}}^2\dx=1+O(\e^{N+\beta(1-\tau)}),\quad\text{for all }i=1,\dots,n_0, \label{eq:estim_xi_th6}\\
          \int_\Omega\xi_{i,R,\e}\,\xi_{n_0,R,\e}\dx=O(\e^{N+\frac{\beta}{2}(1-\tau)}\sqrt{ {H(\e)}}),\quad\text{for all }i=1,\dots,n_0-1,,\label{eq:estim_xi_th7} \\
          \int_\Omega\xi_{i,R,\e}\,\xi_{j,R,\e}\dx=O(\e^{N+\beta(1-\tau)}),\quad\text{for
            all }i,j=1,\dots,n_0,~i\neq j. \label{eq:estim_xi_th8}
	\end{gather}
\end{lemma}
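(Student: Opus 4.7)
The plan is to decompose every integral according to the definition \eqref{eq:def_xi} of $\xi_{i,R,\e}$, which agrees with $\varphi_i^\e$ outside $\Theta_{R\e}$ and with $\xint{i}$ inside. For gradients this gives
\begin{equation*}
\int_\Omega |\nabla \xi_{i,R,\e}|^2\dx
=\int_\Omega |\nabla \varphi_i^\e|^2\dx
-\int_{\Theta_{R\e}} |\nabla \varphi_i^\e|^2\dx
+\int_{\Theta_{R\e}} |\nabla \xint{i}|^2\dx,
\end{equation*}
and by testing \eqref{eq:eqphiie} with $\varphi_i^\e$ and using \eqref{eq:ortonor}, the first term equals $\lambda_i^\e$. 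An analogous identity for $L^2$ inner products yields
\begin{equation*}
\int_\Omega \xi_{i,R,\e}\xi_{j,R,\e}\dx
=\delta_i^j -\int_{\Theta_{R\e}} \varphi_i^\e\varphi_j^\e\dx
+\int_{\Theta_{R\e}} \xint{i}\xint{j}\dx,
\end{equation*}
and similarly for bilinear terms with gradients, where orthogonality of the $\varphi_i^\e$'s in $H^1_0(\Omega)$ with respect to the bilinear form $\int_\Omega \nabla u\cdot\nabla v\dx$ (which equals $\lambda_j^\e\int_\Omega uv\dx$ on eigenpairs) removes the global contribution when $i\neq j$.

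For the identity \eqref{eq:estim_xi_th1}, I will then transform the two remaining integrals over $\Theta_{R\e}$ via the diffeomorphism $\Phi$ of Section \ref{sec:an-equiv-probl}: the standard change-of-variable identity $\int_{\Theta_{R\e}} |\nabla u|^2\dx=\int_{B_{R\e}^+} A(y)\nabla (u\circ\Phi^{-1})\cdot\nabla(u\circ\Phi^{-1})\dy$ (which follows from \eqref{eq:A}) applied to $u=\xint{n_0}$ and $u=\varphi_{n_0}^\e$ gives rise to integrals on $B_{R\e}^+$ that, after the rescaling $z=y/\e$ and using the definitions \eqref{eq:def_Z_Ups} of $Z_R^\e$ and $\Upsilon^\e$, produce precisely the factor $\e^{N-2}H(\e)$ multiplying the difference of integrals on $B_R^+$ stated in \eqref{eq:estim_xi_th1}.

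For \eqref{eq:estim_xi_th2}, I will bound the two error integrals separately: $\int_{\Theta_{R\e}}|\nabla\varphi_i^\e|^2\dx$ is controlled by \eqref{eq:rough_estim_th1} after passing to $\widetilde A\nabla v_i^\e\cdot\nabla v_i^\e$ on $B_{R\e}^+$ via $\Phi$ (both $\widetilde A$ and $p$ are bounded above and below by \eqref{eq:bound_A}--\eqref{eq:bound_p}), while $\int_{\Theta_{R\e}}|\nabla\xint{i}|^2\dx$ is directly \eqref{eq:estim_xint_th4}; each is $O(\e^{N-2+\beta(1-\tau)})$. The cross estimates \eqref{eq:estim_xi_th3}--\eqref{eq:estim_xi_th4} follow by Cauchy--Schwarz applied to both correcting integrals, combining the sharper $O(\e^{N-2}H(\e))$ bound \eqref{eq:estim_xint_th1} (and \eqref{eq:energy_estim_th1}) for the factor involving the index $n_0$ with the rougher $O(\e^{N-2+\beta(1-\tau)})$ bound \eqref{eq:estim_xint_th4} (and \eqref{eq:rough_estim_th1}) for the other factor; this produces the mixed rate $\e^{N-2+\beta(1-\tau)/2}\sqrt{H(\e)}$ in \eqref{eq:estim_xi_th3} and the symmetric rate $\e^{N-2+\beta(1-\tau)}$ in \eqref{eq:estim_xi_th4}.

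The $L^2$ estimates \eqref{eq:estim_xi_th5}--\eqref{eq:estim_xi_th8} follow by exactly the same strategy, using $L^2$ estimates \eqref{eq:energy_estim_th2}, \eqref{eq:rough_estim_th2}, \eqref{eq:estim_xint_th2}, \eqref{eq:estim_xint_th5} in place of gradient estimates and exploiting that $\widetilde p$ is bounded above and below. No individual step is technically demanding; the only real obstacle is the careful bookkeeping required to track which of the two possible rates (the $H(\e)$-weighted one, available only through the finer information on $\varphi_{n_0}^\e$ via Proposition \ref{prop:energy_estim} and \eqref{eq:estim_xint_th1}--\eqref{eq:estim_xint_th3}, versus the uniform rate $\e^{\beta(1-\tau)}$ provided by Proposition \ref{prop:rough_estim} and \eqref{eq:estim_xint_th4}--\eqref{eq:estim_xint_th6}) should be used for each factor, so that the cross-estimates inherit the geometric mean of the two rates.
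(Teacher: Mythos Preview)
Your proposal is correct and follows essentially the same approach as the paper: the same decomposition of each integral into a global eigenfunction term plus two local corrections over $\Theta_{R\e}$, the same change of variable via $\Phi$ and rescaling to obtain \eqref{eq:estim_xi_th1}, and the same use of Cauchy--Schwarz combining the sharp estimates \eqref{eq:energy_estim_th1}, \eqref{eq:estim_xint_th1} for the index $n_0$ with the rough estimates \eqref{eq:rough_estim_th1}, \eqref{eq:estim_xint_th4} for the other indices. The paper's proof is in fact slightly terser than yours, omitting the $L^2$ estimates as ``completely analogous''.
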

\begin{proof}
	 By definition of $\xi_{n_0,R,\e}$ we have
	\[
		\int_{\Omega}\abs{\nabla
                  \xi_{n_0,R,\e}}^2\dx=\int_\Omega\abs{\nabla\varphi_{n_0}^\e}^2\dx-
                \int_{\Theta_{R\e}}\abs{\nabla{\varphi_{n_0}^\e}}^2\dx
                +\int_{\Theta_{R\e}}\abs{\nabla\xint{n_0}}^2\dx.
	\]
        Since, by {\eqref{eq:ortonor}--~\eqref{eq:eqphiie},}
        $\int_\Omega\abs{\nabla\varphi_{n_0}^\e}^2\dx=\lambda_{n_0}^\e$,
	by the change of variable $y=\Phi(x)$ and the definition of
        $Z_R^\e$ and $\Upsilon^\e$ given in \eqref{eq:def_Z_Ups}, we
        obtain \eqref{eq:estim_xi_th1}.
  Similarly, for any $i=1,\dots,n_0$,
	\[
		\int_{\Omega}\abs{\nabla\xi_{i,R,\e}}^2\dx=\lambda_i^\e-\int_{\Theta_{R\e}}\abs{\nabla\varphi_i^\e}^2\dx+\int_{\Theta_{R\e}}\abs{\nabla\xint{i}}^2\dx.
	\]
	Then \eqref{eq:estim_xi_th2} follows from
        \eqref{eq:rough_estim_th1} and \eqref{eq:estim_xint_th4}.
        
        For all $i=1,\dots,{n_0-1}$ we have that
	\[
		\int_\Omega\nabla\xi_{i,R,\e}\cdot\nabla\xi_{n_0,R,\e}\dx=-\int_{\Theta_{R\e}}\nabla\varphi_i^\e\cdot\nabla\varphi_{n_0}^\e\dx+\int_{\Theta_{R\e}}\nabla\xint{i}\cdot\nabla\xint{n_0}\dx,
	\]
	since the perturbed eigenfunctions are orthogonal, therefore
        \eqref{eq:estim_xi_th3} follows from Cauchy-Schwartz inequality
        and estimates \eqref{eq:energy_estim_th1},
        \eqref{eq:rough_estim_th1}, \eqref{eq:estim_xint_th1} and
        \eqref{eq:estim_xint_th4}. Finally, again by orthogonality,
        for any $i,j=1,\dots,n_0$, $i\neq j$, we have that
	\[
		\int_\Omega\nabla\xi_{i,R,\e}\cdot\nabla\xi_{j,R,\e}\dx=-\int_{\Theta_{R\e}}\nabla\varphi_i^\e\cdot\nabla\varphi_j^\e\dx+\int_{\Theta_{R\e}}\nabla\xint{i}\cdot\nabla\xint{j}\dx
	\]
	and so \eqref{eq:estim_xi_th4} easily follows from Cauchy-Schwartz inequality and estimates \eqref{eq:rough_estim_th1} and \eqref{eq:estim_xint_th4}. The proof of \eqref{eq:estim_xi_th5}--\eqref{eq:estim_xi_th8} is completely analogous and it is therefore omitted.
\end{proof}

We now construct an orthogonal basis
$\{\hat{\xi}_{1,R,\e},\dots,\hat{\xi}_{n_0,R,\e}\}$ of the space
$\textrm{span}\,\{\xi_{1,R,\e},\dots,\xi_{n_0,R,\e}\}$. To this aim we
recursively define
\begin{equation}\label{eq:def_hat_xi}
	\hat{\xi}_{n_0,R,\e}:=\xi_{n_0,R,\e}\quad\text{and}\quad
        \hat{\xi}_{i,R,\e}:=\xi_{i,R,\e}-\sum_{j=i+1}^{n_0}d_{i,j}^{R,\e}\hat{\xi}_{j,R,\e}
        \quad\text{for }i=1,\dots,n_0-1,
\end{equation}
where
\begin{equation*}
	d_{i,j}^{R,\e}:=\frac{\int_\Omega \xi_{i,R,\e}\hat{\xi}_{j,R,\e}\dx}{\int_\Omega|\hat{\xi}_{j,R,\e}|^2\dx}.
\end{equation*}
 The functions
$\{\hat{\xi}_{1,R,\e},\dots,\hat{\xi}_{n_0,R,\e}\}$ are orthogonal in
$L^2(\Omega)$. Moreover they satisfy the following estimates:
\begin{gather}
	\int_{\Omega}|\nabla \hat{\xi}_{n_0,R,\e}|^2\!\dx=\lambda_{n_0}^\e\!+\e^{N-2}{H(\e)}\!\left( \int_{B_R^+}\!\!A(\e y)\nabla Z_R^\e\!\cdot\!\nabla Z_R^\e\dy-\!\int_{B_R^+}\!\!A(\e y)\nabla \Upsilon^\e\cdot\nabla \Upsilon^\e\dy \right), \label{eq:estim_hat_xi_th1} \\
	\int_\Omega|\nabla \hat{\xi}_{i,R,\e}|^2\dx=\lambda_i^\e+O(\e^{N-2+\beta(1-\tau)}),\quad\text{for all }i=1,\dots,n_0, \label{eq:estim_hat_xi_th2}\\
	\int_\Omega\nabla\hat{\xi}_{i,R,\e}\cdot\nabla\hat{\xi}_{n_0,R,\e}\dx=O(\e^{N-2+\frac{\beta}{2}(1-\tau)}\sqrt{{H(\e)}}),\quad\text{for all }i=1,\dots,n_0-1, \label{eq:estim_hat_xi_th3}\\
	\int_\Omega\nabla\hat{\xi}_{i,R,\e}\cdot\nabla\hat{\xi}_{j,R,\e}\dx=O(\e^{N-2+\beta(1-\tau)}),\quad\text{for all }i,j=1,\dots,n_0,~i\neq j,\label{eq:estim_hat_xi_th4} \\
	\int_\Omega|\hat\xi_{n_0,R,\e}|^2\dx=1+O(\e^N {H(\e)}),\label{eq:estim_hat_xi_th5} \\
	\int_\Omega|\hat{\xi}_{i,R,\e}|^2\dx=1+O(\e^{N+\beta(1-\tau)}),\quad\text{for all }i=1,\dots,n_0. \label{eq:estim_hat_xi_th6}
\end{gather}
The proof of estimates \eqref{eq:estim_hat_xi_th1}--\eqref{eq:estim_hat_xi_th6}
consists in direct computations and comes from Lemma
\ref{lemma:estim_xi} and the following estimates on the coefficients $d_{i,j}^{R,\e}$
\begin{gather*}
	d_{j,n_0}^{R,\e}=O(\e^{N+\frac{\beta}{2}(1-\tau)}\sqrt{{H(\e)}})\quad\text{for all }j=1,\dots,n_0-1, \\
	d_{j,k}^{R,\e}=O(\e^{N+\beta(1-\tau)})\quad\text{for all }k=2,\dots,n_0, ~j<k.
\end{gather*}
We are now ready to prove an upper bound of the eigenvalue variation $\lambda_{n_0}-\lambda_{n_0}^\e$.

\begin{proposition}\label{prop:upper_bound}
	For any $R\geq K_\tau$ we have that
	\begin{equation}\label{eq:upper_bound_th1}
          \lambda_{n_0}-\lambda_{n_0}^\e\leq \e^{N-2}
{H(\e)}(f_R(\e)+o(1))\quad\text{as }\e\to 0,
	\end{equation}
	where
	\begin{equation}\label{eq:def_f_R_eps}
		f_R(\e)=\int_{B_R^+}A(\e y)\nabla Z_R^\e\cdot\nabla Z_R^\e\dy-\int_{B_R^+}A(\e y)\nabla \Upsilon^\e\cdot\nabla \Upsilon^\e\dy.
	\end{equation}
	Moreover
	\begin{equation}\label{eq:upper_bound_O}
		f_R(\e)=O(1)\quad\text{as }\e\to 0.
	\end{equation}
\end{proposition}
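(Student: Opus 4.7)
The plan is to apply the Min-Max variational characterization of $\lambda_{n_0}$ in the unperturbed problem, testing it against the $n_0$-dimensional subspace $F^\e:=\mathrm{span}\{\hat{\xi}_{1,R,\e},\dots,\hat{\xi}_{n_0,R,\e}\}\subset H^1_0(\Omega)$. Since $\{\xi_{i,R,\e}\}_{i=1}^{n_0}$ is linearly independent for $\e$ small (as noted after \eqref{eq:def_xi}) and $\{\hat{\xi}_{i,R,\e}\}$ is $L^2(\Omega)$-orthogonal by construction \eqref{eq:def_hat_xi}, we have $\dim F^\e=n_0$, so that \eqref{eq:min_max} (applied at the unperturbed Dirichlet level) yields $\lambda_{n_0}\le\max_{u\in F^\e\setminus\{0\}}\|\nabla u\|_{L^2(\Omega)}^2/\|u\|_{L^2(\Omega)}^2$.

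The next step is to rewrite this supremum as a generalized eigenvalue problem for a symmetric quadratic form in the coefficients $c=(c_1,\dots,c_{n_0})$ of the expansion $u=\sum_i c_i\hat{\xi}_{i,R,\e}$. By $L^2$-orthogonality the quotient reads $\sum_{i,j}c_ic_j a_{ij}^\e/\sum_i c_i^2 b_{ii}^\e$, where the matrices come directly from \eqref{eq:estim_hat_xi_th1}--\eqref{eq:estim_hat_xi_th6}: the diagonal entries of the Rayleigh quotient are $\lambda_i^\e+O(\e^{N-2+\beta(1-\tau)})$ for $i<n_0$ and $\lambda_{n_0}^\e+\e^{N-2}H(\e)f_R(\e)+o(\e^{N-2}H(\e))$ for $i=n_0$, while the off-diagonal entries are $O(\e^{N-2+\beta(1-\tau)})$ for pairs with indices $<n_0$ and $O(\e^{N-2+\frac{\beta}{2}(1-\tau)}\sqrt{H(\e)})$ for pairs involving $n_0$. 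I would then invoke the abstract lemma on maxima of quadratic forms from the appendix: by Proposition \ref{propo:conv_eigenvalues} and the simplicity assumption \eqref{eq:simple_hp}, $\lambda_i^\e\to\lambda_i<\lambda_{n_0}$ for every $i<n_0$, hence for $\e$ small the largest diagonal entry is attained at $i=n_0$; the abstract lemma then quantifies how much the maximum over $F^\e$ deviates from this leading diagonal entry in terms of the off-diagonal perturbations divided by the spectral gap $\lambda_{n_0}-\lambda_{n_0-1}>0$, producing an error $o(\e^{N-2}H(\e))$. Substituting back into the Min-Max inequality gives \eqref{eq:upper_bound_th1}.

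For the bound \eqref{eq:upper_bound_O}, I would exploit the scalings in \eqref{eq:def_Z_Ups}: by the change of variable $z=\e y$ and the defining property \eqref{eq:A}--\eqref{eq:uie} of the pullback matrix $A$,
\[
\int_{B_R^+}\!A(\e y)|\nabla Z_R^\e|^2\dy=\frac{\e^{2-N}}{H(\e)}\int_{\Theta_{R\e}}|\nabla\xint{n_0}|^2\dx,\quad \int_{B_R^+}\!A(\e y)|\nabla\Upsilon^\e|^2\dy=\frac{\e^{2-N}}{2H(\e)}\int_{B_{R\e}}\widetilde A\nabla v_{n_0}^\e\cdot\nabla v_{n_0}^\e\dy.
\]
Both right-hand sides are $O(1)$ by \eqref{eq:estim_xint_th1} (for the harmonic competitor) and \eqref{eq:energy_estim_th1} (for the eigenfunction), since the normalization $H(\e)$ in \eqref{eq:61} was tailored exactly so as to balance the factor $\e^{N-2}H(\e)$.

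The delicate step is the application of the abstract quadratic-form lemma in Step~2, because the worst mixed entry is of order $\e^{N-2+\frac{\beta}{2}(1-\tau)}\sqrt{H(\e)}$, which a priori is only comparable to the extracted correction $\e^{N-2}H(\e)$. The key point is that the spectral gap $\lambda_{n_0}-\lambda_{n_0-1}$ is bounded away from zero uniformly in $\e$: the abstract lemma inserts a factor of $(\lambda_{n_0}-\lambda_{n_0-1})^{-1}$ into the perturbation estimate, and combined with the Cauchy--Schwarz-type inequalities relating the mixed $(i,n_0)$ entries to the diagonal $(n_0,n_0)$ correction, this turns the error into a genuine $o(\e^{N-2}H(\e))$; checking this absorption rigorously is the main technical work of the proof.
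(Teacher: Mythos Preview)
Your proposal is correct and follows essentially the same route as the paper: test the Min-Max characterization of $\lambda_{n_0}$ with the $L^2$-orthogonalized family $\{\hat\xi_{i,R,\e}\}$, reduce to the maximum of a quadratic form with entries governed by \eqref{eq:estim_hat_xi_th1}--\eqref{eq:estim_hat_xi_th6}, and apply Lemma~\ref{lemma:quadratic_form}; your argument for \eqref{eq:upper_bound_O} via the scalings in \eqref{eq:def_Z_Ups} together with \eqref{eq:energy_estim_th1} and \eqref{eq:estim_xint_th1} is also exactly what the paper does.

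One point to sharpen. You plan to invoke Lemma~\ref{lemma:quadratic_form}, but that lemma has, besides the negativity of the diagonal entries $M_{i,i}\to\lambda_i-\lambda_{n_0}<0$, the hypothesis that $\e^{(2+M)a}=o(\sigma(\e))$ for some $M\in\N$, with $\sigma(\e)=\e^{N-2}H(\e)$ and $a=\tfrac12(N-2+\beta(1-\tau))$. This is a polynomial \emph{lower} bound on $H(\e)$ and does not follow from the spectral gap alone; in the paper it is supplied by Corollary~\ref{cor:up_low_H}, namely $H(\e)\geq\bar C\e^q$, after which one simply takes $M>\frac{N-2+q}{a}-2$. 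Your closing paragraph describes instead a direct $2\times2$-block perturbation argument (square of the mixed entries divided by the gap is $O(\e^{2a}\sigma(\e))=o(\sigma(\e))$), which would indeed also close the estimate without that hypothesis; but if you intend to cite Lemma~\ref{lemma:quadratic_form} as stated, you should invoke Corollary~\ref{cor:up_low_H} explicitly.
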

\begin{proof}
 By the Courant-Fischer Min-Max variational characterization of the
eigenvalues, see \eqref{eq:min_max}, we have that 
\begin{equation*}
		\lambda_{n_0} =\min \left\{
		\max_{\substack{a_1,\ldots,a_{n_0}\in\R \\ \sum_{i=1}^{n_0} a_i^2=1 }} 
		\frac{\left\|\nabla \left( \sum_{i=1}^{n_0} a_i u_i
			\right)\right\|_{L^2(\Omega)}^2}{\|\sum_{i=1}^{n_0} a_i u_i
			\|_{L^2(\Omega)}^2}:\ 
		\begin{minipage}{3.5cm}
			\quad\\[5pt]
			$\{u_1\ldots,u_{n_0}\} \subset  H^1_0(\Omega)$ \\ linearly independent
		\end{minipage}
		\right\}.
	\end{equation*}
	We test the above minimization problem with the orthonormal family
	\[
		\left\{ u_i:=\frac{\hat{\xi}_{i,R,\e}}{\lVert\hat{\xi}_{i,R,\e}\rVert_{L^2(\Omega)}} \right\}_{i=1,\dots,n_0},
	\]
where $\hat{\xi}_{i,R,\e}$ is defined in \eqref{eq:def_hat_xi}; we thus obtain
\begin{equation*}
  \lambda_{n_0}-\lambda_{n_0}^\e\leq
  \max_{\substack{a_1,\ldots,a_{n_0}\in\R \\
      \sum_{i=1}^{n_0} a_i^2=1 }}\int_\Omega
  \bigg|\nabla\bigg(\sum_{i=1}^{n_0}
  a_i\tfrac{\hat{\xi}_{i,R,\e}}{\lVert\hat{\xi}_{i,R,\e}\rVert_{L^2(\Omega)}}
  \bigg)\bigg|^2\!\dx-\lambda_{n_0}^\e=\max_{\substack{a_1,\ldots,a_{n_0}\in\R
      \\ \sum_{i=1}^{n_0} a_i^2=1 }}\sum_{i,j=1}^{n_0}M_{i,j}^\e a_ia_j,
\end{equation*}
	with
\[
  M_{i,j}^\e=\frac{\int_{\Omega}\nabla \hat \xi_{i,R,\e}\cdot\nabla
    \hat \xi_{j,R,\e}\dx}{\|\hat \xi_{i,R,\e}\|_{L^2(\Omega)}\|\hat
    \xi_{j,R,\e}\|_{L^2(\Omega)}} -{\lambda_{n_0}^\e}\delta_i^j.
\]
From estimates \eqref{eq:estim_hat_xi_th1}--\eqref{eq:estim_hat_xi_th6} we deduce the behaviour of the coefficients $M_{i,j}^\e$'s as $\e\to 0$, that is
\begin{align*}
  &M_{n_0,n_0}^\e=\e^{N-2}{H(\e)}(f_R(\e)+o(1)), \\
  &M_{i,i}^\e=\lambda_i^\e-\lambda_{n_0}^\e+o(1),\quad\text{for all }i=1,\dots,n_0-1, \\
  &M_{i,n_0}^\e=O(\e^{N-2+\frac{\beta}{2}(1-\tau)}\sqrt{
    H(\e) }),
  \quad\text{for all }i=1,\dots,n_0-1, \\
  &M_{i,j}^\e=O(\e^{N-2+\beta(1-\tau)}),\quad\text{for all
  }i,j=1,\dots,n_0-1,~i\neq j.
\end{align*}
Moreover, \eqref{eq:energy_estim_th1} and \eqref{eq:estim_xint_th1}
yield that $f_R(\e)=O(1)$ as $\e\to 0$, while from Corollary
\ref{cor:up_low_H} we have that, for all $\e \in (0,\tilde\e)$,
$H(\e)\geq \bar{C}\e^q$ for some
$\bar{C},q,\tilde \e>0$. Therefore the assumptions of
Lemma \ref{lemma:quadratic_form} are fulfilled with
\begin{gather*}
  \sigma(\e)=\e^{N-2}H(\e),\quad\mu(\e)=f_R(\e)+o(1), \quad\text{as }\e\to 0,\\
  \quad a=\frac{1}{2}(N-2+\beta(1-\tau))>0,\quad
  M>\frac{N-2+q}{a}-2.
\end{gather*}
Hence
\[
  \max_{\substack{a_1,\ldots,a_{n_0}\in\R \\
      \sum_{i=1}^{n_0} a_i^2=1
    }}\sum_{i,j=1}^{n_0}M_{i,j}^\e a_ia_j=\e^{N-2}H(\e)(f_R(\e)+o(1))
\]
as $\e\to 0$ and the proof of \eqref{eq:upper_bound_th1} is
complete. As already observed, \eqref{eq:upper_bound_O} is a consequence of
\eqref{eq:energy_estim_th1} and \eqref{eq:estim_xint_th1}.	
\end{proof}

\section{Lower bound on \texorpdfstring{$\lambda_{n_0}-\lambda_{n_0}^\varepsilon$}{eigenvalue variation}}\label{sec:lower-bound-texorpdf}

In this section we provide a lower bound for the eigenvalue variation
$\lambda_{n_0}-\lambda_{n_0}^\e$. In order to do this, we first
construct a family of competitors for the Rayleigh quotient of
$\lambda_{n_0}^\e$; then, exploiting the local energy estimates stated
in \Cref{lem:w_n_0_est}, we prove a blow-up result for their scaling,
see \Cref{lem:U_R_conv}.

Recalling the definition of $\psi_{i}$ given in \eqref{eq:psi_i_def}, from
\eqref{eq:gamma_i_def} 
and 
\eqref{eq:11}--\eqref{eq:10}
we easily deduce that 
\begin{equation}\label{eq:52}
  \frac{(\varphi_{i}\circ \Phi^{-1})(rx)}{r^{\gamma_i}} \to \psi_i \quad\text{in } H^1(B_{R})\text{ as } r\to 0, \text{ for every } R>0.
\end{equation}
From \eqref{eq:52} and \Cref{l:poin} we deduce the following estimates for $\varphi_i\circ\Phi^{-1}$, $i=1,\ldots,n_0$.

\begin{lemma}\label{lemma:phi_i_est}
There exists $\tilde{C}>0$ such that, for all $i\in \{1,\ldots,n_0-1\}$,
for all $R>1$ and $\varepsilon\in(0,\frac{r_1}{R})$,
\begin{align*}
  &\|\nabla(\varphi_i \circ \Phi^{-1})\|^2_{L^2(B_{R\varepsilon}^+)}
    \leq \tilde{C} (R\varepsilon)^N,\\
  &\|\varphi_i \circ \Phi^{-1}\|^2_{L^2(B_{R\varepsilon}^+)}
    \leq \tilde{C} (R\varepsilon)^{N+2},\\
  &\|\varphi_i \circ \Phi^{-1}\|^2_{L^2(S_{R\varepsilon}^+)}
    \leq \tilde{C} (R\varepsilon)^{N+1},
\end{align*}
where, for $r>0$, $B_r^+$ and $S_r^+$ are defined in \eqref{eq:notation},
and
\begin{align*}
  &\|\nabla(\varphi_{n_0} \circ \Phi^{-1})\|^2_{L^2(B_{R\varepsilon}^+)}
    \leq \tilde{C} (R\varepsilon)^{N+2\gamma-2},\\
  &\|\varphi_{n_0} \circ \Phi^{-1}\|^2_{L^2(B_{R\varepsilon}^+)}
    \leq \tilde{C} (R\varepsilon)^{N+2\gamma},\\
  &\|\varphi_{n_0} \circ \Phi^{-1}\|^2_{L^2(S_{R\varepsilon}^+)}
    \leq \tilde{C} (R\varepsilon)^{N-1 +2\gamma},
\end{align*}
where $\gamma$ is defined in \eqref{eq:notation-n_0}.
\end{lemma}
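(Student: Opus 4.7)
The idea is a change of variable $y = rx$ with $r = R\varepsilon$, which reduces all three estimates to a uniform $H^1(B_1^+)$ (and $L^2(S_1^+)$) bound on the rescaled profile $(\varphi_i\circ\Phi^{-1})(r\,\cdot)/r^{\gamma_i}$. A direct computation gives, for each $i \in \{1,\ldots,n_0\}$ and $r \in (0,r_1)$,
\begin{align*}
\|\nabla(\varphi_i \circ \Phi^{-1})\|^2_{L^2(B_r^+)} &= r^{N-2+2\gamma_i}\, \Big\|\nabla\Big[\tfrac{(\varphi_i\circ\Phi^{-1})(r\,\cdot)}{r^{\gamma_i}}\Big]\Big\|^2_{L^2(B_1^+)},\\
\|\varphi_i \circ \Phi^{-1}\|^2_{L^2(B_r^+)} &= r^{N+2\gamma_i}\, \Big\|\tfrac{(\varphi_i\circ\Phi^{-1})(r\,\cdot)}{r^{\gamma_i}}\Big\|^2_{L^2(B_1^+)},\\
\|\varphi_i \circ \Phi^{-1}\|^2_{L^2(S_r^+)} &= r^{N-1+2\gamma_i}\, \Big\|\tfrac{(\varphi_i\circ\Phi^{-1})(r\,\cdot)}{r^{\gamma_i}}\Big\|^2_{L^2(S_1^+)}.
\end{align*}

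By \eqref{eq:52} applied with $\rho = 1$, the rescaled profile $(\varphi_i\circ\Phi^{-1})(r\,\cdot)/r^{\gamma_i}$ converges in $H^1(B_1)$ to $\psi_i$ as $r \to 0^+$; in particular, it is uniformly bounded in $H^1(B_1^+)$, and by continuity of the trace operator $H^1(B_1) \to L^2(\partial B_1)$, also in $L^2(S_1^+)$, for $r$ in a right neighbourhood of $0$. On any compact subinterval of $(0, r_1)$ bounded away from $0$, the same rescaled norms are clearly bounded since $\varphi_i \circ \Phi^{-1}$ is a fixed $H^1$-function on $B_{r_1}^+$. Combining both regimes produces, for each $i$, a constant $C_i > 0$ such that all three rescaled norms in the displayed identities are bounded by $C_i$ uniformly in $r \in (0, r_1)$.

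Substituting this back into the scaling identities yields, for $i = n_0$, precisely the three claimed estimates with exponents $N+2\gamma-2$, $N+2\gamma$, and $N-1+2\gamma$. For $i \in \{1, \ldots, n_0-1\}$, the statement requires the lower exponents $N$, $N+2$, $N+1$; since $\gamma_i \in \N_*$ (so $\gamma_i \geq 1$) and $r = R\varepsilon < r_1$ is bounded, the additional factor $r^{2(\gamma_i - 1)}$ is bounded by $\max\{1, r_1^{2(\gamma_i - 1)}\}$ and can be absorbed into the constant. Setting $\tilde{C}$ to be the maximum, over the finitely many indices $i = 1, \ldots, n_0$, of $C_i \cdot \max\{1, r_1^{2(\gamma_i - 1)}\}$ provides a single constant valid for all $R > 1$ and all $\varepsilon \in (0, r_1/R)$. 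No serious obstacle arises: the proof is a routine scaling argument, the only mildly subtle point being to upgrade the asymptotic bound from \eqref{eq:52} to a uniform bound on the whole range $r \in (0, r_1)$, handled by the continuous dependence of the rescaled norms on $r$.
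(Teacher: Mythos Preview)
Your proof is correct and follows essentially the same route as the paper, which simply states that the lemma follows ``from \eqref{eq:52} and \Cref{l:poin}''. The only minor difference is that you obtain the interior $L^2$ bound directly from the $H^1(B_1)$ convergence in \eqref{eq:52}, whereas the paper's reference to \Cref{l:poin} suggests recovering it via the Poincar\'e-type inequality from the gradient and boundary bounds; your way is, if anything, slightly more direct.
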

\noindent For every $R>\kappa$ and $0<\e<\frac{r_1}{R}$, we define
\begin{equation}\label{eq:w_j0_def}
w_{n_0,R,\varepsilon}(x)=
\begin{cases}
\varphi_{n_0}(x), \quad & \text{if } x\in \Omega\setminus
{\Theta_{R\varepsilon}}, \\
w_{n_0,R,\varepsilon}^{\text{int}}(x), \quad & \text{if } x\in { \Theta_{R\varepsilon}},
\end{cases}
\end{equation}
where the notation $\Theta_{R\varepsilon}$ has been
  introduced in \eqref{eq:62},
\[
w_{n_0,R,\varepsilon}^{\text{int}}(x)=
w_{R,\varepsilon}(\Phi(x))
\]
and 
 $w_{R,\varepsilon}$ is the unique $H^1(B_{R\varepsilon}^+)$-function satisfying
$w_{R,\varepsilon} -(\varphi_{n_0}\circ\Phi^{-1})\in 
H^1_{0,\partial B_{R\varepsilon}^+\setminus \widetilde
  \Sigma_\e}(B_{R\varepsilon}^+)$ and achieving
\[
\min \left\{ \|\nabla w\|^2_{L^2(B_{R\varepsilon}^+)} :  w \in
  H^1(B_{R\varepsilon}^+),\ w-(\varphi_{n_0}\circ\Phi^{-1})\in 
H^1_{0,\partial B_{R\varepsilon}^+\setminus \widetilde \Sigma_\e}(B_{R\varepsilon}^+) \right\}.
\]
By classical variational methods, $w_{R,\varepsilon}$ exists and it is
the unique solution to 
\begin{equation}\label{eq:w_int_eq}
  \begin{cases}
w_{R,\varepsilon} -(\varphi_{n_0}\circ\Phi^{-1})\in 
H^1_{0,\partial B_{R\varepsilon}^+\setminus \widetilde
  \Sigma_\e}(B_{R\varepsilon}^+),\\
    \int_{B_{R\varepsilon}^+} \nabla w_{R,\varepsilon}\cdot \nabla
    \phi \, \dx =0 \text{ for every } \phi \in H^1_{0,\partial
      B_{R\varepsilon}^+\setminus \widetilde
      \Sigma_\e}(B_{R\varepsilon}^+).
  \end{cases}
\end{equation}
As a consequence of Lemma \ref{lemma:phi_i_est} and  \Cref{l:poin} we obtain the following estimates.

\begin{lemma}\label{lem:w_n_0_est}
Letting $\tilde{C}>0$ be as in \Cref{lemma:phi_i_est},
  we have that,
for all $R>\kappa$ and $\varepsilon\in(0,\frac{r_1}{R})$,
\begin{align*}
&\|\nabla w_{R,\varepsilon}\|^2_{L^2(B_{R\varepsilon}^+)} \leq \tilde{C} (R\varepsilon)^{N+2\gamma-2},\\
&\|w_{R,\varepsilon}\|^2_{L^2(B_{R\varepsilon}^+)} \leq \tilde{C} (R\varepsilon)^{N+2\gamma},\\
&\|w_{R,\varepsilon}\|^2_{L^2(S_{R\varepsilon}^+)} \leq \tilde{C} (R\varepsilon)^{N+2\gamma-1}.
\end{align*}
\end{lemma}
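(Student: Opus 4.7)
The plan is to obtain the three estimates separately, exploiting the variational characterization of $w_{R,\varepsilon}$ as a minimizer of the Dirichlet energy with prescribed Dirichlet data on $\partial B_{R\varepsilon}^+ \setminus \widetilde\Sigma_\varepsilon$, together with the estimates on $\varphi_{n_0}\circ\Phi^{-1}$ already provided by Lemma \ref{lemma:phi_i_est}.

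First, for the gradient estimate, I would observe that $\varphi_{n_0}\circ\Phi^{-1}$ is itself an admissible competitor in the minimization problem defining $w_{R,\varepsilon}$, since the difference $\varphi_{n_0}\circ\Phi^{-1} - \varphi_{n_0}\circ\Phi^{-1} = 0$ trivially belongs to $H^1_{0, \partial B_{R\varepsilon}^+ \setminus \widetilde\Sigma_\varepsilon}(B_{R\varepsilon}^+)$. By the minimality of $w_{R,\varepsilon}$ and Lemma \ref{lemma:phi_i_est},
\[
\|\nabla w_{R,\varepsilon}\|_{L^2(B_{R\varepsilon}^+)}^2 \leq \|\nabla(\varphi_{n_0}\circ\Phi^{-1})\|_{L^2(B_{R\varepsilon}^+)}^2 \leq \tilde C (R\varepsilon)^{N+2\gamma-2}.
\]

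Second, the estimate on $S_{R\varepsilon}^+$ is actually immediate: since $S_{R\varepsilon}^+ \subset \partial B_{R\varepsilon}^+ \setminus \widetilde\Sigma_\varepsilon$ (the Neumann set $\widetilde\Sigma_\varepsilon$ lies on the flat part $B'_{R\varepsilon}$), the trace of $w_{R,\varepsilon}$ on $S_{R\varepsilon}^+$ equals that of $\varphi_{n_0}\circ\Phi^{-1}$. Hence the third estimate follows directly from the corresponding bound in Lemma \ref{lemma:phi_i_est}.

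Finally, for the $L^2$ bound in $B_{R\varepsilon}^+$, I would write $w_{R,\varepsilon} = (\varphi_{n_0}\circ\Phi^{-1}) + v$ with $v \in H^1_{0, \partial B_{R\varepsilon}^+ \setminus \widetilde\Sigma_\varepsilon}(B_{R\varepsilon}^+)$, so that $v$ vanishes in particular on the entire spherical cap $S_{R\varepsilon}^+$. Rescaling via $\tilde v(y) := v(R\varepsilon y)$ on $B_1^+$, one has $\tilde v = 0$ on $S_1^+$, and by a standard Poincaré-type inequality (with constant independent of $\varepsilon$, since $S_1^+$ has fixed positive $(N-1)$-dimensional measure) we get $\|\tilde v\|_{L^2(B_1^+)} \leq C \|\nabla \tilde v\|_{L^2(B_1^+)}$, i.e.\ $\|v\|_{L^2(B_{R\varepsilon}^+)} \leq C(R\varepsilon)\|\nabla v\|_{L^2(B_{R\varepsilon}^+)}$. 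Combining a triangle inequality with the gradient estimate already proved and the bound on $\|\nabla(\varphi_{n_0}\circ\Phi^{-1})\|_{L^2(B_{R\varepsilon}^+)}$ from Lemma \ref{lemma:phi_i_est}, together with the $L^2$ bound in the same lemma, yields the desired estimate, possibly after enlarging the constant $\tilde C$. None of the steps presents a serious obstacle: the variational competitor argument is standard, the boundary identity on $S_{R\varepsilon}^+$ is built into the definition of $w_{R,\varepsilon}$, and the scaling plus Poincaré step is routine.
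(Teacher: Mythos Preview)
Your proposal is correct and follows essentially the same approach as the paper, which simply records the lemma as a consequence of Lemma~\ref{lemma:phi_i_est} and Lemma~\ref{l:poin}. The only minor difference is in the $L^2$ bound on $B_{R\varepsilon}^+$: rather than decomposing $w_{R,\varepsilon}=(\varphi_{n_0}\circ\Phi^{-1})+v$ and applying Poincar\'e to $v$, one can apply Lemma~\ref{l:poin} directly to $w_{R,\varepsilon}$ on $B_{R\varepsilon}^+$, using the gradient and $S_{R\varepsilon}^+$ bounds already obtained, which is slightly shorter.
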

\noindent For every $R>\kappa$, $0<\e<\frac{r_1}{R}$, and $x\in B_{R}^+$, we let
\begin{equation}\label{eq:URe}
U_{R,\varepsilon}(x)=\frac{w_{R,\e}(\varepsilon x)}{\varepsilon^{\gamma}}.
\end{equation}

\begin{lemma}\label{lem:U_R_conv}
For all $R>\kappa$, $\lim_{\varepsilon\to0} \|U_{R,\varepsilon}-U_R \|_{H^1(B_R^+)}=0$, where $U_R$ is as in Lemma \ref{lemma:U_R}.
\end{lemma}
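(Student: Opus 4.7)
The strategy is to recast $U_{R,\e}$ as the unique minimizer of the Dirichlet energy over an $\e$-dependent affine space, and then to exploit the Mosco convergence in Remark \ref{rem:Mosco} to pass to the limit. Via the change of variables $y=\e x$ in \eqref{eq:w_int_eq}, $U_{R,\e}$ is the unique achiever of
\begin{equation*}
\min\Big\{\textstyle\int_{B_R^+}\abs{\nabla u}^2\dx:\ u\in \tilde\psi_\e+H^1_{0,\partial B_R^+\setminus(\frac{1}{\e}\widetilde\Sigma_\e)}(B_R^+)\Big\},
\end{equation*}
where $\tilde\psi_\e(x):=\e^{-\gamma}(\varphi_{n_0}\circ\Phi^{-1})(\e x)$; by \eqref{eq:52}, $\tilde\psi_\e\to\psi$ strongly in $H^1(B_R)$. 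Scaling the three estimates in Lemma \ref{lem:w_n_0_est} yields uniform boundedness of $\{U_{R,\e}\}$ in $H^1(B_R^+)$, so along some subsequence $U_{R,\e_n}\rightharpoonup U^*$ weakly in $H^1(B_R^+)$ for some $U^*\in H^1(B_R^+)$.

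To identify $U^*=U_R$, I would first apply Remark \ref{rem:Mosco} to $U_{R,\e}-\tilde\psi_\e\in H^1_{0,\partial B_R^+\setminus(\frac{1}{\e}\widetilde\Sigma_\e)}(B_R^+)$ to conclude $U^*-\psi\in H^1_{0,\partial B_R^+\setminus\Sigma}(B_R^+)$, so that $U^*$ is admissible in the variational problem of Lemma \ref{lemma:U_R}. For the matching upper bound on $J(U_{R,\e}):=\int_{B_R^+}\abs{\nabla U_{R,\e}}^2\dx$, I would construct admissible competitors converging to $U_R$: approximate $U_R-\psi$ in $H^1(B_R^+)$ by functions $v_\delta\in C_c^\infty(\overline{B_R^+}\setminus(\partial B_R^+\setminus\Sigma))$; since $\supp(v_\delta)\cap\partial B_R^+$ is a compact subset of $\Sigma$, the estimate \eqref{eq:10} (which implies that $\frac{1}{\e}\Phi^{-1}(\e\,\cdot\,)$ converges to the identity uniformly on compacta of $\mathcal V$) gives $\supp(v_\delta)\cap\partial B_R^+\subset\frac{1}{\e}\widetilde\Sigma_\e$ for $\e$ sufficiently small, hence $v_\delta\in H^1_{0,\partial B_R^+\setminus(\frac{1}{\e}\widetilde\Sigma_\e)}(B_R^+)$. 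A diagonal argument then produces $v_\e\to U_R-\psi$ in $H^1(B_R^+)$ with $v_\e+\tilde\psi_\e$ admissible for $U_{R,\e}$.

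Combining weak lower semicontinuity, minimality of $U_R$, minimality of $U_{R,\e}$, and strong convergence $J(v_\e+\tilde\psi_\e)\to J(U_R)$ yields the sandwich
\begin{equation*}
\limsup_{\e\to 0}J(U_{R,\e})\leq J(U_R)\leq J(U^*)\leq\liminf_{\e\to 0}J(U_{R,\e}),
\end{equation*}
which forces $J(U^*)=J(U_R)$ and therefore $U^*=U_R$ by uniqueness of the minimizer in Lemma \ref{lemma:U_R}. A Urysohn subsequence argument promotes this to weak convergence of the whole family; convergence of gradient norms together with weak convergence then gives strong $L^2(B_R^+)$ convergence of $\nabla U_{R,\e}$ to $\nabla U_R$, while Rellich--Kondrachov supplies strong $L^2(B_R^+)$ convergence of $U_{R,\e}$ to $U_R$, completing the proof of strong convergence in $H^1(B_R^+)$. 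The main technical point is the competitor construction in the upper-bound step, which requires the quantitative closeness of $\frac{1}{\e}\widetilde\Sigma_\e$ to $\Sigma$ derived from \eqref{eq:10}; once this is in place, the result reduces to standard minimization under Mosco-convergent constraints.
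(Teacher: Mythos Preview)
Your proof is correct and takes a genuinely variational route different from the paper's PDE approach. The paper identifies the weak limit by passing to the limit in the Euler--Lagrange equation \eqref{eq:U_R_eps_eq}: it observes that any $\phi\in C_c^\infty(B_R^+\cup\Sigma)$ lies in $H^1_{0,\partial B_R^+\setminus(\frac1\e\widetilde\Sigma_\e)}(B_R^+)$ for small $\e$ (the same compact-support observation you use), deduces that the weak limit $W$ satisfies \eqref{eq:U_R_eq}, and invokes uniqueness. Strong convergence is then obtained separately by testing \eqref{eq:U_R_eps_eq} with $\phi=U_{R,\e}-V_\e$, which yields $\int_{B_R^+}|\nabla U_{R,\e}|^2=\int_{B_R^+}\nabla U_{R,\e}\cdot\nabla V_\e\to\int_{B_R^+}|\nabla U_R|^2$.

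You instead work at the level of energies: you use the same compact-support fact to build a recovery sequence for $U_R$, so that the sandwich between weak lower semicontinuity and the competitor bound delivers the identification $U^*=U_R$ and the convergence of norms simultaneously. This is essentially the $\Gamma$-convergence/Mosco argument, and it is self-contained once the recovery sequence exists. The paper's approach is slightly more direct in that no diagonal construction is needed (the Euler--Lagrange identity $\int|\nabla U_{R,\e}|^2=\int\nabla U_{R,\e}\cdot\nabla V_\e$ replaces the upper-bound step), but both rest on the same key ingredient: that compact subsets of $\Sigma$ are eventually contained in $\frac1\e\widetilde\Sigma_\e$, which follows from \eqref{eq:10}--\eqref{eq:11}.
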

\begin{proof}
Let $R>\kappa$. From a change of variables and Lemma \ref{lem:w_n_0_est} we have
\[
\|\nabla U_{R,\varepsilon}\|^2_{L^2(B_R^+)} =
\varepsilon^{-N-2\gamma+2} \|\nabla w_{R,\varepsilon}
\|^2_{L^2(B_{R\varepsilon}^+)}\leq
\tilde{C}R^{N+2\gamma-2}
\]
and 
\[
\int_{S_R^+}U_{R,\varepsilon}^2\ds=\e^{1-N-2\gamma}\int_{S_{R\e}^+}w_{R,\e}^2\ds\leq \tilde{C}
R^{N+2\gamma-1}
\]
for all $\varepsilon\in(0,r_1/R)$,
so that the family $\{U_{R,\varepsilon}\}_{\varepsilon\in(0,r_1/R)}$ is bounded
in $H^1(B_R^+)$ in view of  \Cref{l:poin}.

We deduce that there exist $W\in H^1(B_R^+)$ and a sequence $\varepsilon_n\to0$ such that
$U_{R,\varepsilon_n} \rightharpoonup W$ weakly in $H^1(B_R^+)$ as
$n\to\infty$. Letting 
 \begin{equation}\label{eq:Ve}
V_\e(x)= \frac{(\varphi_{n_0}\circ \Phi^{-1})(\e x)}{\e^{\gamma}},
\end{equation}
from \eqref{eq:52} we have that 
\begin{equation}\label{eq:56}
V_\e\to \psi \quad\text{in $H^1(B_R^+)$ as }
\e\to 0.
\end{equation}
 Hence 
\[
U_{R,\e_n}-V_{\e_n} \rightharpoonup W-\psi\quad\text{as
  $n\to\infty$ in $H^1(B_R^+)$}.
\]
Since \eqref{eq:w_int_eq} yields that 
$U_{R,\e}-V_\e\in  H^1_{0,\partial B_{R}^+\setminus (\frac1\e\widetilde
  \Sigma_\e)}(B_{R}^+)$, the above convergence and 
\Cref{rem:Mosco} imply that 
\[
W-\psi\in H^1_{0,\partial B_{R}^+\setminus \Sigma}(B_{R}^+).
\]
We observe that the equation satisfied by $U_{R,\varepsilon}$ is 
\begin{equation}\label{eq:U_R_eps_eq}
\int_{B_R^+} \nabla U_{R,\varepsilon}\cdot\nabla\phi \dx=0
\quad \text{for every } \phi \in H^1_{0,\partial B_{R}^+\setminus (\frac1\e\widetilde
  \Sigma_\e)}(B_{R}^+).
\end{equation}
Let $\phi\in C^\infty_{\rm c}(B_R^+\cup \Sigma)$.
  From \eqref{eq:11}--\eqref{eq:10} we easily deduce that
$\phi\in C^\infty_{\rm c}(B_R^+\cup (\frac1\e\widetilde\Sigma_\e))$
for $\e$ sufficiently small, hence $\phi\in H^1_{0,\partial B_{R}^+\setminus (\frac1\e\widetilde
  \Sigma_\e)}(B_{R}^+)$ and \eqref{eq:U_R_eps_eq} is satisfied for
$\e=\e_n$ and large $n$. Therefore
we can pass to the limit to infer that $\int_{B_R^+} \nabla W
\cdot\nabla\phi \dx=0$ for every $\phi \in C^\infty_{\rm c}(B_R^+\cup
\Sigma)$ and then, by density, for all $\phi\in H^1_{0,\partial B_{R}^+\setminus \Sigma}(B_{R}^+)$.
By uniqueness of the solution to \eqref{eq:U_R_eq}, we conclude that
$W=U_R$. Since the limit $U_R$ is the same along every subsequence,
the Urysohn's Subsequence Principle implies that the whole family
$U_{R,\varepsilon}$ weakly converges to $U_R$ in $H^1(B_R^+)$ as $\e\to0$.

It remains to show the strong $H^1$-convergence. To this aim, we choose in \eqref{eq:U_R_eps_eq}
$\phi= U_{R,\varepsilon}-V_\e$ for all $\varepsilon\in(0,r_1/R)$. We obtain
\[
\int_{B_R^+} |\nabla U_{R,\varepsilon}|^2 \dx =\int_{B_R^+} \nabla U_{R,\varepsilon} \cdot \nabla V_\varepsilon \dx\to
\int_{B_R^+} \nabla U_R\cdot \nabla \psi \dx 
=\int_{B_R^+} |\nabla U_R|^2\dx,
\]
as $\varepsilon\to0$, where the convergence above is justified by
the fact that $U_{R,\varepsilon_n} \rightharpoonup U_R$ weakly in
$H^1(B_R^+)$ and $V_\varepsilon \to \psi$ strongly
in $H^1(B_R^+)$, and the last equality follows from \eqref{eq:U_R_eq}.
\end{proof}

From \eqref{eq:w_j0_def}, \eqref{eq:URe}, and \eqref{eq:Ve} it follows
that, for all $R>\kappa$ fixed, 
\begin{align}\label{eq:48}
  \int_\Omega &|\nabla w_{n_0,R,\e}(x)|^2\dx=
 \lambda_{n_0}-\int_{\Theta_{R\e}}|\nabla\varphi_{n_0}(x)|^2\dx+
 \int_{\Theta_{R\e}}|\nabla w_{n_0,R,\varepsilon}^{\text{int}}(x)|^2\dx\\
  \notag&=\lambda_{n_0}-\e^{N-2+2\gamma}\int_{B_R^+}\Big(A(\e
          y)\nabla V_\e(y)\cdot \nabla V_\e(y)-A(\e
          y)\nabla U_{R,\e}(y)\cdot \nabla U_{R,\e}(y)\Big)\,dy,\\
                        \notag&=\lambda_{n_0}-\e^{N-2+2\gamma}\bigg(
                                \int_{B_R^+}\Big(|\nabla V_\e(y)|^2-|\nabla U_{R,\e}(y)|^2\Big)\,dy+o(1)\bigg),
\end{align}
as $\e\to 0$, where the last estimate follows from
  \eqref{eq:33} and boundedness of $\{V_\e\}_\e$ and $\{U_{R,\e}\}_\e$
  in $H^1(B_R^+)$ (see \Cref{lem:U_R_conv}).

The main goal of this section is to prove the following result.

\begin{proposition}\label{prop:lower_bound}
For all $R>\kappa$, we have that
\begin{equation}\label{eq:lower_bound}
\lambda_{n_0}^\varepsilon-\lambda_{n_0} \leq 
\varepsilon^{N+2\gamma-2}(g_R(\varepsilon)+o(1))
\quad\text{as } \varepsilon\to0,
\end{equation}
where
\begin{equation}\label{eq:55}
g_R(\e)=\int_{B_R^+}\Big(|\nabla U_{R,\e}(y)|^2-|\nabla
  V_\e(y)|^2\Big)\,dy
\end{equation}
with $V_\e$ and $U_{R,\e}$ defined in \eqref{eq:Ve} and \eqref{eq:URe} respectively.
Furthermore $\lim_{\varepsilon\to0} g_R(\varepsilon)=g_R$, where
\begin{equation}\label{eq:def_g_R}
g_R=\|\nabla U_R\|^2_{L^2(B_R^+)} - \|\nabla \psi\|^2_{L^2(B_R^+)}.
\end{equation}
\end{proposition}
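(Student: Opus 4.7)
The plan is to derive \eqref{eq:lower_bound} by applying the Courant--Fischer Min--Max characterization \eqref{eq:min_max} of $\lambda_{n_0}^\e$ to the $n_0$-dimensional test subspace
\begin{equation*}
F_\e:=\Span\{\varphi_1,\ldots,\varphi_{n_0-1},w_{n_0,R,\e}\}\subset H^1_{0,\partial\Omega\setminus\Sigma_\e}(\Omega).
\end{equation*}
Each $\varphi_i$ with $i<n_0$ belongs to $H^1_0(\Omega)\subset H^1_{0,\partial\Omega\setminus\Sigma_\e}(\Omega)$, while $w_{n_0,R,\e}$ is admissible by construction, see \eqref{eq:w_j0_def} and \eqref{eq:w_int_eq}. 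Since $w_{n_0,R,\e}\to\varphi_{n_0}$ in $L^2(\Omega)$ as $\e\to 0$ and $\{\varphi_i\}_{i=1,\dots,n_0}$ is $L^2$-orthonormal, $\dim F_\e=n_0$ for $\e$ small; then by Min--Max $\lambda_{n_0}^\e-\lambda_{n_0}\le \max_{\sum a_i^2=1}\sum_{i,j}M_{i,j}^\e a_ia_j$, where $M_{i,j}^\e$ is the usual normalized Gramian minus $\lambda_{n_0}\delta_i^j$, exactly as in the proof of Proposition \ref{prop:upper_bound}.

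The diagonal entries are controlled as follows. Rewriting \eqref{eq:48} in terms of $g_R(\e)$ from \eqref{eq:55} gives
\begin{equation*}
\int_\Omega |\nabla w_{n_0,R,\e}|^2\,dx=\lambda_{n_0}+\e^{N+2\gamma-2}(g_R(\e)+o(1))\quad\text{as }\e\to 0,
\end{equation*}
while $w_{n_0,R,\e}-\varphi_{n_0}$ is supported in $\Theta_{R\e}$ and Lemmas \ref{lemma:phi_i_est}--\ref{lem:w_n_0_est} yield $\int_\Omega w_{n_0,R,\e}^2\,dx=1+o(\e^{N+2\gamma-2})$; hence $M_{n_0,n_0}^\e=\e^{N+2\gamma-2}(g_R(\e)+o(1))$. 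For $i<n_0$ one has $M_{i,i}^\e=\lambda_i-\lambda_{n_0}$, strictly negative by simplicity of $\lambda_{n_0}$ (see \eqref{eq:simple_hp}).

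The principal technical subtlety, and the \emph{main obstacle}, lies in the off-diagonal estimates. By $L^2$-orthogonality of $\{\varphi_i\}_{i\le n_0}$ and the fact that $w_{n_0,R,\e}$ coincides with $\varphi_{n_0}$ outside $\Theta_{R\e}$, the cross pairings reduce to
\begin{equation*}
\int_\Omega\varphi_i w_{n_0,R,\e}\,dx=\int_{\Theta_{R\e}}\varphi_i(w_{n_0,R,\e}^{\text{int}}-\varphi_{n_0})\,dx,
\end{equation*}
and analogously for the gradient pairing. A direct Cauchy--Schwarz using Lemmas \ref{lemma:phi_i_est} and \ref{lem:w_n_0_est} handles the $L^2$ term, but the same approach applied to the gradient term produces only $O(\e^{N+\gamma-1})$, which exceeds the leading scale $\e^{N+2\gamma-2}$ whenever $\gamma>1$. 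To recover the correct decay, one integrates by parts and invokes $-\Delta\varphi_i=\lambda_i\varphi_i$ in $\Omega$, converting the gradient pairing into a bulk $L^2$ pairing plus a boundary contribution on $\Sigma_\e$; there $\varphi_{n_0}$ vanishes, $w_{n_0,R,\e}^{\text{int}}$ satisfies a homogeneous Neumann condition, and $\varphi_i$ vanishes with order $\gamma_i\ge 1$, so rescaled trace estimates stemming from \eqref{eq:w_int_eq} (in the spirit of the proof of Lemma \ref{lem:U_R_conv}) yield $M_{i,n_0}^\e=o(\e^{N+2\gamma-2})$. The remaining off-diagonal entries with $i,j<n_0$ are similarly estimated and are of even smaller order.

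With these ingredients in place, the abstract quadratic-form Lemma \ref{lemma:quadratic_form} applied with $\sigma(\e)=\e^{N+2\gamma-2}$ and $\mu(\e)=g_R(\e)+o(1)$ gives
\begin{equation*}
\max_{\sum a_i^2=1}\sum_{i,j=1}^{n_0}M_{i,j}^\e a_ia_j=\e^{N+2\gamma-2}(g_R(\e)+o(1)),
\end{equation*}
proving \eqref{eq:lower_bound}. For the final assertion $g_R(\e)\to g_R$, it suffices to pass to the limit in \eqref{eq:55}: by \eqref{eq:56} we have $V_\e\to\psi$ strongly in $H^1(B_R^+)$, by Lemma \ref{lem:U_R_conv} we have $U_{R,\e}\to U_R$ strongly in $H^1(B_R^+)$, so $\|\nabla V_\e\|_{L^2(B_R^+)}\to\|\nabla\psi\|_{L^2(B_R^+)}$ and $\|\nabla U_{R,\e}\|_{L^2(B_R^+)}\to\|\nabla U_R\|_{L^2(B_R^+)}$, yielding \eqref{eq:def_g_R}.
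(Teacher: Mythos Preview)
Your overall strategy coincides with the paper's: test the Min--Max characterization \eqref{eq:min_max} of $\lambda_{n_0}^\e$ with the space spanned by $\varphi_1,\dots,\varphi_{n_0-1}$ and $w_{n_0,R,\e}$, then invoke Lemma~\ref{lemma:quadratic_form}. One omission: to write the Rayleigh quotient as $\max_{\sum a_i^2=1}\sum M_{i,j}^\e a_ia_j$ you need an $L^2$-orthonormal family. The paper handles this by a Gram--Schmidt step, replacing $w_{n_0,R,\e}$ by $\hat w_{n_0,R,\e}=w_{n_0,R,\e}-\sum_{i<n_0}c_i^\e\varphi_i$ with $c_i^\e=\int_\Omega w_{n_0,R,\e}\varphi_i\,dx$; the correction terms are $O(\e^{N+1+\gamma})$ and do not disturb the leading asymptotics.

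Where your argument diverges from the paper is in the treatment of the off-diagonal entries $L_{i,n_0}^\e$, and here you have misidentified the difficulty. You correctly observe that the direct Cauchy--Schwarz estimate yields only $L_{i,n_0}^\e=O(\e^{N+\gamma-1})$, which is indeed larger than $\e^{N+2\gamma-2}$ when $\gamma>1$. But Lemma~\ref{lemma:quadratic_form} does \emph{not} require the off-diagonal terms to be $o(\sigma(\e))$: its hypothesis is $M_{i,n_0}=O(\e^{a}\sqrt{\sigma(\e)})$ for some $a>0$ with $\sigma(\e)=O(\e^{2a})$. Taking $\sigma(\e)=\e^{N+2\gamma-2}$ one has $\sqrt{\sigma(\e)}=\e^{N/2+\gamma-1}$, so
\[
\e^{N+\gamma-1}=\e^{N/2}\cdot\sqrt{\sigma(\e)},
\]
and the choice $a=N/2$ verifies every hypothesis of the lemma (recall $L_{i,j}^\e=0$ exactly for distinct $i,j<n_0$, and $\sigma(\e)=O(\e^N)$ since $\gamma\ge1$). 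This is precisely what the paper does: it records $L_{i,n_0}^\e=O(\e^{N+\gamma-1})$ and applies Lemma~\ref{lemma:quadratic_form} directly, with no further work on the cross-terms.

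Your proposed integration-by-parts remedy is therefore unnecessary; moreover, as written it does not deliver the claimed $o(\e^{N+2\gamma-2})$. After moving $-\Delta$ onto $\varphi_i$ you are left with a boundary integral $\int_{\Sigma_\e}(\partial_\nu\varphi_i)\,w_{n_0,R,\e}^{\mathrm{int}}\,dS$, and the available trace bounds (scaling the $H^1$-estimate of Lemma~\ref{lem:w_n_0_est}) still give only $O(\e^{N+\gamma-1})$ for this term. The Neumann condition you invoke holds for $w_{R,\e}$ with respect to the \emph{flat} Laplacian on $B_{R\e}^+$, not for $w_{n_0,R,\e}^{\mathrm{int}}$ with respect to the Euclidean Laplacian on $\Theta_{R\e}$, so it cannot be used to kill that boundary term.

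Your argument for $\lim_{\e\to0}g_R(\e)=g_R$ is correct and matches the paper's.
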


\begin{proof}
  We use the  Min-Max characterization of the eigenvalue $\lambda_{n_0}^\varepsilon$ recalled in \eqref{eq:min_max}, that we rewrite as follows
\begin{equation}\label{eq:min_max2}
\lambda_{n_0}^\varepsilon =\min \left\{
\max_{\substack{a_1,\ldots,a_{n_0}\in\R \\ \sum_{i=1}^{n_0} a_i^2=1 }} 
\frac{\left\|\nabla \left( \sum_{i=1}^{n_0} a_i u_i
    \right)\right\|_{L^2(\Omega)}^2}{\|\sum_{i=1}^{n_0} a_i u_i
  \|_{L^2(\Omega)}^2}:\ 
\begin{minipage}{4.5cm}
\quad\\[5pt]
$\{u_1\ldots,u_{n_0}\} \subset  H^1_{0,\partial\Omega\setminus \Sigma_\e}(\Omega)$ \\ linearly independent
\end{minipage}
\right\}.
\end{equation}
Let us fix $R>\kappa$. We define 
\[
\hat{w}_{i,R,\varepsilon}=\varphi_i\quad\text{for all
}i=1,2,\dots,n_0-1,
\]
and 
\[
\hat{w}_{n_0,R,\varepsilon} =w_{n_0,R,\varepsilon} - \sum_{i=1}^{n_0-1} c_i^\varepsilon \varphi_i, 
\]
where 
\[
c_i^\varepsilon=\int_\Omega w_{n_0,R,\varepsilon} \varphi_i \dx.
\]
Let 
\[
\tilde{w}_{i,R,\varepsilon}=\frac{\hat{w}_{i,R,\varepsilon}}{\|\hat{w}_{i,R,\varepsilon}\|_{L^2(\Omega)}},\quad i=1,\dots,n_0.
\]
We note that the family $\{\tilde{w}_{i,R,\varepsilon}\}_{i=1,\ldots,n_0}$ is orthonormal in $L^2(\Omega)$ and linearly independent in $H^1_{0,\partial\Omega\setminus \Sigma_\varepsilon}(\Omega)$.

 Lemmas \ref{lemma:phi_i_est} and \ref{lem:w_n_0_est},
\eqref{eq:w_j0_def} and \eqref{eq:10} imply that, for
  all $i\in\{1,2,\dots,n_0-1\}$, 
\begin{equation}\label{eq:49}
c_i^\varepsilon =O(\e^{N+1+\gamma})
\quad\text{and}\quad
\int_\Omega \nabla w_{n_0,R,\varepsilon}\cdot \nabla\varphi_i \dx =O(\e^{N+\gamma-1})
 \quad\text{as }\e\to0.
\end{equation}
Then, from \eqref{eq:48} we deduce that 
\begin{align}\label{eq:51}
 & \int_\Omega |\nabla \hat w_{n_0,R,\e}(x)|^2\dx\\
  \notag&\quad=
\lambda_{n_0}-\e^{N-2+2\gamma}\bigg(
                                \int_{B_R^+}\Big(|\nabla V_\e(y)|^2-|\nabla U_{R,\e}(y)|^2\Big)\,dy+o(1)\bigg)+O(\e^{2(N+\gamma)})\\
\notag&\quad=\lambda_{n_0}-\e^{N-2+2\gamma}\left(\int_{B_R^+}\Big(|\nabla
  V_\e(y)|^2-|\nabla U_{R,\e}(y)|^2\Big)\,dy+o(1)\right)
\end{align}
as $\e\to0$. Furthermore \eqref{eq:49} implies that, for all $i=1,\dots,n_0-1$, 
\begin{equation}\label{eq:53}
\int_\Omega \nabla \hat w_{n_0,R,\e}(x)\cdot \nabla \hat w_{i,R,\e}(x)\dx=
O(\e^{N+\gamma-1})
 \quad\text{as }\e\to0,
\end{equation}
while \eqref{eq:w_j0_def}, \Cref{lemma:phi_i_est}, \Cref{lem:w_n_0_est},
and \eqref{eq:49} yield
\begin{equation}\label{eq:54}
 \int_\Omega |\hat w_{n_0,R,\e}(x)|^2\dx=1+O(\e^{N+2\gamma}),
 \quad\text{as }\e\to0.
\end{equation}
Choosing as test functions in \eqref{eq:min_max2}
$u_i=\tilde{w}_{i,R,\varepsilon}$ we obtain the following estimate
 \[
    \lambda_{n_0}^\e-\lambda_{n_0}\leq
    \max_{\substack{a_1,\dots,a_{n_0}\in \R \\
        \sum_{i=1}^{n_0}\abs{a_i}^2=1}}
\int_{\Omega}\abs{\nabla \left(\sum_{i=1}^{n_0} a_i \tilde{w}_{i,R,\varepsilon}\right)}^2\dx-\lambda_{n_0}
    =\max_{\substack{a_1,\dots,a_{n_0}\in \R \\
        \sum_{i=1}^{n_0}\abs{a_i}^2=1}}\sum_{i,j=1}^{n_0} L_{i,j}^\e a_ia_j ,
\]
  where
  \[
    L_{i,j}^\e=\frac{\int_{\Omega}\nabla
\hat w_{i,R,\e}\cdot\nabla
\hat w_{j,R,\e}\dx}{\|\hat w_{i,R,\e}\|_{L^2(\Omega)}\|\hat w_{j,R,\e}\|_{L^2(\Omega)}}-\lambda_{n_0}\delta_i^j.
  \]
From estimates \eqref{eq:51}, \eqref{eq:53}, and \eqref{eq:54} it
follows that
  \begin{gather*}
   L_{n_0,n_0}^\e=\e^{N+2\gamma-2}(g_R(\e)+o(1)),\quad
   L_{i,n_0}^\e=L_{n_0,i}^\e=O(\e^{N+\gamma-1})\quad\text{for all }i<n_0, \\
   L_{i,i}^\e=\lambda_i-\lambda_{n_0}<0  \quad\text{for all }i<n_0, \quad
   L_{i,j}^\e=0\quad\text{for all }i,j<n_0,\quad i\neq j,
  \end{gather*}
  as $\e\to 0$. We observe that $g_R(\e)=O(1)$ as $\e \to
  0$ by \eqref{eq:56} and \Cref{lem:U_R_conv}. Therefore estimate
  \eqref{eq:lower_bound} follows from Lemma \ref{lemma:quadratic_form}. Finally
  the limit $\lim_{\varepsilon\to0} g_R(\varepsilon)=g_R$ is a
  direct consequence of \eqref{eq:55}, \Cref{lem:U_R_conv},
  and \eqref{eq:56}.
\end{proof}

\noindent
With the purpose of deducing from \eqref{eq:lower_bound} a more
precise estimate from above of the eigenvalue variation
$\lambda_{n_0}^\e-\lambda_{n_0}$ and, in particular, of recognizing a
sign in the right-hand side of \eqref{eq:lower_bound}, we are now
going to compute the limit of the function $g_R$, as $R$ diverges.  To
this aim we define
\begin{align}
  \chi_R(r):=\int_{S_1^+} U_R(r\theta) \Psi(\theta)\ds,\quad
  &\text{for }R>2\text{ and }1\leq r\leq R, \label{eq:def_chi_R} \\
  \chi(r):=\int_{S_1^+} U(r\theta) \Psi(\theta)\ds,\quad
  &\text{for }r\geq 1. \label{eq:def_chi}
\end{align}
In addition, hereafter we denote
\begin{equation}\label{eq:def_pi_0}
	\pi_0=\pi_0(n_0):=\int_{S_1^+}\Psi^2\ds.
\end{equation}
We first establish the following preliminary result.

\begin{lemma}\label{lemma:lim_chi_R}
  Let $m_{n_0}(\Sigma)$ be the constant defined in \eqref{eq:def_m},
  let $\chi_R$ be as in \eqref{eq:def_chi_R}, $\chi$ as in
  \eqref{eq:def_chi} and $\pi_0$ as in \eqref{eq:def_pi_0}. Then
	\begin{equation}\label{eq:lim_chi_R_th1}
		\chi(1)=\lim_{R\to+\infty}\chi_R(1)=\pi_0-\frac{2 m_{n_0}(\Sigma)}{N+2\gamma-2}.
	\end{equation}
\end{lemma}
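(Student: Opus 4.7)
The strategy is: (i) decompose $U=\psi+w_0$, with $w_0:=w_{0,\Sigma}$, and reduce $\chi(1)$ to the $\Psi$-projection of $w_0$; (ii) identify that projection by applying Green's identity to $\psi$ and $w_0$ on a large half-ball and exploiting the boundary data of $w_0$; (iii) derive the convergence $\chi_R(1)\to\chi(1)$ from Lemma~\ref{lemma:U_R_conv} via the trace theorem.

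Since $\psi|_{S_1^+}=\Psi$, one has $\int_{S_1^+}\psi\,\Psi\ds=\pi_0$, so that $\chi(1)=\pi_0+\chi_w(1)$, where
\[
\chi_w(r):=\int_{S_1^+}w_0(r\theta)\,\Psi(\theta)\ds.
\]
The convergence $\chi_R(1)\to\chi(1)$ as $R\to+\infty$ is immediate from Lemma~\ref{lemma:U_R_conv} (applied, say, with $r=3$), combined with the continuity of the trace operator $H^1(B_1^+)\to L^2(S_1^+)$ and the Cauchy--Schwarz inequality.

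To evaluate $\chi_w(1)$, fix $R>r_0$, so that $\Sigma\subset B_{r_0}'\subset B_R'$. Both $\psi$ and $w_0$ are harmonic on $B_R^+$; Green's identity, together with $\psi\equiv 0$ on $\partial\R^N_+$, $w_0\equiv 0$ on $B_R'\setminus\Sigma$, $\partial_\nnu w_0=-\partial_\nnu\psi$ on $\Sigma$, and \eqref{eq:m_int_by_parts}, yields
\[
\int_{S_R^+}\bigl(\psi\,\partial_r w_0-w_0\,\partial_r\psi\bigr)\ds \;=\; \int_\Sigma w_0\,\partial_\nnu\psi\dx' \;=\; 2\,m_{n_0}(\Sigma).
\]
Inserting $\psi(R\theta)=R^\gamma\Psi(\theta)$ and $\partial_r\psi|_{S_R^+}=\gamma R^{\gamma-1}\Psi(\theta)$ and changing variables to $\theta\in S_1^+$, the left-hand side reduces to $R^{N-1+\gamma}\chi_w'(R)-\gamma R^{N-2+\gamma}\chi_w(R)$.

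In $\R^N_+\setminus\overline{B_{r_0}^+}$, $w_0$ is harmonic and vanishes on the flat part of the boundary; projecting $\Delta w_0=0$ in polar coordinates onto the spherical eigenfunction $\Psi$ of $-\Delta_{S^{N-1}}$ with Dirichlet data on $\partial S_1^+$ and eigenvalue $\gamma(\gamma+N-2)$ shows that $\chi_w$ satisfies the Euler-type ODE
\[
\chi_w''(r)+\tfrac{N-1}{r}\chi_w'(r)-\tfrac{\gamma(\gamma+N-2)}{r^2}\chi_w(r)=0,\qquad r>r_0,
\]
with general solution $Ar^\gamma+Br^{-(N-2+\gamma)}$. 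The Sobolev embedding $\mathcal{D}^{1,2}(\R^N_+\cup\Sigma)\hookrightarrow L^{2^*}(\R^N_+)$ excludes the growing mode, forcing $A=0$. Substituting $\chi_w(r)=Br^{-(N-2+\gamma)}$ into the Green identity gives $-(N+2\gamma-2)B=2\,m_{n_0}(\Sigma)$, i.e.\ $B=-\tfrac{2\,m_{n_0}(\Sigma)}{N+2\gamma-2}$. Since $\mathop{\rm diam}(\mathcal V)<r_0<1$, this representation is valid at $r=1$, so $\chi_w(1)=B$, proving \eqref{eq:lim_chi_R_th1}. The main technical obstacle is the rigorous justification of the decay-mode selection: one must verify, via Sobolev embedding together with classical elliptic regularity (for instance by extending $w_0$ harmonically across the Dirichlet part of $\partial\R^N_+$ by odd reflection), that the $L^{2^*}$-integrability of $w_0$ rules out any $r^\gamma$-contribution at infinity.
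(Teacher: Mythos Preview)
Your proof is correct and follows essentially the same strategy as the paper: both establish $\chi_R(1)\to\chi(1)$ via Lemma~\ref{lemma:U_R_conv} and trace continuity, solve the Euler ODE coming from the spherical projection of the harmonic equation, and identify the free constant through an integration by parts that brings in $m_{n_0}(\Sigma)$. Working with $\chi_w$ rather than $\chi$ is merely a shift by $\pi_0 r^\gamma$, and your Green identity for $\psi$ and $w_0$ on $B_R^+$ is equivalent to the paper's computation of $\int_{B_1^+}\nabla\psi\cdot\nabla U\dx$.

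The one genuine difference is how the growing mode $r^\gamma$ is excluded. The paper appeals to a Kelvin transform and a vanishing-order result from \cite{FF-proca} to obtain the pointwise decay $|w_0(x)|=O(|x|^{-N+1})$. Your route via $w_0\in\mathcal D^{1,2}(\R^N_+\cup\Sigma)$ is more self-contained: indeed, Cauchy--Schwarz on $S_1^+$ gives $\int_{S_1^+}|w_0(r\theta)|^2\ds\geq \pi_0^{-1}|\chi_w(r)|^2$, and if $A\neq0$ the Hardy inequality (which already follows from $w_0\in\mathcal D^{1,2}$) then forces $\int_1^\infty r^{N-3+2\gamma}\dr<\infty$, a contradiction since $N\geq 3$, $\gamma\geq1$. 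This is slightly cleaner than invoking Sobolev embedding plus elliptic regularity, and avoids the external reference; otherwise the two arguments coincide.
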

\begin{proof}
 The fact that $\chi_R(1)\to \chi(1)$ as $R\to+\infty$ is a
  consequence of Lemma \ref{lemma:U_R_conv} and continuity 
 of the trace map from $H^1(B_R^+)$ to $L^2(S_1^+)$. We now claim that
	\begin{equation}\label{eq:lim_chi_R_1}
		r^{-\gamma}\chi(r)\to \pi_0\quad\text{as }r\to+\infty.
	\end{equation}
To prove \eqref{eq:lim_chi_R_1}, we first observe that
	\begin{equation}\label{eq:lim_chi_R_2}
		\chi(r)=\pi_0 r^{\gamma}+\int_{S_1^+}w_0 (r\theta)\Psi(\theta)\ds,
	\end{equation}
 since $U(x)=w_0(x)+\abs{x}^{\gamma}\Psi(x/\abs{x})$ by \eqref{eq:def_U}.
By considering  the Kelvin transform of
   the restriction of $w_0$ on $\R^N_+\setminus B_1^+$ and observing
   that it must vanish at $0$ at least with vanishing order $1$ (see
   \cite{FF-proca}), we deduce  that 
\begin{equation*}
|w_0(x)|=O(|x|^{-N+1})\quad\text{as }|x|\to+\infty.
\end{equation*}
Combining the above estimate with \eqref{eq:lim_chi_R_2} we obtain
claim \eqref{eq:lim_chi_R_1}, being $\gamma\geq 1$.

  In view of the definition of $\chi$ given in \eqref{eq:def_chi}, the equation satisfied by $U$ and the fact that $\Psi$ is a spherical harmonic of
degree $\gamma$, it's easy to prove that $\chi(r)$
solves the following differential equation
\begin{equation*}
  (r^{N+2\gamma-1}(r^{-\gamma}\chi(r))')'=0 \quad\text{in }[1,+\infty).
\end{equation*}
Integration of the the above equation yields that
\begin{equation}\label{eq:lim_chi_R_3}
  r^{-\gamma}\chi(r)=\chi(1)+C\frac{1-r^{-N-2\gamma+2}}{N+2\gamma-2},\quad\text{for
    all }r\geq 1
\end{equation}
and for some $C\in\R$. Taking into account \eqref{eq:lim_chi_R_1}, we
obtain the exact value of the constant $C$, i.e.
	\[
		C=(N+2\gamma-2)(\pi_0-\chi(1)).
	\]
	Then \eqref{eq:lim_chi_R_3} can be rewritten as
	\begin{equation}\label{eq:lim_chi_R_6}
		\chi(r)=\pi_0 r^{\gamma}-(\pi_0-\chi(1))r^{-N-\gamma+2},
	\end{equation}
	whose derivative is
\begin{align}\label{eq:lim_chi_R_7}
	\chi'(r)&=\pi_0 \gamma
                  r^{\gamma-1}-(N+\gamma-2)(\chi(1)-\pi_0)r^{-N-\gamma+1}
  \\
  \notag&=(N+2\gamma-2)\pi_0
          r^{\gamma-1}-\frac{N+\gamma-2}{r}\chi(r),\quad\text{for all }r\geq1.
\end{align}
	Then, by computing the derivative in \eqref{eq:def_chi} as well, and evaluating it at $r=1$, we have that
	\begin{equation}\label{eq:lim_chi_R_4}
		\int_{S_1^+}\Psi\partial_{\nnu}U\ds=(N+2\gamma-2)\pi_0-(N+\gamma-2)\chi(1).
	\end{equation}
	Thanks to the harmonicity of the function
        $\psi$, the definition of $U$ given in \eqref{eq:def_U} and
        \eqref{eq:m_int_by_parts}, one can see that
	\[
		\int_{B_1^+}\nabla \psi\cdot\nabla U\dx=\int_{S_1^+} U\partial_{\nnu}\psi+2m_{n_0}(\Sigma).
	\]
	On the other hand, being $\psi$ a $\gamma$-homogeneous
        polynomial $\partial_{\nnu}\psi =\gamma\psi$ on
        $S_1^+$ 
	and so
	\begin{equation}\label{eq:lim_chi_R_5}
		\int_{B_1^+}\nabla \psi\cdot\nabla U\dx=\gamma\chi(1)+2m_{n_0}(\Sigma).
	\end{equation}
	Moreover, by \eqref{eq:63} and integration by parts we have that
	\[
		\int_{B_1^+}\nabla \psi\cdot\nabla U\dx=\int_{S_1^+}\Psi\partial_{\nnu} U\ds.
	\]
	Combining the identity above with \eqref{eq:lim_chi_R_5} and
        \eqref{eq:lim_chi_R_4} and rearranging the terms, we finally
        obtain \eqref{eq:lim_chi_R_th1} and complete the proof.
\end{proof}

As a byproduct of the proof of the previous Lemma, we obtain the following result, which is needed in the Section \ref{sec:proof_main}.

\begin{corollary}\label{cor:chi}
	For all $R>1$ there holds
	\begin{equation}\label{eq:chi_th1}
		\int_{S_R^+}\psi\partial_{\nnu}U\ds=\pi_0\gamma R^{N+2\gamma-2}+\frac{2(N+\gamma-2)}{N+2\gamma-2}m_{n_0}(\Sigma)
	\end{equation}
	as well as
        \begin{equation}\label{eq:chi_th2}
		\chi(R)=\pi_0R^{\gamma}-\frac{2m_{n_0}(\Sigma)}{N+2\gamma-2}R^{-N-\gamma+2}.
	\end{equation}
\end{corollary}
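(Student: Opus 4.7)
The plan is to extract both identities directly from computations already carried out in the proof of Lemma \ref{lemma:lim_chi_R}, essentially by substituting the value of $\chi(1)$ determined there and then applying homogeneity.

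First, for \eqref{eq:chi_th2}, I would observe that the argument in Lemma \ref{lemma:lim_chi_R} actually established the closed-form formula
\begin{equation*}
\chi(r)=\pi_0 r^{\gamma}-(\pi_0-\chi(1))r^{-N-\gamma+2}
\end{equation*}
for all $r\geq 1$ (this is \eqref{eq:lim_chi_R_6}). Plugging in the value $\chi(1)=\pi_0-\frac{2m_{n_0}(\Sigma)}{N+2\gamma-2}$ proved in \eqref{eq:lim_chi_R_th1} immediately yields \eqref{eq:chi_th2}.

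For \eqref{eq:chi_th1}, the key observation is that $\psi$ is $\gamma$-homogeneous with angular profile $\Psi$, so on $S_R^+$ we have $\psi(R\theta)=R^\gamma\Psi(\theta)$ and the outer normal is $\nnu(R\theta)=\theta$. Parametrizing $S_R^+$ by $x=R\theta$ with $\theta\in S_1^+$ and $\ds=R^{N-1}\,d\theta$, I would rewrite
\begin{equation*}
\int_{S_R^+}\psi\,\partial_{\nnu}U\ds
=R^{N+\gamma-1}\!\int_{S_1^+}\Psi(\theta)\,\nabla U(R\theta)\cdot\theta\,d\theta
=R^{N+\gamma-1}\chi'(R),
\end{equation*}
where in the last step I use that differentiation under the integral sign in \eqref{eq:def_chi} gives exactly $\chi'(r)=\int_{S_1^+}\Psi(\theta)\,\nabla U(r\theta)\cdot\theta\,d\theta$.

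It then suffices to evaluate $\chi'(R)$ using \eqref{eq:lim_chi_R_7}, which, after substituting $\chi(1)-\pi_0=-\frac{2m_{n_0}(\Sigma)}{N+2\gamma-2}$, reads
\begin{equation*}
\chi'(R)=\pi_0\gamma R^{\gamma-1}+\frac{2(N+\gamma-2)}{N+2\gamma-2}\,m_{n_0}(\Sigma)\,R^{-N-\gamma+1}.
\end{equation*}
Multiplying by $R^{N+\gamma-1}$ gives \eqref{eq:chi_th1}. There is no real obstacle here: the entire content of the corollary is already contained implicitly in the derivation of Lemma \ref{lemma:lim_chi_R}, and the only thing to be careful about is the bookkeeping of powers of $R$ in the homogeneity scaling and in the conversion between $\chi'(R)$ and the boundary integral on $S_R^+$.
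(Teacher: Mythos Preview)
Your proposal is correct and follows essentially the same approach as the paper: both derive \eqref{eq:chi_th2} by substituting the value of $\chi(1)$ from \eqref{eq:lim_chi_R_th1} into \eqref{eq:lim_chi_R_6}, and both obtain \eqref{eq:chi_th1} from the identity $\int_{S_R^+}\psi\,\partial_{\nnu}U\ds=R^{N+\gamma-1}\chi'(R)$ combined with \eqref{eq:lim_chi_R_7}. Your write-up is a bit more explicit about the homogeneity scaling and differentiation under the integral sign, but the content is the same.
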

\begin{proof}
	By definition \eqref{eq:def_chi} we have that
	\begin{equation*}
		\int_{S_R^+}\psi\partial_{\nnu} U\ds=R^{N+\gamma-1}\chi'(R).
	\end{equation*}
	Plugging \eqref{eq:lim_chi_R_7}, \eqref{eq:lim_chi_R_6} and
        \eqref{eq:lim_chi_R_th1} into the previous identity, one can
        deduce \eqref{eq:chi_th1}. On the other hand
        \eqref{eq:chi_th2} can be easily proved by plugging
        \eqref{eq:lim_chi_R_th1} into \eqref{eq:lim_chi_R_6}.
\end{proof}

We are now able to compute the limit of $g_R$ as $R$ diverges.

\begin{lemma}\label{lemma:lim_g_R}
	Let $g_R$ be as in \eqref{eq:def_g_R} and $m_{n_0}(\Sigma)$ as in \eqref{eq:def_m}. Then $\lim_{R\to\infty}g_R=2m_{n_0}(\Sigma)$.
\end{lemma}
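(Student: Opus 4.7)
The plan is to rewrite $g_R$ as a boundary integral over $\Sigma$ using the minimality/harmonicity of $U_R$, and then pass to the limit using the convergence $U_R\to U$ established in Lemma \ref{lemma:U_R_conv}. The identification with $2m_{n_0}(\Sigma)$ will follow directly from \eqref{eq:m_int_by_parts}.

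First, for $R$ large enough so that $\Sigma\subset B_R'$, I set $v_R:=U_R-\psi$. Since $U_R=\psi$ on $\partial B_R^+\setminus\Sigma$, we have $v_R\in H^1_{0,\partial B_R^+\setminus\Sigma}(B_R^+)$, hence $v_R$ is an admissible test function in the weak formulation \eqref{eq:U_R_eq}. Expanding $|\nabla U_R|^2=|\nabla\psi|^2+2\nabla\psi\cdot\nabla v_R+|\nabla v_R|^2$ and integrating over $B_R^+$, I obtain
\begin{equation*}
g_R=2\int_{B_R^+}\nabla\psi\cdot\nabla v_R\dx+\int_{B_R^+}|\nabla v_R|^2\dx.
\end{equation*}
Testing \eqref{eq:U_R_eq} with $\phi=v_R$ yields $\int_{B_R^+}\nabla U_R\cdot\nabla v_R\dx=0$, that is $\int_{B_R^+}|\nabla v_R|^2\dx=-\int_{B_R^+}\nabla\psi\cdot\nabla v_R\dx$. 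Substituting gives the clean expression
\begin{equation*}
g_R=\int_{B_R^+}\nabla\psi\cdot\nabla v_R\dx.
\end{equation*}

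Next I integrate by parts using that $\psi$ is a harmonic polynomial (so $\Delta\psi=0$ in $\R^N_+$) and that $v_R$ has vanishing trace on $\partial B_R^+\setminus\Sigma$. Since $\psi\equiv 0$ on $\partial\R^N_+\supset\Sigma$, I get
\begin{equation*}
\int_{B_R^+}\nabla\psi\cdot\nabla v_R\dx=\int_{\Sigma}v_R\,\partial_{\nnu}\psi\dx'=\int_{\Sigma}(U_R-\psi)\partial_{\nnu}\psi\dx'=\int_{\Sigma}U_R\,\partial_{\nnu}\psi\dx',
\end{equation*}
so $g_R=\int_{\Sigma}U_R\,\partial_{\nnu}\psi\dx'$.

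Finally, I pass to the limit as $R\to+\infty$. Fix $r>2$ with $\Sigma\subset B_r'$; Lemma \ref{lemma:U_R_conv} gives $U_R\to U$ in $H^1(B_r^+)$, so by continuity of the trace map $H^1(B_r^+)\to L^2(\Sigma)$ we have $U_R\big|_\Sigma\to U\big|_\Sigma$ in $L^2(\Sigma)$. Since $\partial_{\nnu}\psi\in L^\infty(\Sigma)$ (as $\psi$ is a polynomial),
\begin{equation*}
\lim_{R\to+\infty}g_R=\int_{\Sigma}U\,\partial_{\nnu}\psi\dx'=\int_{\Sigma}(w_0+\psi)\partial_{\nnu}\psi\dx'=\int_{\Sigma}w_0\,\partial_{\nnu}\psi\dx'=2m_{n_0}(\Sigma),
\end{equation*}
where the penultimate equality uses $\psi=0$ on $\Sigma$ and the last one is exactly \eqref{eq:m_int_by_parts}.

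There is no real obstacle here: the argument is essentially a one-line computation once one writes $U_R$ as $\psi$ plus an admissible perturbation and invokes the weak formulation. The only mild caveat is checking that $R$ is taken sufficiently large so that $\Sigma\subset B_R'$ (which is possible since $\mathop{\rm diam}\mathcal V<r_0<1$ by \eqref{eq:7}), and ensuring that the integration by parts is justified — which it is, since $\psi$ is smooth and $v_R\in H^1(B_R^+)$ vanishes on $\partial B_R^+\setminus\Sigma$ so that the boundary term localizes on $\Sigma$.
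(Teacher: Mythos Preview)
Your proof is correct and considerably more direct than the paper's. The paper proceeds by writing $g_R$ as a difference of boundary integrals over the outer sphere $S_R^+$, introduces the spherical-harmonic projections $\chi_R(r)=\int_{S_1^+}U_R(r\theta)\Psi(\theta)\,dS$ and $\chi(r)$, solves the second-order ODE they satisfy, and combines this with the auxiliary Lemma~\ref{lemma:lim_chi_R} (whose proof in turn uses a Kelvin-transform decay estimate on $w_0$). You instead integrate by parts on the flat inner boundary $\Sigma$, obtain the exact identity $g_R=\int_{\Sigma}U_R\,\partial_{\nnu}\psi\,dx'$, and pass to the limit directly via the $H^1$-convergence of Lemma~\ref{lemma:U_R_conv} and trace continuity. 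This bypasses the ODE machinery entirely. The paper's longer route is not wasted, however: the $\chi$-computations (specifically Corollary~\ref{cor:chi}) are reused later in the proof of Lemma~\ref{lemma:conv_f_R}, so the ODE analysis serves double duty there, whereas your argument is tailored to this lemma alone.
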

\begin{proof}
  From \eqref{eq:60}, harmonicity of $\psi$ and the
    fact that $\psi=0$ on $\partial\R^N_+$ it follows that
	\begin{equation}\label{eq:lim_g_R_1}
		g_R=\int_{S_R^+}\psi\partial_{\nnu} U_R\ds-\int_{S_R^+}\psi\partial_{\nnu}\psi\ds.
	\end{equation}
        Let us compute the two terms at the right hand side of
        \eqref{eq:lim_g_R_1}. If $\chi_R$ is the function defined in
        \eqref{eq:def_chi_R}, then
\begin{equation}\label{eq:lim_g_R_2}
          \chi_R'(R)=\int_{S_1^+}\Psi(\theta)\partial_{\nnu}U_R(R\theta)\ds=R^{-N-\gamma+1}\int_{S_R^+}{\psi}\partial_{\nnu} U_R\ds.
	\end{equation}
	On the other hand, one can easily prove that the function $\chi_R$ solves the following ODE
	\begin{equation*}
		(r^{N+2\gamma-1}(r^{-\gamma}\chi_R(r))')'=0\quad\text{in }[1, R],
	\end{equation*}
so that, by integration, there exists $C\in \R$ such that 
	\begin{equation*}
		r^{-\gamma}\chi_R(r)=\chi_R(1)+C\frac{1-r^{-N-2\gamma+2}}{N+2\gamma-2}\quad\text{for
                all }r\in [1,R].
	\end{equation*}
	Since $U_R=\psi=R^{\gamma}\Psi$ on $S_R^+$, then, by
        \eqref{eq:def_chi_R} and \eqref{eq:def_pi_0},
        $\chi_R(R)=\pi_0 R^{\gamma}$. Therefore the
        constant $C$ above is explicitly given by
	\[
		C=\frac{(N+2\gamma-2)(\pi_0-\chi_R(1))}{1-R^{-N-2\gamma+2}}.
	\]
Hence, in view of \eqref{eq:lim_g_R_2}, we can rewrite  the first term
in \eqref{eq:lim_g_R_1} as
\begin{equation}
  \int_{S_R^+}\psi\partial_{\nnu}U_R\ds=R^{N+\gamma-1}\chi_R'(R) =\frac{\pi_0(N+\gamma-2)-\chi_R(1)(N+2\gamma-2)+\pi_0\gamma R^{N+2\gamma-2}}{1-R^{-N-2\gamma+2}}. \label{eq:lim_r_R_3}
\end{equation}
	Concerning the second term in \eqref{eq:lim_g_R_1}, from
        \eqref{eq:def_pi_0} and the fact that
	\[
          \psi\partial_{\nnu}\psi=\gamma R^{2\gamma-1}\Psi^2
          \quad\text{on }S_R^+,
	\]
	we may easily deduce that
	\[
		\int_{S_R^+}\psi\partial_{\nnu}\psi\ds=\pi_0\gamma R^{N+2\gamma-2}.
	\]
	Plugging the previous identity and \eqref{eq:lim_r_R_3} into \eqref{eq:lim_g_R_1} we obtain that
	\[
		g_R=\frac{(\pi_0-\chi_R(1))(N+2\gamma-2)}{1-R^{-N-2\gamma+2}}.
	\]
	In view of Lemma \ref{lemma:lim_chi_R}, passing to
        the limit as $R\to\infty$ in the previous identity, we draw
        the conclusion.
	\end{proof}

  Combining Proposition \ref{prop:lower_bound} and Lemma
  \ref{lemma:lim_g_R} we directly  obtain the following lower bound for the
  eigenvalue variation $\lambda_{n_0}-\lambda_{n_0}^\e$.
\begin{corollary}\label{cor:lowbound}
  We have that
  \begin{equation*}
    \liminf_{\e\to0}\frac{\lambda_{n_0}-\lambda_{n_0}^\e}{\e^{N+2\gamma-2}}\geq
    -2m_{n_0}(\Sigma)>0.
  \end{equation*}
\end{corollary}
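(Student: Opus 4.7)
The plan is a direct concatenation of the two pieces already in hand, Proposition \ref{prop:lower_bound} and Lemma \ref{lemma:lim_g_R}. First I would fix $R>\kappa$ and start from the estimate
\begin{equation*}
\lambda_{n_0}^\e-\lambda_{n_0}\le \e^{N+2\gamma-2}(g_R(\e)+o(1))\quad\text{as }\e\to 0
\end{equation*}
proved in Proposition \ref{prop:lower_bound}. Dividing by $\e^{N+2\gamma-2}>0$, passing to $\limsup$ as $\e\to 0$, and using the other conclusion of that proposition, namely $g_R(\e)\to g_R$, I obtain
\begin{equation*}
\limsup_{\e\to 0}\frac{\lambda_{n_0}^\e-\lambda_{n_0}}{\e^{N+2\gamma-2}}\le g_R,
\end{equation*}
which, changing sign, is equivalent to
\begin{equation*}
\liminf_{\e\to 0}\frac{\lambda_{n_0}-\lambda_{n_0}^\e}{\e^{N+2\gamma-2}}\ge -g_R.
\end{equation*}

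Since the left-hand side does not depend on $R$, I would then let $R\to+\infty$ along the above inequality. By Lemma \ref{lemma:lim_g_R} we have $g_R\to 2m_{n_0}(\Sigma)$, so that
\begin{equation*}
\liminf_{\e\to 0}\frac{\lambda_{n_0}-\lambda_{n_0}^\e}{\e^{N+2\gamma-2}}\ge -2m_{n_0}(\Sigma),
\end{equation*}
which is the first inequality in the claim.

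Finally, the strict positivity $-2m_{n_0}(\Sigma)>0$ is just a quotation of \eqref{eq:m_int_by_parts}: since $\psi$ is a nontrivial harmonic polynomial vanishing on $\partial\R^N_+$ we have $\partial_{x_N}\psi\not\equiv 0$ on $\Sigma$, hence the minimizer $w_{0,\Sigma}\in\mathcal{D}^{1,2}(\R^N_+\cup\Sigma)$ of $J_\Sigma$ is not identically zero, and \eqref{eq:m_int_by_parts} yields
\begin{equation*}
m_{n_0}(\Sigma)=-\tfrac{1}{2}\int_{\R^N_+}|\nabla w_{0,\Sigma}|^2\dx<0.
\end{equation*}
There is no real obstacle in this proof: the corollary is just the packaging of the sharp upper bound for $\lambda_{n_0}^\e-\lambda_{n_0}$ proved in Section \ref{sec:lower-bound-texorpdf} with the $R\to\infty$ asymptotics of the remainder function $g_R$ computed in Lemma \ref{lemma:lim_g_R}; all the substantial work (construction of competitors $w_{n_0,R,\e}$, blow-up analysis yielding $U_{R,\e}\to U_R$, and the ODE computation identifying $\lim_R g_R$ with $2m_{n_0}(\Sigma)$) has already been carried out.
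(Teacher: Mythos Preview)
Your proposal is correct and matches the paper's own approach: the paper simply states that the corollary follows directly by combining Proposition~\ref{prop:lower_bound} and Lemma~\ref{lemma:lim_g_R}, and you have written out exactly those details, together with the citation of \eqref{eq:m_int_by_parts} for the strict sign of $m_{n_0}(\Sigma)$.
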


 Combining Proposition \ref{prop:upper_bound} and Corollary
\ref{cor:lowbound} we finally obtain the following result.

\begin{corollary}\label{cor:first_up_low_bound}
For any $R\geq K_\tau$ fixed we have that
	\[
		-2m_{n_0}(\Sigma)+o(1)\leq
                \frac{\lambda_{n_0}-\lambda_{n_0}^\e}
                {\e^{N+2\gamma-2}}\leq
                \frac{H(\e)}{\e^{2\gamma}}(f_R(\e)+o(1))
                \quad\text{as }\e\to0,
	\]
	where $f_R(\e)$ and $H(\e)$ are defined in
        \eqref{eq:def_f_R_eps} and \eqref{eq:61} respectively. In particular
	\begin{equation}\label{eq:cor_up_low_2}
		\frac{\e^{2\gamma}}{H(\e)}=O(1)\quad\text{as }\e\to 0.
	\end{equation}
\end{corollary}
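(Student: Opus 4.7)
The statement is essentially a bookkeeping corollary that assembles the two main bounds already obtained in the preceding sections and extracts a further piece of information by comparing them. My plan is therefore in three short steps.

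First, I would recall the upper bound from Proposition \ref{prop:upper_bound}: for any $R\geq K_\tau$,
\begin{equation*}
  \lambda_{n_0}-\lambda_{n_0}^\e\leq \e^{N-2}H(\e)(f_R(\e)+o(1))\quad\text{as }\e\to0.
\end{equation*}
Dividing both sides by $\e^{N+2\gamma-2}$ (which is positive) yields directly
\begin{equation*}
  \frac{\lambda_{n_0}-\lambda_{n_0}^\e}{\e^{N+2\gamma-2}}\leq \frac{H(\e)}{\e^{2\gamma}}(f_R(\e)+o(1)),
\end{equation*}
which is the right-hand inequality of the statement.

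Next, the left-hand inequality $-2m_{n_0}(\Sigma)+o(1)\leq \frac{\lambda_{n_0}-\lambda_{n_0}^\e}{\e^{N+2\gamma-2}}$ is nothing but the $\liminf$ estimate of Corollary \ref{cor:lowbound} rewritten with an explicit $o(1)$ remainder, so it requires no further work. This gives the two-sided inequality stated.

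Finally, for the estimate $\e^{2\gamma}/H(\e)=O(1)$, I would chain the two inequalities just obtained:
\begin{equation*}
  -2m_{n_0}(\Sigma)+o(1)\leq \frac{H(\e)}{\e^{2\gamma}}(f_R(\e)+o(1))\quad\text{as }\e\to0.
\end{equation*}
By \eqref{eq:m_int_by_parts} we have $-2m_{n_0}(\Sigma)>0$, so the left-hand side stays bounded away from $0$ for small $\e$; on the other hand, \eqref{eq:upper_bound_O} ensures $f_R(\e)=O(1)$. Combining these two facts, there exist $c_1>0$ and $c_2>0$ (independent of $\e$, for $\e$ small) such that $c_1\leq c_2\,H(\e)/\e^{2\gamma}$, whence $H(\e)/\e^{2\gamma}$ is bounded below by a positive constant and consequently $\e^{2\gamma}/H(\e)=O(1)$ as $\e\to0$. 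There is no genuine obstacle here — the only point that requires care is to invoke $-2m_{n_0}(\Sigma)>0$ (not merely $\geq 0$) to conclude strict positivity of the lower bound, which is what allows the ratio to be inverted.
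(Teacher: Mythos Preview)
Your proposal is correct and follows essentially the same approach as the paper, which simply states that the corollary follows by combining Proposition \ref{prop:upper_bound} and Corollary \ref{cor:lowbound}. Your treatment is in fact more explicit than the paper's: you spell out the division by $\e^{N+2\gamma-2}$, the equivalence between the $\liminf$ inequality and the $o(1)$ formulation, and the final step where positivity of $-2m_{n_0}(\Sigma)$ together with $f_R(\e)=O(1)$ forces $H(\e)/\e^{2\gamma}$ to be bounded below.
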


\section{Blow-up analysis}\label{sec:blow-up-analysis}
The analysis performed in the previous sections led, in
\Cref{cor:first_up_low_bound}, to an estimate of the eigenvalue
variation in terms of the normalization factor $H(\e)$. In order to
detect the sharp asymptotic behaviour of $H(\e)$ as $\e\to\ 0$, in the
present section we perform a blow-up analysis for scaled
eigenfunctions. The identification of the limit profile of blown-up
eigenfunctions will be possible thanks to the energy estimate in
\Cref{prop:blow_up_estim} below, which is based on the invertibility
of the Fréchet derivative of the operator $T$, defined as
\begin{align}\label{eq:def_T}
	T\colon H^1_0(\Omega)\times \R & \longrightarrow H^{-1}(\Omega)\times \R \\
\notag	(\varphi,\lambda)&\longmapsto T(\varphi,\lambda):=\left(-\Delta \varphi-\lambda\varphi, \int_{\Omega}\abs{\nabla\varphi}^2\dx-\lambda_{n_0}\right),
\end{align}
where
\[
	_{H^{-1}(\Omega)}\langle-\Delta\varphi -\lambda\varphi,v\rangle_{H^1_0(\Omega)}:=\int_\Omega\left(\nabla\varphi\cdot\nabla v-\lambda\varphi v\right)\dx.
\]
From  the normalization
  \eqref{eq:normaliz_limit_eigenfunction} it easily follows that 
\[
  T(\varphi_{n_0},\lambda_{n_0})=(0,0).
\]
    Additionally, as a consequence of the simplicity
      assumption \eqref{eq:simple_hp} and the Fredholm Alternative,
    it is easy to prove the following invertibility
      result for the Fréchet derivative of $T$ at
      $(\varphi_{n_0},\lambda_{n_0})$. One can see \cite[Lemma
    7.1]{Abatangelo2015} for the proof in a similar framework.

\begin{lemma}
	The functional $T$ defined in \eqref{eq:def_T} is
        Fréchet-differentiable at $(\varphi_{n_0},\lambda_{n_0})$ and
        its Fréchet derivative
\begin{align*}
  &\mathrm{d}T(\varphi_{n_0},\lambda_{n_0})\colon   H^1_0(\Omega)\times\R  \longrightarrow  H^{-1}(\Omega)\times\R, \\
  &\mathrm{d}T(\varphi_{n_0},\lambda_{n_0})(\varphi,\lambda)=\left(-\Delta\varphi-\lambda \varphi_{n_0}-\lambda_{n_0} \varphi,\,2\int_\Omega\nabla\varphi_{n_0}\cdot\nabla\varphi\dx\right),
\end{align*}
	is invertible.
\end{lemma}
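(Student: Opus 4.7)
The plan is to prove Fréchet-differentiability by direct Taylor expansion of $T$ at $(\varphi_{n_0},\lambda_{n_0})$, and then to establish invertibility of $\mathrm{d}T(\varphi_{n_0},\lambda_{n_0})$ by combining the Fredholm Alternative for $-\Delta-\lambda_{n_0}$ on $H^1_0(\Omega)$ with the simplicity assumption \eqref{eq:simple_hp}.

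For the first part, given $(\varphi,\lambda)\in H^1_0(\Omega)\times \R$ with small norm, I would compute
\[
T(\varphi_{n_0}+\varphi,\lambda_{n_0}+\lambda)-T(\varphi_{n_0},\lambda_{n_0})-\mathrm{d}T(\varphi_{n_0},\lambda_{n_0})(\varphi,\lambda)=\Big(-\lambda\varphi,\,\int_\Omega|\nabla\varphi|^2\dx\Big),
\]
using the identity $\int_\Omega|\nabla(\varphi_{n_0}+\varphi)|^2\dx=\lambda_{n_0}+2\int_\Omega\nabla\varphi_{n_0}\cdot\nabla\varphi\dx+\int_\Omega|\nabla\varphi|^2\dx$ in the second coordinate and expanding $-\Delta(\varphi_{n_0}+\varphi)-(\lambda_{n_0}+\lambda)(\varphi_{n_0}+\varphi)$ in the first. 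Since $\|-\lambda\varphi\|_{H^{-1}(\Omega)}\leq C|\lambda|\,\|\varphi\|_{H^1_0(\Omega)}$ and $\int_\Omega|\nabla\varphi|^2\dx=\|\varphi\|_{H^1_0(\Omega)}^2$, the remainder is $o(\|(\varphi,\lambda)\|)$ as $(\varphi,\lambda)\to(0,0)$, proving Fréchet-differentiability with the stated derivative.

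For the invertibility, I would show bijectivity and conclude by the Open Mapping Theorem (all spaces are Banach and $\mathrm{d}T(\varphi_{n_0},\lambda_{n_0})$ is bounded). Fix $(f,c)\in H^{-1}(\Omega)\times\R$; I seek $(\varphi,\lambda)\in H^1_0(\Omega)\times\R$ solving
\begin{equation*}
-\Delta\varphi-\lambda_{n_0}\varphi=f+\lambda\varphi_{n_0}\quad\text{in }H^{-1}(\Omega),\qquad 2\int_\Omega\nabla\varphi_{n_0}\cdot\nabla\varphi\dx=c.
\end{equation*}
By the Fredholm Alternative, the first equation is solvable in $H^1_0(\Omega)$ if and only if $f+\lambda\varphi_{n_0}$ annihilates $\ker(-\Delta-\lambda_{n_0})$, which by \eqref{eq:simple_hp} is spanned by $\varphi_{n_0}$ (recall \eqref{eq:normaliz_limit_eigenfunction}). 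This forces the unique choice
\[
\lambda=-{}_{H^{-1}(\Omega)}\langle f,\varphi_{n_0}\rangle_{H^1_0(\Omega)},
\]
and the corresponding $\varphi$ is determined modulo $\Span\{\varphi_{n_0}\}$. Writing any particular solution $\tilde\varphi$ and setting $\varphi=\tilde\varphi+t\varphi_{n_0}$, the second equation becomes $2\lambda_{n_0}\big(\int_\Omega\varphi_{n_0}\tilde\varphi\dx+t\big)=c$ after integration by parts and use of \eqref{eq:eqphi_0}. Since $\lambda_{n_0}>0$, this uniquely fixes $t$, yielding existence and uniqueness of $(\varphi,\lambda)$, hence bijectivity.

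I expect no genuine obstacle: the main (minor) subtlety is consistently using $\varphi_{n_0}\in H^1_0(\Omega)$ and its $L^2$-normalization \eqref{eq:normaliz_limit_eigenfunction} to identify the kernel and cokernel of $-\Delta-\lambda_{n_0}$ via the Riesz representation, and to carry out the integration by parts $\int_\Omega\nabla\varphi_{n_0}\cdot\nabla\varphi\dx=\lambda_{n_0}\int_\Omega\varphi_{n_0}\varphi\dx$, which is what allows the second equation to pin down the free parameter $t$ rather than being automatically satisfied or inconsistent. Injectivity follows by specializing the above argument to $(f,c)=(0,0)$: the Fredholm condition gives $\lambda=0$, so $\varphi\in\ker(-\Delta-\lambda_{n_0})=\Span\{\varphi_{n_0}\}$, and then the second equation forces $t=0$. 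The Open Mapping Theorem concludes.
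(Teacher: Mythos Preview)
Your proposal is correct and follows exactly the approach the paper indicates: the paper does not give a detailed proof but states that invertibility is a consequence of the simplicity assumption \eqref{eq:simple_hp} and the Fredholm Alternative, referring to \cite[Lemma 7.1]{Abatangelo2015} for details in a similar framework. Your argument spells this out precisely, including the direct Taylor expansion for differentiability and the use of \eqref{eq:eqphi_0} and \eqref{eq:normaliz_limit_eigenfunction} to determine $\lambda$ and then the free parameter $t$.
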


The following Lemma states that the function $\xi_{n_0,R,\e}$, defined
in \eqref{eq:def_xi}, is a good approximation of the limit
eigenfunction $\varphi_{n_0}$ for small values of $\e$.

\begin{lemma}\label{lemma:conv_xi}
	Let $R\geq K_\tau$ and let $\xi_{n_0,R,\e}$ be as in \eqref{eq:def_xi}. Then
	\[
		\xi_{n_0,R,\e}\to \varphi_{n_0}\quad\text{in }H^1_0(\Omega),\quad\text{as }\e\to 0. 
	\]
\end{lemma}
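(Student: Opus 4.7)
My plan is to split the error as
\[
\xi_{n_0,R,\e}-\varphi_{n_0}=\bigl(\xi_{n_0,R,\e}-\varphi_{n_0}^\e\bigr)+\bigl(\varphi_{n_0}^\e-\varphi_{n_0}\bigr)
\]
and estimate the two terms separately in $H^1(\Omega)$. The second term tends to $0$ in $H^1(\Omega)$ directly by Proposition \ref{prop:conv_eigenfunctions}, and this reduces the task to showing that $\xi_{n_0,R,\e}-\varphi_{n_0}^\e\to 0$ in $H^1(\Omega)$ as $\e\to0$.

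The key observation is that, by the definition \eqref{eq:def_xi}, the function $\xi_{n_0,R,\e}-\varphi_{n_0}^\e$ is a difference of two $H^1(\Omega)$ functions that agree identically on $\Omega\setminus\Theta_{R\e}$. Hence its $H^1(\Omega)$-norm coincides with the $H^1(\Theta_{R\e})$-norm of $\xint{n_0}-\varphi_{n_0}^\e$, and the triangle inequality yields
\[
\|\xi_{n_0,R,\e}-\varphi_{n_0}^\e\|_{H^1(\Omega)}^2\le 2\|\xint{n_0}\|_{H^1(\Theta_{R\e})}^2+2\|\varphi_{n_0}^\e\|_{H^1(\Theta_{R\e})}^2.
\]

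I would then bound both terms on the right-hand side by $O(\e^{N-2+\beta(1-\tau)})$. The first one is controlled directly by \eqref{eq:estim_xint_th4} and \eqref{eq:estim_xint_th5} of Lemma \ref{lemma:estim_xint}. For the second, I would exploit the change of variables $y=\Phi(x)$, the uniform ellipticity \eqref{eq:bound_A}, the two-sided bound \eqref{eq:bound_p} on $\widetilde p$, and the even reflection \eqref{eq:veps_def} relating $v_{n_0}^\e$ to $u_{n_0}^\e=\varphi_{n_0}^\e\circ F$, thereby reducing $\|\varphi_{n_0}^\e\|_{H^1(\Theta_{R\e})}^2$, up to a multiplicative constant, to $\int_{B_{R\e}}\bigl(\widetilde A\nabla v_{n_0}^\e\cdot\nabla v_{n_0}^\e+\widetilde p\,|v_{n_0}^\e|^2\bigr)\,dy$. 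This last quantity is $O(\e^{N-2+\beta(1-\tau)})$ by \eqref{eq:rough_estim_th1} and \eqref{eq:rough_estim_th2} of Proposition \ref{prop:rough_estim}.

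Since $N\ge 3$ and $\beta(1-\tau)>0$ (by the definition \eqref{eq:def_beta} of $\beta$ and the choice $\tau\in(0,1/2)$), the exponent $N-2+\beta(1-\tau)$ is strictly positive and the desired convergence will follow. I do not foresee any serious obstacle, since all the energy estimates needed here have already been prepared in Sections \ref{sec:an-equiv-probl}--\ref{sec:energy-estimates-via}; the only subtlety is to keep track of the fact that $\xi_{n_0,R,\e}$ and $\varphi_{n_0}^\e$ differ only inside the shrinking set $\Theta_{R\e}$, which is precisely what makes the uniform smallness of their difference possible.
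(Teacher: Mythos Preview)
Your proposal is correct and follows essentially the same approach as the paper. The paper writes the exact identity
\[
\int_\Omega\abs{\nabla(\xi_{n_0,R,\e}-\varphi_{n_0})}^2\dx
=\int_\Omega\abs{\nabla(\varphi_{n_0}^\e-\varphi_{n_0})}^2\dx
-\int_{\Theta_{R\e}}\abs{\nabla(\varphi_{n_0}^\e-\varphi_{n_0})}^2\dx
+\int_{\Theta_{R\e}}\abs{\nabla(\xint{n_0}-\varphi_{n_0})}^2\dx
\]
and then invokes Proposition~\ref{prop:conv_eigenfunctions}, \eqref{eq:rough_estim_th1}, \eqref{eq:estim_xint_th4} and Lemma~\ref{lemma:phi_i_est}; this is the same localization idea and the same energy estimates you use, the only cosmetic difference being that the paper keeps $\varphi_{n_0}$ (rather than $\varphi_{n_0}^\e$) as the reference function inside $\Theta_{R\e}$.
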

\begin{proof}
	We first observe that, by definition,
	\begin{multline}\label{eq:conv_xi_1}
		\int_\Omega\abs{\nabla (\xi_{n_0,R,\e}-\varphi_{n_0})}^2\dx=\int_\Omega\abs{\nabla(\varphi_{n_0}^\e-\varphi_{n_0})}^2\dx\\
		-\int_{\Theta_{R\e}}\abs{\nabla(\varphi_{n_0}^\e-\varphi_{n_0})}^2\dx+\int_{\Theta_{R\e}}\abs{\nabla(\xi_{n_0,R,\e}^{\textup{int}}-\varphi_{n_0})}^2\dx.
	\end{multline}
	In view of Proposition \ref{prop:conv_eigenfunctions},
        estimates \eqref{eq:rough_estim_th1},
        \eqref{eq:estim_xint_th4} and Lemma \ref{lemma:phi_i_est}, we
        can estimate the right hand side of \eqref{eq:conv_xi_1}, thus
        obtaining 
	\[
		\int_\Omega\abs{\nabla (\xi_{n_0,R,\e}-\varphi_{n_0})}^2\dx\leq o(1)+O(\e^{N-2+\beta(1-\tau)})+O(\e^{N+2\gamma-2})=o(1)\quad\text{as }\e\to 0.
	\]
	The proof is thereby complete.
\end{proof}

We now state a crucial energy estimate that quantifies the rate of convergence in Lemma \ref{lemma:conv_xi}.

\begin{proposition}\label{prop:blow_up_estim}
	Let $R\geq K_\tau$. Then
	\begin{equation}\label{eq:blow_up_estim_th1}
	\int_\Omega\abs{\nabla(\xi_{n_0,R,\e}-\varphi_{n_0})}^2\dx=O(\e^{N-2}H(\e))
	\end{equation}
and
	\begin{equation}\label{eq:blow_up_estim_th2}
		\int_{\Omega\setminus \Theta_{R\e}}\abs{\nabla (\varphi_{n_0}^\e-\varphi_{n_0})}^2\dx=O(\e^{N-2}H(\e)),\quad\text{as }\e\to 0.
	\end{equation}
\end{proposition}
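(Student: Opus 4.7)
The plan is to set $v_\e:=\xi_{n_0,R,\e}-\varphi_{n_0}$ and $\mu_\e:=\lambda_{n_0}^\e-\lambda_{n_0}$ and to exploit the invertibility of the Fréchet derivative $dT(\varphi_{n_0},\lambda_{n_0})$ applied to $(v_\e,\mu_\e)$. Since $T_1$ is bilinear and $T_2$ is quadratic in $(\varphi,\lambda)$, the Taylor expansion of $T$ at $(\varphi_{n_0},\lambda_{n_0})$ is exact:
\begin{equation*}
T(\xi_{n_0,R,\e},\lambda_{n_0}^\e)=dT(\varphi_{n_0},\lambda_{n_0})(v_\e,\mu_\e)+\bigl(-\mu_\e v_\e,\,\|v_\e\|_{H^1_0(\Omega)}^2\bigr),
\end{equation*}
so by the Open Mapping Theorem the problem reduces to estimating $\|T(\xi_{n_0,R,\e},\lambda_{n_0}^\e)\|_{H^{-1}(\Omega)\times\mathbb{R}}$ and then absorbing the quadratic remainder using the smallness $\|v_\e\|_{H^1_0(\Omega)}+|\mu_\e|\to 0$ coming from Lemma \ref{lemma:conv_xi} and Proposition \ref{propo:conv_eigenvalues}.

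To estimate the first component, I would test against $\phi\in H^1_0(\Omega)\subset H^1_{0,\partial\Omega\setminus\Sigma_\e}(\Omega)$ and exploit that $\xi_{n_0,R,\e}$ coincides with $\varphi_{n_0}^\e$ on $\Omega\setminus\Theta_{R\e}$; plugging in the weak form of \eqref{eq:eqphiie} produces the clean identity
\begin{equation*}
{}_{H^{-1}(\Omega)}\bigl\langle T_1(\xi_{n_0,R,\e},\lambda_{n_0}^\e),\phi\bigr\rangle_{H^1_0(\Omega)}=-\int_{\Theta_{R\e}}\nabla(\varphi_{n_0}^\e-\xint{n_0})\cdot\nabla\phi\,\dx+\lambda_{n_0}^\e\int_{\Theta_{R\e}}(\varphi_{n_0}^\e-\xint{n_0})\phi\,\dx.
\end{equation*}
Then Cauchy--Schwarz, combined with the change of variable $y=\Phi(x)$ and the energy bounds \eqref{eq:energy_estim_th1}--\eqref{eq:energy_estim_th2} for $\varphi_{n_0}^\e$ together with \eqref{eq:estim_xint_th1}--\eqref{eq:estim_xint_th2} for $\xint{n_0}$, would yield $\|T_1(\xi_{n_0,R,\e},\lambda_{n_0}^\e)\|_{H^{-1}(\Omega)}=O(\e^{(N-2)/2}\sqrt{H(\e)})$. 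For the second component, the expansion \eqref{eq:estim_xi_th1}, the boundedness \eqref{eq:upper_bound_O} of $f_R(\e)$, and the bound $|\mu_\e|=O(\e^{N-2}H(\e))$ (from Proposition \ref{prop:upper_bound} combined with $\mu_\e\leq 0$, see \eqref{eq:28}) immediately give $|T_2(\xi_{n_0,R,\e},\lambda_{n_0}^\e)|=O(\e^{N-2}H(\e))$, which is of lower order than the $T_1$ bound since $\e^{N-2}H(\e)\to 0$.

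Putting everything together, by the invertibility of $dT(\varphi_{n_0},\lambda_{n_0})$ I would obtain
\begin{equation*}
\|v_\e\|_{H^1_0(\Omega)}+|\mu_\e|\leq C\e^{(N-2)/2}\sqrt{H(\e)}+C\bigl(|\mu_\e|+\|v_\e\|_{H^1_0(\Omega)}\bigr)\|v_\e\|_{H^1_0(\Omega)},
\end{equation*}
and the quadratic tail can be absorbed into the left-hand side for $\e$ small, so that $\|v_\e\|_{H^1_0(\Omega)}\leq C\e^{(N-2)/2}\sqrt{H(\e)}$. Squaring gives \eqref{eq:blow_up_estim_th1}, and \eqref{eq:blow_up_estim_th2} follows at once by restricting the integral to $\Omega\setminus\Theta_{R\e}$, where $\xi_{n_0,R,\e}=\varphi_{n_0}^\e$. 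The main delicate point is the algebraic identity for $T_1$: all boundary contributions on $\partial\Theta_{R\e}\cap\Omega$ cancel thanks to the continuity of $\xi_{n_0,R,\e}$ across this interface, leaving only two local volume integrals on $\Theta_{R\e}$ precisely at the scale controlled by the monotonicity-based energy estimates of Section \ref{sec:energy-estimates-via}.
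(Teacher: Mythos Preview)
Your proposal is correct and follows essentially the same route as the paper: both apply the invertibility of $dT(\varphi_{n_0},\lambda_{n_0})$ to the pair $(\xi_{n_0,R,\e}-\varphi_{n_0},\lambda_{n_0}^\e-\lambda_{n_0})$, derive the same local identity for $T_1$ on $\Theta_{R\e}$, and bound the two components via \eqref{eq:energy_estim_th1}--\eqref{eq:energy_estim_th2}, \eqref{eq:estim_xint_th1}--\eqref{eq:estim_xint_th2}, and Proposition \ref{prop:upper_bound}. The only cosmetic difference is that you write the Taylor remainder explicitly as $(-\mu_\e v_\e,\|v_\e\|_{H^1_0}^2)$ and absorb it algebraically, whereas the paper uses the generic $o(\cdot)$ from Fréchet differentiability together with Lemma \ref{lemma:conv_xi}.
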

\begin{proof}
  Let $T$ be as in \eqref{eq:def_T}. Being $T$ differentiable at
  $(\varphi_{n_0},\lambda_{n_0})$, in view of Lemma
  \ref{lemma:conv_xi} and Proposition \ref{propo:conv_eigenvalues}
  there holds
	\begin{multline}\label{eq:blow_up_estim_1}
		T(\xi_{n_0,R,\e},\lambda_{n_0}^\e)=\mathrm{d} T(\varphi_{n_0},\lambda_{n_0})(\xi_{n_0,R,\e}-\varphi_{n_0},\lambda_{n_0}^\e-\lambda_{n_0})\\
		+o(\norm{\xi_{n_0,R,\e}-\varphi_{n_0}}_{H^1_0(\Omega)}+\abs{\lambda_{n_0}^\e-\lambda_{n_0}})
	\end{multline}
	as $\e\to 0$, where
	\[
		\norm{v}_{H^1_0(\Omega)}:=\left(
                  \int_\Omega\abs{\nabla v}^2\dx
                \right)^{1/2}\quad\text{for all }v\in H^1_0(\Omega).
	\]
	   Applying $(\mathrm{d}T(\varphi_{n_0},\lambda_{n_0}))^{-1}$ to both sides
        in \eqref{eq:blow_up_estim_1} and taking the norms, we obtain
    that
    	\begin{equation*}
		\norm{\xi_{n_0,R,\e}-\varphi_{n_0}}_{H^1_0(\Omega)}  +\abs{\lambda_{n_0}^\e-\lambda_{n_0}}\leq \norm{\mathrm{d} T(\varphi_{n_0},\lambda_{n_0})^{-1}}\norm{T(\xi_{n_0,R,\e},\lambda_{n_0}^\e)}_{H^{-1}(\Omega)\times\R}(1+o(1)),
	\end{equation*}
	as $\e\to 0$, where the norm of $\mathrm{d} T(\varphi_{n_0},\lambda_{n_0})^{-1}$ is intended in the space of linear bounded operators from $H^{-1}(\Omega)\times\R$ to $H^1_0(\Omega)\times\R$ and it is a constant independent of $R$ and $\e$. Therefore
	\begin{multline}\label{eq:blow_up_estim_2}
		\norm{\xi_{n_0,R,\e}-\varphi_{n_0}}_{H^1_0(\Omega)}  +\abs{\lambda_{n_0}^\e-\lambda_{n_0}} \\
		\leq C\left( \norm{-\Delta \xi_{n_0,R,\e}-\lambda_{n_0}^\e\xi_{n_0,R,\e}}_{H^{-1}(\Omega)}+\abs{\norm{\xi_{n_0,R,\e}}_{H^1_0(\Omega)}^2-\lambda_{n_0}} \right)(1+o(1)),
	\end{multline}
	as $\e\to 0$.
We first observe that the definition of $\xi_{n_0,R,\e}$ given in
  \eqref{eq:def_xi}, \eqref{eq:estim_xint_th1}, 
  \eqref{eq:energy_estim_th1}, and Proposition \ref{prop:upper_bound} imply that  
  \begin{equation}\label{eq:blow_up_estim_4}
    \abs{\norm{\xi_{n_0,R,\e}}_{H^1_0(\Omega)}^2-\lambda_{n_0}}\leq \abs{\int_\Omega\abs{\nabla\xi_{n_0,R,\e} }^2\dx-\lambda_{n_0}^\e}+\abs{\lambda_{n_0}^\e-\lambda_{n_0}}=O(\e^{N-2}H(\e)),
	\end{equation}
	as $\e\to 0$.
      Let us now study the other term at the right
      hand side of \eqref{eq:blow_up_estim_2}. For any
      $v\in H^1_0(\Omega)$ we have that, by definition of
      $\xi_{n_0,R,\e}$ as in \eqref{eq:def_xi},
	\begin{align*}
		_{H^{-1}(\Omega)}\langle -&\Delta\xi_{n_0,R,\e}-\lambda_{n_0}^\e \xi_{n_0,R,\e}, v\rangle_{H^1_0(\Omega)}=\int_\Omega(\nabla\xi_{n_0,R,\e}\cdot\nabla v-\lambda_{n_0}^\e\xi_{n_0,R,\e}v)\dx \\
		=& \int_{\Theta_{R\e}}(\nabla \xi^{\textup{int}}_{n_0,R,\e}\cdot\nabla v-\lambda_{n_0}^\e \xi^{\textup{int}}_{n_0,R,\e}v)\dx -\int_{\Theta_{R\e}}(\nabla \varphi_{n_0}^\e\cdot\nabla v	-\lambda_{n_0}^\e\varphi_{n_0}^\e v)\dx.
	\end{align*}
	Now, thanks to the boundedness with respect to $\e$ of
        $\{\lambda_{n_0}^\e\}$, Cauchy-Schwartz inequality and by
        virtue of estimates \eqref{eq:energy_estim_th1},
        \eqref{eq:energy_estim_th2}, \eqref{eq:estim_xint_th1}, 
        \eqref{eq:estim_xint_th2} and Poincar\'e inequality, we have that
        \begin{align*}
  _{H^{-1}(\Omega)}\langle -\Delta\xi_{n_0,R,\e}-\lambda_{n_0}^\e \xi_{n_0,R,\e}, v\rangle_{H^1_0(\Omega)} 
          &=O\left(\e^{\frac{N-2}{2}}\sqrt{H(\e)}\right)\norm{v}_{H^1_0(\Omega)}+O\left(\e^{\frac{N}{2}}\sqrt{H(\e)}\right)\norm{v}_{L^2(\Omega)}\\
        &=O\left(\e^{\frac{N-2}{2}}\sqrt{H(\e)}\right)\norm{v}_{H^1_0(\Omega)}  ,
        \end{align*}
	as $\e\to 0$ and this readily implies that
	\begin{equation*}
		\norm{-\Delta \xi_{n_0,R,\e}-\lambda_{n_0}^\e\xi_{n_0,R,\e}}_{H^{-1}(\Omega)}=O\left(\e^{\frac{N-2}{2}}\sqrt{H(\e)}\right),\quad\text{as }\e\to 0.
	\end{equation*}
	Statement \eqref{eq:blow_up_estim_th1} follows by plugging the
        previous estimate and  \eqref{eq:blow_up_estim_4} into
        \eqref{eq:blow_up_estim_2}.
        
  Since by \eqref{eq:def_xi} we have that 
       \begin{equation*}
	\int_\Omega\abs{\nabla(\xi_{n_0,R,\e}-\varphi_{n_0})}^2\dx\geq
        	\int_{\Omega\setminus \Theta_{R\e}}\abs{\nabla (\varphi_{n_0}^\e-\varphi_{n_0})}^2\dx,
              \end{equation*}
estimate \eqref{eq:blow_up_estim_th2} directly follows
  from \eqref{eq:blow_up_estim_th1}.
\end{proof}

 We are now ready to perform a blow-up analysis for scaled eigenfunctions.

\begin{theorem}[Blow-up]\label{thm:blow_up}
  Let $U$ be as in \eqref{eq:def_U} and $\Upsilon^\e$ be as in
  \eqref{eq:def_Z_Ups}. Then, for all $R\geq K_\tau$, 
  \begin{equation}\label{eq:blow_up_th1}
    \Upsilon^\e\to \frac{1}{\sqrt{\Lambda_\tau}}\,U\quad\text{in
    }H^1(B_R^+)\quad\text{as }\e\to 0
  \end{equation}
  and
\begin{equation}\label{eq:68}
 \frac{H(\e)}{\e^{2\gamma}}\to \Lambda_\tau\quad\text{as }\e\to 0,
\end{equation}
	where
\begin{equation}\label{eq:blow_up_th3}
		\Lambda_\tau:=\frac{1}{K_\tau^{N-1}}\int_{S ^+_{K_\tau}}U^2\ds\,>0.
	\end{equation}
In particular, for all $R\geq K_\tau$, we have that
\begin{equation}\label{eq:blow_up_th4}
	\frac{u_{n_0}^\varepsilon(\varepsilon x)}{\varepsilon^{\gamma}}\to U(x)\quad\text{in }H^1(B_R^+)\quad\text{as }\varepsilon\to 0.
\end{equation}
\end{theorem}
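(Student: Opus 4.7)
The plan is to proceed by a compactness and uniqueness argument based on the energy estimates established so far and the decay information provided by Proposition \ref{prop:blow_up_estim}. A rescaling via $y=\e x$ of the energy estimates \eqref{eq:energy_estim_th1}--\eqref{eq:energy_estim_th2}, combined with \eqref{eq:cor_up_low_2} and the uniform ellipticity \eqref{eq:bound_A}, immediately yields $\|\Upsilon^\e\|_{H^1(B_R^+)}=O(1)$ for every $R\geq K_\tau$. A diagonal extraction then produces a subsequence $\e_n\to0$ and a function $\tilde U\in H^1_{\mathrm{loc}}(\overline{\R^N_+})$ with $\Upsilon^{\e_n}\rightharpoonup\tilde U$ weakly in $H^1(B_R^+)$ and strongly in $L^2(B_R^+)$ for every such $R$; by \eqref{eq:cor_up_low_2} a further extraction gives $\sqrt{\e_n^{2\gamma}/H(\e_n)}\to\ell$ for some $\ell\geq0$.

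Rescaling equation \eqref{eq:uie} through $y=\e z$ and dividing by $\sqrt{H(\e)}$ shows that $\Upsilon^\e$ weakly solves
\begin{equation*}
	\int_{B_R^+} A(\e z)\,\nabla \Upsilon^\e\cdot\nabla\phi\dz = \lambda_{n_0}^\e\,\e^2 \int_{B_R^+} p(\e z)\,\Upsilon^\e\,\phi\dz
\end{equation*}
for all test functions $\phi$ vanishing on $S_R^+$ and on $\partial\R^N_+\setminus(\tfrac1\e\widetilde\Sigma_\e)$. Using $A(\e z)\to I_N$ and $p(\e z)\to1$ by \eqref{eq:5}--\eqref{eq:alpha}, the vanishing prefactor $\e^2$ on the right-hand side, and the Mosco convergence recalled in Remark \ref{rem:Mosco}, passing to the limit yields that $\tilde U$ weakly solves problem \eqref{eq:63}.

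The main obstacle is identifying $\tilde U$ as a specific multiple of $U$. For this I would exploit estimate \eqref{eq:blow_up_estim_th2}: a change of variables through $\Phi$ combined with the scaling $y=\e z$ transforms that bound into
\begin{equation*}
	\int_{B_{r_1/\e}^+\setminus B_R^+} \Bigl|\nabla\Bigl(\Upsilon^\e - \sqrt{\e^{2\gamma}/H(\e)}\,V_\e\Bigr)\Bigr|^2\dz \leq C,
\end{equation*}
uniformly in $\e$, with $V_\e$ defined in \eqref{eq:Ve} and $V_\e\to\psi$ in $H^1_{\mathrm{loc}}$ by \eqref{eq:56}. Passing to the subsequential limit by weak lower semicontinuity on each annulus $B_{R'}^+\setminus B_R^+$ and then letting $R'\to\infty$, together with the local $H^1$-bound already available, yields $\tilde U-\ell\psi\in\mathcal{D}^{1,2}(\R^N_+\cup\Sigma)$. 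Since $\tilde U-\ell\psi$ weakly satisfies $-\Delta v=0$ in $\R^N_+$, $v=0$ on $\partial\R^N_+\setminus\Sigma$ and $\partial_\nnu v=-\ell\,\partial_\nnu\psi$ on $\Sigma$, the uniqueness statement in Lemma \ref{lemma:def_w} forces $\tilde U-\ell\psi=\ell w_0$, that is $\tilde U=\ell U$.

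To determine $\ell$ and conclude, I would use the normalization built into $\Upsilon^\e$: a direct change of variables in \eqref{eq:61} gives $\int_{S^+_{K_\tau}}\mu(\e y)|\Upsilon^\e|^2\ds=K_\tau^{N-1}$ for every $\e$; passing to the limit by trace compactness and $\mu(\e y)\to1$ yields $\int_{S^+_{K_\tau}}\tilde U^2\ds=K_\tau^{N-1}$, hence $\ell^2\Lambda_\tau=1$ (in particular $\Lambda_\tau>0$ and $\ell=1/\sqrt{\Lambda_\tau}$). Since the limits $\tilde U=U/\sqrt{\Lambda_\tau}$ and $\ell=1/\sqrt{\Lambda_\tau}$ are independent of the extracted subsequence, Urysohn's subsequence principle delivers \eqref{eq:68} and the full weak convergence stated in \eqref{eq:blow_up_th1}. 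To upgrade weak to strong $H^1$ convergence, I would test the rescaled equation with $\chi^2(\Upsilon^\e-\tilde U_\e)$, where $\chi$ is a smooth cut-off equal to $1$ on $B_R^+$ and supported in $B_{R+\delta}^+$, and $\tilde U_\e$ is an approximation of $U/\sqrt{\Lambda_\tau}$ in $H^1(B_{R+\delta}^+)$ satisfying the $\e$-dependent Dirichlet condition (whose existence is ensured by the Mosco convergence of \Cref{rem:Mosco}); the right-hand side vanishes in the limit thanks to the $\e^2$ factor, yielding norm convergence $\int_{B_R^+}|\nabla\Upsilon^\e|^2\dz\to\int_{B_R^+}|\nabla\tilde U|^2\dz$ and hence strong convergence. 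Finally, \eqref{eq:blow_up_th4} follows at once since $u_{n_0}^\e(\e x)/\e^\gamma=\sqrt{H(\e)/\e^{2\gamma}}\,\Upsilon^\e(x)$ and both factors converge.
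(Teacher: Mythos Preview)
Your proposal is correct and follows essentially the same compactness–identification scheme as the paper: extract a weakly convergent subsequence of $\Upsilon^{\e}$, pass to the limit in the equation via the Mosco convergence of Remark~\ref{rem:Mosco}, use Proposition~\ref{prop:blow_up_estim} to show $\tilde U-\ell\psi$ has finite Dirichlet energy and hence equals $\ell w_0$, and fix $\ell$ through the built-in normalization on $S_{K_\tau}^+$.

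There are two minor technical differences worth noting. First, the paper establishes strong $H^1$ convergence \emph{before} invoking Proposition~\ref{prop:blow_up_estim}, and therefore can simply pass to the limit in the annular estimate; you instead use weak lower semicontinuity on each annulus, which is equally valid and avoids needing strong convergence at that stage. Second, for the upgrade to strong convergence the paper exploits classical $H^2$ regularity on annuli $B_R^+\setminus B_{R/2}^+$ (after odd reflection, away from the Dirichlet--Neumann junction) to obtain $\partial_{\nnu}\Upsilon^{\e_n}\to\partial_{\nnu}\tilde U$ in $L^2(S_R^+)$, and then deduces norm convergence directly from the energy identity $\int_{B_R^+}A(\e_n x)\nabla\Upsilon^{\e_n}\cdot\nabla\Upsilon^{\e_n}=\int_{S_R^+}\Upsilon^{\e_n}(A(\e_n x)\nabla\Upsilon^{\e_n}\cdot\nnu)\,\d S+O(\e_n^2)$. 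Your cut-off testing with a Mosco recovery sequence $\tilde U_\e$ is a legitimate alternative; it is slightly more hands-on (one must also let $\delta\to0$ after comparing $\int_{B_R^+}|\nabla\Upsilon^\e|^2$ with $\int_{B_{R+\delta}^+}|\nabla\tilde U|^2$), but it sidesteps the regularity argument entirely. Both routes are standard; the paper's is a touch cleaner here because the reflected problem on the annulus is genuinely smooth.
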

\begin{proof}
  Let $\e_n\to 0$ as $n\to\infty$.  Firstly, from
  \eqref{eq:cor_up_low_2} we deduce that there exists $c\in\R$ such
  that  $c\geq 0$ and, up to a subsequence,
	\begin{equation}\label{eq:blow_up_8}
		q(\e_n):=\frac{\e_n^{\gamma}}{\sqrt{H(\e_n)}}\to c\quad\text{as }n\to\infty.
	\end{equation}
	Secondly, thanks to Proposition \ref{prop:energy_estim}, we have that, for any $R\geq K_\tau$, 
	\[
		\int_{B_R^+}A(\e_n x)\nabla \Upsilon^{\e_n}(x)\cdot\nabla \Upsilon^{\e_n}(x)\dx=O(1)\quad\text{and}\quad \int_{B_R^+}p(\e_n x)\abs{\Upsilon^{\e_n}}^2\dx=O(1)
	\]
	as $n\to\infty$. Therefore, by a diagonal process there exists $\tilde{U}\in H^1_{\textup{loc}}(\overline{\R^N_+})$ such that, up to a subsequence,
\begin{equation}
  \Upsilon^{\e_n}\rightharpoonup \tilde{U}\quad\text{weakly in
  }H^1(B_R^+),\quad \Upsilon^{\e_n}\to \tilde{U}\quad\text{strongly in
  }L^2(B_R^+),\label{eq:blow_up_2}
\end{equation}
and
\begin{equation}
  \Upsilon^{\e_n}\to \tilde{U}\quad\text{strongly in }L^2(S_R^+),\label{eq:blow_up_3}
\end{equation}
as $n\to\infty$ and for all $R\geq K_\tau$. Since, by
definition,
\[ \int_{S_{K_\tau}^+}{\mu}(\e_n
  x)\abs{\Upsilon^{\e_n}}^2\ds=K_\tau^{N-1},
	\]
thanks to \eqref{eq:38} and \eqref{eq:blow_up_3} we can pass to the limit and infer that
	\begin{equation}\label{eq:blow_up_11}
		\int_{S_{K_\tau}^+}\tilde{U}^2\ds=K_\tau^{N-1},
	\end{equation}
	which implies that $\tilde{U}\not\equiv 0$ in $\R^N_+$.
        From the convergence,  as
  $\e\to0$, in the sense of Mosco of
  $\R^{N}\setminus
  \big(\partial\R^N_+\setminus\big(\frac1\e\widetilde\Sigma_\e\big)\big)$
  to
  the set $\R^{N}\setminus \big(\partial\R^N_+\setminus\Sigma\big)$,
       observed in Remark \ref{rem:Mosco}, we derive that
       $\tilde{U}\in H^1_{0,B_R'\setminus\Sigma}(B_R^+)$
       for all $R\geq K_\tau$. In addition, $\tilde{U}$ weakly solves
	\begin{equation}\label{eq:blow_up_10}
		\begin{bvp}
			-\Delta \tilde{U}&=0, &&\text{in }\R^N_+, \\
			\tilde{U}&=0, &&\text{on }\partial\R^N_+\setminus\Sigma, \\
			\partial_{\nnu}\tilde{U}&=0, &&\text{on }\Sigma.
		\end{bvp}
	\end{equation}
In particular
	\begin{equation}\label{eq:blow_up_4}
		\int_{B_R^+}|\nabla \tilde{U}|^2\dx=\int_{S_R^+}\tilde{U}\partial_{\nnu} \tilde{U}\ds,\quad\text{for all }R\geq K_\tau.
	\end{equation}
	 We now aim at proving that
	\begin{equation}\label{eq:blow_up_5}
		\Upsilon^{\e_n}\to \tilde{U}\quad\text{strongly in }H^1(B_R^+),\quad\text{as }n\to \infty,
	\end{equation}
	for all $R\geq K_\tau$. For every $R\geq
          K_\tau$, we have that, for $n$ sufficiently large, $\Upsilon^{\e_n}$ weakly
        solves
\[
\begin{cases}
  -\dive(A(\e_n x)\nabla \Upsilon^{\e_n})(x)=\e_n^2\lambda_{n_0}^{\e_n}p(\e_n x)\Upsilon^{\e_n}(x), &\text{in }B_R^+, \\
  \Upsilon^{\e_n}(x)=0, &\text{on }B_R'\setminus \frac{1}{\e_n} \widetilde{\Sigma}_{\e_n}, \\
  A(\e_n x)\nabla \Upsilon^{\e_n}(x)\cdot\nnu(x)=0, &\text{on }\frac{1}{\e_n}\widetilde{\Sigma}_{\e_n}, \\
  \Upsilon^{\e_n}(x)=\dfrac{u_{n_0}^{\e_n}(\e_n x)}{\sqrt{H(\e_n)}},
  &\text{on }S_R^+.
\end{cases}
\]
For $R\geq K_\tau$, if we
consider the restriction of $\Upsilon^{\e_n}$ to
$B_R^+\setminus B_{R/2}^+$ and we oddly reflect it through the
hyperplane $\{x_N=0\}$, given the equation this function satisfies,
from classical elliptic regularity theory (see e.g. \cite[Theorem
2.3.3.2]{grisvard}) we know that
 $\{\Upsilon^{\e_n}\}_{n}$ is bounded in $H^2(B_R\setminus
B_{R/2})$.
 Therefore, up to a subsequence (still denoted by $\varepsilon_n$), we have that
\begin{equation}\label{eq:blow_up_7}
	\partial_{\nnu}\Upsilon^{\varepsilon_n}\to \partial_{\nnu}\tilde{U}\quad\text{in }L^2(S_R^+),\quad\text{as }n\to\infty.
\end{equation}
Furthermore, from the equation satisfied by $\Upsilon^{\varepsilon_n}$, \eqref{eq:33}, \eqref{eq:bound_p}, \eqref{eq:blow_up_3} and \eqref{eq:blow_up_7} we have, as $n\to\infty$,
\begin{align*}
	\int_{B_R^+}\abs{\nabla \Upsilon^{\varepsilon_n}}^2\dx&=(1+o(1))\int_{B_R^+}A(\varepsilon_n x)\nabla \Upsilon^{\varepsilon_n}(x)\cdot\nabla \Upsilon^{\varepsilon_n}(x)\dx \\
	&=(1+o(1))\left(O(1)\varepsilon_n^2\lambda_{n_0}^{\varepsilon_n}\int_{B_R^+}\abs{\Upsilon^{\varepsilon_n}}^2\dx+\int_{S_R^+}\tilde{U}\partial_{\nnu}\tilde{U}\ds+o(1) \right).
\end{align*}
Therefore, thanks to \eqref{eq:blow_up_2} and
\eqref{eq:blow_up_4}, we conclude that
\begin{equation*}
	\int_{B_R^+}\abs{\nabla \Upsilon^{\varepsilon_n}}^2\dx\to \int_{B_R^+}|\nabla \tilde{U} |^2\dx,
\end{equation*}
which, together with \eqref{eq:blow_up_2}, proves \eqref{eq:blow_up_5}.

Now let us fix $R\geq K_\tau$. From \eqref{eq:blow_up_estim_th2}, we
know that there exist $C_R>0$ and $n_R\in\N$ such that 
	\[
		\int_{\Theta_{\tilde{R}\e}\setminus\Theta_{R\e}}\abs{\nabla
                  (\varphi_{n_0}^{\e_n}-\varphi_{n_0})}^2\dx\leq C_R \e_n^{N-2}H(\e_n),
	\]
	for all $\tilde{R}>R$ and $n>n_R$. In fact, up to a change of
      variable, this is equivalent to
	\begin{equation*}
		\int_{B_{\tilde{R}}^+\setminus B_R^+}A(\e_n x)\nabla
                \left( \Upsilon^{\e_n}-q(\e_n)
                  V_{\e_n}\right) (x) \cdot \nabla \left(
                  \Upsilon^{\e_n}-q(\e_n)
                  V_{\e_n}\right) (x) \dx \leq C_R
              \end{equation*}
              for all $\tilde{R}>R$ and $n>n_R$, where
              $q(\e_n)$ is defined in \eqref{eq:blow_up_8} and
              $V_{\e_n}$ in \eqref{eq:Ve}. Passing to the limit as
              $n\to\infty$ in the above estimate and taking into account
              \eqref{eq:blow_up_8}, \eqref{eq:blow_up_5},
              \eqref{eq:33} and \eqref{eq:56}, we obtain that 
	\begin{equation*}
		\int_{B_{\tilde{R}}^+\setminus B_R^+}|\nabla \tilde{U}-c\nabla \psi|^2\dx\leq C_R
	\end{equation*}
	for all $\tilde{R}>R$ and this readily implies that
	\begin{equation}\label{eq:65}
		\int_{\R^N_+}|\nabla \tilde{U}-c\nabla \psi|^2\dx <\infty.
	\end{equation}
	We now claim that $c>0$.
        Indeed, if this were not the case, then
$\int_{\R^N_+}|\nabla \tilde{U}|^2\dx <\infty$. Then,
  since $U=0$ on $\partial\R^N_+\setminus\Sigma$, in view of
\cite[Lemma 2.3]{FT-tubi} we would have $\tilde{U}\in
\mathcal{D}^{1,2}(\R^N_+\cup \Sigma)$; since $\tilde{U}$ weakly solves
\eqref{eq:blow_up_10}, this would imply that $\tilde{U}\equiv 0$, thus
raising a contradiction.

From \eqref{eq:65} and \cite[Lemma 2.3]{FT-tubi}  it
  follows that $c^{-1}\tilde U-\psi\in \mathcal{D}^{1,2}(\R^N_+\cup
  \Sigma)$; hence, by uniqueness of the limit profile constructed in Lemma
  \ref{lemma:def_w}, we conclude that $c^{-1}\tilde U-\psi=w_0$.
Hence, by the definition of $U$ in \eqref{eq:def_U}, we have that
	\[
		\tilde{U}=c U.	
	\]
	Moreover, in view of \eqref{eq:blow_up_11}, we conclude that
	\[
		c=\frac{1}{\sqrt{\Lambda_\tau}},
	\]
	with $\Lambda_\tau$ as in \eqref{eq:blow_up_th3}.  Since the
        limit of $\Upsilon^{\e_n}$ is independent of the choice of the
        sequence $\{\e_n\}_n$ and of the extracted subsequence, by
        Uryshon Subsequence Principle we may conclude that the
        convergence holds as $\e\to0$. Finally,
          \eqref{eq:blow_up_th4} is a direct consequence of
          \eqref{eq:blow_up_th1} and \eqref{eq:68}.
\end{proof}

As a consequence of the Blow-up Theorem
  \ref{thm:blow_up}, we are able to prove the strong convergence 
 as $\e\to0$ of the family $\{Z_R^\e\}_\e$ defined in
\eqref{eq:def_Z_Ups}.

\begin{corollary}\label{cor:conv_Z_R_eps}
	For any $R>K_\tau$, there holds
	\[
		Z_R^\e\to \frac{1}{\sqrt{\Lambda_\tau}} Z_R\quad \text{in }H^1(B_R^+)\quad\text{as }\e\to 0,
	\]
	where $Z_R$ is defined in Lemma \ref{lemma:Z_R}.
\end{corollary}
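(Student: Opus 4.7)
The plan is to view $Z_R^\e$ as the solution of a rescaled Dirichlet problem on the fixed domain $B_R^+$ and to use the strong convergence $\Upsilon^\e \to \Lambda_\tau^{-1/2} U$ in $H^1(B_R^+)$ provided by Theorem~\ref{thm:blow_up} to pass to the limit in the boundary data.

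First I would perform the change of variable $y = \e x$ in \eqref{eq:min_xi_int}. Since $R \geq K_\tau > 2\kappa$, the cutoff $\eta_R$ vanishes on $\{|x|\leq R/2\}$ and, by Remark~\ref{rem:gis}, the set $\frac{1}{\e}\widetilde\Sigma_\e$ is contained in $B'_\kappa \subset B'_{R/2}$; hence $\eta_R \Upsilon^\e$ has trace zero on $B_R'$ and trace $\Upsilon^\e$ on $S_R^+$. A direct computation shows that $Z_R^\e$ is the unique function satisfying $Z_R^\e - \eta_R \Upsilon^\e \in H^1_0(B_R^+)$ and
\[
\int_{B_R^+} A(\e y)\,\nabla Z_R^\e \cdot \nabla \phi \,\dy = 0 \quad \text{for every } \phi \in H^1_0(B_R^+),
\]
i.e.\ $Z_R^\e$ minimizes $u \mapsto \int_{B_R^+} A(\e y)\nabla u\cdot \nabla u \,\dy$ under this boundary constraint. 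Testing with the competitor $\eta_R \Upsilon^\e$ and using \eqref{eq:bound_A}, $|\nabla \eta_R| \leq 4/R$, and the $H^1(B_R^+)$-boundedness of $\{\Upsilon^\e\}$ from Theorem~\ref{thm:blow_up}, we get that $\{Z_R^\e\}_\e$ is bounded in $H^1(B_R^+)$ uniformly in $\e$.

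Next, I would extract a weakly convergent subsequence $Z_R^{\e_n} \rightharpoonup \tilde Z$ in $H^1(B_R^+)$ and identify $\tilde Z = \Lambda_\tau^{-1/2} Z_R$. Since $\eta_R \Upsilon^{\e_n} \to \Lambda_\tau^{-1/2} \eta_R U$ strongly in $H^1(B_R^+)$ by Theorem~\ref{thm:blow_up}, the weak closedness of $H^1_0(B_R^+)$ yields $\tilde Z - \Lambda_\tau^{-1/2} \eta_R U \in H^1_0(B_R^+)$. A trace inspection on $\partial B_R^+$ (using $\supp \Sigma \subset B_1'$ and $R/2 > 1$) shows that $\eta_R U$ agrees with $U$ on $S_R^+$ and vanishes on $B_R'$, so this reduces to $\tilde Z - \Lambda_\tau^{-1/2} Z_R \in H^1_0(B_R^+)$ in view of Lemma~\ref{lemma:Z_R}. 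Passing to the limit in the weak equation using the uniform convergence $A(\e y) \to I_N$ (from \eqref{eq:33}) shows $-\Delta \tilde Z = 0$ weakly in $B_R^+$, and the uniqueness statement in Lemma~\ref{lemma:Z_R} forces $\tilde Z = \Lambda_\tau^{-1/2} Z_R$. Since the limit does not depend on the subsequence, the Urysohn subsequence principle gives weak $H^1$-convergence of the whole family.

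To upgrade to strong convergence, I would test the weak equation for $Z_R^\e$ against $Z_R^\e - \eta_R \Upsilon^\e \in H^1_0(B_R^+)$ to obtain
\[
\int_{B_R^+} A(\e y)\,\nabla Z_R^\e \cdot \nabla Z_R^\e \,\dy = \int_{B_R^+} A(\e y)\,\nabla Z_R^\e \cdot \nabla(\eta_R \Upsilon^\e)\,\dy,
\]
and pass to the limit on the right-hand side by combining weak convergence of $\nabla Z_R^\e$, strong convergence of $\nabla(\eta_R \Upsilon^\e)$, and $A(\e y) \to I_N$ uniformly on $B_R^+$. Using the analogous identity derived from \eqref{eq:69} applied to $Z_R - \eta_R U \in H^1_0(B_R^+)$, the right-hand side converges to $\Lambda_\tau^{-1}\int_{B_R^+}|\nabla Z_R|^2\,\dy = \int_{B_R^+}|\nabla \tilde Z|^2\,\dy$. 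Together with \eqref{eq:33} and the uniform ellipticity \eqref{eq:bound_A}, this gives $\int_{B_R^+}|\nabla Z_R^\e|^2\,\dy \to \int_{B_R^+}|\nabla \tilde Z|^2\,\dy$, which combined with the already established weak convergence yields the desired strong $H^1(B_R^+)$-convergence. The main technical point is the careful bookkeeping of boundary traces after the rescaling, which crucially relies on the containment $\widetilde\Sigma_\e \subset B'_{\kappa\e}$ (Remark~\ref{rem:gis}) together with the choice $R \geq K_\tau > 2\kappa M_\tau$ to ensure that $\eta_R \Upsilon^\e$ truly has vanishing trace on $B_R'$.
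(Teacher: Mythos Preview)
Your proof is correct and complete. It differs from the paper's argument in style rather than substance, so a brief comparison is worthwhile.

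The paper proceeds by a direct subtraction: it introduces the auxiliary function
\[
W_R^\e := Z_R^\e - \Lambda_\tau^{-1/2} Z_R - \eta_R\big(\Upsilon^\e - \Lambda_\tau^{-1/2} U\big),
\]
observes that $W_R^\e \in H^1_0(B_R^+)$ and satisfies $-\dive(A(\e\cdot)\nabla W_R^\e) = \dive f_\e$ with $f_\e \to 0$ in $L^2$ (coming from $A(\e\cdot)-I_N \to 0$ and $\Upsilon^\e \to \Lambda_\tau^{-1/2}U$), and concludes $W_R^\e \to 0$ in $H^1_0(B_R^+)$ by testing the equation with $W_R^\e$ itself and using ellipticity. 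Strong $H^1$-convergence of $Z_R^\e$ then follows immediately.

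Your route is the standard compactness--identification--norm-convergence scheme: boundedness via the competitor $\eta_R\Upsilon^\e$, weak subsequential limit identified as $\Lambda_\tau^{-1/2}Z_R$ by uniqueness in Lemma~\ref{lemma:Z_R}, and strong convergence via $\int A(\e\cdot)\nabla Z_R^\e\cdot\nabla Z_R^\e = \int A(\e\cdot)\nabla Z_R^\e\cdot\nabla(\eta_R\Upsilon^\e)$. The paper's argument is slightly more economical (no subsequences, one energy estimate) and in principle yields a quantitative rate; your argument is more modular and makes the role of the uniqueness statement in Lemma~\ref{lemma:Z_R} explicit. Both rely on exactly the same two ingredients, namely Theorem~\ref{thm:blow_up} and the uniform convergence $A(\e\cdot)\to I_N$ from \eqref{eq:33}.
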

\begin{proof}
Let us fix $R>K_\tau$. We observe that $Z_R^\e$ weakly solves 
\begin{equation*}
\begin{cases}
  -\dive(A(\e x)\nabla Z_R^\e)=0, &\text{in }B_R^+, \\
  Z_R^\e=\Upsilon^{\e} , &\text{on }S_R^+ ,\\
  Z_R^\e=0 , &\text{on }B_R',
\end{cases}
\end{equation*}
hence the function
\begin{equation*}
W_R^\e := Z_R^\e-\Lambda_\tau^{-1/2}Z_R-\eta_R(\Upsilon^{\e}-\Lambda_\tau^{-1/2}U),
\end{equation*}
with $\eta_R$ being as in \eqref{eq:def_cutoff},
weakly solves
\begin{equation*}
\begin{cases}
  -\dive(A(\e x)\nabla W_R^\e)=\dive\bigg(\dfrac{A(\e
  x)-I_N}{\sqrt{\Lambda_\tau}} \nabla Z_R+A(\e x)\nabla\Big(\eta_R\Big(\Upsilon^{\e}-\frac{U}{\sqrt{\Lambda_\tau}}\Big)\Big)\bigg), &\text{in }B_R^+, \\
  W_R^\e=0 , &\text{on }\partial B_R^+,
\end{cases}
\end{equation*}
i.e.
\begin{multline*}
  \int_{B_R^+}A(\e x)\nabla W_R^\e\cdot\nabla \phi\dx=-
 \frac1 {\sqrt{\Lambda_\tau}} \int_{B_R^+}(A(\e
 x)-I_N) \nabla Z_R\cdot\nabla\phi\dx\\
-\int_{B_R^+}
A(\e
x)\nabla\Big(\eta_R\Big(\Upsilon^{\e}-\frac{U}{\sqrt{\Lambda_\tau}}\Big)\Big)\cdot\nabla\phi\dx\quad\text{for
every }\phi\in H^1_0(B_R^+).
\end{multline*}
Testing the above equation with $\phi= W_R^\e$ and using
\eqref{eq:33},
we then obtain that 
\begin{equation*}
  \int_{B_R^+}A(\e x)\nabla W_R^\e\cdot\nabla W_R^\e\dx\leq
\mathop{\rm const}\|\nabla W_R^\e\|_{L^2(B_R^+)}
 \bigg(\e+\bigg\|  \eta_R\Big(\Upsilon^{\e}-\frac{U}{\sqrt{\Lambda_\tau}}\Big)\bigg\|_{H^1(B_R^+)}\bigg),
\end{equation*}
which implies that
\begin{equation}\label{eq:66}
W_R^\e\to 0\quad\text{in }H^1_0(B_R^+)\quad\text{as }\e\to0,
\end{equation}
thanks to \eqref{eq:33} and Theorem \ref{thm:blow_up}.
Since
$Z_R^\e-\Lambda_\tau^{-1/2}Z_R=W_R^\e +\eta_R(\Upsilon^{\e}-\Lambda_\tau^{-1/2}U)$,
from \eqref{eq:66} and Theorem \ref{thm:blow_up} we deduce that
$Z_R^\e-\Lambda_\tau^{-1/2}Z_R\to 0$ in $H^1(B_R^+)$.
\end{proof}

We conclude this section with the proof of Theorem \ref{thm:sharp_eigenfunction}.

\begin{proof}[Proof of Theorem \ref{thm:sharp_eigenfunction}]
	It can be easily derived from the change of variable $x=\Phi(y)$, \eqref{eq:blow_up_th4} and Dominated Convergence Theorem.
\end{proof}

\section{Proof of Theorem \ref{thm:main}}\label{sec:proof_main}

From Theorem \ref{thm:blow_up} and Corollary \ref{cor:conv_Z_R_eps} it
follows that, letting $f_R(\e)$ be as in \eqref{eq:def_f_R_eps},
\begin{equation}\label{eq:67}
\lim_{\e\to 0}	f_R(\e)=\frac{1}{\Lambda_\tau}\left(\int_{B_R^+}\abs{\nabla Z_R}^2\dx-\int_{B_R^+}\abs{\nabla U}^2\dx\right)
\end{equation}
for all $R>K_\tau$, 
in view also of \eqref{eq:33}.

Combining \eqref{eq:67} with \eqref{eq:68} and Corollary \ref{cor:first_up_low_bound}, at this point we know that
\begin{equation*}\label{eq:second_up_low_bound}
  -2m_{n_0}(\Sigma) \leq
  \liminf_{\e\to 0}\frac{\lambda_{n_0}-\lambda_{n_0}^\e}{\e^{N+2\gamma-2}} 
  \leq \limsup_{\e\to
    0}\frac{\lambda_{n_0}-\lambda_{n_0}^\e}{\e^{N+2\gamma-2}}
  \leq \int_{B_R^+}\abs{\nabla Z_R}^2\dx-\int_{B_R^+}\abs{\nabla U}^2\dx,
      \end{equation*}
or all $R>K_\tau$. Therefore the proof of Theorem \ref{thm:main} amounts to the proof of the following Lemma, which the rest of the Section is devoted to.

\begin{lemma}\label{lemma:conv_f_R}
	There holds 
	\begin{equation}\label{eq:conv_f_R_th1}
          \lim_{R\to+\infty}\bigg(
          \int_{B_R^+}\abs{\nabla Z_R}^2\dx-
          \int_{B_R^+}\abs{\nabla U}^2\dx\bigg)=-2m_{n_0}(\Sigma).
	\end{equation}
\end{lemma}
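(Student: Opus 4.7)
The plan is to use the splitting $Z_R=\psi+\tilde Z_R$, where $\tilde Z_R:=Z_R-\psi$, together with $U=\psi+w_0$ (see \eqref{eq:def_U}, with $w_0:=w_{0,\Sigma}$). Since $\psi$ is harmonic and vanishes on $\partial\R^N_+$, the function $\tilde Z_R$ is harmonic in $B_R^+$, equal to $w_0$ on $S_R^+$ (because $Z_R=U$ there) and to $0$ on $B_R'$ (because $Z_R=0$ and $\psi=0$ there), so it is precisely the Dirichlet minimizer from Lemma~\ref{lemma:Z_R} with boundary datum $w_0$ on $S_R^+$. Expanding the squares,
\begin{equation*}
|\nabla Z_R|^2-|\nabla U|^2=\bigl(|\nabla\tilde Z_R|^2-|\nabla w_0|^2\bigr)+2\,\nabla\psi\cdot\nabla(\tilde Z_R-w_0),
\end{equation*}
so the task reduces to computing the cross term and controlling the parenthesis.

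For the cross term, harmonicity of $\psi$ and Lipschitz integration by parts in $B_R^+$, whose outward normal on $B_R'$ equals $\nnu=(0,\dots,0,-1)$ (matching the normal in \eqref{eq:57}), give
\begin{align*}
\int_{B_R^+}\nabla\psi\cdot\nabla\tilde Z_R\dy&=\int_{S_R^+}w_0\,\partial_{\nnu}\psi\ds,\\
\int_{B_R^+}\nabla\psi\cdot\nabla w_0\dy&=\int_{S_R^+}w_0\,\partial_{\nnu}\psi\ds+\int_{\Sigma}w_0\,\partial_{\nnu}\psi\dx',
\end{align*}
using $\tilde Z_R=w_0$ on $S_R^+$, $\tilde Z_R=0$ on $B_R'$, and $w_0=0$ on $B_R'\setminus\Sigma$. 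By \eqref{eq:m_int_by_parts} the last integral equals $2m_{n_0}(\Sigma)$, hence
\begin{equation*}
2\int_{B_R^+}\nabla\psi\cdot\nabla(\tilde Z_R-w_0)\dy=-4m_{n_0}(\Sigma),
\end{equation*}
independently of $R$. Moreover, since $w_0\in\mathcal{D}^{1,2}(\R^N_+\cup\Sigma)$, dominated convergence and \eqref{eq:m_int_by_parts} yield $\int_{B_R^+}|\nabla w_0|^2\dy\to-2m_{n_0}(\Sigma)$. It therefore suffices to prove $\int_{B_R^+}|\nabla\tilde Z_R|^2\dy\to 0$.

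Since $\Sigma$ is bounded by \eqref{eq:Sigma} and \eqref{eq:7}, we may take $R$ so large that $\Sigma\subset B_{R/2}'$; then the function $\eta_R w_0$, with $\eta_R$ as in \eqref{eq:def_cutoff}, is admissible in the Dirichlet minimization defining $\tilde Z_R$: it equals $w_0$ on $S_R^+$ and vanishes on all of $B_R'$, being $0$ on $\Sigma$ (since $\eta_R\equiv 0$ there) and on $B_R'\setminus\Sigma$ (since $w_0=0$). Hence
\begin{equation*}
\int_{B_R^+}|\nabla\tilde Z_R|^2\dy\leq 2\int_{B_R^+}|\nabla\eta_R|^2 w_0^2\dy+2\int_{B_R^+\setminus B_{R/2}^+}|\nabla w_0|^2\dy.
\end{equation*}
The second term vanishes by $|\nabla w_0|^2\in L^1(\R^N_+)$; the first, bounded by $64\int_{|x|\geq R/2}w_0^2/|x|^2\dy$ since $|\nabla\eta_R|\leq 4/R$ on its support $\{R/2\leq|x|\leq R\}$, tends to $0$ either by the Hardy-type argument already used in the proof of Lemma~\ref{lemma:U_R_conv}, or directly from the pointwise decay $|w_0(x)|=O(|x|^{-N+1})$ derived in the proof of Lemma~\ref{lemma:lim_chi_R} via Kelvin transform. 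Collecting these facts yields $\lim_R\bigl(\int_{B_R^+}|\nabla Z_R|^2-\int_{B_R^+}|\nabla U|^2\bigr)=0-(-2m_{n_0}(\Sigma))-4m_{n_0}(\Sigma)=-2m_{n_0}(\Sigma)$, which is \eqref{eq:conv_f_R_th1}. The main---though mild---obstacle is merely the verification that $\eta_R w_0$ is an admissible competitor, which forces $R$ large enough that $\Sigma$ is contained in the vanishing region of $\eta_R$; everything else is a direct consequence of harmonicity of $\psi$, Lipschitz integration by parts on $B_R^+$, and the variational identity \eqref{eq:m_int_by_parts}.
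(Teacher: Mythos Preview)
Your proof is correct and takes a genuinely different, more elementary route than the paper's.

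The paper first reduces the problem, via Lemma~\ref{lemma:final_aux}, to computing
\[
\lim_{R\to+\infty}\int_{S_R^+}\psi\,\partial_{\nnu}(Z_R-U)\ds,
\]
and then evaluates this boundary term by introducing the spherical Fourier coefficient $\Gamma_R(r)=\int_{S_1^+}Z_R(r\theta)\Psi(\theta)\ds$, solving the associated second-order ODE, and invoking Corollary~\ref{cor:chi} (itself a byproduct of the ODE analysis for $\chi$ in Lemma~\ref{lemma:lim_chi_R}). Your approach bypasses all of this: the algebraic splitting $Z_R=\psi+\tilde Z_R$, $U=\psi+w_0$ isolates the cross term $2\int\nabla\psi\cdot\nabla(\tilde Z_R-w_0)$, which you compute \emph{exactly} (and independently of $R$) by a single integration by parts against the harmonic polynomial $\psi$, picking up the $\Sigma$-contribution $-4m_{n_0}(\Sigma)$ via \eqref{eq:m_int_by_parts}. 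The remaining terms are handled by the Dirichlet principle for $\tilde Z_R$ with competitor $\eta_R w_0$, together with the Hardy-type tail estimate already used in Lemma~\ref{lemma:U_R_conv}. The paper's ODE machinery has the advantage of being reusable (the function $\chi$ also drives the proof of Lemma~\ref{lemma:lim_g_R}), but for this specific lemma your argument is shorter and avoids the spherical-harmonics layer entirely.
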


A first step in this direction is given by the following lemma.

\begin{lemma}\label{lemma:final_aux}
	There holds
	\begin{equation*}
	\lim_{R\to+\infty}\bigg(\int_{B_R^+}\abs{\nabla
                  Z_R}^2\dx-\int_{B_R^+}\abs{\nabla U}^2\dx-
\int_{S_R^+}\psi \partial_{\nnu}(Z_R-U)\ds\bigg)=0.
	\end{equation*}
\end{lemma}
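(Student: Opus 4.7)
My plan is to derive an exact expression for the difference of Dirichlet energies via integration by parts, subtract the prescribed boundary term, and show that what is left decays to zero as $R\to\infty$, thanks to the decay of $w_0=U-\psi$ at infinity and standard harmonic-function estimates.

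First, since $Z_R$ and $U$ are harmonic in $B_R^+$, integration by parts gives $\int_{B_R^+}|\nabla Z_R|^2\dx=\int_{\partial B_R^+}Z_R\,\partial_\nnu Z_R\ds$ and $\int_{B_R^+}|\nabla U|^2\dx=\int_{\partial B_R^+}U\,\partial_\nnu U\ds$. The first reduces to $\int_{S_R^+}U\,\partial_\nnu Z_R\ds$ because $Z_R=0$ on $B_R'$ and $Z_R=U$ on $S_R^+$; the second reduces to $\int_{S_R^+}U\,\partial_\nnu U\ds$ because on $B_R'$ either $U=0$ (off $\Sigma$) or $\partial_\nnu U=0$ (on $\Sigma$). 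Subtracting these and then also $\int_{S_R^+}\psi\,\partial_\nnu(Z_R-U)\ds$, the quantity in the lemma reduces to
\[
\int_{S_R^+}w_0\,\partial_\nnu(Z_R-U)\ds,
\]
so it suffices to show that this tends to $0$.

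Next, to convert this boundary integral over $S_R^+$ into one over the compact set $\Sigma$, I would apply Green's identity to the pair $(w_0,v)$ with $v:=Z_R-U$ on $B_R^+$: both are harmonic and belong to $H^1(B_R^+)$. Extending $v$ by zero outside $B_R^+$ --- legitimate since $v|_{S_R^+}=0$ and $v|_{B_R'\setminus\Sigma}=0$, so the extension lies in $\mathcal{D}^{1,2}(\R^N_+\cup\Sigma)$ --- and using it as a test function in the weak formulation \eqref{eq:59} for $w_0$, together with $v|_\Sigma=-w_0$, gives $\int_{B_R^+}\nabla w_0\cdot\nabla v\dx=\int_\Sigma w_0\,\partial_\nnu\psi\ds$. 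On the other hand, since $v$ is harmonic, integrating by parts in $B_R^+$ against $w_0$ and using $v|_{S_R^+}=0$, $w_0|_{B_R'\setminus\Sigma}=0$, and $\partial_\nnu v=\partial_\nnu Z_R$ on $\Sigma$ (since $\partial_\nnu U=0$ there) yields $\int_{B_R^+}\nabla w_0\cdot\nabla v\dx=\int_{S_R^+}w_0\,\partial_\nnu v\ds+\int_\Sigma w_0\,\partial_\nnu Z_R\ds$. Comparing the two expressions one obtains
\[
\int_{S_R^+}w_0\,\partial_\nnu(Z_R-U)\ds=-\int_\Sigma w_0\,\partial_\nnu(Z_R-\psi)\ds.
\]

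Finally, the function $Z_R-\psi$ is harmonic in $B_R^+$, vanishes identically on $B_R'$ (since both $Z_R$ and $\psi$ do), and has Dirichlet trace $w_0$ on $S_R^+$ with $\|w_0\|_{L^\infty(S_R^+)}=O(R^{-(N-1)})$, by the Kelvin transform estimate $|w_0(x)|=O(|x|^{-(N-1)})$ already recalled in the proof of Lemma~\ref{lemma:lim_chi_R}. Odd reflection across $B_R'$ then turns $Z_R-\psi$ into a harmonic function on the whole ball $B_R$ whose $L^\infty$-norm is, by the maximum principle, still $O(R^{-(N-1)})$. Since $\overline\Sigma$ is a fixed compact subset of $\R^N$ contained in $B_{R/2}$ for $R$ large enough, the interior gradient estimate for harmonic functions yields
\[
\|\nabla(Z_R-\psi)\|_{L^\infty(\Sigma)}\le \frac{C}{R}\,\|Z_R-\psi\|_{L^\infty(B_R)}=O(R^{-N}).
\]
Therefore $\bigl|\int_\Sigma w_0\,\partial_\nnu(Z_R-\psi)\ds\bigr|\le C R^{-N}\|w_0\|_{L^1(\Sigma)}\to 0$, completing the proof. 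The main technical obstacle is the rigorous justification of Green's identity in the second step, given the limited regularity of $w_0$ at the Dirichlet--Neumann junction $\partial\Sigma$; the key observation is that the vanishing Dirichlet traces of $v$ on $S_R^+$ and on $B_R'\setminus\Sigma$ allow the zero-extension of $v$ to serve as an admissible test function in \eqref{eq:59}, bypassing any need for additional regularity of $w_0$ near $\partial\Sigma$.
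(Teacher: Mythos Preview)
Your reduction in the first step coincides with the paper's: both arrive at showing $\int_{S_R^+}w_0\,\partial_\nnu(Z_R-U)\ds\to0$. From there the two arguments diverge. The paper splits this as $\int_{S_R^+}w_0\,\partial_\nnu(Z_R-\psi)\ds-\int_{S_R^+}w_0\,\partial_\nnu w_0\ds$ and handles each piece on $S_R^+$ by energy methods: the second term equals $\int_{\R^N_+\setminus B_R^+}|\nabla w_0|^2\dx$, while the first is controlled via the Dirichlet principle and Hardy's inequality, using only that $w_0\in\mathcal{D}^{1,2}(\R^N_+\cup\Sigma)$. You instead transfer the whole boundary term to the fixed compact set $\Sigma$ via Green's identity, obtaining $-\int_\Sigma w_0\,\partial_\nnu(Z_R-\psi)\ds$, and then kill it with the pointwise decay $|w_0(x)|=O(|x|^{1-N})$ together with the maximum principle and interior gradient estimates applied to the odd reflection of $Z_R-\psi$. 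Your route is more quantitative and yields an explicit rate $O(R^{-N})$; the paper's route is softer but avoids any integration by parts involving $\partial_\nnu v$ across the mixed boundary.

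The one point that deserves more care is precisely the one you flag: your second integration by parts produces $\int_\Sigma w_0\,\partial_\nnu v\ds$ with $v=Z_R-U$, and $U$ has limited regularity at $\partial\Sigma$, so $\partial_\nnu v$ is not obviously a function there. Your remark that testing \eqref{eq:59} with the zero extension of $v$ justifies the \emph{first} identity is correct, but it does not by itself justify the second. A clean fix is to split $v=(Z_R-\psi)-w_0$ and observe that the odd reflection of $Z_R-\psi$ is harmonic, hence smooth, in all of $B_R$; this makes $\partial_\nnu(Z_R-\psi)$ a genuine smooth function on $\Sigma$ and legitimises the boundary integral there, while the $w_0$-contribution is absorbed back into the weak formulation \eqref{eq:59}. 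With that adjustment your argument is complete.
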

\begin{proof}
 Integration by parts and equations
    \eqref{eq:63} and \eqref{eq:69} imply that 
	\begin{multline*}
		\int_{B_R^+}\abs{\nabla Z_R}^2\dx-\int_{B_R^+}\abs{\nabla U}^2\dx-\int_{S_R^+}\psi \partial_{\nnu}(Z_R-U)\ds\\
		=\int_{S_R^+}(U-\psi)\partial_{\nnu}(\psi-U)\ds+\int_{S_R^+}(U-\psi)\partial_{\nnu}(Z_R-\psi)\ds.
	\end{multline*}
		Therefore the conclusion follows if we prove that
	\begin{gather}
		\lim_{R\to+\infty}\int_{S_R^+}(U-\psi)\partial_{\nnu}(U-\psi)\ds=0, \label{eq:final_aux_1}\\
		\lim_{R\to+\infty}\int_{S_R^+}(U-\psi)\partial_{\nnu}(Z_R-\psi)\ds=0. \label{eq:final_aux_2}
	\end{gather}
First, we observe that integration by parts and the
  fact that $U-\psi\in  \mathcal{D}^{1,2}(\R^N_+\cup
  \Sigma)$ is harmonic in $\R^N_+$ imply that
	\[
		\int_{S_R^+}(U-\psi)\partial_{\nnu}(U-\psi)\ds=\int_{\R^N_+\setminus B_R^+}\abs{\nabla (U-\psi)}^2\dx.
	\]
Since $U-\psi\in \mathcal{D}^{1,2}(\R^N_+\cup \Sigma)$, the right hand
side vanishes as $R\to+\infty$, thus implying \eqref{eq:final_aux_1}.

In order to prove \eqref{eq:final_aux_2}, we let $R>2$ and consider the equation satisfied by $Z_R-\psi\in H^1(B_R^+)$ in $B_R^+$, i.e.	
\begin{equation*}
\begin{cases}
  -\Delta (Z_R-\psi)=0, &\text{in }B_R^+, \\
Z_R-\psi= 0, &\text{on }B_R', \\
Z_R-\psi= U-\psi, &\text{on }S_R^+.
\end{cases}
\end{equation*}
	If we multiply both sides of the above equation by $\eta_R(U-\psi)$, where $\eta_R$ is as in \eqref{eq:def_cutoff}, and integrate by parts, we obtain that
	\begin{equation*}
		\int_{S_R^+}(U-\psi)\partial_{\nnu}(Z_R-\psi)\ds=\int_{B_R^+}\nabla (Z_R-\psi)\cdot\nabla(\eta_R(U-\psi))\dx.
	\end{equation*}
	Therefore, from the Cauchy-Schwartz inequality and the
        Dirichlet principle it follows that
	\begin{equation}\label{eq:final_aux_3}
		\abs{\int_{S_R^+}(U-\psi)\partial_{\nnu}(Z_R-\psi)\ds}\leq \int_{B_R^+}\abs{\nabla(\eta_R(U-\psi))}^2\dx.
	\end{equation}
Thanks to \eqref{eq:def_cutoff}, we have that
\begin{equation}\label{eq:final_aux_4}
  \int_{B_R^+}\abs{\nabla(\eta_R(U-\psi))}^2\dx\leq 32\left(
  \int_{B_R^+\setminus B_{R/2}^+}\frac{\abs{U-\psi}^2}{\abs{x}^2}\dx+
  \int_{B_R^+\setminus B_{R/2}^+}\abs{\nabla (U-\psi)}^2\dx\right).
\end{equation}
Now, since $U-\psi\in\mathcal{D}^{1,2}(\R^N_+\cup \Sigma)$ and since
the Hardy inequality holds in this space, we have that
\[
  \int_{B_R^+\setminus B_{R/2}^+}\frac{\abs{U-\psi}^2}{\abs{x}^2}\dx
  +\int_{B_R^+\setminus B_{R/2}^+}\abs{\nabla (U-\psi)}^2\dx\to 0\quad\text{as }R\to+\infty.
\]
Combining this fact with \eqref{eq:final_aux_4} and
\eqref{eq:final_aux_3} we obtain \eqref{eq:final_aux_2}, thus
concluding the proof.
\end{proof}

We are now ready to prove Lemma \ref{lemma:conv_f_R}.

\begin{proof}[Proof of Lemma \ref{lemma:conv_f_R}]
  By virtue of Lemma \ref{lemma:final_aux}, to prove
  \eqref{eq:conv_f_R_th1} it is enough to show that
\begin{equation}\label{eq:70}
  \lim_{R\to+\infty}\int_{S_R^+}\psi\partial_{\nnu}(Z_R-U)\ds
  = -2m_{n_0}(\Sigma).
\end{equation}
	For $R>2$ we let
	\[
          \Gamma_R(r):=\int_{S_1^+}Z_R(r\theta)\Psi(\theta)\ds
          \quad\text{for
                  any }0< r\leq R.
	\]
        From \eqref{eq:69}  and the fact that $\Psi$ is a spherical harmonic of
degree $\gamma$ it easily follows that
	\[
		(r^{N+2\gamma-1}(r^{-\gamma}\Gamma_R(r))')'=0\quad\text{in }(0,R).
	\]
	Integrating this ODE in $(r,R)$ we obtain that
	\begin{equation*}
		r^{-\gamma}\Gamma_R(r)=R^{-\gamma}\Gamma_R(R)+\frac{C}{N+2\gamma-2}\left[R^{-N-2\gamma+2}-r^{-N-2\gamma+2}\right]
	\end{equation*}
	for some constant $C\in\R$ and for all
          $r\in(0,R)$. Multiplying both sides by $r^{N+2\gamma-2}$
        leads to
	\[
		r^{N+\gamma-2}\Gamma_R(r)=R^{-\gamma}r^{N+2\gamma-2}\Gamma_R(R)+\frac{C}{N+2\gamma-2}\left[R^{-N-2\gamma+2}r^{N+2\gamma-2}-1\right].
	\]
	 Tanking into account that, by regularity of $Z_R$,
         $\lim_{r\to 0}\Gamma_R(r)$ is finite, thanks to the previous
         identity, we may conclude that $C=0$, thus implying that
	 \begin{equation*}
	 	\Gamma_R(r)=\left(\frac{r}{R}\right)^{\gamma}\Gamma_R(R).
	 \end{equation*}
 	Moreover, since $Z_R=U$ on $S_R^+$, we have that $\Gamma_R(R)=\chi(R)$ and then
 	\begin{equation}\label{eq:conv_f_R_2}
 		\Gamma_R(r)=\left(\frac{r}{R}\right)^{\gamma}\chi(R).
 	\end{equation}
	 By definition of $\Gamma_R$, we know that
	 \[
	 	\int_{S_R^+}\psi\partial_{\nnu} Z_R\ds=R^{N+\gamma-1}\Gamma_R'(R)
	 \]
	 which, in view of \eqref{eq:conv_f_R_2}, becomes
	 \begin{equation*}
	 	\int_{S_R^+}\psi\partial_{\nnu} Z_R\ds=\gamma R^{N+\gamma-2}\chi(R).
	 \end{equation*}
Then, taking into account \eqref{eq:chi_th2}, we have that
 	\begin{equation*}
          \int_{S_R^+}\psi\partial_{\nnu} Z_R\ds
          =\pi_0 \gamma R^{N+2\gamma-2}-\frac{2\gamma m_{n_0}(\Sigma)}{N+2\gamma-2}.
 	\end{equation*}
 	Combining this identity with \eqref{eq:chi_th1} yields
 	\begin{equation*}
 		\int_{S_R^+}\psi\partial_{\nnu}(Z_R-U)\ds=-\frac{2\gamma m_{n_0}(\Sigma)}{N+2\gamma-2}-\frac{2(N+\gamma-2)}{N+2\gamma-2}m_{n_0}(\Sigma)=-2m_{n_0}(\Sigma),
 	\end{equation*}
which implies \eqref{eq:70}. The proof is thereby complete.
	\end{proof}

\section{Proof of Theorem \ref{thm:nonstarshaped}} \label{sec:proof-theor-refthm:m}

In this section, we drop assumptions
\eqref{eq:V-C11}--\eqref{eq:V-star-shaped} on the set $\mathcal V$ and
prove Theorem \ref{thm:nonstarshaped} under the sole assumption
\eqref{eq:7} on $\mathcal V$.
Let  $0<r_{\mathcal V}<R_{\mathcal V}<r_0$ be such that
  $B_{r_{\mathcal V}}\subset\mathcal V\subset B_{R_{\mathcal
      V}}$ (such $r_{\mathcal V},R_{\mathcal V}$ exist because
  $\mathcal V$ is an open bounded set containing $0$). For every
  $\omega\subset\R^N$ bounded open set, we denote
  as $\lambda_{n_0}^\e(\omega)$ the $n_0$-th eigenvalue of problem
  \eqref{eq:eqphiie} with $\Sigma_\e$ given by $(\e\mathcal
  \omega)\cap\partial\Omega$ (i.e. with $\mathcal V$ replaced by
  $\omega$). Then, from \eqref{eq:min_max} and the fact that  
$\e B_{r_{\mathcal V}}\subset\e \mathcal V\subset \e B_{R_{\mathcal
    V}}$ it follows that
\begin{equation}\label{eq:71}
  \lambda_{n_0}^\e( B_{R_{\mathcal
    V}})\leq \lambda_{n_0}^\e(\mathcal V)\leq \lambda_{n_0}^\e(B_{r_{\mathcal
    V}}).
\end{equation}
Since $B_{r_{\mathcal V}}$ and $B_{R_{\mathcal V}}$ satisfy
assumptions \eqref{eq:V-C11} and \eqref{eq:V-star-shaped}, Theorem
\ref{thm:main} and Lemma \ref{l:scaling-m0} yield the
following asymptotic expansions for
$\lambda_{n_0}^\e( B_{R_{\mathcal V}}), \lambda_{n_0}^\e(
B_{r_{\mathcal V}})$:

\begin{align*}
  \lambda_{n_0}^\e( B_{R_{\mathcal V}})& =\lambda_{n_0}-R_{\mathcal V}^{N+2\gamma-2}\mathcal C_{n_0}
  \e^{N+2\gamma-2}+o(\e^{N+2\gamma-2}),\\
    \lambda_{n_0}^\e( B_{r_{\mathcal V}})&=\lambda_{n_0}-r_{\mathcal V}^{N+2\gamma-2}\mathcal C_{n_0}
  \e^{N+2\gamma-2}+o(\e^{N+2\gamma-2}),
\end{align*}
as $\e\to0$, so that, in view of \eqref{eq:71},
\begin{align*}
 r_{\mathcal V}^{N+2\gamma-2}\mathcal C_{n_0}+o(1) 
=  \frac{\lambda_{n_0}-\lambda_{n_0}^\e( B_{r_{\mathcal
    V}})}{ \e^{N+2\gamma-2}}&\leq \frac{\lambda_{n_0}-\lambda_{n_0}^\e(\mathcal
  V)}{ \e^{N+2\gamma-2}}\\
  &\leq
\frac{\lambda_{n_0}-\lambda_{n_0}^\e(B_{R_{\mathcal V}})}{ \e^{N+2\gamma-2}}=R_{\mathcal V}^{N+2\gamma-2}\mathcal C_{n_0}+o(1)
\end{align*}
as $\e\to0$. The above chain of inequalities directly proves Theorem
\ref{thm:nonstarshaped}.

\appendix

\section{}\label{appendix:poincare}

We recall from \cite[Lemma 4.1]{FO} the following Poincar\'e-type inequality on
balls and half-balls.
\begin{lemma}\label{l:poin}
  Let $r>0$. Then
\begin{equation*}
 \frac{N-1}{r^2}\int_{B_r^+}u^2\dx\leq\int_{B_r^+}|\nabla
 u|^2\dx+\frac{1}{r}\int_{S_r^+}u^2\ds\quad\text{for every $u\in
   H^1(B_r^+)$},
 \end{equation*}
and 
\begin{equation*}
 \frac{N-1}{r^2}\int_{B_r}u^2\dx\leq\int_{B_r}|\nabla
 u|^2\dx+\frac{1}{r}\int_{\partial B_r}u^2\ds\quad\text{for every $u\in
   H^1(B_r)$}.
 \end{equation*}
\end{lemma}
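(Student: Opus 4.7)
The plan is to derive both inequalities from a single application of the divergence theorem with the radial vector field $\bm{F}(x)=x$, combined with the Cauchy-Schwarz inequality and a weighted Young inequality tuned so that the resulting constants match the ones in the statement.

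First, by a standard density argument it is enough to prove the inequalities for $u\in C^\infty(\overline{B_r})$ (respectively $u\in C^\infty(\overline{B_r^+})$). I would start from the identity $\dive(u^2\bm{F})=2u\,\nabla u\cdot x + N u^2$, which follows from $\dive\bm{F}=N$. Integrating over $B_r$ and applying the divergence theorem yields
\begin{equation*}
N\int_{B_r}u^2\,dx = -2\int_{B_r} u\,(\nabla u\cdot x)\,dx + \int_{\partial B_r} u^2\,(x\cdot\nnu)\,dS,
\end{equation*}
where on $\partial B_r$ one has $x\cdot\nnu=r$, so the boundary term equals $r\int_{\partial B_r} u^2\,dS$.

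Next I would bound the cross term by Cauchy-Schwarz and Young's inequality with parameter $\varepsilon=1$:
\begin{equation*}
2\left|\int_{B_r} u\,(\nabla u\cdot x)\,dx\right|
\leq 2\int_{B_r}|u|\,|x|\,|\nabla u|\,dx
\leq \int_{B_r} u^2\,dx + \int_{B_r}|x|^2|\nabla u|^2\,dx
\leq \int_{B_r}u^2\,dx + r^2\int_{B_r}|\nabla u|^2\,dx,
\end{equation*}
using $|x|\leq r$ in $B_r$. Substituting, moving $\int_{B_r}u^2$ to the left-hand side, and dividing by $r^2$ gives exactly the second inequality of the statement.

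For the half-ball case, the only point to check is that the integration by parts produces no extra contribution from the flat portion $B_r'=B_r\cap\partial\R^N_+$ of $\partial B_r^+$. Indeed, on $B_r'$ the outer unit normal is $-\bm{e}_N$, so $x\cdot\nnu = -x_N = 0$ there, and the boundary term reduces to an integral over $S_r^+$, where again $x\cdot\nnu=r$. The remainder of the argument is identical. No step here is truly delicate; the only subtlety is matching constants, which is why the choice $\varepsilon=1$ in Young's inequality is the right one to produce the factor $N-1$ on the left-hand side.
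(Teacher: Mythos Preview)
Your proof is correct. The divergence identity with the radial field, followed by Young's inequality with $\varepsilon=1$, yields exactly the constant $N-1$, and the observation that $x\cdot\nnu=0$ on the flat part $B_r'$ handles the half-ball case cleanly.

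The paper itself does not prove this lemma; it simply recalls the result from \cite[Lemma~4.1]{FO}. So there is no proof in the paper to compare against. Your argument is the standard self-contained one and is entirely adequate here.
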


	From \cite{Abatangelo2015} we recall the following result, regarding the maximum of quadratic forms with coefficients depending on a parameter (see also \cite{FO}).

  \begin{lemma}\label{lemma:quadratic_form}
	For every $\e>0$ let us consider a quadratic form
	\[
	\begin{aligned}
	&Q_\e\colon \R^{n_0} \longrightarrow \R, \\
	&Q_\e(z_1,\dots,z_{n_0})=\sum_{i,j=1}^{n_0} M_{i,j}(\e)z_i z_j,
	\end{aligned}
	\]
	with real coefficients $M_{i,j}(\e)$ such that
        $M_{i,j}(\e)=M_{j,i}(\e)$. Let us assume that there exist
        $a>0$, $\e\mapsto \sigma(\e)\in\R$ with
        $\sigma(\e)\geq 0$ and $\sigma(\e)=O(\e^{2a})$ as
        $\e\to 0$, and $\e\mapsto \mu(\e)\in\R$ with $\mu(\e)=O(1)$ as
        $\e\to 0$, such that the coefficients $M_{i,j}(\e)$ satisfy
        the following conditions:
	\begin{align*}
	& M_{n_0,n_0}(\e)=\sigma(\e)\mu(\e), \\
	& \text{for all }i<n_0~M_{i,i}(\e)\to M_i<0,~\text{as }\e\to 0, \\
	& \text{for all }i<n_0~M_{i,n_0}(\e)=O(\e^{a}\sqrt{\sigma(\e)})~\text{as }\e\to 0, \\
	& \text{for all }i,j<n_0~\text{with }i\neq j~M_{i,j}=O(\e^{2a})~\text{as }\e\to 0, \\
	& \text{there exists }M\in \mathbb{N}~\text{such that }\e^{(2+M)a}=o(\sigma(\e))~\text{as }\e\to 0.
	\end{align*}
	Then
	\[
	\max_{\substack{z \in\R^n_0 \\ \norm{z}=1}}Q_\e(z)=\sigma(\e)(\mu(\e)+o(1))\quad\text{as }\e\to 0,
	\]
	where $\norm{z}=\norm{(z_1,\dots,z_{n_0})}=\big( \sum_{i=1}^{n_0}z_i^2 \big)^{1/2}$.   
\end{lemma}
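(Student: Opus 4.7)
The plan is to prove the asymptotic formula by establishing matching lower and upper bounds for $\max_{\|z\|=1} Q_\e(z)$, viewed as the largest eigenvalue of the symmetric matrix $M(\e)=(M_{i,j}(\e))$. The lower bound is immediate: testing with the canonical basis vector $e_{n_0}=(0,\dots,0,1)$ gives $Q_\e(e_{n_0})=M_{n_0,n_0}(\e)=\sigma(\e)\mu(\e)$, hence $\max_{\|z\|=1}Q_\e(z)\geq \sigma(\e)\mu(\e)$, which already has the form $\sigma(\e)(\mu(\e)+o(1))$. The main work is in the matching upper bound.

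For the upper bound, I would decompose any unit vector as $z=(z',z_{n_0})$ with $z'=(z_1,\dots,z_{n_0-1})$, $|z'|^2+z_{n_0}^2=1$, and split
\begin{equation*}
Q_\e(z)=Q'_\e(z')+2L_\e(z')\,z_{n_0}+M_{n_0,n_0}(\e)\,z_{n_0}^2,
\end{equation*}
where $Q'_\e(z')=\sum_{i,j<n_0}M_{i,j}(\e)z_iz_j$ and $L_\e(z')=\sum_{i<n_0}M_{i,n_0}(\e)z_i$. The three pieces are estimated separately using the hypotheses. Since $M_{i,i}(\e)\to M_i<0$ and the off-diagonal entries of the top-left block are $O(\e^{2a})=o(1)$, choosing $c:=\tfrac12\min_{i<n_0}|M_i|>0$ and applying a diagonal-dominance argument (or directly bounding by $\max_iM_{i,i}(\e)|z'|^2$ plus an $O(\e^{2a})|z'|^2$ off-diagonal remainder) yields $Q'_\e(z')\leq -c|z'|^2$ for $\e$ small enough. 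By Cauchy--Schwarz, $|L_\e(z')|\leq C\e^a\sqrt{\sigma(\e)}\,|z'|$, and Young's inequality $2ab\leq \tfrac{c}{2}a^2+\tfrac{2}{c}b^2$ gives
\begin{equation*}
|2L_\e(z')z_{n_0}|\leq \tfrac{c}{2}|z'|^2+\tfrac{2C^2}{c}\e^{2a}\sigma(\e)\,z_{n_0}^2.
\end{equation*}
Assembling the pieces,
\begin{equation*}
Q_\e(z)\leq -\tfrac{c}{2}|z'|^2+\sigma(\e)\bigl(\mu(\e)+C'\e^{2a}\bigr)z_{n_0}^2
=-\tfrac{c}{2}+\bigl[\tfrac{c}{2}+\sigma(\e)(\mu(\e)+o(1))\bigr]z_{n_0}^2,
\end{equation*}
after using $|z'|^2=1-z_{n_0}^2$.

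Since $\sigma(\e)\to 0$ and $\mu(\e)=O(1)$, the coefficient of $z_{n_0}^2$ is eventually positive, so the maximum over $z_{n_0}^2\in[0,1]$ is attained at $z_{n_0}^2=1$ and equals $\sigma(\e)(\mu(\e)+o(1))$; thus $\max_{\|z\|=1}Q_\e(z)\leq \sigma(\e)(\mu(\e)+o(1))$. Combined with the lower bound, this yields the claim. The last hypothesis $\e^{(2+M)a}=o(\sigma(\e))$ guarantees $\sigma(\e)>0$ eventually, so dividing by $\sigma(\e)$ makes sense and shows that the quotient $(\max Q_\e)/\sigma(\e)-\mu(\e)$ is $O(\e^{2a})$, which is indeed $o(1)$. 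The only delicate point is handling the cross-term coupling between $z'$ and $z_{n_0}$: the Young constant must be chosen to make the negative definiteness of $Q'_\e$ absorb half of the cross contribution, leaving the other half as an $O(\e^{2a}\sigma(\e))$ correction to $M_{n_0,n_0}(\e)$, which is precisely $o(\sigma(\e))$ and thus fits into the asymptotic $\sigma(\e)(\mu(\e)+o(1))$.
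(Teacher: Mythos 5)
The paper does not prove this lemma but recalls it from the external reference \cite{Abatangelo2015}, so there is no internal proof to compare against; your argument must stand on its own, and it does. The lower bound by testing with the coordinate vector $e_{n_0}$ is immediate. The upper bound is handled cleanly: the block split $Q_\e(z)=Q'_\e(z')+2L_\e(z')z_{n_0}+M_{n_0,n_0}(\e)z_{n_0}^2$, the eventual negative-definiteness $Q'_\e(z')\leq -c|z'|^2$ (from $M_{i,i}(\e)\to M_i<0$ plus $O(\e^{2a})=o(1)$ off-diagonal entries), Cauchy--Schwarz on the cross term, and Young's inequality with the constant calibrated to let $-\tfrac{c}{2}|z'|^2$ absorb half of the cross contribution, leaving an $O(\e^{2a}\sigma(\e))=o(\sigma(\e))$ correction; substituting $|z'|^2=1-z_{n_0}^2$ and observing the coefficient of $z_{n_0}^2$ is eventually positive yields the matching upper bound. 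All steps check out, including the edge case $\sigma(\e)=0$, where both bounds give $\max Q_\e=0$ and the conclusion holds trivially.

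One minor remark: your proof never substantively uses the hypothesis that there exists $M\in\mathbb N$ with $\e^{(2+M)a}=o(\sigma(\e))$. You invoke it only to assert $\sigma(\e)>0$ eventually, but the rewriting $\sigma(\e)\mu(\e)+o(\sigma(\e))=\sigma(\e)\bigl(\mu(\e)+o(1)\bigr)$ is valid by definition of the $o(\cdot)$ symbol even when $\sigma(\e)=0$. So either \cite{Abatangelo2015} takes a different route that genuinely needs that hypothesis, or your argument is marginally more economical; in any case the hypothesis does no harm.
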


\section*{Acknowledgments}
\noindent
The authors would like to thank the  anonymous referee for the careful reading of the paper and many useful suggestions.
 B. Noris was partially supported by the INdAM - GNAMPA Project 2020 “Problemi ai limiti per l’equazione della curvatura media prescritta”. R. Ognibene was partially supported by the project ERC VAREG - \emph{Variational approach to the regularity of the free boundaries} (grant agreement No. 853404) and by the MIUR-PRIN project No. 2017TEXA3H. This work started during a visit of R. Ognibene at Département de mathématiques, Université de Picardie Jules Verne, which he warmly thanks for the hospitality.

\bibliography{biblio}
\bibliographystyle{acm} 

\end{document}